\documentclass[12pt,reqno]{amsart}
\usepackage[letterpaper,margin=1in]{geometry} 
\usepackage{amsmath,amssymb}
\usepackage{enumitem}
\usepackage{setspace}
\usepackage[dvipsnames]{xcolor}
\usepackage{soul}
\usepackage{amsthm}
\usepackage{mathrsfs}
\DeclareMathOperator{\Sym}{Sym}

\usepackage{enumitem}
\newcommand{\Ad}{\operatorname{Ad}}
\newcommand{\Span}{\operatorname{Span}}

\usepackage{booktabs}
\usepackage{makecell}
\usepackage{tabularx}

\newtheorem{theorem}{Theorem}[section] 
\newtheorem{lemma}[theorem]{Lemma}
\newtheorem{proposition}[theorem]{Proposition}
\newtheorem{definition}[theorem]{Definition}
\newtheorem{corollary}[theorem]{Corollary}
\newtheorem{remark}[theorem]{Remark}
\newtheorem{example}[theorem]{Example}
\usepackage{tikz-cd}

\newcommand{\Deck}{\mathrm{Deck}}

\newcommand{\Isom}{\mathrm{Isom}}

\newcommand{\Flag}{\mathrm{Flag}}
\newcommand{\Cham}{\mathrm{Cham}}

\usepackage{longtable}

\usepackage{amsmath,amssymb}
\usepackage{graphicx}
\usepackage{upgreek}
\usepackage{mathrsfs}
\makeatletter
\newcommand{\listeqno}{
  \refstepcounter{equation}
  \hfill\mbox{\tagform@{\theequation}}
}
\makeatother

\numberwithin{equation}{section}

\providecommand{\K}{\mathtt{K}}
\providecommand{\N}{\mathtt{N}}
\providecommand{\B}{\mathtt{B}}
\providecommand{\f}{\mathfrak{f}}
\providecommand{\aaa}{\mathfrak{a}}
\providecommand{\h}{\mathfrak{h}}
\newcommand{\E}{\mathtt{E}}
\newcommand{\dist}{\operatorname{dist}}
\newcommand{\R}{\mathbb{R}}

\DeclareMathOperator{\Fix}{Fix}

\newcommand{\id}{\mathrm{id}}
\newcommand{\Spin}{\mathrm{Spin}}

\newcommand{\SO}{\mathrm{SO}}
\newcommand{\SU}{\mathrm{SU}}

\newcommand{\HH}{\mathbb{H}}
\newcommand{\Hull}[1]{\operatorname{Hull}\bigl\{#1\bigr\}}

\title{
Homogeneous $\mathbb Z/2$-Harmonic Forms and Spinors on $\R^4$ from Regular 4-Polytopes  }

\author[C. H.~Taubes]{Clifford Taubes$^\dag$}
\author[Y. Wu]{Yingying Wu$^\lozenge$}

\address{Department of Mathematics, Harvard University, Cambridge, MA}
\email{chtaubes@math.harvard.edu}

\address{Department of Mathematics, University of Houston, Houston, TX}
\email{ywu68@uh.edu}

\begin{document}

\begin{abstract}
We describe novel local singularity models for $\mathbb Z/2$ harmonic 1-forms, self-dual 2-forms and spinors in dimension 4.  These models are homogeneous versions on $\R^4$ whose singular sets are cones on the 1-skeletal of certain regular 4-dimensional polytopes. 
\end{abstract}

\maketitle

\setcounter{tocdepth}{1}
\tableofcontents

\part{Analytic Theory of $\mathbb{Z}/2$--Harmonic Forms on $S^3\setminus\Gamma$}

\medskip

\section{Introduction}

Homogeneous, $\mathbb{Z}/2$ harmonic differential forms and spinors on $\mathbb{R}^4$ arise as rescaling limits of divergent sequences to certain 4-dimensional generalizations of the Seiberg--Witten and (anti)-self-dual Yang--Mills equations.  (See e.g.\ \cite{T1,T2,T3}.)  What follows momentarily is a working definition of these objects of interest.  First some preliminaries to set the stage: The set of length $1$ vectors in $\mathbb{R}^4$ is the $3$-sphere, denoted here by $S^3$.  The radial projection map from $\mathbb{R}^4 \setminus \{0\}$ to $S^3$ that sends any given non-zero vector $x$ to the unit vector along the ray from the origin to $x$ is denoted by $\pi$.  The multiplicative group $(0,\infty)$ acts on $\mathbb{R}^4$ as the $1$-parameter family of
coordinate rescaling diffeomorphisms using the rule whereby any given $\lambda \in (0,\infty)$ sends any given vector $x$ to $\lambda x$. Next some notation: The upcoming definition refers to a ``Euclidean line bundle''.  This is a real line bundle with fiber metric.  Given that metric, there is a canonical definition of a (covariant) derivative for sections of the line bundle (and thus for differential form/spinor valued sections).  This is because the line bundle is locally isometric to the product $\mathbb{R}$-bundle with the isometry being ambiguous only up to multiplication by $\pm 1$ which commutes with differentiation.

\medskip

\begin{definition}
\emph{A homogeneous, $\mathbb{Z}/2$ harmonic differential form or spinor on $\mathbb{R}^4$ is characterized by a data set $(Z,\mathcal{I},v)$ consisting of the following elements:}
\begin{itemize}
  \item \emph{What is denoted by $Z$ signifies the inverse image via $\pi$ of a closed, non-empty set in $S^3$ with finite $1$-dimensional Hausdorff measure in $S^3$.}

  \item \emph{What is denoted by $\mathcal{I} \to \mathbb{R}^4 - Z$ signifies the pull-back via $\pi$ of a real (Euclidean) line bundle defined on the complement of $Z \cap S^3$ which has no extension to any ball in $S^3$ that contains any point of $Z \cap S^3$.}

  \item \emph{What is denoted by $v$ signifies an $\mathcal{I}$-valued harmonic, differential form or spinor defined over $\mathbb{R}^4 - Z$ that pulls-back as a non-zero multiple of itself via the action of $(0,\infty)$ on $\mathbb{R}^4$.  In addition, the conditions below must be met:}
\begin{enumerate}[label=\alph*)]
  \item \emph{The norm of $v$ extends over $Z$ to define a H\"older continuous function on $\mathbb{R}^4$ whose zero locus contains $Z$.}

  \item \emph{The square of the norm of the first derivative of any components of $v$ has finite integral over the complement of $Z$ in any ball in $\mathbb{R}^4$.}
\end{enumerate}
\end{itemize}
\end{definition}

Of particular interest in this paper are the cases where $v$ is a 1-form, self-dual 2-form, or self-dual spinor. (The self-dual spinor bundle on $\mathbb{R}^4$ is the product, rank 2, complex vector bundle $\mathbb{R}^4\times\mathbb{C}^2$.)

To put the upcoming theorems in a historical context, note first that examples of homogeneous $\mathbb{Z}/2$ harmonic differential forms and spinors on $\mathbb{R}^3$ were constructed previously, by the authors (see \cite{taubes2020examples} and also \cite{taubes2024topological}), by \cite{chen2024existence} and by \cite{FS}. As noted in \cite{T4}, these objects on $\mathbb{R}^3$ can be pulled up to $\mathbb{R}^4$ via the projection map to any given hyperplane and then those pull-backs give examples on $\mathbb{R}^4$. In addition, there are examples whereby $Z$ is the zero locus of a homogeneous, holomorphic function on $\mathbb{C}^2$ (given some isometry between $\mathbb{C}^2$ and $\mathbb{R}^4$). See for example \cite{T4} and \cite{HMT1}. In the former case, $Z$ is the union of half-planes in $\mathbb{R}^4$ through the origin that share a common axis; and in the latter case, $Z$ is a union of complex lines through the origin.

\medskip

With the preceding as background, what follows is our main theorem.

\medskip

\begin{theorem}\label{thm:main}
There exist homogeneous, $\mathbb{Z}/2$ harmonic 1-forms, self-dual 2-forms and self-dual spinors on $\mathbb{R}^4$ with the set $Z$ being the cone on the union of the edges and vertices of the following 4-dimensional analogs of the Platonic solids: The 4-dimensional tetrahedron (the 5-cell), the 4-dimensional cube (the 8-cell), and the polytopes known as the 24-cell, the 120-cell and the 600-cell.
\end{theorem}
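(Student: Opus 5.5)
Reduce to a problem on $S^3$: with $\Gamma = Z\cap S^3$ the radial projection of the $1$-skeleton of the polytope, a homogeneous $v$ of degree $\alpha$ is $r^{\alpha}$ times data on $S^3\setminus\Gamma$. For a $1$-form write $v = r^{\alpha}(f\,dr + r\,\omega)$ with $f$ a twisted function and $\omega$ a twisted $1$-form on $S^3\setminus\Gamma$. Then $dv=0$ and $d^*v=0$ become an eigenvalue problem for the twisted Laplacian $\Delta$ (or the twisted $d+d^*$) on $S^3\setminus\Gamma$, and $\alpha$ is determined by the eigenvalue. The simplest choice is $v = d(r^{\alpha} f)$ with $f$ a twisted eigenfunction, $\Delta_{S^3} f = \alpha(\alpha+2) f$. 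For self-dual $2$-forms and spinors there is the analogous reduction: self-dual $2$-forms on $\mathbb{R}^4$ correspond to twisted $1$-forms on $S^3$ satisfying a first order equation $*d\omega = \lambda\omega$, and positive spinors correspond to eigenspinors of the $S^3$ Dirac operator. In each case $v$ is an eigenvector of a self-adjoint first order operator on $S^3\setminus\Gamma$ twisted by $\mathcal{I}$.

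The line bundle is chosen so that the holonomy around each edge of $\Gamma$ is $-1$. Such a bundle exists if every vertex has even valence, with each small $2$-sphere around a vertex meeting $\Gamma$ in an even number of points. The valences are $4$ for the 5-cell, 8-cell and 120-cell, $6$ for the 24-cell and $12$ for the 600-cell. A bundle with these holonomies corresponds to a class in $H^1(S^3\setminus\Gamma;\mathbb{Z}/2)$ evaluating to $1$ on each meridian, and Alexander duality shows such a class exists exactly when the $\mathbb{Z}/2$ $1$-cycle $\sum \text{edges}$ is a cycle, which is the even-valence condition. The variational problem is posed on the Friedrichs extension on $L^2$ sections vanishing on $\Gamma$, i.e.\ the closure of compactly supported sections in $H^1$. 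This space has discrete spectrum, and condition (b) holds by construction. The nonextendability clause follows because the holonomy is nontrivial around every edge.

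Existence of a nonzero eigenvector is then immediate. What must be shown is the regularity in condition (a): that $|v|$ extends Hölder continuously and vanishes on $\Gamma$. This follows from local analysis near $\Gamma$. Near an edge interior, sections with $-1$ monodromy have expansions in half-integer powers $\rho^{1/2}$ of the distance to the edge, giving $|v| = O(\rho^{1/2})$. Near a vertex, one separates variables on the small sphere $S^2$ minus $k$ points ($k$ the valence) with its twisted bundle. The growth rate is governed by the first twisted eigenvalue $\mu_1$ on $S^2\setminus\{k \text{ points}\}$, giving $|v|\sim s^{\beta}$ with $\beta>0$ because the twisted bundle has no nonzero parallel sections. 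The same holds for the first order operators, whose lowest exponent is positive. These local expansions give Hölder continuity and vanishing on $Z$, and they extend to the cone over the origin by homogeneity.

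The step I expect to be the main obstacle is showing the resulting $v$ is not degenerate, i.e.\ that a minimizer genuinely has $\mathbb{Z}/2$ character with $Z$ exactly the cone on $\Gamma$. This requires the norm to vanish on all of $\Gamma$ and on nothing larger that matters, and it rules out $|v|$ vanishing identically on a region. Unique continuation on the connected set $S^3\setminus\Gamma$ handles the latter. One could also use the polytope symmetry group $G\subset O(4)$, which lifts to the bundle, to restrict to an isotypic component and pick an equivariant eigenvector, but this is not needed for existence. The delicate part is the vertex analysis for the spinor and self-dual cases, where positivity of the indicial root must be checked for the specific vertex figures: tetrahedron, octahedron and icosahedron configurations on $S^2$. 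This is where I would spend the real effort, likely via a Weitzenböck/Hardy-type inequality on twisted $S^2$.
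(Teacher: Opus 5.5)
Your reduction to $S^3\setminus\Gamma$ and your construction of $\mathcal{I}$ match the paper. The argument breaks at the edge analysis, and this is the heart of the problem rather than a technicality.

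The $\rho^{1/2}$ expansion controls the \emph{variational} section, not $v$. If $f\in\mathbb{H}$ is a twisted Laplace eigensection, its leading normal term at an interior edge point is generically $\mathrm{Re}(a\,z^{1/2})$ with $a\neq 0$. So $|f|=O(\rho^{1/2})$, but $|df|\sim\rho^{-1/2}$. Hence $v=d(r^{\alpha}f)$, and likewise the spinor $\mathfrak{D}(r^{\alpha}f\,\eta)$, has norm blowing up along $Z$, violating (a). Also $|\nabla v|^2\sim\rho^{-3}$ is not integrable in the two normal directions, violating (b); condition (b) holds \emph{by construction} only for $f$.

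For the first-order formulations (Dirac on $S^3$, $*d$ on coclosed $1$-forms), the claim that existence is immediate also fails. These operators are not semibounded, so there is no Friedrichs extension. The closed extension with domain $\mathbb{H}$ is symmetric but not self-adjoint: it has infinite-dimensional cokernel, made of $L^2$ solutions that behave like $\rho^{-1/2}$. No spectral theorem yields eigenvectors in $\mathbb{H}$. One can only take an eigensection $\psi\in\mathbb{H}$ of $\mathcal{D}^2$ (or of $\nabla^\dagger\nabla+\mathcal{R}$) and form $\psi\pm\mathcal{D}\psi/\sqrt{\mathtt{E}}$. This is admissible only if $\mathcal{D}\psi\in\mathbb{H}$, i.e.\ only if the $z^{1/2}$ mode is absent.

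So in every case you need the frequency at every edge point to be at least $3/2$ (the hypothesis of Lemma~4.4). That requires the coefficient of $z^{1/2}$, a function along each edge, to vanish identically, and nothing in a bare eigenvalue problem forces this. Tellingly, your argument uses nothing beyond even valence, so it would apply verbatim to the $16$-cell and to any even-valence geodesic graph.

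The missing idea is precisely the symmetry you set aside as not needed. The paper restricts to $G$-invariant eigensections (Lemmas~5.1--5.2), where $G=\mathrm{Sym}^{+}(P)$ acts on $\mathcal{I}$ through a coherent lift (Part~II, splitting the central $\mathbb{Z}/2$-extension). Each edge is fixed pointwise by a cyclic subgroup of odd order $m\in\{3,5\}$ rotating the normal plane by $2\pi/m$. For twisted functions, invariance forces the edge frequencies of the blow-up limits in Section~9 into $\{m/2,3m/2,\dots\}$, so they are at least $3/2$. The frequency-function machinery (Lemmas~7.1--7.3, 4.4 and 4.2) then gives $d\psi\in\mathbb{H}$ and H\"older vanishing of $|d\psi|$ on $\Gamma$. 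The vertex bound is inherited from the edges by upper semicontinuity of the frequency, not from a separate indicial-root computation. You also need $\mathtt{E}>3$ so that $|d\phi|\to 0$ at the origin. This mechanism is also why the $16$-cell, whose edge stabilizer has order $4$, is excluded.

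For self-dual $2$-forms even this is insufficient when $m=3$, since invariant $1$-forms still admit a frequency-$1/2$ mode. The paper handles this by the decomposition $\psi=d\phi+y$ with $\phi=*d*\psi$, and proves separately that $\phi$ and $*dy$ lie in $\mathbb{H}$ (Section~10). This uses (5.10) and the fact that invariant twisted eigensections on $S^2$ minus the vertex-figure points have eigenvalue greater than $1$.

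Two smaller slips: the $24$-cell has valency $8$ (its vertex figure is a cube), and the octahedral configuration belongs to the excluded $16$-cell. The nondegeneracy issue you flag as the main obstacle is not one, since non-extendability of $\mathcal{I}$ across $Z$ is automatic from the monodromy.
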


\medskip

See Part II for a detailed description of these polytopes.

\medskip

In the cases of our theorem, the set $Z$ is the union of the cones on geodesic arcs as is the case for the pull-backs from $\mathbb{R}^3$ and for the holomorphic examples. However, in no cases is $Z$ a union of half-planes and/or full planes. (The upcoming Section \ref{ch:pre}-\ref{sec:600cell} gives a detailed description of the polytopes that are mentioned in the theorem.) By way of a look ahead to the proof: Our constructions exploit the symmetry groups of these 4-dimensional polytopes in the manner of the constructions in \cite{taubes2020examples} which exploit the symmetry groups of the Platonic solids. Even so, the group theory here is different (and pretty) and the analysis has some novelties. (The constructions for $\mathbb{R}^3$ in \cite{chen2024existence} and \cite{FS} also exploit symmetry groups, but in a more sophisticated way.) An almost contemporaneous preprint \cite{franceschini2026minimal} develops related microlocal methods in the study of two-valued harmonic functions and branched minimal submanifolds with stratified branching sets relevant to some of the local regularity and asymptotic questions considered here.
Other recent papers with related results on $\mathbb{Z}/2$ harmonic 1-forms and spinors are the paper by Bera and Walpuski \cite{bera2025dirac} and the paper by Jiahuang Chen \cite{chen2025perturbation}.
\section{Homogeneous, Harmonic on $\mathbf{R^4}$ from Eigensections on $\mathbf{S^3}$}

The basic observation (which is truly ancient) is that homogeneous, harmonic differential forms and spinors on $\mathbb{R}^4$ come from eigensections of corresponding differential operators acting on sections of some vector bundle over $S^3$. To elaborate: The assertion that $\nu$ is homogeneous says in effect that there exists $\kappa\in\mathbb{R}$ such that the pull-back of $\nu$ under a rescaling diffeomorphism $x\mapsto\lambda x$ is equal to $\lambda^\kappa \nu$ for any given $\lambda\in (0,\infty)$. Note in this regard that $Z$ is invariant under these coordinate rescalings so these diffeomorphisms map $\mathbb{R}^4-Z$ to itself. Meanwhile, the fact that $\mathcal{I}$ is pulled up from $S^3-(Z\cap S^3)$ implies that $\mathcal{I}$ at any given point in $\mathbb{R}^4-Z$ is canonically isomorphic to the pull-back of $\mathcal{I}$ via any rescaling diffeomorphism. This is why it makes sense to say that the $\lambda$-pull-back of $\nu$ at a given point in $\mathbb{R}^4-Z$ is equal to $\lambda^\kappa \nu$ at that point. In particular, homogeneity of $\nu$ implies that $\nu$ is completely determined by its restriction to $S^3-(Z\cap S^3)$. This homogeneity also implies the following: Let $r$ denote the radial coordinate $x\mapsto |x|$. The Lie derivative of $\nu$ along the vector field $r\partial_r$ (which generates the 1-parameter family of rescaling diffeomorphisms) is equal to $\kappa\nu$. (In the case when $\nu$ is a spinor, this Lie derivative is defined using the product structure on the appropriate vector bundle that is defined by parallel transport along rays from the origin using the Euclidean metric’s Levi–Civita connection.) As explained momentarily, this property of the Lie derivative has fundamental implications when $\nu$ is a harmonic differential form or spinor.

A simple case to consider first is the case when $\nu$ is an $\mathcal{I}$-valued, homogeneous, harmonic function. The assertion that $\nu$ is harmonic (thus $\Delta \nu=0$ with $\Delta$ denoting the sum of the second derivatives in the four coordinate directions) is equivalent to the assertion that $\nu$’s restriction to $S^3-(Z\cap S^3)$ is an $\mathcal{I}$-valued eigensection of the standard Laplacian on $S^3-(Z\cap S^3)$ with eigenvalue $\kappa(\kappa+2)$, thus
\begin{align}
- \Delta^{\perp} \nu = \kappa(\kappa+2)\nu
\end{align}
where $\Delta^{\perp}$ denotes here and subsequently the standard Laplacian on the 3-sphere. (The Laplace eigensections for the $S^2-(Z\cap S^2)$ analog are studied in \cite{taubes2020examples, taubes2024topological}.)

As explained in what follows, any given $\mathcal{I}$-valued, homogeneous harmonic function on $\mathbb{R}^4 - Z$ with the norm of its exterior derivative extending across $Z$ as a H\"older continuous function that vanishes on $Z$ can be used to construct homogeneous, $\mathbb{Z}/2$ harmonic 1-forms and spinors for the same $Z$ (but not directly any homogeneous, $\mathbb{Z}/2$ harmonic, self-dual 2-forms). To elaborate, let $\phi$ denote such a function. Then $d\phi$ is a homogeneous, $\mathbb{Z}/2$ harmonic 1-form for the given set $Z$. With regards to self-dual spinors: Let $\eta$ denote a constant anti-self-dual spinor and let $\mathfrak{D}$ denote the standard Dirac operator on $\mathbb{R}^4$. Then $\mathfrak{D}(\phi \eta)$ is a homogeneous, $\mathbb{Z}/2$ harmonic, self-dual spinor for that same locus $Z$.

These last two examples are instances of a general construction: Let $\mathbb{V}_0$ denote a vector space (real or complex); and let $\{\gamma_1,\dots,\gamma_4\}$ denote a set of anti-symmetric (or skew-adjoint if $\mathbb{V}_0$ is complex) endomorphisms of $\mathbb{V}_0$ that obey the Clifford algebra relations
\begin{itemize}[topsep=6pt]
\item $\gamma_i \gamma_j + \gamma_j \gamma_i = 0\;\text{for each distinct index pair},$
\item $\gamma_i^{\,2} = -\mathbb{I}\;\text{for each }i$.
\listeqno
\end{itemize}
(Here and below, $\mathbb{I}$ denotes the identity endomorphism of $\mathbb{V}_0$.)  Let $\{\partial_i\}_{i=1,2,3,4}$ denote the directional derivatives along the coordinate axes in $\mathbb{R}^4$ acting component-wise on sections of the product vector bundle $\mathbb{R}^4 \times \mathbb{V}_0$. Set
\begin{align}
\mathfrak{D} = \gamma_1 \partial_1 + \cdots + \gamma_4 \partial_4.
\end{align}
This is a first-order differential operator acting on sections of $\mathbb{R}^4\times \mathbb{V}_0$ obeying \(\mathfrak{D}^{\,2} = -\Delta\, \mathbb{I}\) with $\Delta$ denoting the standard Laplacian acting component-wise also. Take $\phi$ as in the previous paragraph and let $\eta$ denote any given element in $\mathbb{V}_0$. Then
\begin{align}\label{eq:2.3}
v = \mathfrak{D}(\phi \eta)
\end{align}
is annihilated by $\mathfrak{D}$; and as such, it defines a $\mathbb{Z}/2$ homogeneous, harmonic (with respect to $\mathfrak{D}$) section of $\mathbb{V}$ (it is a section of the vector bundle $(\mathbb{R}^4\times \mathbb{V}_0)\otimes\mathcal{I}$ on $\mathbb{R}^4 - Z$ that obeys the conditions set forth by the third bullet of Definition 1.1).

By way of a parenthetical remark: Any $\mathfrak{D}$-harmonic section of $(\mathbb{R}^4\times \mathbb{V}_0)\otimes\mathcal{I}$ can be written as a linear combination of sections of the form depicted in \eqref{eq:2.3} if one is not concerned with the behavior along $Z$. To elaborate, let $(x_1,\dots,x_4)$ denote the Euclidean coordinates on $\mathbb{R}^4$, and let
\begin{align}
x\!\cdot\!\gamma \;=\; x_1 \gamma_1 + \cdots + x_4 \gamma_4.
\end{align}
The crucial observation is that if $\mathfrak{D}\psi =0$, then $(x\!\cdot\!\gamma)\,\psi$ is annihilated by the Laplacian on $\mathbb{R}^4$ and
\begin{align}
\psi = -4\mathfrak{D}\!\big((x\!\cdot\!\gamma)\,\psi\big).
\end{align}
Even so, it is not clear \emph{a priori} that the norms of the derivatives of the components of $(x\!\cdot\!\gamma)\,\psi$ must extend across $Z$ as H\"older continuous functions that vanish on $Z$ if the norm of $\psi$ does.

The construction of homogeneous, $\mathbb{Z}/2$ harmonic self-dual 2-forms requires a different approach. (Yet, examples of homogeneous, harmonic, $\mathcal{I}$-valued solutions to the equation \(*dw + du = 0\) for a pair $\nu = (w,u)$ of $\mathcal{I}$-valued self-dual 2-form and $\mathcal{I}$-valued function can be obtained via an appropriate version of \eqref{eq:2.3}. However, because the function component of $\nu$ won’t be identically zero, the self-dual 2-form part isn’t harmonic.)

To say more about the upcoming construction for $\mathbb{Z}/2$ harmonic, self-dual 2-forms, let $\nu$ denote for the moment a harmonic, $\mathcal{I}$-valued self-dual 2-form on $\mathbb{R}^4 - Z$ for some scaling-invariant, codimension-2 set $Z$. Let $\{\hat e_1,\hat e_2,\hat e_3\}$ denote an oriented, orthonormal basis for $T^*S^3$. The self-dual 2-form $\nu$ can be written schematically using this basis as
\begin{align}\label{eq:2.4}
\nu
= \nu_a\Big(dr\wedge r\,\pi^*\hat e_a 
\;+\; \frac12\, r^2\,\varepsilon^{abc}\,\pi^*(\hat e^b\wedge\hat e^c)\Big)
\end{align}
with the notation as follows: The repeated indices are implicitly summed over $\{1,2,3\}$ with it understood that $\{\nu_1,\nu_2,\nu_3\}$ denote $\mathcal{I}$-valued functions on $\mathbb{R}^4 - Z$. Meanwhile, $\varepsilon^{abc}$ denotes the totally anti-symmetric 3-tensor with $\varepsilon^{123}=1$.

The self-dual 2-form $\nu$ is homogeneous and harmonic if and only if the three conditions below are met by $\{\nu_1,\nu_2,\nu_3\}$. This list uses $\nu^\perp$ to denote the 1-form 
\begin{align}
\nu^\perp = \nu_a\,\pi^*\hat e^a.
\end{align}
The symbol \(*\) below denotes the metric Hodge star on $S^3$:
\begin{itemize}[topsep=6pt]
\item $r\partial_r \nu^\perp     = (\kappa-2)\nu^\perp$,
\item $*d^{\perp}\nu^\perp        = \kappa\,\nu^\perp$,
\item $d^{\perp} * \nu^\perp      = 0$.
\listeqno\label{eq:2.5}
\end{itemize}
These bullets say in effect that $\nu^\perp$ is the product of $r^{\,\kappa-2}$ times the pull-back from $S^3 - (S^3 \cap Z)$ via the radial projection \(\pi\) of an eigensection of the operator \(*d\) acting on coclosed, $\mathcal{I}$-valued 1-forms on $S^3-(S^3\cap Z)$.

To put \eqref{eq:2.5} in some sort of perspective and to motivate some of the focus in the subsequent sections of this paper, reintroduce the vector
space \(\mathbb{V}_0\) and then the operator
\begin{align}
\mathfrak{D} \;=\; \gamma_1 \partial_1 + \gamma_2 \partial_2
               + \gamma_3 \partial_3 + \gamma_4 \partial_4
\end{align}
which acts on sections over \(\mathbb{R}^4 - Z\) of the tensor product of the line bundle \(\mathcal{I}\) with the product \(\mathbb{V}_0\)-bundle. Let
\begin{align}
\gamma_r \;=\; r^{-1}\bigl(x_1 \gamma_1 + \cdots + x_4 \gamma_4\bigr)
\end{align}
which is an endomorphism of that bundle with square equal to $-1$ times the identity endomorphism. The equation \(\mathfrak{D}\nu = 0\) can be written alternately in the schematic form
\begin{align}\label{eq:2.6}
r\,\partial_r \nu \;=\; \mathcal{D}\nu
\end{align}
with \(\mathcal{D}\) being a homogeneous, first order, elliptic differential operator that differentiates only in directions tangential to the constant \(r\)–spheres in \(\mathbb{R}^4\).  The assumption that \(\nu\) is homogeneous implies that \(r\,\partial_r\nu\) is equal to \(\lambda\nu\) with \(\lambda\) constant.  Hence, \(\nu\) is both harmonic and homogeneous if and only if its restriction to the standard \(r = 1\) sphere is an eigensection of the operator \(\mathcal{D}\) over \(S^3 - (S^3 \cap Z)\).  Conversely, if \(\nu\) is an eigensection of \(\mathcal{D}\) on \(S^3 - (S^3 \cap Z)\) with eigenvalue \(\lambda\), and if the norm of \(\nu\) vanishes on \(Z\) and is H\"older continuous across \(S^3 \cap Z\), then \(r^{\,\lambda}\,\pi^*\nu\) will be a homogeneous, harmonic (with respect to \(\mathfrak{D}\)) section of \((\mathbb{R}^4 \times \mathbb{V}_0)\otimes\mathcal{I}\) over \(\mathbb{R}^4 - Z\) that is described by the third bullet of Definition~1.1.

By way of an example: Take \(\mathfrak{D}\) to be the Dirac operator acting on \(\mathbb{R}^4\)'s spinor bundle, this being the \(\mathbb{V}_0 = \mathbb{C}^2 \oplus \mathbb{C}^2\) case (the left-hand \(\mathbb{C}^2\) summand gives the self-dual spinor bundle and the right-hand summand gives the anti-self-dual spinor bundle).  A self-dual, \(\mathcal{I}\)-valued spinor is homogeneous and harmonic if
\begin{align}
\nu \;=\; r^{\,\lambda}\,\pi^*\psi
\end{align}
with \(\psi\) an eigensection of the 3-sphere's Dirac operator on \(\mathcal{I}\)-valued spinors over \(S^3 - (S^3 \cap Z)\) with eigenvalue \(\lambda + \tfrac{3}{2}\). (The relevant version of \(\mathfrak{D}\) differs from that Dirac operator by a constant multiple of the identity endomorphism.)

\section{Some Topology Regarding the Set $\mathbf{Z}$ and the Line Bundle $\boldsymbol{\mathcal{I}}$}\label{sec:3}

For the purposes of what is to come, the set \(Z\) will be the cone in \(\mathbb{R}^4\) on certain sorts of embedded graphs in \(S^3\).  To elaborate: Let \(\Gamma \subset S^3\) denote a non-empty, connected graph with each vertex having even valency (which is to say that an even number of edges end at each vertex).  The implicit assumption is that the edges of \(\Gamma\) are closed subsets of embedded arcs in \(S^3\), and that the tangent vectors to the edges that are incident to any given vertex of \(\Gamma\) are distinct.  (There will be an extra assumption later to the effect that the edges of \(\Gamma\) are geodesic arcs.)  The upcoming Lemma~3.1 makes a formal statement to the effect that an appropriate line bundle \(\mathcal{I}\) on \(S^3 - \Gamma\) exists if and only if each vertex of \(\Gamma\) has even valency. The lemma refers to a \emph{linking circle} of \(\Gamma\). This is an embedded circle in \(S^3 - \Gamma\) bounding a disk in \(S^3\) that intersects \(\Gamma\) in a single point, that being a transversal intersection with the interior of an edge of \(\Gamma\).

\begin{lemma}
\emph{Let \(\Gamma\) denote a graph in \(S^3\).  There is a class in \(H^1(S^3 - \Gamma;\,\mathbb{Z}/2)\) with non-zero restriction to any linking circle of \(\Gamma\) if and only if each vertex of \(\Gamma\) has even valency. This is to say that there is a real line bundle over \(S^3 - \Gamma\), which restricts as the M\"obius line bundle to each linking circle of \(\Gamma\).} 
\end{lemma}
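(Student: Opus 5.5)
Via Alexander duality, $H^1(S^3-\Gamma;\mathbb{Z}/2)$ is identified with the $\mathbb{Z}/2$ 1-cycles on $\Gamma$. Linking circles then serve as the dual objects to edges, and the "even valency" condition is exactly the cycle condition for the all-ones chain.

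\textbf{Setup.} Thicken $\Gamma$ to a closed regular neighborhood $N$. Then $S^3-\Gamma$ deformation retracts onto $S^3-\mathrm{int}(N)$, so
\[
H^1(S^3-\Gamma;\mathbb{Z}/2)\cong \operatorname{Hom}\bigl(H_1(S^3-\Gamma;\mathbb{Z}/2),\mathbb{Z}/2\bigr).
\]
The proof reduces to understanding $H_1(S^3-\Gamma;\mathbb{Z}/2)$ together with the classes $[\mu_e]$ of linking circles, one for each edge $e$.

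\textbf{Key steps.}

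First, every linking circle of a given edge $e$ is homologous to a fixed meridian $\mu_e$. Any two such disks hitting $e$ once can be slid along $e$ inside $N$. Orientation is irrelevant mod $2$.

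Second, $H_1(S^3-\Gamma;\mathbb{Z}/2)$ is generated by the $[\mu_e]$, and the relations among them are exactly:
\[
\sum_{e\ni p}[\mu_e]=0 \quad \text{for each vertex } p,
\]
where a loop edge at $p$ is counted twice. The relation comes from the boundary of a small punctured sphere around $p$: that sphere is cut by $\Gamma$ in $\mathrm{val}(p)$ points, and its boundary is the sum of the meridians. That these are the only relations follows from a Mayer--Vietoris argument for $S^3=(S^3-\Gamma)\cup N$ with $H_1(S^3)=H_2(S^3)=0$.

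Alternatively, and more cleanly, Alexander duality gives
\[
H^1(S^3-\Gamma;\mathbb{Z}/2)\cong H_1(\Gamma;\mathbb{Z}/2).
\]
Under this isomorphism, evaluating a class on $\mu_e$ equals the coefficient of $e$ in the corresponding cycle; this is the linking-number pairing. Since $H_1$ of a graph equals its $1$-cycles, the required class corresponds to the chain $\sum_e e$. That chain is a cycle exactly when every vertex has even valency, which proves both directions at once.

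Third, translate to line bundles. Real line bundles on $X=S^3-\Gamma$ are classified by $w_1\in H^1(X;\mathbb{Z}/2)$. A bundle restricts to the M\"obius bundle on a circle exactly when $w_1$ evaluates nontrivially on that circle. Since $\mu_e$ generates the $\mathbb{Z}/2$-homology of each linking circle, the class from the second step gives the bundle, and conversely.

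\textbf{Main obstacle.} The delicate point is justifying the identification of the duality pairing with the evaluation on meridians, when $\Gamma$ is only a tame embedded graph. I would argue with the regular neighborhood $N$ and Lefschetz duality, $H^1(S^3-\mathrm{int}N)\cong H_2(S^3,N)\cong H_1(N)\cong H_1(\Gamma)$. Under these isomorphisms, the value of a class on $\mu_e$ is the intersection number of the corresponding relative $2$-chain with $\mu_e$, which equals the coefficient of $e$. Tameness is guaranteed by the standing assumption that the edges are embedded arcs with distinct tangents at the vertices.
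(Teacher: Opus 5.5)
Your proposal is correct, and it takes a more algebraic route than the paper. You normalize Lefschetz duality, $H^1(S^3-\mathrm{int}N;\mathbb{Z}/2)\cong H_2(S^3,N;\mathbb{Z}/2)\cong H_1(\Gamma;\mathbb{Z}/2)=Z_1(\Gamma;\mathbb{Z}/2)$, so that evaluating a class on the meridian $\mu_e$ reads off the coefficient of $e$. The existence of the desired class then becomes the statement that $\partial\bigl(\sum_e e\bigr)=\sum_v \mathrm{val}(v)\,v$ vanishes mod $2$, and both directions follow at once.

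The paper instead treats the two directions separately. For sufficiency, it uses even valency to decompose the edge set into edge-disjoint closed edge loops $L_1,\dots,L_p$. It then takes a surface bounded by their union and observes that this surface has intersection number $1$ with each meridian, so the desired class is the one dual to that surface. For necessity, it carries out your vertex relation by hand: it looks at a small sphere around an odd-valency vertex and a circle on it enclosing all the punctures.

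The two proofs are really the same duality. The edge-loop decomposition is a combinatorial certificate (Veblen's theorem) that $\sum_e e$ is a cycle, and the surface is a geometric representative of the corresponding class in $H_2(S^3,N)$.

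Each version has its advantages. The paper's gives an explicit spanning surface, and hence a concrete picture of $\mathcal{I}$: the product bundle reglued with a sign across the surface. Yours is shorter and treats the two directions symmetrically. Your second step also shows that the meridians generate $H_1(S^3-\Gamma;\mathbb{Z}/2)$, so the class, and hence $\mathcal{I}$, is unique. The paper later uses exactly this fact in Lemma~\ref{lem:12.3}, citing the proof of Lemma~3.1.

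One small precision point concerns showing that an arbitrary linking circle $\partial D$ is homologous to $\mu_e$. Do not slide $D$ into $N$, since a general disk need not stay near $e$. Instead, note that $D$ minus a small disk about its transverse intersection point with $e$ is an annulus in $S^3-\Gamma$ joining $\partial D$ to a small meridian.
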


This lemma is proved momentarily.  The line bundle \(\mathcal{I}\) for the constructions that follow will always be the line bundle from Lemma~3.1.  By way of relevant instances for Lemma~3.1: The image in \(S^3\) via the radial projection map \(\pi\) of the union of the edges and vertices of any of the polytopes from Theorem~1.2 is a graph in \(S^3\) whose vertices all have even valency, this being four for the 5-cell, 8-cell and 120-cell, and eight and twelve in the respective cases of the 24-cell and 600-cell.

\medskip

\noindent\textit{Proof of Lemma 3.1.}\;
Since real line bundles are classified by their first Stiefel–Whitney class, and since any \(\mathbb{Z}/2\mathbb{Z}\) cohomology class is the first Stiefel–Whitney class of a real line bundle, it is sufficient to prove the lemma's assertion regarding \(H^1(S^3 - \Gamma;\,\mathbb{Z}/2)\).  To this end, let \(g\) denote the rank of \(H_1(S^3 - \Gamma;\,\mathbb{Z})\).  A regular neighborhood of \(\Gamma\) is a handlebody whose boundary is homeomorphic to a Riemann surface with genus equal to \(g\).  (This surface can be built in the following way: Let \(V\) denote the set of vertices of \(\Gamma\) and let \(E\) denote the set of edges.  Use each vertex to label a 2-sphere and each edge to label a cylinder.  The ends of the cylinders are then attached to the complements of disjoint disks in the 2-spheres using the edge–vertex attachment rule that defines \(\Gamma\).)  In particular, since the regular neighborhood is a handlebody which deformation retracts to \(\Gamma\), it follows that the first homology of \(S^3 - \Gamma\) with \(\mathbb{Z}/2\) coefficients has the same rank as the first homology of \(\Gamma\); thus a direct sum of \(g\) copies of \(\mathbb{Z}/2\).

To continue, let \(\Sigma\) denote the boundary of a regular neighborhood of \(\Gamma\) in \(S^3 - \Gamma\).  This surface separates \(S^3\) into two parts, one that contains \(\Gamma\) and the other that is disjoint from \(\Gamma\); it is a deformation retract of \(S^3 - \Gamma\).  The ``interior of \(\Sigma\)'' will denote the part that contains \(\Gamma\).  Let \(H_\Gamma \subset H_1(\Sigma;\,\mathbb{Z})\) denote the kernel of the inclusion map to \(H_1(S^3 - \Gamma;\,\mathbb{Z})\).  The inclusion of \(\Sigma\) into the closure of its interior identifies \(H_\Gamma\) with \(H_1(\Gamma;\,\mathbb{Z})\). Let \(H_\Gamma^{\perp}\) denote the symplectic dual of \(H_\Gamma\) in \(H_1(\Sigma;\,\mathbb{Z})\).  This is the kernel of the inclusion map sending \(H_1(\Sigma;\,\mathbb{Z})\) to the first homology of the interior of \(\Sigma\). (The intersection form on \(H_1(\Sigma;\,\mathbb{Z})\) defines a non-degenerate, skew-symmetric form on \(H_1(\Sigma;\,\mathbb{Z})\).)  The inclusion map identifies \(H_\Gamma^{\perp}\) with \(H_1(S^3 - \Gamma;\,\mathbb{Z})\).

With the preceding understood: The assertion of the lemma follows if there is a surface in \(S^3 - \Gamma\) that intersects \(\Sigma\) transversally and, in this regard, has intersection number \(1\) with each generator of \(H_\Gamma^{\perp}\). The task is to find this surface when the vertices
of \(\Gamma\) have even valencies.

First, by way of terminology, define an \emph{edge loop} in \(\Gamma\) to be a closed path in \(\Gamma\) consisting of a concatenation of edges that begin and end at the same vertex such that no edge is traversed twice. To construct an edge loop, start at a chosen vertex of $\Gamma$ and start walking along an edge until its end, then continue from that end vertex along another edge, and so on, with it understood that no edge should be traversed more than once and that the walk should end when the starting vertex is reached. If all edges are not contained in this edge loop, then there is a vertex with at least two untouched incident edges, so a second edge loop can be constructed starting at that vertex that doesn't cross any edge from the first edge loop. If all edges are not contained in these two edge loops, then the process can be repeated because there will be a vertex with at least two uncrossed, incident edges. And then repeat again (and again) if needed until all edges have been crossed exactly once. Let $[L_1,\dots,L_p]$ denote the set of loops so constructed. These define distinct classes in $H_1(\Gamma;\mathbb{Z})$ (because they have distinct edges), so their union bounds a surface in $S^3 - \Gamma$. This surface will have intersection number $1$ with each generator of $H_\Gamma^{\perp}$ since its boundary crosses each edge exactly once.

To complete the proof, suppose that $\Gamma$ has a vertex with odd valency and there is a class obeying the conditions of the lemma. That class restricted to a small radius $2$-sphere centered on the odd valency vertex would be non-trivial on the linking circles of the points where that $2$-sphere intersects the edges of $\Gamma$. Thus, it will be non-trivial on a circle in that $2$-sphere that encloses all of those intersection points (since there is an odd number). But that conclusion is nonsensical since that circle bounds a disk in the complement of those intersection points.

\section{Some Initial Analysis}

First, some notation:  Here and subsequently, \(\langle\,\cdot\,,\,\cdot\,\rangle\) is used to denote the inner product on \(T^*S^3\), on the spinor bundle for \(S^3\), on tensor bundles constructed from the latter, and in general on any other vector bundle with a specified inner product.  The Levi--Civita covariant derivative on sections of \(T^*S^3\), the corresponding covariant derivative on sections of the spinor bundle, and on tensor bundles constructed from \(T^*S^3\) and the spinor bundle will all be denoted by \(\nabla\).  Covariant derivatives are denoted by \(\nabla\) in what follows.

With this notation, let \(\mathbb{V}\) denote a real or complex vector bundle over \(S^3\) with a fiber metric and metric compatible connection.  Let \(\Gamma\) denote a graph in \(S^3\) with smoothly embedded edges and such that the edges abutting each vertex appear near the origin in a Gaussian coordinate chart centered at that vertex as smoothly embedded arcs that are asymptotically approaching the origin to a corresponding set of disjoint rays from the origin.  Given \(\mathbb{V}\), the tensor product \(\mathbb{V}\otimes \mathcal{I}\) is a vector bundle over \(S^3 - \Gamma\) with an inner product and compatible connection, and thus a covariant derivative (because, up to multiplication by \(\pm 1\), \(\mathbb{V}\) and \(\mathbb{V}\otimes\mathcal{I}\) are canonically isomorphic over any open ball in \(S^3-\Gamma\)).  This covariant derivative is denoted by \(\nabla\) in what follows.

Granted now this covariant derivative:  The positive square root of the functional
\begin{align}\label{eq:4.1}
\psi \longmapsto \int_{S^3-\Gamma} |\nabla \psi|^2
\end{align}
defines a norm on the space of smooth, compactly supported sections of \(\mathbb{V}\otimes\mathcal{I}\) over \(S^3-\Gamma\).

Of particular note with regard to this norm:  The non-triviality of \(\mathcal{I}\) on the linking circles implies the inequality
\begin{align}\label{eq:4.2}
\int_{S^3-\Gamma} \frac{1}{\operatorname{dist}(\cdot,\Gamma)^2}\,|\psi|^2
\;\leq\;
c_0 \int_{S^3-\Gamma} |\nabla \psi|^2,
\end{align}
where
\[\displaystyle\dist(\cdot,\Gamma):=\min_{x\in \Gamma}\dist(\cdot, x).\]

The norm on the space of compactly supported sections of \(\mathbb{V}\otimes\mathcal{I}\) over \(S^3-\Gamma\) whose square is depicted in \eqref{eq:4.1} is denoted by \(\|\cdot\|_{\mathbb{H}}\). The completion of this space of compactly supported sections using the norm \(\|\cdot\|_{\mathbb{H}}\) is a Hilbert space which is denoted by \(\mathbb{H}\) in what follows.

Meanwhile, let \(\mathbb{L}\) denote the Hilbert space completion of that same space of compactly supported sections of \(\mathbb{V}\otimes\mathcal{I}\) over \(S^3-\Gamma\) using the norm whose square is the functional
\begin{align}\label{eq:4.3}
\psi \longmapsto \int_{S^3-\Gamma} |\psi|^2 .
\end{align}
This norm on \(\mathbb{L}\) is denoted by \(\|\cdot\|_{\mathbb{L}}\).

Of interest in what follows are the functionals on \(\mathbb{H}\) that can be written as
\begin{align}\label{eq:4.4}
\psi \longmapsto
\mathcal{E}(\psi) \;\equiv\;
\int_{S^3-\Gamma} \langle \nabla \psi,\nabla \psi\rangle
\;+\;
\int_{S^3-\Gamma} \langle \psi,\mathcal{R}\psi\rangle ,
\end{align}
with \(\mathcal{R}\) here (and subsequently) denoting the extension to \(\mathbb{V}\otimes\mathcal{I}\) of a symmetric (or Hermitian) endomorphism of the bundle \(\mathbb{V}\).  (The inequality in \eqref{eq:4.2} implies that \(\mathcal{E}\) is bounded by a \(c_0\)-multiple of the square of the \(\mathbb{H}\)-norm.)

Let $\mathtt{E}$ denote a given real number.  A non-zero section \(\psi\) from \(\mathbb{H}\) will be deemed to be a \((\nabla^{\dagger}\nabla + \mathcal{R})\)-eigensection with eigenvalue \(\mathtt{E}\) in the event that the identity below holds whenever \(\eta \in \mathbb{H}\):
\begin{align}\label{eq:4.5}
\int_{S^3-\Gamma} \langle \nabla \eta,\nabla \psi\rangle
\;+\;
\int_{S^3-\Gamma} \langle \eta,\mathcal{R}\psi\rangle
\;=\;
\mathtt{E} \int_{S^3-\Gamma} \langle \eta,\psi\rangle .
\end{align}

Any given \((\nabla^{\dagger}\nabla + \mathcal{R})\)-eigensection is smooth on \(S^3 - \Gamma\) and it obeys there the eigenvalue equation \((\nabla^{\dagger}\nabla + \mathcal{R})\psi = \mathtt{E}\psi\). (The unwritten convention henceforth is that eigensections are normalized to have \(\mathbb{L}\)-norm equal to \(1\).) The following lemma says in effect that
\((\nabla^{\dagger}\nabla + \mathcal{R})\)-eigensections are ubiquitous.

\begin{lemma}
\emph{The set of \((\nabla^{\dagger}\nabla + \mathcal{R})\)-eigenvalues is discrete, bounded from below and lacks accumulation point.  Moreover, each such eigenvalue has finite multiplicity.  Corresponding eigenvectors for distinct eigenvalues are \(\mathbb{L}\)-orthogonal.  Once an \(\mathbb{L}\)-orthonormal basis is chosen for each eigenspace, the resulting collection of eigensections forms an orthonormal basis for \(\mathbb{L}\).}

\medskip
This lemma is proved at the end of this section.  The next lemma makes an assertion regarding the norm of a \((\nabla^{\dagger}\nabla + \mathcal{R})\)-eigensection near \(\Gamma\).
\end{lemma}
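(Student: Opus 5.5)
The proof is the standard spectral argument for a semibounded form with compact resolvent. The one non-routine ingredient is compactness of the inclusion $\mathbb{H}\hookrightarrow\mathbb{L}$, and the Hardy-type inequality \eqref{eq:4.2} is what supplies it.

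\textbf{Step 1: the form is semibounded and coercive.} Since $\mathcal{R}$ is a bounded endomorphism of $\mathbb{V}$ over the compact $S^3$, there is a constant $c_{\mathcal{R}}$ with $|\langle\psi,\mathcal{R}\psi\rangle|\le c_{\mathcal{R}}|\psi|^2$. Hence
\[
\mathcal{E}(\psi)\ \ge\ \|\psi\|_{\mathbb{H}}^2-c_{\mathcal{R}}\|\psi\|_{\mathbb{L}}^2 .
\]
By \eqref{eq:4.2}, and because $\dist(\cdot,\Gamma)$ is bounded above on $S^3$, we also have $\|\psi\|_{\mathbb{L}}\le c\|\psi\|_{\mathbb{H}}$. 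So $\mathbb{H}$ embeds continuously in $\mathbb{L}$. Fix $\mu>c_{\mathcal{R}}$. Then the form $\mathcal{E}_\mu(\psi)=\mathcal{E}(\psi)+\mu\|\psi\|_{\mathbb{L}}^2$ is equivalent to $\|\cdot\|_{\mathbb{H}}^2$, so it is a valid inner product on $\mathbb{H}$.

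\textbf{Step 2: compactness of $\mathbb{H}\hookrightarrow\mathbb{L}$ (the main obstacle).} Take a sequence bounded in $\mathbb{H}$ and fix $\epsilon>0$. Let $U_\epsilon$ be the $\epsilon$-neighborhood of $\Gamma$. By \eqref{eq:4.2},
\[
\int_{U_\epsilon}|\psi|^2\ \le\ \epsilon^2\int_{U_\epsilon}\dist(\cdot,\Gamma)^{-2}|\psi|^2\ \le\ c_0\,\epsilon^2\,\|\psi\|_{\mathbb{H}}^2 ,
\]
so the $\mathbb{L}$-mass near $\Gamma$ is uniformly small. On $S^3\setminus U_\epsilon$, the bundle $\mathbb{V}\otimes\mathcal{I}$ is a genuine bundle over a compact manifold with boundary. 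There we use cutoff functions, noting that cutting off costs only $\epsilon^{-1}\|\psi\|_{\mathbb{L}}$, which is controlled by Step 1. Rellich's theorem on finitely many balls, where $\mathcal{I}$ is trivialized up to sign, then gives a subsequence that is Cauchy in $L^2(S^3\setminus U_\epsilon)$. A diagonal argument over $\epsilon\to0$ yields a subsequence that is Cauchy in $\mathbb{L}$.

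\textbf{Step 3: compact solution operator.} By Riesz representation for the inner product $\mathcal{E}_\mu$, each $f\in\mathbb{L}$ determines a unique $T f\in\mathbb{H}$ with $\mathcal{E}_\mu(\eta,Tf)=\langle\eta,f\rangle_{\mathbb{L}}$ for all $\eta\in\mathbb{H}$. Viewed as a map $\mathbb{L}\to\mathbb{L}$, $T$ is bounded and self-adjoint because the form is symmetric. It is positive, since $\langle Tf,f\rangle=\mathcal{E}_\mu(Tf,Tf)$, and injective. It is compact, being the composition of the bounded map $\mathbb{L}\to\mathbb{H}$ with the compact inclusion of Step 2.

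\textbf{Step 4: spectral conclusions.} The spectral theorem for compact self-adjoint operators gives eigenvalues $s_k>0$ of $T$ of finite multiplicity, accumulating only at $0$. Their eigenvectors form an orthonormal basis of $\overline{\mathrm{range}\,T}$, which is all of $\mathbb{L}$ because $T$ is injective and self-adjoint. Now $\psi$ satisfies \eqref{eq:4.5} with eigenvalue $\mathtt{E}$ exactly when $T\psi=(\mathtt{E}+\mu)^{-1}\psi$. Hence the eigenvalues are $\mathtt{E}_k=s_k^{-1}-\mu$. These are discrete, bounded below by $-\mu$, have finite multiplicity, and tend to $+\infty$, so they have no accumulation point. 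Orthogonality of eigensections for distinct eigenvalues follows by inserting each eigensection as the test section in the other's identity \eqref{eq:4.5} and subtracting.
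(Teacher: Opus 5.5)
Your argument is correct. It differs from the paper's mainly in how it is organized.

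\textbf{What the paper does.} The paper gives a two-line sketch of the min--max route. Eigenvalues and eigensections are produced successively as constrained critical values and critical points of $\mathcal{E}$ on the $\mathbb{L}$-unit sphere in $\mathbb{H}$. The needed compactness (bounded sequences in $\mathbb{H}$ have $\mathbb{L}$-Cauchy subsequences) is attributed to a Sobolev inequality for the $L^2_1$ function $|\psi|$ plus the Rellich lemma, applied ``just as if no $\Gamma$ were present''.

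\textbf{What you do instead.} You invert the shifted form $\mathcal{E}_\mu$ to get a compact, injective, positive self-adjoint operator $T$ on $\mathbb{L}$, and read everything off the spectral theorem. Finite multiplicity, absence of accumulation points, and completeness of the eigensections in $\mathbb{L}$ then come at once. On the min--max route, completeness needs a separate step, using $\mathtt{E}_k\to\infty$ together with the density of $\mathbb{H}$ in $\mathbb{L}$.

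\textbf{Your compactness step.} Your Step~2 makes the compactness explicit. Because $\psi$ is $\mathcal{I}$-valued, Rellich can only be applied on balls in $S^3\setminus\Gamma$ where $\mathcal{I}$ is trivial. The Hardy inequality \eqref{eq:4.2} supplies the uniform $O(\epsilon^2)$ bound on the $\mathbb{L}$-mass in the $\epsilon$-neighborhood of $\Gamma$, and that is what makes the localization legitimate. A Sobolev $L^6$ bound on $|\psi|$, which is presumably what the paper intends, would give $O(\epsilon^{4/3})$ and serve equally well.

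\textbf{What each route buys.} The min--max formulation gives the paper a variational characterization that it reuses in Lemmas~5.1 and~5.2, by restricting $\mathcal{E}$ to $G$-invariant sections. Your approach handles that just as easily: $G$-equivariance of the form makes $T$ commute with the $G$-action, so $T$ preserves the $G$-invariant subspace of $\mathbb{L}$.
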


\begin{lemma}
\emph{The norm of any \((\nabla^{\dagger}\nabla + \mathcal{R})\)-eigensection from \(\mathbb{H}\)
extends across \(\Gamma\) as a H\"older continuous function that vanishes on
\(\Gamma\).}
\end{lemma}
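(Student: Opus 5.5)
The plan is a De Giorgi--Nash--Moser argument for the scalar function \(|\psi|\), combined with a Morrey-type decay estimate near \(\Gamma\) that uses the Hardy inequality \eqref{eq:4.2}.

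First, away from \(\Gamma\) the eigensection \(\psi\) is smooth, and so is \(|\psi|\) wherever it is nonzero. Kato's inequality together with the eigenvalue equation gives, in the weak sense on \(S^3-\Gamma\),
\[
\Delta^{\perp}|\psi| \;\geq\; -c_0\,|\psi|,
\]
with \(c_0\) depending on \(\mathtt{E}\) and the sup norm of \(\mathcal{R}\). To justify this, test \eqref{eq:4.5} with \(\eta = \chi\,\psi/(|\psi|^2+\epsilon^2)^{1/2}\), where \(\chi\) is a nonnegative function in \(\mathbb{H}\). Because \(\eta\) is a section of \(\mathbb{V}\otimes\mathcal{I}\) and lies in \(\mathbb{H}\), this is legitimate.

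The main point is that the inequality extends across \(\Gamma\). Extend \(|\psi|\) by zero on \(\Gamma\). Since \(\psi \in \mathbb{H}\), \eqref{eq:4.2} shows that \(|\psi|/\dist(\cdot,\Gamma)\) is square integrable and \(|\psi|\in W^{1,2}(S^3)\). Take cutoffs \(\beta_\delta\) that vanish within \(\delta\) of \(\Gamma\) and have gradients of order \(\delta^{-1}\) on the shell where \(\dist(\cdot,\Gamma)\) lies between \(\delta\) and \(2\delta\). The error terms are bounded by
\[
\int_{\text{shell}} \frac{|\psi|}{\delta}\,|\nabla|\psi||,
\]
which tends to zero by Cauchy--Schwarz and the Hardy bound. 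Hence \(|\psi|\) is a nonnegative weak subsolution of \(\Delta^{\perp} u + c_0 u \geq 0\) on all of \(S^3\). Moser iteration then gives \(|\psi|\in L^\infty\), with bound \(\sup_{B_\rho(p)}|\psi| \le c\,\rho^{-3/2}\|\psi\|_{L^2(B_{2\rho}(p))}\).

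For Hölder continuity and vanishing on \(\Gamma\), I would prove a Campanato/Morrey decay estimate: for \(p\in\Gamma\) and small \(\rho\),
\[
\int_{B_\rho(p)}|\nabla\psi|^2 \;\le\; c\,\rho^{1+2\alpha}
\]
for some \(\alpha>0\). The Moser bound combined with Hardy then gives \(\sup_{B_\rho(p)}|\psi| \le c\,\rho^{-3/2}\cdot\rho\,(\rho^{1+2\alpha})^{1/2} = c\,\rho^{\alpha}\). Interior estimates at scale \(\dist(\cdot,\Gamma)\) away from \(\Gamma\) then yield a global Hölder bound.

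The decay estimate is the hole-filling step, and I expect it to be the main obstacle. Test the equation with \(\chi^2\psi\), where \(\chi\) is a cutoff equal to \(1\) on \(B_\rho\) and supported in \(B_{2\rho}\). This gives
\[
\int_{B_\rho}|\nabla\psi|^2 \;\le\; c\,\rho^{-2}\int_{B_{2\rho}\setminus B_\rho}|\psi|^2 + c\int_{B_{2\rho}}|\psi|^2 .
\]
The key is a scale-invariant Poincaré/Hardy inequality on annuli,
\[
\int_{B_{2\rho}\setminus B_\rho}|\psi|^2 \;\le\; c\,\rho^2\int_{B_{2\rho}\setminus B_\rho}|\nabla\psi|^2 ,
\]
with no mean-value term. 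It holds because \(\mathcal{I}\) is nontrivial on the linking circles: the annulus meets \(\Gamma\), and on every small circle linking \(\Gamma\) there is no nonzero parallel section, so a Poincaré inequality holds without subtracting a mean. Proving this with a uniform constant near vertices is where the geometric hypothesis on \(\Gamma\) enters. At a vertex, \(\Gamma\) is asymptotically a cone on finitely many rays, so the annulus is bi-Lipschitz to a fixed model with uniform constants. Given the annular inequality, hole-filling yields \(D(\rho)\le \theta D(2\rho)+c\rho^3\) with \(\theta<1\), where \(D(\rho)=\int_{B_\rho}|\nabla\psi|^2\). Iterating gives \(D(\rho)\le c\rho^{1+2\alpha}\), since in dimension three the critical exponent is \(1\) and \(\theta<1\) produces the extra \(2\alpha\). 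I would verify the model Poincaré inequality first for the edge case (the local model is a line in \(\mathbb{R}^3\)) and then for the vertex cones.
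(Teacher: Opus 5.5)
Your first steps are sound. Testing \eqref{eq:4.5} with $\chi\psi/(|\psi|^2+\epsilon^2)^{1/2}$, together with the Hardy inequality \eqref{eq:4.2}, does make $|\psi|$ a nonnegative weak subsolution on all of $S^3$, and Moser iteration gives the local $L^\infty$ bound. The uniform twisted Poincar\'e inequality you flag as the main obstacle can be obtained by a Rellich/compactness argument over rescaled configurations: a limit of constant norm would violate the twisted Wirtinger inequality on a linking circle.

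The genuine gap is the hole-filling step. From $D(\rho)\le\theta D(2\rho)+c\rho^3$ with $\theta<1$ you only get $D(\rho)\le c\,\rho^{\beta}$ with $\beta=\min\{\log_2(1/\theta),3\}$ (up to a logarithm), and $\beta>1$ requires $\theta<\frac12$. Your $\theta$ is $C/(1+C)$ with $C$ at least the scale-invariant Poincar\'e constant $C_P$ of the annulus, so you would need $C_P<1$. Nothing provides this, and it fails already for a straight edge through the center of $B_2\setminus B_1$. There the section $\sin\phi\cos(\omega/2)$ (polar angle $\phi$ measured from the edge, azimuth $\omega$ about it) has Rayleigh quotient $3/8$, so $C_P\ge 8/3$ and $\theta\ge 8/11$, even with the optimal Caccioppoli constant $1$.

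This is why Widman-type hole filling gives H\"older continuity only in dimension two; the claim that ``$\theta<1$ produces the extra $2\alpha$'' is not correct. With only $\beta\le 1$, your final combination reads $\sup_{B_\rho(p)}|\psi|\lesssim\rho^{(\beta-1)/2}$, which does not show that $|\psi|$ tends to zero at $\Gamma$. The bound $D(\rho)\lesssim\rho^{1+2\alpha}$ is in fact true, but it needs another mechanism. The scalar subsolution property alone can never give vanishing, since $\Gamma$ has zero capacity and constants are subsolutions.

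The missing idea is a monotonicity-type identity in which the twisted Poincar\'e inequality enters with no smallness requirement. The paper uses Almgren's frequency function. The relation $\frac{d\K}{dr}=\frac{\N}{r}\K$ holds exactly, with $\N$ given by the second bullet of \eqref{eq:7.4}. So any lower bound $\N\ge\gamma>0$ integrates to $\K(r)\lesssim r^{\gamma}$, i.e.\ $\int_{B_r(p)}|\psi|^2\lesssim r^{3+2\gamma}$. This is supercritical decay of $\psi$ itself rather than of $\nabla\psi$.

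The paper proceeds as follows:
\begin{itemize}
\item It obtains $\N(0)>0$ at vertices from the Hardy inequality via \eqref{eq:7.10}.
\item It obtains $\N(0)\ge\frac12$ at edge points from Lemma~6.1, whose Green's function bounds use that the edges are geodesic.
\item It records the resulting decay, with control of the constants near vertices, in Lemma~7.3.
\item In Section~8 it chains the interior estimates \eqref{eq:8.1}--\eqref{eq:8.3} at scale $\dist(\cdot,\Gamma)$ to get H\"older exponent $\min\{\frac12,\N_v(0)\}$ over the vertices $v$.
\end{itemize}

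You can repair your argument in the same spirit using your own compactness lemma. Prove, uniformly for $p\in\Gamma$ and small $r$, the trace and ball inequalities $\int_{\partial B_r(p)}|\psi|^2\le c\,r\int_{B_r(p)}|\nabla\psi|^2$ and $\int_{B_r(p)}|\psi|^2\le c\,r^2\int_{B_r(p)}|\nabla\psi|^2$. Inserted into \eqref{eq:7.4}, they give $\N_p(r)\ge c^{-1}(1-c'r^2)$, hence $\K_p(r)\lesssim r^{\gamma}$ uniformly in $p$. Your Moser bound then yields $\sup_{B_{r/2}(p)}|\psi|\lesssim r^{\gamma}$, and your last step (interior estimates at scale $\dist(\cdot,\Gamma)$) goes through.
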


This lemma is proved in Section~8 using some key technical lemmas in Section~7.  But note in this regard that there are steps in the proof of Section~7's technical lemma where \(\Gamma\)'s edges are assumed to be geodesic arcs.  This assumption is obeyed in the cases from Theorem~1.2 since the graph \(\Gamma\) is the radial projection to \(S^3\) of the edges and vertices of a polytope in \(\mathbb{R}^4\).  Lemma~4.2 is true without the simplifying assumption of geodesic arc edges, but the proof in the general case adds some length to the arguments and isn't given in this paper.

An example for Lemma~4.2 has \(\mathbb{V}\) being the product \(\mathbb{R}\)-bundle and \(\mathcal{R} = 0\); thus \((\nabla^{\dagger}\nabla + \mathcal{R})\) in this case is \((-1)\) times the standard spherical Laplacian on \(S^3\), but acting on sections of \(\mathcal{I}\) over \(S^3-\Gamma\).  This case is relevant for the construction of homogeneous, \(\mathbb{Z}/2\) harmonic \(1\)-forms and spinors on \(\mathbb{R}^4\).  A second relevant example is that where \(\mathbb{V} = T^*S^3\) and where \(\mathcal{R}\) is the Ricci curvature tensor in its incarnation as a symmetric endomorphism of \(T^*S^3\).  This case is relevant for the construction of homogeneous, \(\mathbb{Z}/2\) harmonic self-dual \(2\)-forms on \(\mathbb{R}^4\).  An instance of a third example (which is discussed next) is also relevant to the \(\mathbb{Z}/2\) harmonic, self-dual \(2\)-form case.

Suppose now that \(\mathbb{V}\) is also a Clifford module for \(T^*S^3\).  This is to say that there is a monomorphism
$\tau : T^*S^3 \longrightarrow \operatorname{End}(\mathbb{V})$ obeying the rules below:
\begin{itemize}[topsep=6pt]
\item $\tau(u)\,\tau(v) + \tau(v)\,\tau(u) = -2\langle u,v\rangle$ for all $u,v \in T^*S^3$;
\item $\tau^\dagger = -\,\tau$;
\item $\nabla \tau = \tau \nabla$.
\listeqno\label{eq:4.6}
\end{itemize}
Here, $\tau^\dagger$ signifies either the transpose or Hermitian conjugate of $\tau$ as the case may be.  An additional assumption is (imposed henceforth) that this Clifford algebra action is compatible with the given connection on $\mathbb{V}$.  Granted this Clifford module structure, there is a corresponding, first-order, elliptic and symmetric differential operator acting on sections of $\mathbb{V}$.  This operator is denoted by $\mathcal{D}$; and having chosen an oriented, orthonormal frame for $T^*S^3$, it can be written as
\begin{align}
\mathcal{D} = \tau_a \nabla_a ,
\end{align}
where $\{\tau_1,\tau_2,\tau_3\}$ are the images of the basis $1$-forms via $\tau$ and where $\{\nabla_1,\nabla_2,\nabla_3\}$ are the directional covariant derivatives in the direction of the dual basis for $TS^3$.  This operator $\mathcal{D}$ also acts on the space of smooth, compactly supported sections of $\mathbb{V}\otimes\mathcal{I}$ over $S^3-\Gamma$; and as such, it extends to define a bounded, linear map from $\mathbb{H}$ to $\mathbb{L}$.  A given element (call it $\psi$) from $\mathbb{H}$ is said in what follows to be
an eigensection for $\mathcal{D}$ when it is not identically zero and there exists a real number (denoted by $\lambda$) such that $\mathcal{D}\psi = \lambda\psi$ holds as an identity of sections of $\mathbb{L}$.  (A convention henceforth:  All $\mathcal{D}$-eigensections have $\mathbb{L}$-norm equal to $1$.)

Because $\mathcal{D}$ is elliptic, its eigensections in $\mathbb{H}$ are smooth sections of $\mathbb{V}\otimes\mathcal{I}$ on $S^3-\Gamma$.  The lemma that follows makes an assertion about the norm of $\psi$.
\begin{lemma}
\emph{The norm of an eigensection for $\mathcal{D}$ from $\mathbb{H}$ extends
over $\Gamma$ to define a H\"older continuous function on $S^3$ whose zero
locus contains $\Gamma$.}
\end{lemma}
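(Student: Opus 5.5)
The plan is to reduce Lemma~4.3 to Lemma~4.2 via the Weitzenb\"ock (Lichnerowicz) formula for the Clifford module $\mathbb{V}$. Because the Clifford action is parallel and compatible with the connection, the square of $\mathcal{D}=\tau_a\nabla_a$ can be written as
\begin{align*}
\mathcal{D}^2 = \nabla^{\dagger}\nabla + \mathcal{R}_{\mathcal{D}},
\end{align*}
where $\mathcal{R}_{\mathcal{D}}=-\tfrac12\tau_a\tau_b F_{ab}$, with $F$ the curvature of $\mathbb{V}$. This is a symmetric (or Hermitian) endomorphism of $\mathbb{V}$. It extends to $\mathbb{V}\otimes\mathcal{I}$ because $\mathcal{I}$ is flat, with holonomy $\pm1$, so tensoring with it changes neither the curvature nor the formula. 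The strategy is to show that a $\mathcal{D}$-eigensection $\psi\in\mathbb{H}$ with eigenvalue $\lambda$ is a $(\nabla^{\dagger}\nabla+\mathcal{R}_{\mathcal{D}})$-eigensection with eigenvalue $\lambda^2$ in the weak sense of \eqref{eq:4.5}. Lemma~4.2 then gives the H\"older continuity of $|\psi|$ and its vanishing on $\Gamma$ directly.

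The key step is verifying \eqref{eq:4.5} for every $\eta\in\mathbb{H}$. I would first take $\eta$ smooth with compact support in $S^3-\Gamma$. Integrating by parts there, with no boundary terms since the support avoids $\Gamma$ and $\mathcal{D}$ is symmetric, gives
\begin{align*}
\int\langle\nabla\eta,\nabla\psi\rangle+\int\langle\eta,\mathcal{R}_{\mathcal{D}}\psi\rangle
=\int\langle\eta,\mathcal{D}^2\psi\rangle
=\int\langle\mathcal{D}\eta,\mathcal{D}\psi\rangle
=\lambda\int\langle\mathcal{D}\eta,\psi\rangle.
\end{align*}
A second integration by parts turns the right-hand side into $\lambda^2\int\langle\eta,\psi\rangle$. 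Here I use that $\psi$ is smooth on $S^3-\Gamma$ by elliptic regularity and that $\mathcal{D}\psi=\lambda\psi$ holds pointwise there. An alternative route is to check the pointwise Weitzenb\"ock identity on $\operatorname{supp}\eta$ and then integrate.

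To pass from compactly supported $\eta$ to all $\eta\in\mathbb{H}$, I would use density, since $\mathbb{H}$ is by definition the completion of these sections. Each term is continuous in the $\mathbb{H}$-norm: the gradient pairing because $\psi\in\mathbb{H}$, the $\mathcal{R}_{\mathcal{D}}$ and $\mathtt{E}$ terms because $\mathcal{R}_{\mathcal{D}}$ is bounded and \eqref{eq:4.2} controls the $\mathbb{L}$-norm by the $\mathbb{H}$-norm. Lemma~4.2 then applies with $\mathcal{R}=\mathcal{R}_{\mathcal{D}}$ and $\mathtt{E}=\lambda^2$.

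The main obstacle is confirming that Lemma~4.2, as proved in Sections~7--8, imposes no hypothesis on $\mathcal{R}$ beyond its being a bounded symmetric endomorphism pulled back from $S^3$. In particular, the proof must not rely on a specific structure of $\mathcal{R}$ or on $\mathbb{V}$ being trivial. For the spinor bundle and $T^*S^3\oplus\underline{\mathbb{R}}$ on the round $S^3$, $\mathcal{R}_{\mathcal{D}}$ is in fact a constant multiple of the identity or of the Ricci tensor, which matches the examples already discussed after Lemma~4.2. So even in the worst case the reduction lands on cases Lemma~4.2 explicitly covers.
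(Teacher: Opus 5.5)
Your proposal is correct and is essentially the paper's argument: a $\mathcal{D}$-eigensection from $\mathbb{H}$ is a weak $\mathcal{D}^2$-eigensection, the Weitzenb\"ock formula turns this into the weak identity \eqref{eq:4.5} with $\mathtt{E}=\lambda^2$, and Lemma~4.2 (stated for any symmetric endomorphism $\mathcal{R}$ of a general $\mathbb{V}$) then finishes the proof. Your density step, which checks \eqref{eq:4.5} on compactly supported $\eta$ and extends it using $\psi\in\mathbb{H}$ and \eqref{eq:4.2}, is simply a more explicit version of the paper's remark that the integration by parts does not get hung up on $\Gamma$.
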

\noindent\textit{Proof of Lemma 4.3.}\;
This is a corollary to Lemma~4.2 because an eigensection of \(\mathcal{D}\) from \(\mathbb{H}\) is also eigensection of \(\mathcal{D}^2\), a \(\mathcal{D}^2\) eigensection being an element \(\psi\) of \(\mathbb{H}\) such that the identity
\begin{align}\label{eq:4.8}
\int \langle \mathcal{D}\eta,\mathcal{D}\psi\rangle \;=\;
\mathtt{E} \int \langle \eta,\psi\rangle
\end{align}
holds for all \(\eta \in \mathbb{H}\) with \(\mathtt{E}\) being independent of \(\eta\). Meanwhile, a Bochner–Weitzenb\"ock formula for \(\mathcal{D}^2\) can be used to identify the left-hand side of \eqref{eq:4.8} as an instance of the left-hand side of \eqref{eq:4.5}.  (The integration by parts to go from \eqref{eq:4.8} to \eqref{eq:4.5} won't get
hung up on \(\Gamma\) when both \(\psi\) and \(\eta\) are from \(\mathbb{H}\).)

As explained directly, a \(\mathcal{D}^2\)-eigensection from \(\mathbb{H}\) can be used to construct a section from the Hilbert space \(\mathbb{L}\) which is (formally) a \(\mathcal{D}\)-eigensection.  Indeed, supposing that \(\psi\) denotes a \(\mathcal{D}^2\) eigensection from \(\mathbb{H}\), then both the \(+\) and \(-\) versions of
\begin{align}\label{eq:4.9}
\phi_{\pm} \;=\; \psi \,\pm\, \frac{1}{\sqrt{\mathtt{E}}}\,\mathcal{D}\psi
\end{align}
are in \(\mathbb{L}\) and they obey the identity \(\mathcal{D}\phi_{\pm} = \pm\sqrt{\mathtt{E}}\,\phi_{\pm}\) on \(S^3-\Gamma\).  In particular, \(\phi_{\pm}\) is a \(\mathcal{D}\)-eigensection if it is in \(\mathbb{H}\) and not identically
zero.  This may or may not be the case.  Of course, \(\psi\) is in \(\mathbb{H}\) by assumption, but \(\mathcal{D}\psi\) need not be.  But, if \(\mathcal{D}\psi\) is indeed in \(\mathbb{H}\), then one of \(\phi_{+}\) and \(\phi_{-}\) can be used to construct a harmonic, homogeneous \(\mathbb{Z}/2\) object on \(\mathbb{R}^4\) as explained in Section~2 (assuming
that \(\mathcal{D}\) comes via \eqref{eq:2.6}).

To elaborate about \(\mathcal{D}\psi\) being in \(\mathbb{H}\):  Not-with-standing the fact that \(\mathcal{D}\) is a symmetric operator, and not-with-standing what Lemmas~4.1 and~4.2 imply for \(\mathcal{D}^2\), the operator \(\mathcal{D}\) is only semi-Fredholm; it has an infinite dimensional cokernel (see \cite{Tak,HMT2,bera2025dirac}).  This is to say that there is an infinite-dimensional subspace in \(\mathbb{L}\) that is annihilated by \(\mathcal{D}\).  An element in this subspace is smooth on \(S^3-\Gamma\), but it need not be in \(\mathbb{H}\).  Indeed, it need not be in \(\mathbb{H}\) for the same reason that a solution from \(\mathbb{L}\) to the equation \(\mathcal{D}\psi = \lambda\psi\) need not be in \(\mathbb{H}\), which is this:  An integration by parts is needed to use the integrals of \(|\mathcal{D}\psi|^2\) and \(|\psi|^2\) to bound the integral of \(|\nabla\psi|^2\), and this integration by parts on \(S^3-\Gamma\) gets hung up along \(\Gamma\) if \(\psi\) is in \(\mathbb{L}\) but not in \(\mathbb{H}\).

With Lemma~4.3 in hand, the question at hand is this:  When does a \(\mathcal{D}^2\)-eigensection from \(\mathbb{H}\) give a \(\mathcal{D}\)-eigensection from \(\mathbb{H}\) via the formula in \eqref{eq:4.9}; thus when is \(\mathcal{D}\psi\) in \(\mathbb{H}\) also?  The next lemma gives a sufficient condition for this.
\begin{lemma}
\emph{In the context of \eqref{eq:4.5}, suppose that \(\psi\) denotes a
\((\nabla^{\dagger}\nabla + \mathcal{R})\)-eigensection from \(\mathbb{H}\).  Then
\(\nabla\psi\) is in the \(\mathbb{V}' \equiv \mathbb{V}\otimes T^*S^3\) version of
\(\mathbb{H}\) if, for every point \(q\) from any edge of \(\Gamma\), there exist
constants \(c_q > 0\) and \(\rho_q\) such that the following is true:  For any
given \(r \in (0,\rho_q]\), let \(\partial B_r\) denote the ball of radius \(r\) centered at
\(q\) and let \(A(r)\) denote the area of \(\partial B_r\) (which is \(4\pi\sin^2(r)\)).
Then}
\begin{align}
\frac{1}{A(r)} \int_{\partial B_r} |\psi|^2 \;\le\; c_q r^2 .
\end{align}
\end{lemma}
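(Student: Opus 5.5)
Since $\psi$ solves $(\nabla^\dagger\nabla+\mathcal{R})\psi=\mathtt{E}\psi$ on $S^3-\Gamma$, I would show that $\int_{S^3-\Gamma}|\nabla\nabla\psi|^2<\infty$ and that $\nabla\psi$ can be approximated in the $\mathbb{V}'$ version of the $\mathbb{H}$-norm by compactly supported sections. The plan is to get both from interior elliptic estimates on balls that stay a fixed fraction of their radius away from $\Gamma$. The hypothesized decay $A(r)^{-1}\int_{\partial B_r}|\psi|^2\le c_q r^2$ then controls the resulting sums near $\Gamma$. By compactness, and because $\psi$ is smooth on $S^3-\Gamma$, only small neighborhoods of $\Gamma$ need attention.

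\textbf{Step 1: an $L^2$ bound on tubes.} Integrating the spherical hypothesis over $r$ gives $\int_{B_r(q)}|\psi|^2\le c\,r^5$ for $r\le\rho_q$, uniformly for $q$ in compact parts of the edge interiors. Covering a tube $T_s=\{\dist(\cdot,\Gamma)<s\}$ along an edge by about $s^{-1}$ such balls gives $\int_{T_s}|\psi|^2\le c\,s^4$ away from the vertices. Near a vertex, the edges leave along distinct directions (they are geodesic arcs here), so $q$ ranges over points of the edge at distance $\sim t$ from the vertex. There I would use a rescaled version of the same covering. The one uniformity issue is that $c_q,\rho_q$ must degrade at worst like a power of the distance to the vertex. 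If they do not, I would absorb the vertex balls using the Hardy inequality \eqref{eq:4.2} together with the finiteness of $\|\psi\|_{\mathbb{H}}$.

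\textbf{Step 2: interior elliptic estimates.} Take a point $x$ with $d=\dist(x,\Gamma)$ small, and let $B=B_{d/2}(x)$, $B'=B_{3d/4}(x)$, both inside $S^3-\Gamma$. Trivialize $\mathcal{I}$ on $B'$. The rescaled interior $L^2$ estimate for $\nabla^\dagger\nabla+\mathcal{R}-\mathtt{E}$ gives
\[\int_B|\nabla\nabla\psi|^2\le c\,d^{-4}\int_{B'}|\psi|^2 .\]
Cover the dyadic shell $\{2^{-k-1}<\dist(\cdot,\Gamma)<2^{-k}\}$ by Whitney balls with bounded overlap. Step 1 bounds the right-hand side summed over that shell by $c\,2^{4k}\cdot 2^{-4k}=c$, which is not yet summable over $k$. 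So I need the stronger shellwise bound $\int_{\text{shell}_k}|\psi|^2\le c\,2^{-4k}\cdot 2^{-\epsilon k}$, or else a better estimate.

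The better estimate comes from not using the crude bound, since that is where the \textbf{main obstacle} sits. Apply the estimate to $\nabla\psi$ directly: $\nabla\psi$ obeys a similar elliptic system (commute $\nabla$ through $\nabla^\dagger\nabla$, with curvature terms bounded on $S^3$). This gives $\int_B|\nabla\nabla\psi|^2\le c\,d^{-2}\int_{B'}|\nabla\psi|^2+c\int_{B'}|\psi|^2$. I would then bound $\int_{\text{shell}_k}|\nabla\psi|^2\le c\,2^{2k}\int_{\text{shell}'_k}|\psi|^2$ by Caccioppoli. This leaves $\int_{\text{shell}_k}|\nabla\nabla\psi|^2\le c\,2^{4k}\int_{\text{shell}'_k}|\psi|^2$, which is the same borderline sum. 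The real input must therefore be that the $r^2$ decay is sharp enough to give summability, because it forces $|\psi|\sim d^{1+\alpha}$-type behaviour on average. Concretely, I would first prove the refined bound $\int_{B_r(q)}|\psi|^2\le c\,r^{5+2\alpha}$ for some $\alpha>0$. To get it, expand $\psi$ near an edge in the $\mathcal{I}$-twisted Fourier modes $e^{i(n+1/2)\theta}$ around the edge. Every mode decays like $d^{1/2}$, $d^{3/2}$, and so on. The hypothesis $r^2$ (that is, $|\psi|^2\lesssim d^2$) excludes the leading $d^{1/2}$ modes, and the next admissible rate is $d^{3/2}$, giving $\int_{\text{shell}_k}|\psi|^2\lesssim 2^{-5k}$. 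Then $\sum_k 2^{4k}2^{-5k}<\infty$. This mode-gap step, made rigorous by a monotonicity or frequency argument near the geodesic edge (with the vertex again requiring the conical version), is where I expect the difficulty.

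\textbf{Step 3: approximation.} Once $\nabla\nabla\psi\in L^2$ and $\int_{T_s}|\nabla\psi|^2/\dist(\cdot,\Gamma)^2<\infty$ (from the same shell bounds), multiply $\nabla\psi$ by logarithmic cutoffs $\chi_s$ that vanish on $T_s$. The error term $|d\chi_s|\,|\nabla\psi|$ tends to zero in $L^2$. Hence $\chi_s\nabla\psi\to\nabla\psi$ in the $\mathbb{H}$-norm, so $\nabla\psi$ lies in the $\mathbb{V}'$ version of $\mathbb{H}$.
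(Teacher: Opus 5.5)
\textbf{The obstacle you identify is not real, and the idea that removes it is missing.} The borderline sum $\sum_k O(1)$ in your Step~2 comes from estimating $\nabla\nabla\psi$ one Whitney ball at a time, with each ball paying a factor $d^{-4}$. It is not intrinsic to the problem. The crude tube bound $\int_{\operatorname{dist}(\cdot,\Gamma)\le r}|\psi|^2\le c\,r^4$ already suffices, via one global integration by parts with a \emph{single} cutoff.

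With $\sigma_r=\sigma(\operatorname{dist}(\cdot,\Gamma)/r)$, the section $u=\sigma_r\nabla\psi$ has compact support in $S^3-\Gamma$, and
\[
\nabla^\dagger u=(\mathtt{E}-\mathcal{R})\sigma_r\psi-\nabla\sigma_r\cdot\nabla\psi ,\qquad d^\nabla u=d\sigma_r\wedge\nabla\psi+\sigma_r F_\nabla\psi .
\]
The Weitzenb\"ock identity for $\mathbb{V}$-valued $1$-forms then gives
\[
\int|\nabla(\sigma_r\nabla\psi)|^2\le c_0(1+\mathtt{E})^2+\frac{c_0}{r^2}\int_{\frac r4<\operatorname{dist}(\cdot,\Gamma)\le r}|\nabla\psi|^2 ,
\]
which is the paper's \eqref{eq:8.21}. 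The second derivatives on the whole region where $\sigma_r=1$ are paid for by the equation, that is, by the global $L^2$ norm of $(\mathtt{E}-\mathcal{R})\psi$. Only the one shell where $d\sigma_r\neq 0$ produces an error term.

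Caccioppoli on that shell, combined with the $r^4$ tube bound, makes the right-hand side independent of $r$. Since $\sigma_r\nabla\psi\to\nabla\psi$ in $\mathbb{L}$, a weak $\mathbb{H}$-limit puts $\nabla\psi$ in $\mathbb{H}$. No logarithmic cutoffs are needed. The bound $\int|\nabla\psi|^2/\operatorname{dist}(\cdot,\Gamma)^2<\infty$ then follows from \eqref{eq:4.2} as a consequence, rather than being required as input.

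\textbf{The input that does need work is the one your Step~1 assumes.} The hypothesis gives only point-dependent constants $c_q,\rho_q$. The tube bound needs uniformity in $q$, including near the vertices. The paper gets this by converting the hypothesis into $\mathtt{N}_q(0)\ge 1$ for all $q\in\Gamma$. It then uses the almost-monotonicity of $\mathtt{N}$ (Lemma~7.2) and the control of the constant $z_p$ in Lemma~7.3. Your fallback of absorbing the vertex balls via \eqref{eq:4.2} only gives $\int|\psi|^2/\operatorname{dist}(\cdot,\Gamma)^2<\infty$, which is far weaker than an $r^4$ bound.

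\textbf{Your mode-gap repair is viable but unproven.} At interior edge points, Lemma~9.1 and \eqref{eq:9.16} force $\mathtt{N}(0)\in\{\tfrac12,\tfrac32,\dots\}$, so $\mathtt{N}(0)\ge 1$ upgrades to $\mathtt{N}(0)\ge\tfrac32$. Lemma~7.3's bound $z_p\le\kappa\, r_p^{-\bar{\mathtt{N}}}$ controls the constants as edge points approach a vertex, and this makes your dyadic sums converge. However, you leave exactly this step unproven. It is also considerably heavier than the single-cutoff argument, and gains nothing over it, since that argument already works from the crude bound.
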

Section~8 explains how this lemma follows from those two technical lemmas in Section~7.

With regards to \(\mathcal{D}\psi\) being in \(\mathbb{H}\):  It is possible for this to happen not-with-standing the fact that \(\nabla\psi\) is not in \(\mathbb{H}\).  This possibility is exploited in the upcoming proof of Theorem~1.2's existence assertion for \(\mathbb{Z}/2\) self-dual harmonic \(2\)-forms.

To summarize:  A \(\mathcal{D}^2\)-eigensection \(\psi\) from \(\mathbb{H}\) supplies a \(\mathcal{D}\)-eigensection in \(\mathbb{H}\) via \eqref{eq:4.9} if the average of \(|\psi|^2\) over the radius \(r\) sphere centered at any interior point of an edge of \(\Gamma\) is bounded by a constant multiple of \(r^2\).  In any event, if \(\mathcal{D}\psi\) is in \(\mathbb{H}\), then the formula in \eqref{eq:4.9} supplies an eigensection of \(\mathcal{D}\) in \(\mathbb{H}\).  And, when \(\mathcal{D}\) comes via \eqref{eq:2.6}, that eigensection for \(\mathcal{D}\) from \(\mathbb{H}\) can be used to
construct a homogeneous, \(\mathcal{D}\)-harmonic, \(\mathbb{Z}/2\) object on \(\mathbb{R}^4 - Z\).

\medskip

\noindent\textit{Proof of Lemma 4.1.}\;
The proof follows the standard min–max variational route to obtain the \((\nabla^{\dagger}\nabla + \mathcal{R})\)-eigenvalues and eigenvectors as the respective critical values and critical points of the functional \(\mathcal{E}\) from \eqref{eq:4.4} subject to the constraint that \(\|\psi\|_{\mathbb{L}} = 1\). A standard Sobolev inequality (applied to the \(L^2_1\) function \(|\psi|\) and \(\psi\)'s components with respect to a basis for \(\mathbb{V}\)) and the Rellich lemma can be brought to bear to implement the min–max procedure just as if no \(\Gamma\) were present (see for example \cite{N}).  (In this regard: The Rellich lemma implies that a sequence in \(\mathbb{H}\) with bounded \(\mathbb{H}\)-norm has a Cauchy subsequence with respect to the \(\mathbb{L}\)-norm.)

\section{Equivariant Analysis}\label{sec:5}

The subsequent construction of \((\nabla^{\dagger}\nabla + \mathcal{R})\)-eigensections from \(\mathbb{H}\) that obey the condition set forth in Lemma~4.4 exploits the isometric action on \(S^3\) of certain finite groups (which is the \(S^3\)-analog of the strategy employed in \cite{taubes2020examples}).  To set the stage, suppose for the moment that \(G\) is a finite group acting on \(S^3\) via orientation-preserving isometries.  Assume in addition that all elements
in \(G\) map \(\Gamma\) to itself.  The assumption in what follows is that \(G\) lifts to an action on \(\mathcal{I}\).  To elaborate:  The pull-back of
\(\mathcal{I}\) by the action of any element from \(G\) is isometric to \(\mathcal{I}\).  Even so, that isometry will be ambiguous up to the action
of \(\{-1, 1\}\). With that understood, let \(\widetilde G\) denote the set of pairs of the form \((g,\delta)\) with \(g \in G\) and \(\delta\) being an isometry from \(g^*\mathcal{I}\) to \(\mathcal{I}\).  This set \(\widetilde G\) is itself a group, it being a central extension of \(G\) by \(\{-1,1\}\); see Lemma \ref{lem:12.5}.  The action of the group \(G\) lifts to a fiberwise linear action on \(\mathcal{I}\) if there exist, for each \(g \in G\), an isomorphism \(\delta_g : g^*\mathcal{I} \to \mathcal{I}\), and if these isomorphisms are compatible with the group law in the sense that
\begin{align}
\delta_g \circ \delta_{g'} \;=\; \delta_{g'g}
\end{align}
for any two elements \(g,g' \in G\).  (This implies in particular that \(\delta_{(\cdot)}\) for the identity element in \(G\) is the identity isomorphism from \(\mathcal{I}\) to \(\mathcal{I}\).)

Supposing that the \(G\) action on \(S^3\) lifts to an action on \(\mathcal{I}\) as described above, then \(G\) acts on the space of sections of \(\mathcal{I}\) as follows:  Let \(s\) denote a given section of \(\mathcal{I}\) and let \(g\) denote a given element in \(G\).  Then \(g\) sends the section \(s\) to the section whose value at any given \(x \in S^3-\Gamma\) is \(\delta_g s(g(x))\).  That given section \(s\) is then \(G\)-invariant when \(\delta_g s(g(x)) = s(x)\) for each \(x \in S^3-\Gamma\).  A very simple example of how this works is given in the next paragraph.  As explained in Section~\ref{ch:pre}-\ref{sec:600cell}, other examples are the symmetry groups of the polytopes from Theorem~1.2.

By way of an example:  Suppose that \(\Gamma\)'s intersection with a given small radius ball in \(S^3\) is a subset of the set of \(|x| = 1\) points on a plane through the origin in \(\mathbb{R}^4\).  No generality is lost by taking that plane to be the plane where \(x_3 = x_4 = 0\).  Let \(m\) denote an odd integer greater than \(2\) and let \(G\) denote the group \(\mathbb{Z}/m\mathbb{Z}\) whose action on \(\mathbb{R}^4\) and \(S^3\) is generated by the \(2\pi/m\) rotation of the coordinates \((x_3,x_4)\).  To elaborate, let \(z\) denote the complex coordinate \(x_3 + \mathrm{i}x_4\).  Then the generator of this group sends \((x_1,x_2,z)\) to \((x_1,x_2,e^{2\pi\mathrm{i}/m}z)\). Meanwhile, the line bundle \(\mathcal{I}\) can be viewed as the intersection of the ball with the subset in \((\mathbb{R}^2 \times \mathbb{C}) \times \mathbb{C}\) consisting of the elements \((x_1,x_2,z,u)\) with \(z \neq 0\), with \(x_1^2 + x_2^2 + |z|^2 = 1\) and such that
\begin{align}\label{eq:5.1}
\bar{z}\,u \;=\; |z|\,\bar{u} .
\end{align}
To see about the lift of the group \(\mathbb{Z}/m\mathbb{Z}\), view that group as the multiplicative group of \(m^{\text{th}}\) roots in \(\mathbb{C}\) of the number \(1\).  Let \(g\) denote the generator \(e^{2\pi\mathrm{i}/m}\).  The pull-back via \(g\) of the line bundle \(\mathcal{I}\) consists of the set
\((x_1,x_2,x,u)\) with \((x_1,x_2,z)\) as before but now with the constraint \(\bar z\,e^{-2\pi\mathrm{i}/m} u = |z|\,\bar{u}\).  Define the element
\(\delta_g\) for the \(\mathbb{Z}/m\mathbb{Z}\) generator \(g\) (which is \(e^{2\pi\mathrm{i}/m}\)) according to the rule whereby any given \(u\) obeying the
constraint \(\bar z\,e^{-2\pi\mathrm{i}/m} u = |z|\,\bar{u}\) is sent by \(\delta_g\) to \(u' \equiv -\,e^{-\mathrm{i}\pi/m}u\) (which is in \(\mathcal{I}\) since \(\bar{z}\,u' = |z|\,\bar{u}'\)).  If \(m\) is odd, then the \(m^{\text{th}}\) power of the pair \((g,\delta_g)\) is equal to the identity in \(\widetilde G\); and so it defines a lift of the \(G\) action on \(S^3-\Gamma\) to a fiberwise linear action on \(\mathcal{I}\).  (There is no lift of the \(\mathbb{Z}/m\mathbb{Z}\) to an action on \(\mathcal{I}\) when \(m\) is even.)

Supposing now that $G$ denotes a subgroup of $\mathrm{SO}(4)$ (the isometry group of $S^3$) that maps $\Gamma$ to itself, and that this group $G$ does lift in the manner just described to a fiberwise linear action on $\mathcal{I}$, then there is an infinite dimensional subspace in $\mathbb{H}$ of $G$-invariant sections of $\mathcal{I}$.  Indeed, if there exists just one non-trivial, $G$-invariant section, then there are infinitely many linearly independent ones because any such map can be multiplied by the pull-back to $S$ of a $G$-invariant function on $S^3-\Gamma$.  Meanwhile, a $G$-invariant section can be obtained by taking a section with compact support in some small ball (disjoint from $\Gamma$) in a fundamental domain for $G$ in $S^3$ and then averaging that
section over its $G$-orbit.

In the preceding local example where $G$ is $\mathbb{Z}/m\mathbb{Z}$ with $m$ an odd, positive integer:  Supposing that $k$ is a positive integer and $a$ is
a complex number, then
\begin{align}\label{eq:5.2}
(x_1,x_2,z,u) \;=\;
\biggl(x_1,x_2,z,\,
a\,\frac{1}{\sqrt{|z|}}\,z^{k}
\;+\;
\bar{a}\,\bar{z}^{\,k-1}\sqrt{|z|}\biggr)
\end{align}
is a section of $\mathcal{I}$; but it is a $G$-invariant section of $\mathcal{I}$ if and only if $m$ is an odd integer and $k$ is congruent modulo $m$ to $\tfrac{1}{2}(m+1)$.  To put the section above in a more familiar form, take $\sqrt{z}$ to be defined on the complement of the negative $x_3$–axis. Then $u_0 \equiv \sqrt{z}\,\frac{1}{\sqrt{|z|}}$ is a unit length section of $\mathcal{I}$ on the complement of that axis (it satisfies \eqref{eq:5.1} there). The section $u$ in \eqref{eq:5.2} is a real multiple of this basis section, that multiple being the real part of $a z^{k-1/2}$.

The following lemma has more to say about $G$-invariant sections of $\mathcal{I}$.

\begin{lemma}
\emph{Let $G$ denote a finite subgroup of $\mathrm{SO}(4)$ mapping $\Gamma$ to itself and lifting to a fiberwise linear action on $\mathcal{I}$.  Granted this assumption, then there exists an infinite (but countable) set of $G$-invariant eigensections in $\mathbb{H}$ for the standard Laplacian; and these span the subspace in $\mathbb{L}$ of $G$-invariant sections of $\mathcal{I}$.}
\end{lemma}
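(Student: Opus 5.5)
The plan is to restrict the variational framework of Lemma~4.1 to the closed subspaces of $G$-invariant sections. With $\mathbb{V}$ the product $\mathbb{R}$-bundle and $\mathcal{R}=0$, define the averaging operator
\[
P s = \frac{1}{|G|}\sum_{g\in G} g\cdot s ,\qquad (g\cdot s)(x)=\delta_g\, s(g(x)).
\]
Because each $g$ is an isometry of $S^3$ preserving $\Gamma$, and $\delta_g$ is a fiberwise isometry compatible with the canonical covariant derivative on $\mathcal{I}$ (both are locally $\pm1$-ambiguous, and the sign commutes with $\nabla$), each $g$ acts unitarily on both $\mathbb{L}$ and $\mathbb{H}$. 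The action preserves compact supports in $S^3-\Gamma$, so it passes to the completions. The group law $\delta_g\circ\delta_{g'}=\delta_{g'g}$ makes this an honest group action, so $P$ is an orthogonal projection in both Hilbert spaces. Let $\mathbb{L}^G=P\mathbb{L}$ and $\mathbb{H}^G=P\mathbb{H}$; both are closed, and $\mathbb{H}^G$ is dense in $\mathbb{L}^G$ because $P$ maps a dense set of compactly supported sections into $\mathbb{H}^G$ and is $\mathbb{L}$-continuous.

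The key point is that $\mathcal{E}$ and the $\mathbb{L}$ inner product are $G$-invariant. There are two ways to use this. The first is to rerun the min--max procedure of Lemma~4.1 on $\mathbb{H}^G$ with the constraint $\|\psi\|_{\mathbb{L}}=1$. The Rellich compactness still holds on this closed subspace, so one obtains a discrete, finite-multiplicity sequence of critical values and $G$-invariant critical points. The second is to take the orthonormal eigenbasis of Lemma~4.1 and observe that $P$ commutes with the weak eigen-equation~\eqref{eq:4.5}. Indeed, if $\psi$ is an eigensection, then for $\eta\in\mathbb{H}$ one has $\int\langle\nabla\eta,\nabla P\psi\rangle=\int\langle\nabla P\eta,\nabla\psi\rangle=\mathtt{E}\int\langle P\eta,\psi\rangle=\mathtt{E}\int\langle\eta,P\psi\rangle$, using that $P$ is self-adjoint for both pairings. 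So each finite-dimensional eigenspace is $P$-invariant and splits into its $G$-invariant part and the orthogonal complement. Choosing bases adapted to this splitting in every eigenspace gives an orthonormal eigenbasis of $\mathbb{L}$ in which each element is either $G$-invariant or in $\ker P$. It follows that the $G$-invariant members span $P\mathbb{L}=\mathbb{L}^G$. Any critical point of $\mathcal{E}$ restricted to $\mathbb{H}^G$ is in fact an eigensection in the sense of~\eqref{eq:4.5} for all $\eta\in\mathbb{H}$, since the equation tested against $\eta-P\eta$ vanishes by the same symmetry.

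Countability is immediate from Lemma~4.1, since $\mathbb{L}$ is separable and the eigenvalues are discrete with finite multiplicity. Infinitude follows because $\mathbb{L}^G$ is infinite dimensional. As noted in the preceding paragraph of the paper, averaging a section supported in a small ball inside a fundamental domain gives a nonzero invariant section, and invariant sections supported in disjoint balls there are linearly independent. Since the invariant eigensections span the infinite-dimensional space $\mathbb{L}^G$ and each eigenspace is finite dimensional, there are infinitely many of them.

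The step I expect to need the most care is verifying that the lifted $G$ action genuinely preserves $\mathbb{H}$ and commutes with $\nabla$. The concern is that $\delta_g$ is only defined up to the $\pm1$ ambiguity, and it must be chosen consistently over all of $S^3-\Gamma$ so that $g\cdot s$ is again a smooth section. I would handle this by using local trivializations over balls in $S^3-\Gamma$. In such a trivialization $\delta_g$ is a locally constant $\pm1$, so $|\nabla(g\cdot s)|=|\nabla s|\circ g$ pointwise. Finiteness of the $\mathbb{H}$ norm and the Hardy-type inequality~\eqref{eq:4.2} are then preserved because $\dist(\cdot,\Gamma)$ is $G$-invariant.
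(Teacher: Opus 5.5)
Your proposal is correct. Your first route, together with your remark that a critical point of $\mathcal{E}$ on $\mathbb{H}^G$ satisfies \eqref{eq:4.5} for every $\eta\in\mathbb{H}$ because testing against $\eta-P\eta$ gives zero, is exactly the paper's proof. The paper reruns the min--max of Lemma~4.1 on the $G$-invariant part of $\mathbb{H}$. It then observes that an $\eta$ which is $\mathbb{L}$-orthogonal to the invariant subspace is also $\mathbb{H}$-orthogonal to it, so both sides of \eqref{eq:4.5} vanish for such $\eta$.

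Your second route is genuinely different. It takes the eigenbasis of Lemma~4.1 as a black box and uses only that $P$ is self-adjoint for both the $\mathbb{L}$ and $\mathbb{H}$ pairings. That forces $P$ to preserve each finite-dimensional eigenspace. Spanning of $\mathbb{L}^G$ and countability then follow at once from the $\mathbb{L}$-continuity of $P$ applied to the basis expansion. This needs no second variational argument and no identification of the $\mathbb{L}$-closure of $\mathbb{H}^G$.

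The paper's route does need that identification. The paper leaves it implicit, and your density argument supplies it. In return, the paper's route gives a direct min--max characterization of the $G$-invariant eigenvalues, for instance the lowest one as the infimum of $\mathcal{E}$ over invariant sections of unit $\mathbb{L}$-norm. Both routes carry over verbatim to Lemma~5.2 once $\mathcal{R}$ is assumed $G$-equivariant.
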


Lemma~5.1 follows as an instance of the next lemma which makes an analogous assertion for $(\nabla^{\dagger}\nabla + \mathcal{R})$–eigenvectors.  The upcoming assertion requires \emph{a priori} that $G$ satisfies two constraints: The first requires the $G$–action on $S^3-\Gamma$ lifts to a fiberwise, isometric action on the total space of both  $\mathbb{V}$ and $\mathbb{V}\otimes\mathcal{I}$. The second is that the operator $(\nabla^{\dagger}\nabla + \mathcal{R})$ when acting on the sections of $\mathbb{V}\otimes\mathcal{I}$ is $G$–equivariant.

\begin{lemma}
\emph{Suppose that $G$ is a finite subgroup of $\mathrm{SO}(4)$ that maps $\Gamma$ to itself.  Assuming the preceding two constraints on the group $G$, then there exists an infinite (but countable) set of $G$-invariant $(\nabla^{\dagger}\nabla + \mathcal{R})$–eigensections in $\mathbb{H}$; and these span the subspace in $\mathbb{L}$ of $G$-invariant sections of $\mathbb{V}\otimes\mathcal{I}$.}
\end{lemma}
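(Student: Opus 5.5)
The plan is to combine Lemma~4.1 with a symmetry argument: $G$ acts unitarily on $\mathbb{L}$ and $\mathbb{H}$ and commutes with the operator, so each eigenspace splits into $G$-invariant and non-invariant parts.

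\textbf{Step 1: the action of $G$.} For $g\in G$ and a section $\psi$ of $\mathbb{V}\otimes\mathcal{I}$, write $(g\cdot\psi)(x)=\delta_g\,\psi(g(x))$, where $\delta_g$ now denotes the lifted fiberwise isometry on $\mathbb{V}\otimes\mathcal{I}$.
\begin{itemize}
\item Because $g$ is an isometry of $S^3$ preserving $\Gamma$, and $\delta_g$ is a fiberwise isometry, this action preserves compactly supported smooth sections on $S^3-\Gamma$.
\item It preserves the $\mathbb{L}$-norm.
\item By the equivariance assumption, $(\nabla^{\dagger}\nabla+\mathcal{R})$ commutes with $g$, and the action preserves both $\int|\nabla\psi|^2$ and $\int\langle\psi,\mathcal{R}\psi\rangle$.
\end{itemize}
Consequently $G$ acts by isometries on $\mathbb{H}$ and $\mathbb{L}$, and the quadratic form $\mathcal{E}$ from \eqref{eq:4.4} is $G$-invariant. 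Replacing $\eta$ by $g^{-1}\cdot\eta$ in \eqref{eq:4.5} shows that if $\psi$ is an eigensection with eigenvalue $\mathtt{E}$, so is $g\cdot\psi$. Hence each eigenspace, which is finite dimensional by Lemma~4.1, is a finite-dimensional representation of $G$.

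\textbf{Step 2: the averaging projector.} Define $P=|G|^{-1}\sum_{g}g$, the $\mathbb{L}$-orthogonal projection onto the $G$-invariant subspace $\mathbb{L}^G$. Since $P$ commutes with the operator, it maps each eigenspace to its invariant part and also preserves $\mathbb{H}$. Now choose in each eigenspace an orthonormal basis adapted to the splitting into the image and kernel of $P$. By Lemma~4.1 these bases together form an orthonormal basis of $\mathbb{L}$.

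Take any $s\in\mathbb{L}^G$ and expand it as $s=\sum_k c_k\psi_k$ in this basis. Applying the bounded operator $P$ term by term gives
\[
s=Ps=\sum_k c_kP\psi_k .
\]
The terms with $P\psi_k=0$ drop out, so $s$ lies in the closed span of the $G$-invariant eigensections. This is the spanning assertion. Countability is automatic, since Lemma~4.1 gives a discrete spectrum with finite multiplicities.

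\textbf{Step 3: infinitely many invariant eigensections.} It remains to show $\mathbb{L}^G$ is infinite dimensional; Step~2 then forces infinitely many linearly independent $G$-invariant eigensections. Follow the construction described before Lemma~5.1:
\begin{itemize}
\item Pick a point $x_0$ not fixed by any nontrivial element of $G$ (the fixed sets of nontrivial isometries are proper great spheres, so such points are generic) and not on $\Gamma$.
\item Choose a ball $B$ around $x_0$ whose translates $g(B)$ are pairwise disjoint and miss $\Gamma$.
\item For each compactly supported smooth section $\sigma$ supported in $B$, form the average $\sum_g g\cdot\sigma$. Its restriction to $B$ is $\sigma$ itself.
\end{itemize}
The map $\sigma\mapsto\sum_g g\cdot\sigma$ is therefore injective from an infinite-dimensional space into $\mathbb{H}^G$.

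\textbf{Expected obstacle.} The only delicate point is the bookkeeping of the lift: one must check that the isomorphisms $\delta_g$ compose correctly, so that $g\mapsto g\cdot$ is a genuine representation and $P$ is well defined. This is exactly what the first constraint in the lemma guarantees. Everything else is routine spectral theory built on Lemma~4.1.
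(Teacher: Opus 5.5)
Your argument is correct, but it is organized differently from the paper's. The paper does not use the full eigenbasis of Lemma~4.1. It reruns the min--max procedure with $\mathcal{E}$ restricted to the $G$-invariant part of $\mathbb{H}$. This directly produces an $\mathbb{L}$-orthonormal family of invariant sections spanning the invariant part of $\mathbb{L}$, but a priori these satisfy \eqref{eq:4.5} only for invariant test sections $\eta$. The paper then upgrades them to genuine eigensections: it writes an arbitrary $\eta$ as its $G$-average plus an $\mathbb{L}$-orthogonal remainder, and notes that both sides of \eqref{eq:4.5} vanish on the remainder.

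You instead take Lemma~4.1 as given, show that each finite-dimensional eigenspace $V_{\mathtt{E}}$ is $G$-stable, and project with the averaging operator. The key input is the same in both arguments: the $\mathbb{L}$ inner product and the bilinear form in \eqref{eq:4.5} are $G$-invariant, so invariant sections decouple from the orthogonal complement of the invariant subspace. It just enters at a different step. For you it is used to show that $g$ maps eigensections to eigensections; for the paper it is used to extend the weak eigenvalue equation from invariant to arbitrary test sections.

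Your route is shorter, reuses Lemma~4.1 wholesale, and makes the structure transparent: the invariant eigenvalues are exactly those $\mathtt{E}$ with $V_{\mathtt{E}}^G\neq 0$, with multiplicity $\dim V_{\mathtt{E}}^G$. Your Step~3 also gives an explicit proof that $\mathbb{L}^G$ is infinite dimensional, which the paper's proof leaves to the remarks preceding Lemma~5.1. The paper's route is the standard symmetric-criticality argument; it is self-contained on the invariant subspace and would work without prior knowledge of the full spectrum.

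A minor point: you do not need invariance of $\int|\nabla\psi|^2$ separately. Equivariance of the operator together with $\mathbb{L}$-unitarity already gives invariance of $\mathcal{E}$, and \eqref{eq:4.2} then makes each $g$ bounded on $\mathbb{H}$, which is all your Steps~1--3 actually use.
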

\noindent\textit{Proof of Lemmas 5.1 and 5.2.}\; Use the same min–max eigenvalue characterization as for the proof of Lemma~4.1 after restricting the domain of $\mathcal{E}$ to the subspace in $\mathbb{H}$ of $G$–invariant sections (with $\mathbb{L}$–norm equal to $1$). Doing that leads to an $\mathbb{L}$–orthonormal set of $G$-invariant sections from $\mathbb{H}$ with the following two properties:
\begin{itemize}
\item Any $G$-invariant section from $\mathbb{L}$ is a linear combination of sections from this set.
\item If $\psi$ is from this set, and if $\eta$ is any $G$-invariant eigensection from $\mathbb{H}$, then (4.5) holds with $\mathtt{E}$ being independent of $\eta$.\listeqno\label{eq:5.3}
\end{itemize}

The two lemmas follow from \eqref{eq:5.3} if it is the case that \eqref{eq:4.5} holds with the same number $\mathtt{E}$ when $\eta$ is \emph{not} $G$-invariant.  But this follows because any given section of $\mathbb{L}$ can be written as the sum of a $G$-invariant part (obtained by averaging over the $G$-action) and a part that is orthogonal to the subspace of $G$-invariant elements.  In particular, if $\eta$ is orthogonal to the $G$-invariant subspace in $\mathbb{L}$, then it will be $\mathbb{H}$–orthogonal to that subspace; and this implies that both sides of \eqref{eq:4.5} are zero for any such element $\eta$.

\medskip

What follows describes a relevant example for Lemma~5.2.  Take $\Gamma$ as in the previous example (the intersection of the $x_3 = x_4 = 0$ plane with a ball in $S^3$ centered on that plane).  Likewise, take $G = \mathbb{Z}/m\mathbb{Z}$ with $m$ being an odd integer.  Introduce coordinates $z$ (a complex coordinate) and $t$ (a real coordinate) on that ball with $z = x_3 + \mathrm{i}x_4$ as before and with $t$ defined by writing $x_1 + \mathrm{i}x_2$ as $(1 - |z|^2)^{1/2} e^{\mathrm{i}t}$ and such that $t = 0$ at the ball's center.  Take the vector space $\mathbb{V}$ to be $T^*S^3$.  The differential forms $\mathrm{d}t$ and the real and imaginary parts of $\mathrm{d}z$ span $T^*S^3$ on that ball.

The $\mathbb{Z}/m\mathbb{Z}$ action on $S^3$ lifts canonically to an action on $T^*S^3$ via pull–back; and the generator $g = e^{2\pi\mathrm{i}/m}$ acts according to the rule whereby $g^*\mathrm{d}t = \mathrm{d}t$ and $g^*\mathrm{d}z = e^{2\pi\mathrm{i}/m}\,\mathrm{d}z$. As noted previously, the $\mathbb{Z}/m\mathbb{Z}$ action lifts to an action on $\mathcal{I}$ if $m$ is odd.  Thus, if $m$ is odd, then the action lifts to an action on $T^*S^3 \otimes \mathcal{I}$.  As for invariant sections:  A section of $T^*S^3 \otimes \mathcal{I}$ can be written as
\begin{align}\label{eq:5.4}
w_t\,\mathrm{d}t
\;+\;
\frac{1}{2}\,(w_1 - \mathrm{i}w_2)\,\mathrm{d}z
\;+\;
\frac{1}{2}\,(w_1 + \mathrm{i}w_2)\,\mathrm{d}\bar{z} ,
\end{align}
where $w_t$, $w_1$ and $w_2$ are sections of $\mathcal{I}$ (they must pointwise obey \eqref{eq:5.1} where $z \neq 0$).  If this section is sent to itself via the $\mathbb{Z}/m\mathbb{Z}$ action, then this must be the case for $w_t$, whereas
$w_1$ and $w_2$ must be such that
\begin{align}\label{eq:5.5}
\delta_g\bigl(g^*(w_1 - \mathrm{i}w_2)\bigr)\,e^{2\pi\mathrm{i}/m}
\;=\;
w_1 - \mathrm{i}w_2 .
\end{align}

To find sections $w_1$ and $w_2$ obeying \eqref{eq:5.5}, consider the case where
\begin{align}\label{eq:5.6}
w_1
=
a_1\,\frac{1}{\sqrt{|z|}}\,z^{k}
\;+\;
\bar{a}_1\,\bar{z}^{\,k-1}\sqrt{|z|}
\qquad\text{and}\qquad
w_2
=
a_2\,\frac{1}{\sqrt{|z|}}\,z^{k'}
\;+\;
\bar{a}_2\,\bar{z}^{\,k'-1}\sqrt{|z|} ,
\end{align}
where $a_1$ and $a_2$ are complex numbers, and where $k$ and $k'$ are positive integers.  This choice for $w_1$ and $w_2$ obeys \eqref{eq:5.5} if and only if $k = k'$ and $a_1$ and $a_2$ are such that
\begin{align}
&-\,e^{\pi\mathrm{i}(2k-1)/m}\,e^{2\pi\mathrm{i}/m}\,(a_1 - \mathrm{i}a_2)
\;=\;
(a_1 - \mathrm{i}a_2)\nonumber\\
\text{and}
\ \ \ &-\,e^{-\pi\mathrm{i}(2k-1)/m}\,e^{2\pi\mathrm{i}/m}\,(\bar{a}_1 - \mathrm{i}\bar{a}_2)
\;=\;
(\bar{a}_1 - \mathrm{i}\bar{a}_2) .
\end{align}

These preceding two equations for $a_1$ and $a_2$ are solvable if and only if either $k \equiv \frac{1}{2}(m-1) \pmod{m}$ or $k \equiv \frac{1}{2}(m+3) \pmod{m}$.  In the first instance, $a_1 = -\,\mathrm{i}a_2$; and in the second instance, $a_1 = \mathrm{i}a_2$.  This is to say the following:

\begin{itemize}
\item When $k \equiv \tfrac{1}{2}(m-1) \pmod{m}$, then
      $
      w_1 - \mathrm{i}w_2 \;=\; -2\mathrm{i}a_2\,\frac{1}{\sqrt{|z|}}\,z^{k}.
      $
\item When $k \equiv \tfrac{1}{2}(m+3) \pmod{m}$, then
      $
      w_1 - \mathrm{i}w_2 \;=\; -2\mathrm{i}\bar{a}_2\,\bar{z}^{\,k-1}\sqrt{|z|}.
      $
\listeqno\label{eq:5.8}
\end{itemize}

When considering derivatives of sections of $\mathcal{I}$:  Remember that derivatives of sections of $\mathcal{I}$ are defined using isometries between $\mathcal{I}$ and the product $\mathbb{R}$–bundle on balls in $S^3-\Gamma$. However, when $\mathcal{I}$ is depicted as in \eqref{eq:5.1} and sections of $\mathcal{I}$ appear as a map from $S^3-\Gamma$ to $\mathbb{C}$ obeying \eqref{eq:5.1}, then the derivative of that section appears as the covariant derivative of the corresponding map to $\mathbb{C}$ with respect to a particular flat connection on $S^3-\Gamma$.  To be explicit, if $u$ denotes the map to $\mathbb{C}$ obeying \eqref{eq:5.1}, then the relevant covariant derivative is 
\begin{align}\label{eq:5.9}
\nabla u = \mathrm{d}u - \frac{\mathrm{i}}{2}\,\mathrm{d}\theta\,u
\end{align}
with $\mathrm{d}$ being the usual derivative on maps to $\mathbb{C}$ and with $\theta$ denoting the argument of the coordinate $z$ (write $z$ in polar form as $|z|\mathrm{e}^{\mathrm{i}\theta}$).  For example, $u_0 \equiv \sqrt{z}\,\frac{1}{\sqrt{|z|}}$ is a unit length, \textit{covariantly constant} section of $\mathcal{I}$ on the complement of the negative real axis in $\mathbb{C}$ (it satisfies \eqref{eq:5.1} there and it is annihilated by the covariant derivative in \eqref{eq:5.9}. What follows is a consequence:\\

\noindent
\emph{The $\mathcal{I}$–valued $1$–form}
$
a \;=\;
\frac{1}{2}(w_1 - \mathrm{i}w_2)\,\mathrm{d}z
\;+\;
\frac{1}{2}(w_1 + \mathrm{i}w_2)\,\mathrm{d}\bar{z}
$
\emph{from  \eqref{eq:5.5} is closed when $k = k'$ with both being $\tfrac{1}{2}(m-1) \pmod{m}$.  This is to say that $\mathrm{d}a = 0$. Moreover, although $\mathrm{d}^*a$ is not zero, its norm is bounded by a constant multiple of $|a|$.}\hfill\listeqno\label{eq:5.10}\\

As explained subsequently, this last observation plays a key role in the upcoming proof of the assertions in Theorem~1.2 regarding $\mathbb{Z}/2$ harmonic, self-dual harmonic $2$-forms.  By way of comparison, if $k = k'$ and both equal $\tfrac{1}{2}(m+3) \bmod(m)$, then $\mathrm{d}a$ is not zero; in fact, the norm of $\mathrm{d}a$ and that of $\mathrm{d}^*a$ are both $O(|z|^{-1}|a|)$.

\section{H\"older Continuity Across the Interior of the Edges of $\Gamma$}

This section states and then proves preliminary results that are needed for the proof of Lemma~4.2 and for later use.  These preliminary results are summarized by the upcoming Lemmas~6.1 and~6.2.  To set the background for what is to come:  A graph in $S^3$ has been specified (it is again denoted by $\Gamma$); and it is assumed implicitly that this graph has even valencies so that there is a corresponding real line bundle $\mathcal{I}$ on $S^3 - \Gamma$.  A vector bundle $\mathbb{V}$ on $S^3$ has also been specified along with fiber metric and metric compatible connection.  As before, $\mathbb{H}$ denotes the Hilbert space completion of the space of smooth sections of $\mathbb{V}\otimes\mathcal{I}$ with compact support in $S^3 - \mathcal{I}$ using the norm whose square is depicted in  \eqref{eq:4.1}.  Also as before, use $\mathbb{L}$ to denote the completion of that same space of sections using the norm whose square is depicted in  \eqref{eq:4.3}.  Supposing that $\mathcal{R}$ denotes a given endomorphism of $\mathbb{V}$ (as in \eqref{eq:4.4}), the promised Lemmas~6.1 and~6.2 concern $(\nabla^{\dagger}\nabla + \mathcal{R})$-eigensections from the Hilbert space $\mathbb{H}$.

By way of a look ahead:  These lemmas concern $(\nabla^{\dagger}\nabla + \mathcal{R})$-eigensections near interior points of edges from the graph $\Gamma$.  Lemma~6.1 concerns the pointwise norm of an eigensection from $\mathbb{H}$ near an interior point of an edge; and Lemma~6.2 concerns the directional derivative of an eigensection from $\mathbb{H}$ in the direction tangent to that nearby edge.  Both Lemma~6.1 and~6.2 assume implicitly that the edges of $\Gamma$ are geodesic arcs.  (Both lemmas are true when the geodesic arc assumption is not made; but the proofs in that generality are not needed for this paper.)

To set the notation for what follows momentarily:  Supposing that $p$ denotes a given point in $S^3 - \Gamma$, the notation has $r_p$ denoting the distance from $p$ to $\Gamma$.  When $p$ is in $\Gamma$ but not a vertex of $\Gamma$, the number $r_p$ denotes the distance from $p$ to the nearest vertex of $\Gamma$.  When $p$ is a vertex of $\Gamma$, then $r_p$ denotes the distance from $p$ to the nearest of the edges of $\Gamma$ that don't have an endpoint at $p$.  An important geometric point to keep in mind regarding the various $p \in \Gamma$ versions of $r_p$ is the following:

\begin{center}\it
\textit{There exists a number to be denoted by $c$ (it is greater than $100$) such that when $p$ is a point from $\Gamma$, then the only edges of $\Gamma$ that intersect the radius $\frac{1}{c} r_p$ ball centered at $p$ are the edges that contain $p$, the intersection of that radius $\frac{1}{c} r_p$ ball centered at $p$ with $\Gamma$ being a union of half-open arcs with $p$ being their common starting point.}\\
\hfill
\listeqno\label{eq:6.1}
\end{center}

Since the geometry is a bit subtle, the subsequent digression to describe it might prove illuminating.  The digression refers to the following diagram, which sketches a 3-dimensional cartoon version of part of a graph in $S^3$, as illustrated in Fig.~\ref{eq:6.2}.

What should first catch your eye in this sketch is a sphere centered at a vertex of \(\Gamma\).
Supposing that \(p\) denotes that vertex, then the radius of this sphere is the corresponding
\(r_p\), where \(r_p\) is the distance from \(p\) to the nearest edge of \(\Gamma\) that is not
incident to \(p\). The interior of this sphere corresponds to the ball in \(S^3\) with distance
less than \(r_p\) from \(p\). This vertex \(p\) has valency 4; its incident edges are the black
lines that have \(p\) as an endpoint. 
The colored cones indicate the regions in \(S^3\) whose
closest point in \(\Gamma\) lies on the corresponding edge, with the distance from a given
point to that closest point \(q\) bounded by \(\frac{1}{8c}r_q\). Note that the solid-angle
cross-sections of these cones are very nearly \(\frac{1}{8c}\). The idea is that, after shrinking
to the ball of radius \(c^{-1}r_p\), only the incident edges remain in view and these conical
regions are uniformly separated.

\begin{figure}[htp]
\centering
\includegraphics[width=0.45\linewidth]{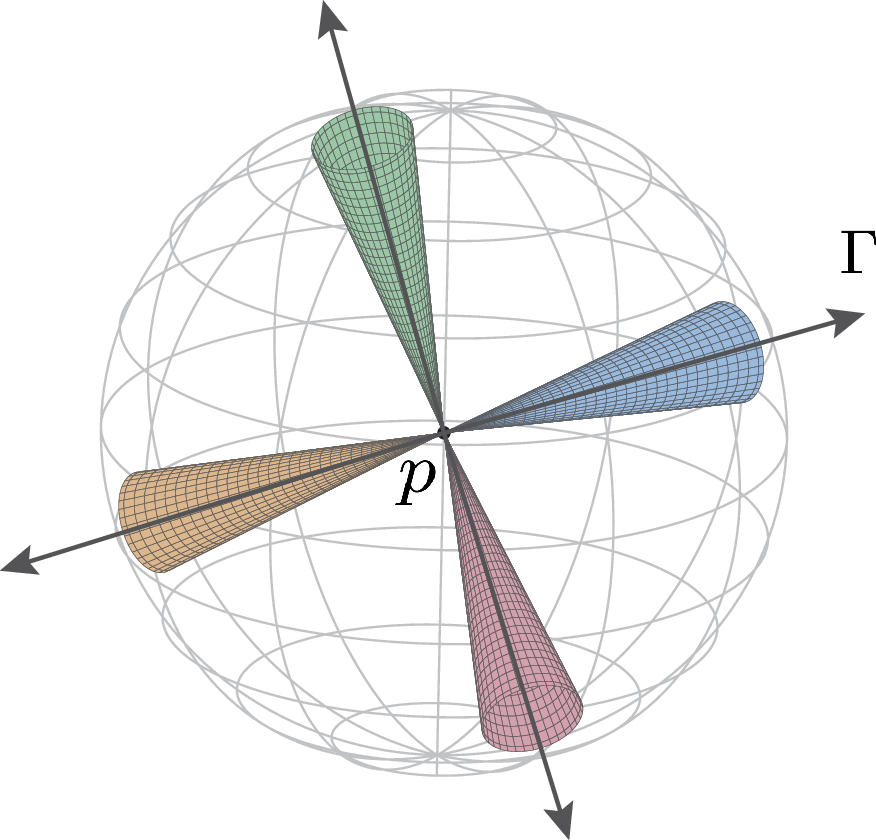}
\caption{Local geometry near a vertex \(p\in V(\Gamma)\). The sphere represents \(\partial B_{r_p}(p)\) and the colored conical regions represent uniformly separated distance cones around the incident edges. The picture shows the valency--4 case.}
\label{eq:6.2}
\end{figure}

\medskip
What follows is the first of the promised lemmas from this section.

\begin{lemma}
\emph{Let $\psi$ denote a $(\nabla^{\dagger}\nabla + \mathcal{R})$-eigensection from $\mathbb{H}$ with $\mathbb{L}$-norm equal to $1$.  If $q$ signifies an interior point of an edge from $\Gamma$, then the function $|\psi|$ on $S^3-\Gamma$ extends as a continuous function across $\Gamma$ in the radius $\frac{1}{4c}\, r_q$ ball centered at $q$; and $|\psi|$ on this ball obeys
\begin{align}
|\psi| \;\leq\; \kappa\,\operatorname{dist}(\,\cdot\,,\Gamma)^{1/2}
\end{align}
with $\kappa$ being independent of the chosen point $q$.}
\end{lemma}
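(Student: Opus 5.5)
The plan is to combine a global pointwise bound on $|\psi|$ with a monotonicity/frequency argument near the edge, using the Hardy-type inequality \eqref{eq:4.2} as the crucial input.

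\emph{Step 1: global bound.} The first step is a uniform bound on $|\psi|$. Where $\psi$ is smooth, the eigen-equation and Kato's inequality give
\[
\Delta|\psi|^2 = 2|\nabla\psi|^2 + 2\langle\psi,(\mathcal{R}-\mathtt{E})\psi\rangle \ge -c_0|\psi|^2 .
\]
Thus $|\psi|^2$ is a weak subsolution of $\Delta u + c_0 u \ge 0$ on all of $S^3$. Testing is legitimate because $|\psi|\in L^2_1(S^3)$: approximate by compactly supported sections, noting that $\Gamma$ has zero capacity and that \eqref{eq:4.2} controls the cutoff errors. Moser iteration then gives $\sup|\psi|\le c_0$, independent of $q$.

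\emph{Step 2: local model and vanishing order.} Work in the ball $B$ of radius $\frac{1}{2c}r_q$ about $q$. By \eqref{eq:6.1} this ball meets $\Gamma$ only in the geodesic edge through $q$. Use coordinates $(t,z)$ as in Section~\ref{sec:5}, with the edge equal to $\{z=0\}$ and $\mathcal{I}$ trivialized by $u_0=\sqrt{z}/\sqrt{|z|}$. Expanding in the angle $\theta$, sections of $\mathcal{I}$ contain only half-integer Fourier modes $e^{\mathrm{i}(n+1/2)\theta}$. The lowest such mode gives the model decay $|z|^{1/2}$; this is the source of the exponent $1/2$ and is where non-triviality on linking circles enters.

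To make Step 2 quantitative, centre balls $B_s(p)$ at points $p$ of the edge with $\mathrm{dist}(p,q)\le\frac{1}{4c}r_q$ and $s\le \frac{1}{4c}r_q$. Define
\[
H(s)=s^{-2}\int_{\partial B_s}|\psi|^2,\qquad D(s)=s^{-1}\int_{B_s}\bigl(|\nabla\psi|^2+\langle\psi,(\mathcal{R}-\mathtt{E})\psi\rangle\bigr),
\]
and the frequency $N=D/H$. The Rellich–Pohozaev identity for $\nabla^\dagger\nabla$ shows $N$ is almost monotone. The integration by parts does not get hung up on $\Gamma$ because $\psi\in\mathbb{H}$ and, the edge being a geodesic, the radial vector field from $p$ is tangent to $\Gamma$; this is the point where the geodesic assumption is used. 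A lower bound $N(s)\ge \frac12 - c s$ comes from the Hardy/Poincaré inequality on $\partial B_s\setminus\Gamma$. On the unit $2$-sphere minus two antipodal points, the first eigenvalue for $\mathcal{I}$-valued sections is $\frac12\cdot\frac32$, giving homogeneity $\ge\frac12$. Integrating $(\log H)'\ge (2N-c s)/s$ from $s$ to $\frac{1}{4c}r_q$ and using Step 1 yields
\[
\frac{1}{A(s)}\int_{\partial B_s(p)}|\psi|^2\le \kappa\, s .
\]

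\emph{Step 3: pointwise estimate and continuity.} Take $x$ with $d=\mathrm{dist}(x,\Gamma)$, and let $p$ be the nearest point of the edge. The ball $B_{d/2}(x)$ avoids $\Gamma$, so the subsolution property gives
\[
|\psi(x)|^2\le c\, d^{-3}\int_{B_{d/2}(x)}|\psi|^2 \le c\, d^{-3}\int_{B_{2d}(p)}|\psi|^2 \le \kappa\, d .
\]
Hence $|\psi|\le\kappa\,\mathrm{dist}(\cdot,\Gamma)^{1/2}$, and setting $|\psi|=0$ on $\Gamma$ gives the continuous extension. Continuity off $\Gamma$ is automatic by smoothness.

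The main obstacle is Step 2. It requires establishing the frequency monotonicity rigorously for $\mathbb{H}$-sections, and in particular justifying the boundary terms along the edge via \eqref{eq:4.2}. It also requires the sharp spectral constant $\frac12$ on the punctured sphere; anything weaker degrades the exponent. If the monotonicity proves delicate, I would instead compare $\psi$ with the explicit modes $\mathrm{Re}(a z^{k-1/2})$ via a barrier argument for the subsolution $|\psi|$ in the $z$-plane, uniformly in $t$.
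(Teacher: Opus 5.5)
Your route is genuinely different from the paper's, and it works after one repair. The paper uses no frequency function for this lemma. It constructs a Dirichlet Green's function $G_{\mathtt{B}}$ for $\nabla^\dagger\nabla$ on $\mathcal{I}$-twisted sections in a small ball about $q$ (Lemmas~6.3--6.4). It proves $|G_{\mathtt{B}}(x,y)|\le\kappa\,\dist(x,\Gamma)^{1/2}\dist(y,\Gamma)^{-3/2}$ by a reflection argument (for $\mathbb{V}=\mathbb{R}$ the Green's function vanishes on a half-plane bounded by the edge) together with the barrier $\sqrt{\rho}\cos(\theta/2)$. It then perturbs to general $\mathbb{V}$ and estimates the Green's-function representation of $\chi_q\psi$ region by region.

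In the paper the implications run opposite to yours. The bound $N(0)\ge\frac12$ at edge points (Lemma~7.1(c)) is deduced from Lemma~6.1, and the monotonicity Lemma~7.2 uses Lemmas~6.1 and~6.2. What your approach buys:
\begin{itemize}
\item It avoids the Green's function entirely.
\item The exponent $\frac12$ comes transparently from the spectral value $\frac34$ on the twice-punctured sphere.
\item The geodesic hypothesis is used only to make the punctures a fixed antipodal pair on every geodesic sphere about $p$.
\item It gives $\kappa\sim r_q^{-1/2}$, against the factor $1+r_q^{-1}$ at the end of the paper's proof.
\end{itemize}
Neither constant is literally independent of $q$, since both arguments work at scale $r_q$.

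The repair is to delete the almost-monotonicity claim: it is unjustified as stated, and your argument never uses it. In the Rellich--Pohozaev identity centred at an edge point, consider the cutoff error on the tube $U_\varepsilon=\{\frac{\varepsilon}{4}<\dist(\cdot,\Gamma)<\varepsilon\}$. Tangency of $x$ to the edge removes everything except the cross term, leaving $\varepsilon^{-1}\int_{U_\varepsilon}|x|\,|\nabla_\nu\psi|\,|\nabla_t\psi|$. From $\psi\in\mathbb{H}$ and (4.2) you only get $\int_{U_\varepsilon}|\nabla\psi|^2\to0$, which does not beat the factor $\varepsilon^{-1}$. The paper kills this term using $\nabla_t\psi\in\mathbb{H}$ (Step~9 of the proof of Lemma~7.2).

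Integrating $(\log H)'=2N/s$ needs only a lower bound on $N(s)$ at each fixed $s$. That bound, however, needs more than Poincar\'e on $\partial B_s$: tangential energy controls $\int_0^s r^{-2}\int_{\partial B_r}|\psi|^2\,dr$, not $\int_{\partial B_s}|\psi|^2$. Prove it as follows.
\begin{itemize}
\item Use geodesic polar coordinates about $p$, with metric $dr^2+\sin^2r\,g_{S^2}$, and a radially parallel frame for $\mathbb{V}$, whose connection form is $O(r)$.
\item Expand each component in the $\mathcal{I}$-eigensections of $S^2$ minus two antipodal points; all eigenvalues satisfy $\lambda_j\ge\frac34$.
\item For each mode use $\int_0^s(|a'|^2+\lambda r^{-2}|a|^2)r^2\,dr\ge\mu s|a(s)|^2$ with $\mu(\mu+1)=\lambda$. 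To prove it, substitute $a=r^\mu b$ and integrate by parts.
\item Prove the inequality first for sections supported off $\Gamma$, then pass to $\mathbb{H}$ by density, since both sides are $\mathbb{H}$-continuous.
\end{itemize}
This gives $s\int_{B_s}|\nabla\psi|^2\ge(\frac12-cs)\int_{\partial B_s}|\psi|^2-cs^2\int_{B_s}|\psi|^2$.

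The term $\langle\psi,(\mathcal{R}-\mathtt{E})\psi\rangle$ then enters as an additive error, not the multiplicative $-cs$ you wrote. Your Step~1 $L^\infty$ bound absorbs it: $H'\ge\frac{1-cs}{s}H-Cs$, hence $H(s)\le C(s_0^{-1}+s_0)\,s$ for $s\le s_0=r_q/(4c)$. Steps~1 and~3 are fine as written.
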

This lemma can be proved using the technology that was introduced in \cite{HMT2}, and also \cite{franceschini2026minimal}.  A proof using a different approach is given below.

The second lemma introduces additional notation:  To define this new notation, let $q$ denote a point in the interior of an edge of $\Gamma$.  The part of $\Gamma$ in the radius $\tfrac{1}{4c} r_q$ ball centered at $q$ is then a geodesic arc which is an arc in a great circle.  Denote that great circle by $\gamma$.  Fix Euclidean coordinates $x = (x_1,x_2,x_3,x_4)$ for $\mathbb{R}^4$ so that $\gamma$ appears as the radius $1$ circle in the $x_1,x_2$ plane, i.e.,~$x_3 = x_4 = 0$. The $\mathbb{R}^4$ vector field $x_1 \frac{\partial}{\partial x_2} - x_2 \frac{\partial}{\partial x_1}$ is tangent to $S^3$ along $S^3$ (which is the $|x| = 1$ locus in $\mathbb{R}^4$) and it is tangent to $\gamma$ along $\gamma$.  In addition, this vector field along $S^3$ generates an isometry of $S^3$ which maps $\gamma$ to itself as the group of constant rotations of $\gamma$.  With respect to the coordinates $(z,t)$ from Section~5 for a ball centered at a point in $\gamma$, the vector field $x_1 \frac{\partial}{\partial x_2} - x_2 \frac{\partial}{\partial x_1}$ is just $\frac{\partial}{\partial t}$.  (By way of a reminder:  The complex coordinate $z$ is the restriction of $x_3 + \mathrm{i}x_4$ to $S^3$, and $t$ comes by writing the complex coordinate $x_1 + \mathrm{i}x_2$ on that ball as $(1 - |z|^2)^{1/2} \mathrm{e}^{\mathrm{i}t}$ with $t = 0$ at ball's center point.)

More notation for the second lemma:  Given $q$ as above, let $\chi_q$ denote a favorite non-increasing, smooth function with values in $[0,1]$ that is equal to $1$ where the distance to $q$ is less than $\tfrac{1}{8c} r_q$, equal to $0$ where the distance to $q$ is greater than $\tfrac{1}{4c} r_q$, and such that $|d\chi_q| \le 100c\,\tfrac{1}{r_q}$.
\begin{lemma}\label{lem:6.2}
\emph{Let $\psi$ denote a $(\nabla^{\dagger}\nabla + \mathcal{R})$-eigensection from $\mathbb{H}$ with $\mathbb{L}$-norm equal to $1$.  Restricting to the radius $\tfrac{1}{8c} r_q$ ball centered at $q$, let $\nabla_t\psi$ denote the directional derivative of $\psi$ along the vector field $\tfrac{\partial}{\partial t}$. Then $\chi_q \nabla_t\psi$ is in the Hilbert space $\mathbb{H}$; and its $\mathbb{H}$-norm is at most $\kappa\,\tfrac{1}{\sqrt{r_q}}$ with $\kappa$ being independent of $q$.}
\end{lemma}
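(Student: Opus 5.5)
The plan is a difference-quotient argument in the $t$ direction, exploiting the fact that $\partial_t$ generates a one-parameter group of isometries $\{\Phi_s\}$ of $S^3$ that preserves the great circle $\gamma$. On the ball $B$ of radius $\tfrac{1}{4c}r_q$ centered at $q$, property \eqref{eq:6.1} says $\Gamma\cap B$ is a subarc of $\gamma$. So for $|s|$ small compared with $r_q/c$, $\Phi_s$ maps $B\setminus\Gamma$ into a slightly larger ball, still with only that arc of $\gamma$ removed. The flow also lifts canonically to $\mathcal{I}$ there: the model \eqref{eq:5.1} depends only on $z$, and $\Phi_s$ fixes $z$. It lifts to $\mathbb{V}$ by parallel transport along the flow lines. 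Define
\[
\psi_s=\chi_q\,\tfrac{1}{s}\bigl(\Phi_s^*\psi-\psi\bigr),
\]
which is a compactly supported element of $\mathbb{H}$ because $\psi\in\mathbb{H}$ and $\Phi_s$ is an isometry preserving $\Gamma\cap B$. The goal is a bound $\|\psi_s\|_{\mathbb{H}}\le \kappa\, r_q^{-1/2}$ uniform in $s$. Weak compactness in $\mathbb{H}$ then gives a limit, and that limit must equal $\chi_q\nabla_t\psi$ because $\psi_s\to\chi_q\nabla_t\psi$ in the sense of distributions on $S^3-\Gamma$.

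To get the uniform bound, I will test the weak eigen-equation \eqref{eq:4.5} against $\eta=\chi_q^2\,\Phi_{-s}^*\bigl(\tfrac{1}{s}(\Phi_s^*\psi-\psi)\bigr)$, or more simply use $\eta=\chi_q\psi_s$ together with the translated equation. Because $\Phi_s$ is an isometry, $\Phi_s^*\psi$ satisfies \eqref{eq:4.5} up to an error coming from the commutator of $\Phi_s^*$ with $\nabla$ and $\mathcal{R}$. Those error terms are $O(1)$ smooth endomorphisms, since $\partial_t$ is a Killing field and the connection on $\mathbb{V}$ is smooth across $\Gamma$. Subtracting the two equations, dividing by $s$ and integrating by parts against the cutoff leaves the standard Caccioppoli-type estimate
\[
\int|\nabla\psi_s|^2\le c\Bigl(\int |d\chi_q|^2\,\bigl|\tfrac{1}{s}(\Phi_s^*\psi-\psi)\bigr|^2+\int_{\operatorname{supp}\chi_q}\bigl(|\psi|^2+|\nabla\psi|^2\bigr)\Bigr).
\]
I bound the difference quotient in $L^2$ by $\int_{B'}|\nabla_t\psi|^2\le\int_{B'}|\nabla\psi|^2$ on a slightly larger ball $B'$. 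That in turn is controlled by $\mathcal{E}(\psi)+c\|\psi\|_{\mathbb{L}}^2\le c(\mathtt E+1)$.

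This crude route yields $|d\chi_q|^2\lesssim r_q^{-2}$, so the bound becomes $\kappa r_q^{-1}$ rather than the claimed $\kappa r_q^{-1/2}$. To sharpen it I will use Lemma~6.1 in a local energy bound: $\int_{B'}|\nabla\psi|^2\le c\,r_q$, with $B'$ of radius comparable to $r_q/c$. To prove this, multiply the eigen-equation by $\chi'^2\psi$ for a cutoff $\chi'$ adapted to $B'$. This gives
\[
\int\chi'^2|\nabla\psi|^2\le c\int|d\chi'|^2|\psi|^2+c\int_{B'}|\psi|^2.
\]
By Lemma~6.1, $|\psi|^2\le\kappa^2\operatorname{dist}(\cdot,\Gamma)$, and $\operatorname{dist}(\cdot,\Gamma)\lesssim r_q$ on $B'$. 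Since the volume of $B'$ is of order $r_q^3$, the first term on the right is at most $r_q^{-2}\cdot r_q\cdot r_q^3=r_q^2$.

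Using this local energy bound in the difference-quotient estimate gives $r_q^{-2}\cdot r_q^2=O(1)$, which is even better than required. So the final bound is at most $\kappa r_q^{-1/2}$ once one allows for $r_q\le\pi$, and a more careful bookkeeping of which ball the energy is measured on will reproduce the stated exponent.

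The main obstacle I expect is justifying that the translated section and the test functions genuinely lie in $\mathbb{H}$, and that the integrations by parts do not pick up boundary contributions along $\Gamma$. This is where the geodesic-arc hypothesis enters: the flow $\Phi_s$ preserves $\Gamma$ exactly near $q$, so $\mathbb{H}$ (defined as a completion of sections compactly supported away from $\Gamma$) is mapped to itself, and all pairings are legitimate by density. A secondary point is checking that the lift of $\Phi_s$ to $\mathbb{V}\otimes\mathcal{I}$ commutes with $\nabla$ up to a bounded zeroth-order term, which I will verify in the $(z,t)$ coordinates of Section~5.
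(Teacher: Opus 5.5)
Your proposal is correct and takes essentially the same route as the paper. You pull $\psi$ back along the isometric flow of $\partial_t$ (identified by parallel transport along flow lines), test the eigen-equation against the cut-off difference quotient to get the Caccioppoli bound $\int|\nabla(\chi_q\eta_s)|^2\le c\,r_q^{-2}\int|\eta_s|^2$, control the local energy of $\psi$ by a cutoff argument, and extract a weak $\mathbb{H}$-limit; the only differences are refinements. You keep track of the commutator terms that appear if $\nabla$ and $\mathcal{R}$ are not flow-invariant, which the paper simply ignores by asserting that the pulled-back section solves the same equation. You also use the pointwise bound from Lemma~6.1 to get $\int_{B'}|\nabla\psi|^2\lesssim r_q^2$, and hence an $O(1)$ bound. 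Note that this contradicts your own stated target $c\,r_q$, though either suffices. The paper instead uses $\int_{\mathtt{B}}|\psi|^2\le c_0\rho^3$, which gives $\int_{\mathtt{B}}|\nabla\psi|^2\le c_0\rho$ and exactly the stated $r_q^{-1/2}$.
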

The rest of this subsection has five parts with Part~4 containing the proof of Lemma~6.1 and Part~5 containing the proof of Lemma~6.2.  Part~1 states some direct \emph{a priori} bounds for $(\nabla^{\dagger}\nabla + \mathcal{R})$-eigensections that are used later.  The intervening Parts~2 and~3 then introduce and analyze a Dirichlet Green's function for the operator $\nabla^{\dagger}\nabla$ acting on sections of \(\mathbb{V}\otimes\mathcal{I}\) over a given small radius ball centered on an interior point of an edge of $\Gamma$.  This Green's function is then used in Part~4 to analyze the behavior of a given $(\nabla^{\dagger}\nabla + \mathcal{R})$-eigensection from $\mathbb{H}$ near that interior point of the given edge of $\Gamma$.  Using the Green's function in this way requires some specific bounds for that Green's function near $\Gamma$; and those bounds are relatively straightforward to come by when $\Gamma$ is a geodesic arc because symmetry considerations can be exploited.  That is why the geodesic arc assumption is made in Lemma~6.1.  As for the geodesic arcs in Lemma~\ref{lem:6.2}:  The proof of that lemma uses the bounds from Lemma~6.1, but it also uses the fact that there are isometries of $S^3$ that restrict to any given geodesic (when parametrized by arc-length) as a rigid rotation.

\medskip
\noindent\textit{Part 1:}  Let $\psi$ denote a $(\nabla^{\dagger}\nabla + \mathcal{R})$-eigensection from $\mathbb{H}$ with $\mathbb{L}$-norm equal to $1$, and let \(\mathtt{E}\) denote its eigenvalue. The first observations in this part of Section~6 are the inequalities below for the $\mathbb{L}$-norm of $\nabla\psi$, for a certain weighted version of the $\mathbb{L}$-norm of $\psi$, and for the pointwise norm of $|\psi|$:
\begin{itemize}
\item $\displaystyle \int |\nabla\psi|^2 \;\leq\; c_0(1+\mathtt{E})\,.$
\item $\displaystyle \int \frac{1}{\operatorname{dist}(\,\cdot\,,\Gamma)^2}\,|\psi|^2
       \;\leq\; c_0(1+\mathtt{E})\,.$
\item \it $|\psi|$ extends to $S^3$ as an $L^\infty$ function obeying
      $\;|\psi| \leq c_0(1+\mathtt{E})\,.$
\item \emph{For any point $x \in S^3$:}
      $\displaystyle \int \frac{1}{\operatorname{dist}(\,\cdot\;;x)}\,|\nabla\psi|^2
      \;\leq\; c_0(1+\mathtt{E})\,.$\hfill \listeqno\label{eq:6.3}
\end{itemize}

By way of a proof:  The first of these inequalities follows directly from \eqref{eq:4.5} by taking $\eta$ to be the section $\psi$.  The second inequality in  \eqref{eq:6.3} follows from \eqref{eq:4.2} and the first inequality.  The third and forth inequalities in  \eqref{eq:6.3} are proved using the fact that $\psi$ obeys the equation below on $S^3-\Gamma$:
\begin{equation}\label{eq:6.4}
\nabla^{\dagger}\nabla\psi + \mathcal{R}\psi = \mathtt{E}\,\psi.
\end{equation}
To say more: The identity in \eqref{eq:6.4} implies that $|\psi|^2$ obeys the differential inequality below:
\begin{equation}\label{eq:6.5}
-\tfrac12\,\Delta^{\perp}|\psi|^2 + |\nabla\psi|^2
\;\leq\; c_0(1+\mathtt{E})\,|\psi|^2.
\end{equation}

Now, having chosen a point $x \in S^3 - \Gamma$, multiply both sides of \eqref{eq:6.5}’s inequality by the Green’s function with pole at $x$ for the operator $-\Delta^{\perp} + 1$ acting on $\mathbb{R}$–valued functions on $S^3$. Integrate the result over $S^3$ and then integrate by parts to see that
\begin{align}\label{eq:6.6}
|\psi|^2(x)
\;+\;
\int \frac{1}{\operatorname{dist}(\,\cdot\;;x)}\,|\nabla \psi|^2
\;\leq\;
c_0(1+\mathtt{E}) 
\int \frac{1}{\operatorname{dist}(\,\cdot\;;x)}\,|\psi|^2 .
\end{align}

The integration by parts doesn’t get hung up on $\Gamma$ even when $x$ is from $\Gamma$ because $\psi$ is from $\mathbb{H}$.  This can be justified as done in \cite{T4} using a sequence of bounded approximations to the Green’s function, for example the sequence $\{ G/(1+\tfrac{1}{n}G) \}_{n=1,2,\dots}$ with $G$ denoting just here the $-\Delta^{\perp}+1$ Green’s function.  Use of the Cauchy–Schwarz inequality with \eqref{eq:6.6} and the first bullet in \eqref{eq:6.3}, and with a dimension–$3$ Sobolev inequality leads to the assertions of the third and fourth assertions in \eqref{eq:6.3}.  (That Sobolev inequality says in effect that the fourth root of the integral of the fourth power of an $L^2_1$ function over $S^3$ is no larger than $c_0$ times that function’s $L^2_1$ norm.  The function in this case is $|\psi|$.  Remember in this regard that Kato’s inequality bounds at each point the norm of $d|\psi|$ by the norm of $\nabla\psi$.)

\medskip

\noindent\textit{Part 2:}  
Suppose in what follows that $\mathtt{B}$ is a ball in $S^3$ with radius at most $\frac{\pi}{4}$ (and thus it sits inside a hemisphere).  The set of elements in $\mathbb{H}$ that vanish on $S^3 - \mathtt{B}$ is the completion using the $\mathbb{H}$–norm of the space of smooth sections of $\mathbb{V}\otimes \mathcal{I}$ that have compact support on $\mathtt{B} - (\mathtt{B}\cap\Gamma)$.  This subspace is denoted by $\mathbb{H}_{\mathtt{B}}$. The completion of the space of smooth sections of $\mathbb{V}\otimes \mathcal{I}$ over $\mathtt{B} - (\mathtt{B}\cap\Gamma)$ using the $\mathbb{L}$–norm is denoted by $\mathbb{L}_{\mathtt{B}}$; it is a subspace of the Hilbert space $\mathbb{L}$.

\medskip

The lemma below is the centerpiece of Part~2.

\begin{lemma}\label{lem:6.3}
\emph{There exists $\kappa > 1$ with the following significance:  Let $\mathtt{B}$ denote an open ball in $S^3$ with radius $\rho < \tfrac{\pi}{4}$. Given $\varsigma \in \mathbb{L}_{\mathtt{B}}$, there is a unique element $\psi \in \mathbb{H}_{\mathtt{B}}$ which obeys 
\begin{align}
\nabla^{\dagger}\nabla \psi = \varsigma \quad \text{on } \mathtt{B} - (\mathtt{B}\cap\Gamma),
\end{align}
and is such that the identity
\begin{align}
\int_{\mathtt{B}} \langle \nabla\eta,\nabla\psi\rangle
\;=\;
\int_{\mathtt{B}} \langle \eta,\varsigma\rangle
\end{align}
holds for all elements $\eta \in \mathbb{H}_{\mathtt{B}}$.  This $\psi$ obeys the bound}
\begin{align}
\int_{\mathtt{B}} \Bigl(|\nabla\psi|^2 + \tfrac{1}{\rho^2}|\psi|^2\Bigr)
\;\leq\; \kappa\,\rho^2 \int_{\mathtt{B}} |\varsigma|^2 .
\end{align}
\end{lemma}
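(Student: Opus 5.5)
The plan is to obtain \(\psi\) from the Riesz representation theorem on \(\mathbb{H}_{\mathtt{B}}\), after first proving a Poincaré inequality with the correct scaling in \(\rho\). The inequality to establish is
\begin{align}\label{eq:plan-poinc}
\int_{\mathtt{B}} |\eta|^2 \;\leq\; c_0\,\rho^2 \int_{\mathtt{B}} |\nabla\eta|^2
\qquad\text{for all }\eta\in\mathbb{H}_{\mathtt{B}} ,
\end{align}
with \(c_0\) independent of \(\rho<\tfrac{\pi}{4}\) and of the ball's center.

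To prove \eqref{eq:plan-poinc}, reduce to smooth \(\eta\) with compact support in \(\mathtt{B}-(\mathtt{B}\cap\Gamma)\); these are dense by definition. For such \(\eta\), the function \(|\eta|\) is Lipschitz, compactly supported in \(\mathtt{B}\), and vanishes near \(\Gamma\). Kato's inequality gives \(|d|\eta||\le|\nabla\eta|\). It therefore suffices to have the scalar Dirichlet Poincaré inequality on a geodesic ball of radius \(\rho<\tfrac{\pi}{4}\) in \(S^3\), with constant \(c_0\rho^2\). This inequality is standard. One way is to compare the first Dirichlet eigenvalue of the ball with that of the Euclidean ball; since the ball lies in a hemisphere, the metric in normal coordinates is uniformly comparable to the Euclidean metric. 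Another way is to integrate \(|\eta|^2\) along radial geodesics from the center.

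Notably, the twisting by \(\mathcal{I}\) and the presence of \(\Gamma\) play no role here. The weighted inequality \eqref{eq:4.2} is not needed, because the Dirichlet condition on \(\partial\mathtt{B}\) already supplies the \(\rho^2\) factor. The main point requiring care is uniformity: the constant must be independent of the position of \(\mathtt{B}\) relative to \(\Gamma\). The Kato reduction to a scalar problem on the ball handles this.

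Next, consider the linear functional \(\eta\mapsto\int_{\mathtt{B}}\langle\eta,\varsigma\rangle\) on \(\mathbb{H}_{\mathtt{B}}\). By Cauchy–Schwarz and \eqref{eq:plan-poinc}, it is bounded by \(c_0^{1/2}\rho\,\|\varsigma\|_{\mathbb{L}}\,\|\eta\|_{\mathbb{H}}\). The subspace \(\mathbb{H}_{\mathtt{B}}\) is closed, hence a Hilbert space with inner product \(\int\langle\nabla\cdot,\nabla\cdot\rangle\). The Riesz representation theorem then gives a unique \(\psi\in\mathbb{H}_{\mathtt{B}}\) satisfying the weak identity for all \(\eta\in\mathbb{H}_{\mathtt{B}}\). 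Uniqueness is also clear directly: the difference \(\delta\) of two solutions satisfies \(\int|\nabla\delta|^2=0\), so \(\delta=0\) in \(\mathbb{H}\).

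Taking \(\eta\) smooth with compact support in \(\mathtt{B}-(\mathtt{B}\cap\Gamma)\) shows that \(\nabla^\dagger\nabla\psi=\varsigma\) holds weakly there. Interior elliptic regularity, which is local and unaffected by the flat twist by \(\mathcal{I}\), upgrades this to the equation holding in the usual sense (in \(L^2_{2,loc}\)).

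For the bound, take \(\eta=\psi\). This gives \(\int|\nabla\psi|^2=\int\langle\psi,\varsigma\rangle\le\|\psi\|_{\mathbb{L}}\|\varsigma\|_{\mathbb{L}}\le c_0^{1/2}\rho\,\|\nabla\psi\|_{\mathbb{L}}\|\varsigma\|_{\mathbb{L}}\). Hence \(\int|\nabla\psi|^2\le c_0\rho^2\int|\varsigma|^2\). Applying \eqref{eq:plan-poinc} once more yields \(\rho^{-2}\int|\psi|^2\le c_0\int|\nabla\psi|^2\le c_0^2\rho^2\int|\varsigma|^2\). Adding the two estimates gives the stated bound with \(\kappa=c_0+c_0^2\).

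I expect no serious obstacle. The only step needing attention is the uniform Poincaré constant, which the Kato reduction settles.
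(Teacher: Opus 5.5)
Your proof is correct and is essentially the paper's argument: the paper obtains $\psi$ by minimizing $\tfrac12\int_{\mathtt{B}}|\nabla\psi|^2-\int_{\mathtt{B}}\langle\psi,\varsigma\rangle$ on $\mathbb{H}_{\mathtt{B}}$. It uses the same Kato-plus-scalar-Dirichlet Poincaré inequality \eqref{eq:6.7} that you prove, and that minimizer is exactly your Riesz representative. Your Riesz formulation makes the paper's appeal to the Rellich lemma unnecessary, and testing with $\eta=\psi$ supplies the derivation of the stated bound, which the paper leaves implicit.
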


\medskip
\noindent\textit{Proof of Lemma 6.3.}
The desired section $\psi$ can be obtained by minimizing the functional on $\mathbb{H}_{\mathtt{B}}$ given by the rule
\begin{align}
\psi \longmapsto \tfrac12 \int_{\mathtt{B}} |\nabla\psi|^2
\;-\; \int_{\mathtt{B}} \langle \psi,\varsigma\rangle.
\end{align}
The existence of a unique minimizer in $\mathbb{H}_{\mathtt{B}}$ can be readily proved using \eqref{eq:4.6} with input from the fact that the inequality
\begin{align}\label{eq:6.7}
\int_{\mathtt{B}} |\nabla\psi|^2
\;\geq\; c_0^{-1}\rho^{-2} \int_{\mathtt{B}} |\psi|^2
\end{align}
holds for all $\psi \in \mathbb{H}_{\mathtt{B}}$.  (This follows from Kato’s inequality $|\nabla\psi| \leq |d|\psi||$ and the fact that \eqref{eq:6.7} holds when $\psi$ is an ordinary function with compact support in $\mathtt{B}$.)  The existence of the minimizer also invokes the Rellich lemma to conclude that bounded $\mathbb{H}_{\mathtt{B}}$-norm sequences
have $\mathbb{L}_{\mathtt{B}}$-norm Cauchy subsequences.

\medskip
\noindent\textit{Part 3:}  
Let $\mathtt{B}$ denote an (open) ball of radius less than $\tfrac{\pi}{4}$ centered at an interior point of an edge in $\Gamma$ whose closure has distance at least twice $\mathtt{B}$'s radius from any other edge in $\Gamma$.  This step states and then proves a lemma regarding a Dirichlet Green's function for the operator $\nabla^{\dagger}\nabla$.  By way of terminology, a Green's function for $\nabla^{\dagger}\nabla$ is an example of a singular integral operator, that being an object which is defined for the purposes of the upcoming lemma as follows:  Let $(\mathbb{V}\otimes\mathcal{I})_R$ and $(\mathbb{V}\otimes\mathcal{I})_L$ denote the respective pull-backs to 
\begin{align}\label{eq:set-off}
\bigl(\mathtt{B} - (\mathtt{B}\cap\Gamma)\bigr) \times
\bigl(\mathtt{B} - (\mathtt{B}\cap\Gamma)\bigr)
\end{align}
of the bundle $\mathbb{V}\otimes\mathcal{I}$ by the projections to the right- and left-hand factors of $\mathtt{B} - (\mathtt{B}\cap\Gamma)$.  A singular integral operator is a smooth section of the vector bundle
\[
\operatorname{Hom}\bigl((\mathbb{V}\otimes\mathcal{I})_R, \;(\mathbb{V}\otimes\mathcal{I})_L\bigr)
\]
over the complement of the diagonal in the product space depicted in \eqref{eq:set-off}, which is characterized by the following property:  Let $\varsigma$ denote any given smooth section of $\mathbb{V}\otimes\mathcal{I}$ over $\mathtt{B} - (\mathtt{B}\cap\Gamma)$.  The section of $\mathbb{V}\otimes\mathcal{I}$ over $\mathtt{B} - (\mathtt{B}\cap\Gamma)$ that is obtained by first acting by that section of $\operatorname{Hom}\bigl((\mathbb{V}\otimes\mathcal{I})_R, \,(\mathbb{V}\otimes\mathcal{I})_L\bigr)$ on the pull-back of $\varsigma$ to the product space in \eqref{eq:set-off} via the right-hand projection and then integrating the result over that same right-hand factor yields a section of $\mathbb{V}\otimes\mathcal{I}$ in the Hilbert space
$\mathbb{H}_{\mathtt{B}}$.

\smallskip

The lemma below is the centerpiece for Part~3.

\begin{lemma}\label{lem:6.4}
\emph{Suppose in what follows that $\Gamma$ is a graph in $S^3$ whose vertices have even valency and whose edges are geodesic arcs.  Given this graph $\Gamma$, there exists $\kappa > 1$ with the following significance:  If the radius of a ball $\mathtt{B} \subset S^3$ is sufficiently small (less than $\tfrac1\kappa$), then there is a linear operator (denoted by $G_{\mathtt{B}}$) mapping $\mathbb{L}_{\mathtt{B}}$ to $\mathbb{H}_{\mathtt{B}}$ with operator norm at most $\kappa$, and which has the properties listed in the subsequent bullets.}
\begin{itemize}[leftmargin=12pt]
\item \emph{If $\varsigma \in \mathbb{L}_{\mathtt{B}}$, then $\psi = G_{\mathtt{B}}(\varsigma)$ is the section from Lemma~6.3 that obeys
$\nabla^{\dagger}\nabla\psi = \varsigma$ on $\mathtt{B} - (\mathtt{B}\cap\Gamma)$, 
and is such that}
\begin{align}
\int_{\mathtt{B}} \langle \nabla\eta,\nabla\psi\rangle
\;=\;
\int_{\mathtt{B}} \langle \eta,\varsigma\rangle
\qquad\text{\emph{for all $\eta \in \mathbb{H}_{\mathtt{B}}$.}}
\end{align}

\item \emph{The operator $G_{\mathtt{B}}$ has a unique representation as a Green's function, and in particular, as a singular integral operator for which the bounds below hold for any given pair of distinct points $(x,y)$ in $\mathtt{B} - (\mathtt{B}\cap\Gamma)$:}
\begin{itemize}[leftmargin=14pt]
\rm
\item[a)] \it $\displaystyle |G_{\mathtt{B}}(x,y)|
   \;\leq\; \frac{\kappa}{\operatorname{dist}(x,y)}$. \rm
\item[b)] \it If $\displaystyle \operatorname{dist}(x,\Gamma) \le \tfrac12\,\operatorname{dist}(y,\Gamma)$, then $\displaystyle|G_{\mathtt{B}}(x,y)|
   \;\leq\;
   \kappa\,\frac{\operatorname{dist}(x,\Gamma)^{1/2}}
              {\operatorname{dist}(y,\Gamma)^{3/2}}$. \rm
\item[c)] \it For any fixed $y \in \mathtt{B} - (\mathtt{B}\cap\Gamma)$, the function $|\nabla G_{\mathtt{B}}(\,\cdot\,,y)|$ has finite integral on $\mathtt{B} - (\mathtt{B}\cap\Gamma)$; this integral being at most $\kappa$. Meanwhile, the integral of $|\nabla G_{\mathtt{B}}(\,\cdot\,,y)|^2$ over the complement in $\mathtt{B} - (\mathtt{B}\cap\Gamma)$ of any open ball centered at $y$ is finite,that integral being at most $\tfrac{\kappa}{\delta}$, with $\delta$ denoting the radius of that ball.
\end{itemize}
\end{itemize}
\end{lemma}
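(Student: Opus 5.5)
The plan is to define $G_{\mathtt{B}}$ as the solution operator from Lemma~6.3, so that $\psi = G_{\mathtt{B}}(\varsigma)$ is the unique minimizer. The operator-norm bound $\kappa$ (indeed $\kappa\rho$) and the first bullet are then immediate from Lemma~6.3's estimate. Self-adjointness of $G_{\mathtt{B}}$ on $\mathbb{L}_{\mathtt{B}}$ follows from the weak formulation by testing against $G_{\mathtt{B}}(\varsigma')$. Elliptic regularity off $\Gamma$ gives a smooth kernel $G_{\mathtt{B}}(x,y)$ away from the diagonal in $(\mathtt{B}-\Gamma)\times(\mathtt{B}-\Gamma)$. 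Uniqueness of the kernel representation comes from density of smooth compactly supported $\varsigma$. For fixed $y$, the section $G_{\mathtt{B}}(\cdot,y)$ solves $\nabla^\dagger\nabla G = 0$ away from $y$, vanishes on $\partial\mathtt{B}$, and is locally in $\mathbb{H}$ away from $y$. The last point is seen by taking $\varsigma$ to be a mollified delta at $y$ and passing to the limit using the bounds below.

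For bound a), I would compare with the scalar Dirichlet Green's function. Kato's inequality gives $\Delta^\perp |G(\cdot,y)| \ge 0$ weakly on $\mathtt{B}-\{y\}$, including across $\Gamma$, since elements of $\mathbb{H}$ have $|\cdot|$ in $L^2_1$ and the integration by parts does not see $\Gamma$, as in Part~1. The scalar maximum principle, applied to $|G(\cdot,y)|$ minus the scalar Dirichlet Green's function $G_0(\cdot,y)\le c_0/\dist(\cdot,y)$ after approximating the pole, then gives $|G_{\mathtt{B}}(x,y)|\le G_0(x,y)$. For c), I would take an annulus decomposition around $y$. On the ball of radius $\delta$ complement, test the weak equation with $\chi^2 G$ for a cutoff $\chi$ vanishing near $y$. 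This gives $\int_{\mathtt{B}\setminus B_\delta(y)}|\nabla G|^2 \le c_0\delta^{-2}\int_{B_{2\delta}\setminus B_{\delta/2}}|G|^2 + \ldots \le \kappa/\delta$ by a). Summing the dyadic $L^1$ contributions, each at most $c_0 2^{-k}$, gives the $L^1$ bound.

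The main obstacle is b), the $\dist(x,\Gamma)^{1/2}$ decay, and this is where the geodesic assumption enters. I would use coordinates $(z,t)$ from Section~5 with $\Gamma\cap\mathtt{B}=\{z=0\}$. The rotation $\partial_t$ and the rotations $z\mapsto e^{i\alpha}z$ are isometries preserving $\Gamma$; the latter acts on $\mathcal{I}$ via the phase in \eqref{eq:5.9}. Fix $y$ with $s=\dist(y,\Gamma)$ and $x$ with $\dist(x,\Gamma)\le s/2$. Since $G(\cdot,y)$ is $\nabla^\dagger\nabla$-harmonic on the tube region $U=\{|z|<s/2\}$ intersected with a ball of radius $\sim s/2$ about the projection of $x$ to $\Gamma$, and $|G|\le \kappa/s$ there by a), it suffices to prove a scale-invariant decay lemma. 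The lemma is this: a section $u\in\mathbb{H}$ of $\mathbb{V}\otimes\mathcal{I}$, harmonic on the half-size tube neighbourhood of a geodesic arc of length $\sim s$ and radius $\sim s$, with $|u|\le M$, obeys $|u|\le c_0 M (|z|/s)^{1/2}$ on the quarter-size region.

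I would prove this lemma by rescaling to $s=1$, treating curvature terms and the $\mathbb{V}$ connection as small perturbations. After rescaling, the model is the flat operator on $\mathbb{C}\times\mathbb{R}$ with the twisted connection \eqref{eq:5.9}. Its separated solutions are $r^{|n+1/2|}e^{i(n+1/2)\theta}$ times functions of $t$, and the lowest rate is $r^{1/2}$. Concretely, I would build an explicit barrier. The function $w=|z|^{1/2}\cos(\theta/2)$-type modulus is not a section, so I would instead argue by Fourier decomposition in $\theta$ of $u$ paired with a local flat frame. Alternatively, I would apply Kato to $|u|$ and compare with the positive supersolution $h=|z|^{1/2}(1+C(t^2-|z|^2))$-style barrier built from $\mathrm{Re}\,z^{1/2}$ on a double cover, which gives $|u|\le c_0 M|z|^{1/2}$ via the maximum principle on the double cover, where sections of $\mathcal{I}$ become odd functions. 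The small errors from curvature and $\mathcal{R}$-free connection terms of $\mathbb{V}$ are absorbed for $\rho<1/\kappa$. Scaling back yields b) with $M=\kappa/s$: $|G(x,y)|\le\kappa s^{-1}(\dist(x,\Gamma)/s)^{1/2}$.
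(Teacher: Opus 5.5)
Your treatment of the first bullet, of (a) and of (c) is sound and is essentially the paper's Steps~1--2: Kato's inequality plus comparison with the scalar Dirichlet Green's function gives (a), and a cutoff (Caccioppoli) estimate near $y$ combined with (a), summed over dyadic annuli, gives (c). The gap is in (b), which is the heart of the lemma. Reducing (b) to a scale-invariant decay statement near the edge is reasonable. The trouble is that the mechanism you actually describe for proving that statement cannot work.

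\begin{itemize}
\item Kato's inequality only says that $|u|$ is a nonnegative subharmonic function. That class contains the constants, so no comparison of $|u|$ with any barrier can force $|u|\to 0$ on $\Gamma$. The half-integer monodromy of $\mathcal{I}$ is the only source of the $\dist(\cdot,\Gamma)^{1/2}$ decay, and passing to $|u|$ discards exactly that information.
\item Concretely, your $h$ is not a supersolution. Since $\Delta |z|^{1/2}=\tfrac14|z|^{-3/2}>0$, $h$ is subharmonic near $z=0$. In fact no positive supersolution can tend to zero along the arc: an arc is polar in dimension three, so such a function would extend across it and violate the strong minimum principle.
\item The double cover does not help. $|u|$ lifts to an even function, whereas the odd lift of $u$ would have to be compared with $\mathrm{Re}\,z^{1/2}$, which changes sign. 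So the comparison must be carried out on a fundamental domain whose boundary contains a half-plane $\Pi$ bounded by the edge, and a general harmonic section has no reason to vanish on $\Pi$.
\item The Fourier alternative is only named. The ``separated solutions'' $r^{|n+1/2|}e^{i(n+1/2)\theta}f(t)$ are harmonic only when $f$ is affine. You also say nothing about the $t$-ends of the tube, the summation over modes, or the mode coupling introduced by the connection on $\mathbb{V}$.
\end{itemize}

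The paper gets past this with an idea specific to the Green's function. After a rotation about the edge, put $y$ on the half-plane $\{x_2=0,\ x_1>0\}$. The reflection $x_2\mapsto -x_2$ is an isometry that fixes $y$ and preserves $\mathtt{B}$ and the edge. By uniqueness, $G_{\mathtt{B}}(\cdot,y)$ is invariant under it, which forces it to vanish on the opposite half-plane $\Pi$ and to be positive on $\mathtt{B}-\Pi$ (Step~4). Only then is $\sqrt{\rho}\cos(\theta/2)$ an admissible barrier. The comparison takes place on the slit domain, with an auxiliary harmonic $w_\varepsilon$ handling the thin cylinder $\rho=\varepsilon$ (Step~5). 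A general $\mathbb{V}$ is then obtained from the scalar $\mathcal{I}$ case through the perturbation identity \eqref{eq:6.18} together with (a) and (c) (Step~6), not by absorbing connection terms into a barrier.

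If you want to keep your general decay lemma, it can be rescued along the Fourier route, but that argument has to be supplied. For a geodesic edge the metric in $(\rho,\theta,t)$ is $\theta$-independent. In the flat model, apply Kato's inequality to the $\ell^2$-vector $U$ of half-integer Fourier coefficients; this gives $(\partial_r^2+r^{-1}\partial_r+\partial_t^2)|U|\ge \tfrac{1}{4r^2}|U|$. Equivalently, $r^{1/2}|U|$ is subharmonic in the $(r,t)$ half-plane, and it vanishes at $r=0$ by boundedness. Planar comparison then yields $|U|\lesssim M r^{1/2}$, and interior estimates turn this into the pointwise bound. The curvature and $\mathbb{V}$-connection terms would still have to be controlled as perturbations of this argument.
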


\noindent\textit{Proof of Lemma 6.4.}
The first bullet simply restates conclusions from Lemma~6.3.  As for the second bullet:  The existence of the Green's function can be proved using standard arguments (see, e.g., \cite{GT}) given that $\mathcal{I}$ is isometric to the product $\mathbb{R}$-bundle on any ball in $\mathtt{B} - (\mathtt{B}\cap\Gamma)$.  In any event, some remarks about a construction of the Green's function are given below in Step~3 of the upcoming six-step proof.

\medskip

With regards to the assumption that the edges of $\Gamma$ are geodesic arcs:  This is the case for the graphs that are defined by the radial projections to $S^3$ of the polytopes from Theorem~1.2.  In any event, the proofs that follow of Items a) and c) of Lemma~6.4's second bullet don't make the geodesic arc assumption; and Item b) is true without that assumption but some extra (relatively straightforward) steps are needed to deal honestly with that level of generality.

The six steps of the proof explain how to obtain the bounds that are asserted in Items a), b) and c) of the lemma's second bullet.  What follows directly are two facts about the Dirichlet Green's function for the Laplacian acting on $\mathbb{R}$-valued functions on the ball $\mathtt{B}$.  This is a function on the complement of the diagonal in $\mathtt{B}\times\mathtt{B}$ that is harmonic with respect to $x$ for fixed $y$ and vice versa.  (It is symmetric with respect to the map sending $(x,y)$ to $(y,x)$.)  Of particular note are the properties listed below:
\begin{itemize}
\item \textit{This Green's function is positive, and it is bounded by $\displaystyle \frac{c_0}{\operatorname{dist}(\,\cdot, y)}$.}
\item \textit{The norm of this Green's function's differential (which is a $1$-form on the complement of the diagonal in $\mathtt{B}\times\mathtt{B}$) is bounded by
$\displaystyle \frac{c_0}{\operatorname{dist}(x,y)^2}$.}
\listeqno\label{eq:6.8}
\end{itemize}

With regards to the steps in the proof:  Step~1 proves Item~a) of the lemma's second bullet and Step~2 proves Item~c) of that bullet.  Step~3 is an aside regarding the existence of the Green's function $G_{\mathtt{B}}$.  Steps~4 and~5 prove Item~b) of the lemma's second bullet for the case when $\mathbb{V}$ is the product $\mathbb{R}$-bundle and thus $\mathbb{V}\otimes\mathcal{I} \cong \mathcal{I}$ and when the edges of the graph are geodesic arcs.  The sixth step proves Item~b) for any given $\mathbb{V}$ under that same assumption about the edges.

\medskip

\noindent\underline{\textit{Step 1:}}
Fix a point $y \in \mathtt{B} - (\mathtt{B}\cap\Gamma)$ and then an isometric identification between $\mathcal{I}|_y$ and $\mathbb{R}$. Transport this product structure at $y$ along the radial geodesics from $y$ to identify $\mathcal{I}$ with the product $\mathbb{R}$–bundle on any ball in $\mathtt{B} - (\mathtt{B}\cap\Gamma)$ centered at $y$.  Given any positive number $r$ that is less than one fourth of the distance from $y$ to $\Gamma$, let $u_{y,r}$ denote the characteristic function for the radius $r$ ball centered at $y$.  Fix a unit length section of $\mathbb{V}$ over this radius $r$ ball to be denoted by $t_y$ and use the product structure for $\mathcal{I}$ on this radius $r$ ball to view $u_{y,r} t_y$ as a section of $\mathbb{V}\otimes\mathcal{I}$ from the Hilbert space $\mathbb{L}_{\mathtt{B}}$.  Let $\psi_{y,r}$ denote the corresponding section from $\mathbb{H}_{\mathtt{B}}$ given by Lemma~6.3. The identity
\begin{align}
-\Delta^{\perp}\psi_{y,r} = u_{y,r} t_y
\end{align}
implies a differential inequality below for the norm of $\psi_{y,r}$ which is that
\begin{align}
-\Delta^{\perp}|\psi_{y,r}| \;\leq\; \frac{1}{r^3}\,u_{y,r}.
\end{align}

Granted this inequality, then the comparison/maximum principle can be brought to bear to compare $|\psi_{y,r}|$ with the function $f_{y,r}$ on $\mathtt{B}$ that obeys the equation $-\Delta^{\perp} f_{y,r} = u_{y,r}$ and vanishes on $\mathtt{B}$'s boundary. The comparison finds that
\begin{align}\label{eq:6.10}
|\psi_{y,r}(x)| \;\leq\; c_0\,\frac{1}{r^3}
   \int_{\mathtt{B}} \frac{1}{\operatorname{dist}(\,\cdot\;;x)}\,u_{y,r}.
\end{align}
(The function on the right comes via the top bullet of \eqref{eq:6.8} since $f_{y,r}$ at any given point $x$ is given by the integral over $\mathtt{B}$ of the product of $u_{y,r}$ with Dirichlet Green's function on $\mathtt{B}$ with pole at $x$.)  In particular, if $\operatorname{dist}(x,y) < 2r$, then the bound in \eqref{eq:6.10}
implies in turn that
\begin{align}\label{eq:6.11}
|\psi_{y,r}(x)| \;\leq\; c_0\,\frac{1}{\operatorname{dist}(x,y)} .
\end{align}
Letting $r \to 0$ gives the bound in Item~a) of the lemma's second bullet.

\medskip
\noindent\underline{\textit{Step 2:}}
Take that same $\psi_{y,r}$ with $r$ still positive.  Fix $\delta \in \bigl(0,\tfrac{1}{100}\operatorname{dist}(y,\Gamma)\bigr)$ and let $\beta_{y,\delta}$ denote a smooth, non‑negative function mapping to $[0,1]$ which is zero on the radius $\tfrac{1}{2}\delta$ ball centered at $y$ and one on the complement of the radius $\delta$ ball centered at $y$.  This function can and should be chosen so that $|d\beta_{y,\delta}| \leq c_0\,\delta^{-1}$.  Now take the inner product of both sides of the equation $-\Delta^{\perp}\psi_{y,r} = u_{y,r} t_y$ with $\beta_{y,\delta}^2 \psi_{y,r}$ and integrate the result over $\mathtt{B} - (\mathtt{B}\cap\Gamma)$.  Supposing that $r < \tfrac{1}{4}\delta$, then an integration by parts (which is allowed because $\psi_{y,r} \in \mathbb{H}_{\mathtt{B}}$) together with \eqref{eq:6.11}'s bound leads to this inequality:
\begin{align}\label{eq:6.12}
\int_{\mathtt{B}} \bigl|\nabla(\beta_{y,\delta}\psi_{y,r})\bigr|^2
\;\leq\; c_0\,\frac{1}{\delta} .
\end{align}
Noting that the right hand side is independent of $r$, taking $r \to 0$ proves the second assertion of Item~c) of the lemma's second bullet.

To prove that $|\nabla G_{\mathtt{B}}(\,\cdot\,,y)|$ has finite integral, set $\delta_0 = \tfrac{1}{1000}\operatorname{dist}(y,\Gamma)$, and for each positive integer $n$, let $\delta_n$ denote $\tfrac{1}{2^n}\delta_0$.  It follows from \eqref{eq:6.6} that the integral of $|\nabla\psi_{y,r}|$ over the complement of the radius $\delta_0$ ball centered at $y$ is at most $c_0\,\delta_0^{-1/2}$.  By the same token, for any positive integer $n$, the integral of $|\nabla\psi_{y,r}|$ over the spherical annulus centered at $y$ with inner radius $\delta_n$ and outer radius $\delta_{n-1}$ is at most $c_0\,\delta_n^{-1/2}\,(\delta_{n-1})^{3/2},$ which is at most $c_0\,2^{-n}\delta_0$.  
Thus, the sum of these annular contributions to the integral of $|\nabla G_{\mathtt{B}}(\,\cdot\,,y)|$ is finite and at most $c_0\,\delta_0$, as 
\begin{align}
\sum_{n=1}^\infty \int_{A_n} |\nabla \psi_{y,r}|
\;\le\;
c_0\,\delta_0 \sum_{n=1}^\infty 2^{-n}
=
c_0\,\delta_0.
\end{align}

\medskip
\noindent\underline{\textit{Step 3:}}
This step is an aside for a remark regarding the construction of the Green's function $G_{\mathtt{B}}$.  The remark is that the existence of the $r \to 0$ limits for \eqref{eq:6.11} and \eqref{eq:6.12} can be used to prove the existence of $G_{\mathtt{B}}$. This is because sections of the form $u_{y,r} t_y$ are dense in $\mathbb{L}_{\mathtt{B}}$; and hence any given section $\varsigma$ of $\mathbb{L}_{\mathtt{B}}$ can be approximated to any given accuracy (as measured by the $\mathbb{L}_{\mathtt{B}}$-norm) by a finite linear combination of these sorts of sections with disjoint support and with any given positive upper bound for the $r$-values.  As a consequence, the corresponding $\psi$ from Lemma~6.3 can be approximated to any given accuracy (as measured by the $\mathbb{H}_{\mathtt{B}}$-norm) by the corresponding finite linear combination of the $\psi_{y,r}$ sections.

\medskip
\noindent\underline{\textit{Step 4:}}
This step and the next step prove Item~b) of the lemma's second bullet for the case when $\mathbb{V}$ is the product $\mathbb{R}$–bundle and thus $\mathbb{V}\otimes\mathcal{I} \cong \mathcal{I}$. This step, the next one, and Step~6 all assume that the edges of $\Gamma$ are geodesic arcs.

To describe $G_{\mathtt{B}}$ when $\mathbb{V}$ is the product $\mathbb{R}$–bundle, it proves convenient to introduce Gaussian normal coordinates (denoted by $x_1,x_2,x_3$) centered at the point $q$ so as to identify the ball $\mathtt{B}$ with the ball of radius $r_{\mathtt{B}}$ in $\mathbb{R}^3$ centered at the origin and with the intersection of $\mathtt{B}$ with the edge containing $q$ being the $-r_{\mathtt{B}}<x_3<r_{\mathtt{B}}$ part of the $x_3$-axis.  In these coordinates, the spherical Riemannian metric is invariant with respect to rotations about the origin, and both the metric and $\mathtt{B}$'s part of the chosen edge from $\Gamma$ are invariant with respect to rotations of the $(x_1,x_2)$ plane.  This being the case, it is sufficient for what follows to take $y$ to be a point where $x_2$ is zero and $x_1$ is positive.  Let $\Pi$ denote the part of $\mathtt{B}$ where $x_1\le 0$ and $x_2=0$ (this is $\mathtt{B}$'s intersection with a half-plane in $\mathbb{R}^3$).

As explained in the following, the endomorphism $G_{\mathtt{B}}(\cdot,y)$ in this case (when $\mathbb{V}=\mathbb{R}$) must vanish on $\Pi$.  To see why, note first that $\mathcal{I}$ can be identified with the product line bundle on $\mathtt{B}-\Pi$ with the identification such that $G_{\mathtt{B}}>0$ near $y$.  Now take $u_{y,r}$ as before and let $g_{y,r}$ denote the function on $\mathtt{B}-\Pi$ that obeys the conditions below:
\begin{itemize}[leftmargin=12pt]
\item $\quad -\Delta^{\perp} g_{y,r} = u_{y,r},$
\item $\quad g_{y,r}\big|_{\partial\mathtt{B}} = 0 \quad\text{and}\quad g_{y,r}\big|_{\Pi}=0.$
\listeqno\label{eq:6.13}
\end{itemize}
This function is positive on $\mathtt{B}-\Pi$ and, by symmetry (the symmetry of the Laplacian when $x_2$ is replaced by $-x_2$), the $x_2$-derivative of $g_{y,r}$ at any given point on the $x_1<0$ part of $\Pi$ coming from positive $x_2$ side must be $-1$ times the $x_2$-derivative at that same point coming from the negative $x_2$ side.  It follows from this that $g_{y,r}$ can be viewed as a smooth section of $\mathcal{I}$ on $\mathtt{B}-(\mathtt{B}\cap\Gamma)$.  And, as such, this section of $\mathcal{I}$ obeys the same equation and same Dirichlet boundary conditions on $\partial\mathtt{B}$ as $G_{\mathtt{B}}(\varsigma)$ with $\varsigma = u_{y,r}t_y$ where $t_y$ here denotes the unit length section of $\mathcal{I}$ on $\mathtt{B}-\Pi$ that is mapped to $1\in\mathbb{R}$ by the aforementioned isometry between $\mathcal{I}$ and the product $\mathbb{R}$-bundle. Thus, \(g_{y,r}=G_{\mathtt{B}}(\varsigma)\). Since this holds for all \(r\), it follows that
\begin{equation}\label{eq:whatever-label}
\lim_{r\to 0}\frac{1}{r^{3}}\,g_{y,r}=G_{\mathtt{B}}(\cdot,y).
\end{equation}
Thus, $G_{\mathtt{B}}(\cdot,y)$ vanishes on $\Pi$ as claimed (and nowhere else in $\mathtt{B}-\Gamma)$.  (The preceding argument for the vanishing of $G_{\mathtt{B}}(\cdot,y)$ on $\Pi$ is a simplistic version of a very clever construction in \cite{FS}.)

\smallskip
\noindent\underline{\textit{Step 5:}}
Continue here with the assumptions and notation of the previous step.  Let $\mathtt{B}' \subset \mathtt{B}$ denote the ball centered at the origin in $\mathbb{R}^3$ with radius equal to $\frac{1}{100}\,\operatorname{dist}(y,\Gamma)$.  Let $\rho$ and $\theta$ denote the standard polar coordinates on the $(x_1,x_2)$ plane in $\mathbb{R}^3$ with the $\theta=0$ ray being the positive $x_1$ axis. Let $u=\sqrt{\rho}\,\cos(\tfrac12\theta)$.  This is a positive function on $\mathtt{B}-\Pi$ that extends continuously over $\Pi$ so as to be zero on $\Pi$.  It is also harmonic on $\mathtt{B}-\Pi$ with respect to the Euclidean metric on $\mathtt{B}$.  With respect to the metric from the sphere (which is conformal to the Euclidean metric), this function is such that
\begin{equation}\label{eq:6.14}
\Delta^{\perp}u \le 0 \, .
\end{equation}
Hold onto $u$ for the moment.  In the meantime, fix a very small but positive number to be denoted by $\varepsilon$ and let $w_{\varepsilon}$ denote the $\Delta^{\perp}$-harmonic, $\mathbb{R}$-valued function on the spherical annulus in $\mathtt{B}$ where $\rho$ is between $\varepsilon$ and $\frac{1}{100}\operatorname{dist}(y,\Gamma)$ subject to the boundary conditions that $w_{\varepsilon}$ vanish both on $\partial\mathtt{B}$ and where $\rho=\frac{1}{100}\operatorname{dist}(y,\Gamma)$, and that $w_{\varepsilon}$ is equal $1$ where $\rho=\varepsilon$.  This function $w$ is non-negative and it obeys the bound
\begin{equation}\label{eq:6.15}
w_{\varepsilon}(x) \;\le\;
c_0\,\ln\!\biggl(\frac{\operatorname{dist}(y,\Gamma)}{100\rho}\biggr)\bigg/
\ln\!\biggl(\frac{\operatorname{dist}(y,\Gamma)}{100\varepsilon}\biggr) \, .
\end{equation}
(The bounds in \eqref{eq:6.8} can be used to prove the preceding bound by bounding $w_{\varepsilon}$ by a weighted integral of \eqref{eq:6.8}'s Green function.)

By virtue of what is said above about $u$ and $w$, and by virtue of the fact that $G_{\mathtt{B}}(x,y)$ is bounded by $c_0\,\frac{1}{\operatorname{dist}(x,y)}$, and by virtue of the fact that $G_{\mathtt{B}}(\,\cdot\,,y)$ vanishes on $\Pi$, there exists $c \in (1,c_0)$ such that the function
\begin{equation}\label{eq:6.16}
h_{\varepsilon}
= c\,\frac{1}{\operatorname{dist}(y,\Gamma)}\,(u+w_{\varepsilon}) .
\end{equation}
is larger than $G_{\mathtt{B}}(\,\cdot\,,y)$ on the boundary of the complement in $\mathtt{B}^\prime-(\mathtt{B}^\prime\cap\Pi)$ to the cylinder where $\rho=\varepsilon$.  This being the case, the maximum/comparison principle can be brought to bear using \eqref{eq:6.14} and the fact that $w_{\varepsilon}$ is harmonic to see that $|G_{\mathtt{B}}(\,\cdot\,,y)| \le c_0 h_{\varepsilon}$ on the complement of that same cylinder in $\mathtt{B}'-(\mathtt{B}'\cap\Pi)$.  The claim in Item~a) of the second bullet of the lemma for the case where $\mathbb{V}$ is the product $\mathbb{R}$-bundle
and $\Gamma$'s edges are geodesic arcs follows directly from the latter fact by taking $\varepsilon \to 0$ while noting that the right hand side of \eqref{eq:6.15} limits to zero at fixed $\rho$ as $\varepsilon \to 0$.\\

\noindent\underline{\textit{Step 6:}}
This step finishes the proof of Item~b) of the lemma's second bullet under the assumption that the edges of $\Gamma$ are geodesic arcs.  To this end, first fix an identification between $\mathcal{I}|_y$ and the product $\mathbb{R}$–bundle as before.  Next, choose an orthonormal basis for $\mathbb{V}$ at the point $q$ and parallel transport that chosen basis using the metric's Levi-Civita connection along the radial geodesic arcs from $q$ so as to obtain a basis for $\mathbb{V}|_{\mathtt{B}}$.  Denote this basis as $\{t_1,\dots,t_{\dim(\mathbb{V})}\}$.  Now use this basis to write $G_{\mathtt{B}}(x,y)$ as
\begin{align}
\sum G_{\alpha\beta}(x,y)\, t_\alpha|_x \otimes t_\beta|_y
\end{align}
with it understood that the summation is over the labels $\alpha,\beta$, which come from the index set for $\mathbb{V}$'s orthonormal basis.  Each $G_{\alpha\beta}(\,\cdot\,,y)$ is a section of $\mathcal{I}$ over $\mathtt{B}$ that vanishes on $\partial\mathtt{B}$. As such, it obeys an equation on the complement of $y$ that has the schematic form
\begin{equation}\label{eq:6.17}
-\Delta^{\perp} G_{\alpha\beta}
+ Q_{\alpha\gamma}\cdot \nabla G_{\gamma\beta}
+ P_{\alpha\gamma} G_{\gamma\beta}
= 0
\end{equation}
where the repeated index ($\gamma$ in this case) is implicitly summed over that same index set and where $y$ is fixed with differentiation on the $x$ coordinate. (The tensor valued functions $Q$ and $P$ are smooth on a neighborhood of $\mathtt{B}$ in $S^3$ with uniform derivative bounds to any given order.) It follows as a consequence of \eqref{eq:6.17} that $G_{\alpha\beta}(x,y)$ with $x\ne y$ can be written using the Green's function for $-\Delta^{\perp}$ acting on sections of the line bundle $\mathcal{I}$ (here denoted by $G$) as \begin{equation}\label{eq:6.18}
G_{\alpha\beta}(x,y)
= G(x,y)\,\delta_{\alpha\beta}
-\int_{\mathtt{B}}
\left.\Bigl( G(x,\cdot)\bigl(Q_{\alpha\gamma}\cdot\nabla G_{\gamma\beta}
+ P_{\alpha\gamma}G_{\gamma\beta}\bigr)\Bigr)\right|_{(\cdot,y)}.
\end{equation}
Hold on to this identity for the moment.

Let $\mathtt{B}'\subset\mathtt{B}$ again denote the ball centered at the origin in $\mathbb{R}^3$ with radius equal to $\frac{1}{100}\operatorname{dist}(y,\Gamma)$. Let $\mathtt{B}''$ denote the ball concentric to $\mathtt{B}$ with radius $\frac{1}{500}\operatorname{dist}(y,\Gamma)$. Supposing that $x$ is from $\mathtt{B}''$, an appeal to the $\mathbb{V}=\mathbb{R}$ version of Lemma~6.3 (whose proof was just finished) implies the inequality below:
\begin{equation}\label{eq:6.19}
\begin{aligned}
|G_{\alpha\beta}(x,y)|
\le\;&
c_0\,\frac{\operatorname{dist}(x,\Gamma)^{1/2}}{\operatorname{dist}(y,\Gamma)^{3/2}}
\left(
1+\int_{\mathtt{B}-\mathtt{B}''}
\bigl(|\nabla G_{\mathtt{B}}|_{(\cdot,y)} + |G_{\mathtt{B}}|_{(\cdot,y)}\bigr)
\right) \\
&\quad
+\,c_0\int_{\mathtt{B}''} |G|_{(x,\cdot)}
\bigl(|\nabla G_{\mathtt{B}}|_{(\cdot,y)} + |G_{\mathtt{B}}|_{(\cdot,y)}\bigr) .
\end{aligned}
\end{equation}

As for these integrals:  An appeal to Items~a) and~c) of the lemma's second bullet for the case when $\mathbb{V} = \mathbb{R}$ (and thus $\mathbb{V}\otimes \mathcal{I} \cong \mathcal{I}$) supply the bounds asserted below:
\begin{itemize}
\item $\int_{\mathtt{B}-\mathtt{B}''}
\bigl(|\nabla G_{\mathtt{B}}|_{(\cdot,y)} + |G_{\mathtt{B}}|_{(\cdot,y)}\bigr)
\le c_0,$
\smallskip
\item $\int_{\mathtt{B}'} |G|_{(x,\cdot)}
\bigl(|\nabla G_{\mathtt{B}}|_{(\cdot,y)} + |G_{\mathtt{B}}|_{(\cdot,y)}\bigr)
\le c_0\,\operatorname{dist}(x,\Gamma)^{1/2}.$
\listeqno\label{eq:6.20}
\end{itemize}

And, using these bounds in \((6.19)\) finishes the proof of Item~2 of the lemma's second bullet for the case when $\Gamma$'s edges are geodesic arcs.

\medskip
\noindent\textit{Part 4:} This part of the section proves Lemma~6.1.

\medskip
\noindent\textbf{Proof of Lemma 6.1:}
To start the proof, let $q$ denote a given point in the interior of a given edge of $\Gamma$.  Use $q$ to define the number $r_q$ as done at the beginning of this section and let $\mathtt{B}$ denote the ball of radius $\frac{1}{2c}\,r_q$ centered at $q$. Use $\chi_{1q}$ to denote (for the purposes of this proof) a smooth function with values in $[0,1]$ that equals $1$ in the radius $\frac{3}{8c}\,r_q$ ball centered at $q$ and equals $0$ near the boundary of $\mathtt{B}$. This function can and should be constructed so that the norm of its differential and Hessian are bounded respectively by $c_0\,\frac{1}{r_q}$ and by $c_0\,\frac{1}{r_q^2}$.

Let $\psi_q$ denote $\chi_q\psi$, this being a section of $\mathbb{V}\otimes\mathcal{I}$ on $\mathtt{B}-(\mathtt{B}\cap\Gamma)$ with compact support on $\mathtt{B}$.  It is in the Hilbert space $\mathbb{H}_{\mathtt{B}}$ (this subspace of $\mathbb{H}$ is defined at the start of Part~2).  By virtue of \eqref{eq:6.4}, the section $\psi_q$ obeys an equation that can be written schematically as
\begin{equation}\label{eq:6.21}
\nabla^\dagger\nabla \psi_q
=
\mathtt{E}\,\psi_q
-\mathcal{R}\cdot\psi_q
+2\,d\chi_q\cdot\nabla\psi
+(\Delta^{\perp}\chi_q)\,\psi \, .
\end{equation}
where $d\chi_q\cdot\nabla\psi$ denotes the directional covariant derivative of $\psi$ along the gradient of the function $\chi_q$.

With \eqref{eq:6.21} in hand, use the Green's function $G_{\mathtt{B}}$ from Lemma~6.3 to depict $\psi_q$ at a point $x$ in $\mathtt{B}$ as done below:
\begin{equation}\label{eq:6.22}
\psi_q(x)
=
\int_{\mathtt{B}} G_{\mathtt{B}}(x,\cdot)\Bigl(
\mathtt{E}\,\chi_q\psi
-\mathcal{R}\cdot(\chi_q\psi)
+2\,d\chi_q\cdot\nabla\psi
+(\Delta^{\perp}\chi_q)\psi
\Bigr).
\end{equation}
To exploit the preceding identity, suppose that $x$ is in the radius $\frac{1}{4c}\,r_q$ ball centered at $q$.  This is inside the set where $\psi_q=\psi$.  Moreover, the distance from $x$ to the support of $|d\chi_q|$ is no less than $\frac{1}{8c}\,r_q$.  (This last fact is important when it comes to analyzing the contributions to $|\psi(x)|$ by the right-most two terms in
\eqref{eq:6.22}.)  Introducing $r$ to denote the distance from $x$ to $\Gamma$, let $\mathtt{B}'$ denote the part of $\mathtt{B}$ where the distance to $\Gamma$ is less than $4r$, and let $\mathtt{B}''$ denote the complementary part of $\mathtt{B}$.  By virtue of the inequality from Item~a) of the second bullet in Lemma~6.3, the contribution to $|\psi(x)|$ from $\mathtt{B}'$ is at most
\begin{equation}\label{eq:6.23}
c_0
\left(\int_{\mathtt{B}'} \frac{1}{\operatorname{dist}(x,\cdot)^2}\right)^{1/2}
\left(\left(1+\mathtt{E}+\frac{1}{r_q}\right)\|\psi\|_{\mathbb{H}}
+\frac{1}{r_q^2}\,\|\psi\|_{\mathbb{L}}\right).
\end{equation}
This in turn is bounded courtesy of \eqref{eq:6.3} by
$c_0\left(1+\frac{1}{r_q}\right)(1+\mathtt{E})\sqrt{r}$.  Meanwhile, by virtue of
the inequality from Item~b) of the second bullet in Lemma~6.4 and \eqref{eq:6.3},
the contribution to $|\psi(x)|$ from $\mathtt{B}''$ is at most
\begin{equation}\label{eq:6.24}
c_0\sqrt{r}\left(\int_{\mathtt{B}''}
\frac{1}{\operatorname{dist}(\Gamma,\cdot)^{3/2}}(1+\mathtt{E})|\psi|
+\frac{1}{r_q^{1/2}}(1+\mathtt{E})\right).
\end{equation}
To elaborate: The contribution to \eqref{eq:6.24} from the terms in \eqref{eq:6.22}'s integral with derivatives on $\chi_q$ is accounted for by the right most term in \eqref{eq:6.24} (the term with the factor of $r_q^{-1/2}$). The contribution to \eqref{eq:6.24} from \eqref{eq:6.22}'s integral from its two $\chi_q\psi$ terms is accounted for by the term with the explicit integral in \eqref{eq:6.24}.  With regards to that integral: The third and fourth bullets in \eqref{eq:6.3} bounds that integral by $c_0(1+\mathtt{E})$.  As a consequence, the expression in \eqref{eq:6.24} is bounded in turn by $c_0\sqrt{r}\left(1+\frac{1}{\sqrt{r_q}}\right)(1+\mathtt{E})$.

Since $r=\operatorname{dist}(x,\Gamma)$, the $c_0\sqrt{r}\left(1+\frac{1}{r_q}\right)(1+\mathtt{E})$ bound for both the $\mathtt{B}'$ and $\mathtt{B}''$ contributions to the integral on the right hand side of \eqref{eq:6.24} prove Lemma~6.1.\\

\noindent\textit{Part 5:} This last part of the section justifies the assertion of Lemma~6.2.\\

\noindent\textbf{Proof of Lemma 6.2:}
When $\varepsilon$ is from the interval $\left(0,\frac{1}{256c}\,r_q\right)$, let $u_{\varepsilon}$ denote the isometry of $S^3$ that is obtained by integrating the vector field $\frac{\partial}{\partial t}$ for time $\varepsilon$.  Then, let $\psi_{\varepsilon}$ denote the pull-back of $\psi$ by this isometry.  This $\psi_{\varepsilon}$ is a section of the pull-back bundle $u_{\varepsilon}^*(\mathbb{V}\otimes\mathcal{I})$.  Use parallel transport along the integral curves of $\frac{\partial}{\partial t}$ to identify this bundle over the radius $\frac{1}{4c}\,r_q$ ball centered at $q$ with $\mathbb{V}\otimes\mathcal{I}$. Both $\psi_{\varepsilon}$ and $\psi$ are sections of bundle $\mathbb{V}\otimes\mathcal{I}$ over that same ball, and as such, they both obey $(\nabla^\dagger\nabla+\mathcal{R})(\cdot)=\mathtt{E}(\cdot)$.  Also, the integrals of $|\nabla\psi|^2$ and $|\nabla\psi_{\varepsilon}|^2$ over that ball are both bounded by $c_0\rho^{-2}$ times the integral of $|\psi|^2$ over $\mathtt{B}$.  That in turn is bounded by $c_0\rho^{3}$ (see Lemma~6.1).  Thus, the integrals over $\mathtt{B}$ of both $|\nabla\psi|^2$ and $|\nabla\psi_{\varepsilon}|^2$ are bounded by $c_0\rho$.

The plan for what follows is to examine the $\varepsilon\to 0$ behavior of $(\psi_{\varepsilon}-\psi)$ inside the ball $\mathtt{B}$.  This section is denoted by $\eta_{\varepsilon}$ in what follows; and it obeys $(\nabla^\dagger\nabla+\mathcal{R})\eta_{\varepsilon}=\mathtt{E}\eta_{\varepsilon}$. To localize the subsequent analysis to $\mathtt{B}$, reintroduce the function $\chi_q$ and take the inner product of the equation $(\nabla^\dagger\nabla+\mathcal{R})\eta_{\varepsilon}=\mathtt{E}\eta_{\varepsilon}$ with $\chi_q^{2}\eta_{\varepsilon}$; then integrate the resulting function over $\mathtt{B}$ (where it has compact support).  An instance of integration by parts with \eqref{eq:6.3} leads to the inequality below
\begin{equation}\label{eq:6.25}
\int_{\mathtt{B}} \bigl|\nabla(\chi_q\eta_{\varepsilon})\bigr|^{2}
\le c_0\,r_q^{-2}\int_{\mathtt{B}} |\eta_{\varepsilon}|^{2}\,.
\end{equation}

The key observation now is that $\varepsilon^{-1}(\psi_{\varepsilon}-\psi)$ converges in the $\mathbb{L}$-topology to $\nabla_t\psi$. Hence, the product of $\varepsilon^{-2}$ times the right hand side of \eqref{eq:6.25} converges as $\varepsilon\to0$ to $c_0 r_q^{-2}$ times the integral over $\mathtt{B}$ of $|\nabla_t\psi|^2$. It follows as a consequence this (and the dominated convergence theorem) and that $\chi_q\nabla_t\psi$ is a limit of a uniformly $\mathbb{H}$-norm bounded sequence of elements in $\mathbb{H}$ thus that it is in $\mathbb{H}$ also. Moreover, given the bound in the top bullet of \eqref{eq:6.3}, taking $\varepsilon\to0$ in \eqref{eq:6.25} leads to an \emph{a priori} bound for the $\mathbb{H}$-norm of $\chi_q\nabla_t\psi$, this being $c_0 r_q^{-1}$ times the square root of the integral over $\mathtt{B}$ of $|\nabla_t\psi|^2$. This last fact implies Lemma~6.2's asserted bound because the square root of
the integral over $\mathtt{B}$ of $|\nabla_t\psi|^2$ has a $c_0 r_q^{1/2}$ upper bound courtesy of \eqref{eq:6.3} and Lemma~6.1.

\section{Almgren's Frequency Function}

This section presents various technical observations and constructions that are also needed for the proofs of Lemmas~4.2 and~4.4.  Some of these technical observations and constructions are used again in later parts of the proof of Theorem~1.2.

Assume at the outset that a graph in $S^3$ has been specified (it is again denoted by $\Gamma$); and that it has even valencies so that there is a corresponding real line bundle $\mathcal{I}$ on $S^3-\Gamma$.  Likewise, assume that a vector bundle $\mathbb{V}$ on $S^3$ has been specified with fiber metric and metric compatible connection.  These are fixed throughout the subsequent discussion.  As before, $\mathbb{H}$ denotes the Hilbert space completion of the space of smooth sections of $\mathbb{V}\otimes\mathcal{I}$ with compact support in $S^3-\Gamma$ using the norm whose square is depicted in \eqref{eq:4.1}.  Also as before, use $\mathbb{L}$ to denote the completion of that same space of sections using the norm whose square is depicted in \eqref{eq:4.3}.  Supposing that $\mathcal{R}$ denotes a given endomorphism of $\mathbb{V}$ (as in \eqref{eq:4.4}), the constructions in what follow concern $(\nabla^\dagger\nabla+\mathcal{R})$-eigensections from the Hilbert space $\mathbb{H}$.

What follows next is a digression to further set the stage and notation for the definition of the frequency function.  To start the digression, suppose that $\psi$ again denotes a $(\nabla^\dagger\nabla+\mathcal{R})$-eigensection from $\mathbb{H}$ and let $\mathtt{E}$ again denote its eigenvalue.  For any given point $p\in S^3$ and $r\in \bigl(0,\frac{1}{1000}\pi\bigr]$, use $\mathtt{B}_r$ to denote the radius $r$ ball centered at $p$ and use $\partial\mathtt{B}_r$ to denote $\mathtt{B}_r$'s boundary. With $p$ fixed in advance, a positive function on $\bigl(0,\frac{1}{1000}\pi\bigr]$ to be denoted by $\K$ is defined by the rule below whereby
\begin{equation}\label{eq:7.1}
\K^2(r) \;=\; \frac{1}{A(r)}\int_{\partial\mathtt{B}_r} |\psi|^2,
\end{equation}
where $A(r)=4\pi\sin^2(r)$ is the area $\partial\mathtt{B}_r$.  Thus, $\K^2$ is the average of $|\psi|^2$ over the radius $r$ ball centered at $p$.  This function $\K$ is a differentiable function on $\bigl(0,\frac{1}{1000}\pi\bigr]$ as an integration by parts (which is allowed if $\psi\in\mathbb{H}$) can be used to write its derivative as below:
\begin{equation}\label{eq:7.2}
\frac{d\K}{dr}
\;=\;
\frac{1}{A(r)}\int_{\mathtt{B}_r}
\Bigl(|\nabla\psi|^2+\langle\psi,\mathcal{R}\psi\rangle-\mathtt{E}|\psi|^2\Bigr)\,.
\end{equation}

As for the $r\to 0$ limit of $\K$: The function $\K$ extends continuously to $r=0$ except possibly in the cases where the chosen point $p$ is in $\Gamma$.  In any event, if $|\psi|$ is continuous at a chosen point $p$, then $\K_p(0)=|\psi|(p)$.

Fix a point $p\in S^3$.  Almgren's frequency function is defined from the corresponding version of $\K$:  This frequency function is denoted by $\N$; it is defined (initially) on $\left(0,\frac{1}{1000}\pi\right]$ by writing the derivative of the function $\K$ as
\begin{equation}\label{eq:7.3}
\frac{d\K}{dr} \;=\; \frac{\N(r)}{r}\,\K(r)\,.
\end{equation}

It follows from \eqref{eq:7.2} that $\N$ is a continuous function on
$\left(0,\frac{1}{1000}\pi\right]$ that can be depicted as the two bullets
below:
\begin{itemize}
    \item 
$\N(r)
= \frac{r}{2A \K^2}\int_{\partial\mathtt{B}_r}
\Bigl(\langle \psi,\nabla_r\psi\rangle + \langle \nabla_r\psi,\psi\rangle\Bigr),$
    \item 
$\N(r)
= \frac{r}{A \K^2}\int_{\mathtt{B}_r}
\Bigl(|\nabla\psi|^2+\langle\psi,\mathcal{R}\psi\rangle-\mathtt{E}|\psi|^2\Bigr)$.
\listeqno\label{eq:7.4}
\end{itemize}
The first line is obtained by directly differentiating K and the second follows from the first using the divergence theorem (integration by parts). What follows are two lemmas that states key properties of the function $\N$ and then a third lemma that states a key property of the function $\K$.
\begin{lemma}\label{lem:7.1}
There exists $\kappa>1$ such that the assertions below hold for any $(\nabla^\dagger\nabla+\mathcal{R})$-eigensection from $\mathbb{H}$. Fix a point in $S^3$ to define the functions $\K$ and then $\N$ from the given eigensection.  The chosen point's version of function $\N$ obeys
\begin{itemize}
\item $\N(r)\ge -\kappa(1+\mathtt{E})\,r^2$.

\item The function $\N$ has a unique $r\to 0$ limit; and with regards to that limit,
  \begin{enumerate}
  \item[(a)] $\lim_{r\to 0}\N(r)=0$ if $\psi$ is non-zero at the chosen point.
  \item[(b)] $\lim_{r\to 0}\N(r)>0$ if the chosen point is in a vertex of $\Gamma$.
  \item[(c)] $\lim_{r\to 0}\N(r)\ge \frac{1}{2}$ if the chosen point is in the interior
            of an edge of $\Gamma$.
  \end{enumerate}

\item Supposing that $\{p_n\}\subset S^3$ converges to a chosen point in the interior of an edge of $\Gamma$.  Then that chosen point's version of $\N(0)$ is no smaller than the $\limsup$ of the respective $p_n$ versions of $\N(0)$.

\item Suppose that $p$ denotes a vertex of $\Gamma$ and suppose that the versions of $\N(0)$ for points on the interiors of the edges of $\Gamma$ in a neighborhood of $p$ are all strictly greater than $\frac{1}{2}$.  Then $p$'s version of $\N(0)$ is not less than the $\limsup$ of the versions of $\N(0)$ that are defined by any sequence of edge points from $\Gamma$ that converges to $p$.
\end{itemize}
\end{lemma}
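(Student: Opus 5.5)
The plan is the standard Almgren monotonicity argument. Compute $d\N/dr$ from the second bullet of \eqref{eq:7.4}, writing $\N = rD/(A\K^2)$ with $D(r)$ the integral over $\mathtt{B}_r$. Then $D' = \int_{\partial\mathtt{B}_r}(|\nabla\psi|^2+\langle\psi,\mathcal{R}\psi\rangle-\mathtt{E}|\psi|^2)$. The key is a Rellich--Pohozaev identity, obtained by contracting the stress–energy tensor $|\nabla\psi|^2 g - 2\langle\nabla\psi,\nabla\psi\rangle$ with the radial vector field $\sin(r)\partial_r$. In Gaussian coordinates about $p$ this field is conformal Killing up to $O(r^2)$ corrections. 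The identity converts $\int_{\partial\mathtt{B}_r}|\nabla\psi|^2$ into $2\int_{\partial\mathtt{B}_r}|\nabla_r\psi|^2 + \frac{1}{r}\int_{\mathtt{B}_r}|\nabla\psi|^2\,(1+O(r^2))$.

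This integration by parts must be justified on $\mathtt{B}_r-\Gamma$. I would excise a tubular neighborhood of $\Gamma$ of radius $\delta$ and send $\delta\to0$. The boundary terms there are controlled by Lemma~6.1 ($|\psi|\le\kappa\,\dist^{1/2}$) together with the gradient control of Lemma~6.2. Lemma~6.2 shows that the tangential derivative lies in $\mathbb{H}$, so the dangerous flux term $\int|\nabla\psi|^2\langle X,\nu\rangle$ on the small tube tends to zero as $\delta\to0$. At vertices one uses the cone picture \eqref{eq:6.1}, and the geodesic-edge hypothesis makes $\partial_r$ tangent to $\Gamma$ along edges through $p$. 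I expect this step to be the main obstacle. The cleanest route is probably to show that $\langle X,\nu\rangle$ is $O(\delta\cdot\text{angle})$, since the radial field is nearly tangent to the edges, and to pair this with $\int\dist^{-1}|\nabla\psi|^2$ from \eqref{eq:6.3}.

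Given the identity, Cauchy–Schwarz in the form $(\int\langle\psi,\nabla_r\psi\rangle)^2\le\int|\psi|^2\int|\nabla_r\psi|^2$ yields $\N'\ge -\kappa(1+\mathtt{E})\,r\,(1+\N)$. Therefore $e^{\kappa r^2}(1+\N)$ is almost monotone, so $\lim_{r\to0}\N$ exists. The lower bound $\N\ge-\kappa(1+\mathtt{E})r^2$ follows from the second form of \eqref{eq:7.4}: the negative part of the integrand is bounded by $c_0(1+\mathtt{E})\int_{\mathtt{B}_r}|\psi|^2$, and by the mean value behaviour of $\K$ this is at most $c_0(1+\mathtt{E})r^3\K^2$. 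The resulting bound holds uniformly with constants from \eqref{eq:6.3}.

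For the limit values, I would argue case by case. (a) If $\psi(p)\ne0$ then $\K(0)>0$ and $D(r)=O(r^3)$, so $\N\to0$. (c) On an edge interior, $\K(r)\le\kappa r^{1/2}$ by Lemma~6.1; integrating \eqref{eq:7.3} with $\N(0)<\tfrac12$ would force $\K\ge cr^{\N(0)+\epsilon}$, which contradicts this bound. For the sharp $\tfrac12$ one should do a blow-up: rescale $\psi$ near $p$ by $\K(r)$ and compactness (Rellich plus uniform $\N$) gives a homogeneous $\mathcal{I}$-valued harmonic limit on $\R^3$ minus a line, whose degree must be in $\tfrac12+\mathbb{Z}_{\ge0}$. (b) At a vertex, $\K(0)=0$ because $|\psi|$ vanishes on $\Gamma$, and a zero limit would contradict $\K\to0$ via the same integration. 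Alternatively, a blow-up limit would be a nonzero constant, which is impossible on the nontrivial bundle $\mathcal{I}$. The upper semicontinuity statements (third and fourth bullets) follow from almost monotonicity: $\N_{p}(0)=\inf_r e^{\kappa r^2}(1+\N_p(r))-1$ up to small errors, and each $\N_{p_n}(r)$ at fixed $r>0$ is continuous in $p_n$. Thus the limsup of the infima is bounded by $\N_p(r)$ for every $r$. At a vertex, the hypothesis that edge frequencies exceed $\tfrac12$ ensures the blow-up argument is not degenerate there.
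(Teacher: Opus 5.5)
Your overall route matches the paper: rederive the almost-monotonicity of $\N$ (the paper proves this separately as Lemma~7.2 and cites it), then read off the first bullet, the existence of $\lim_{r\to0}\N(r)$, and items (a) and (c). Those parts are fine. Your Gr\"onwall-type use of the near-subharmonicity of $|\psi|^2$, giving $\int_{\mathtt{B}_r}|\psi|^2\le c\,r^3\K^2(r)$ for $r\lesssim(1+\E)^{-1/2}$, is if anything more complete than the paper's one-line appeal to the $L^\infty$ bound. In (c), integrating \eqref{eq:7.3} against $\K(r)\le\kappa r^{1/2}$ already gives the sharp $\frac12$, since it works for every $\epsilon>0$, so no blow-up is needed. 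There are two genuine gaps.

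\textbf{Item (b).} Your first argument is circular, and it does not conclude even if its input is granted.
\begin{enumerate}
\item The vanishing of $|\psi|$ at a vertex is Lemma~4.2. Its proof (Section~8, Step~5) uses this very item, since the H\"older exponent there is the minimum of the vertex values of $\N(0)$.
\item More seriously, $\K(r)\to0$ is compatible with $\N(0)=0$. For instance $\K(r)=1/\log(1/r)$ has $\N(r)=r\K'/\K=1/\log(1/r)\to0$. Integrating \eqref{eq:7.3} excludes small $\N(0)$ in (c) only because Lemma~6.1 supplies the power rate $r^{1/2}$; at a vertex there is no a priori rate.
\end{enumerate}
The missing input is the Hardy/Poincar\'e inequality coming from the monodromy of $\mathcal{I}$ around the points where small spheres about the vertex meet $\Gamma$. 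Restricting \eqref{eq:4.2} to $\mathtt{B}_r$ gives \eqref{eq:7.10}, namely $\N(r)\ge c_0^{-1}\frac{r}{A\K^2}\int_0^r\K^2(s)\,ds$. If $\N(0)<1$, then \eqref{eq:7.3} gives $\K(s)\ge\frac12\K(r)$ for $s\in[\frac r2,r]$, hence $\N(r)\ge c_0^{-1}$ for small $r$. Your fallback (a blow-up limit would be a nonzero covariantly constant section of $\mathcal{I}$) encodes the same fact and can be made rigorous. If $\N(0)=0$, the rescalings have Dirichlet energy about $4\pi\N(\lambda)\to0$ on the unit ball, while their $L^2$ mass on $\mathtt{B}_2$ is bounded below by the normalization and passes to the limit by Rellich. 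But it is this inequality, not the vanishing of $\K$, that does the work, and \eqref{eq:7.10} gives the conclusion directly.

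\textbf{Third and fourth bullets.} The argument ``$\N_q(0)\le e^{\kappa r^2}(1+\N_q(r))-1$ at a fixed $r$, then let $q\to p$'' needs almost-monotonicity of $\N_q$ on an interval $(0,r_0]$ with $r_0$ independent of $q$. You do not address this.

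For the third bullet the uniformity holds when the $p_n$ lie on the edge, since then $r_{p_n}\to r_p>0$; this is the case the paper's proof treats. For $p_n\notin\Gamma$ the admissible radius $\frac{1}{8c}\dist(p_n,\Gamma)$ collapses. In fact, for $\mathcal{I}$-valued eigenfunctions, nodal points accumulating at an edge point where $\N(0)=\frac12$ have frequency $1$, so the bullet should be read for edge points.

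For the fourth bullet the uniformity is the whole point. Edge points $q\to p$ have $r_q\to0$, and for $r\gg r_q$ the ball $\mathtt{B}_r(q)$ contains the vertex and pieces of edges not through $q$. The radial field from $q$ is transverse to those edges, and the Pohozaev flux through thin tubes about them need not vanish as the tubes shrink. Already for $\operatorname{Re}(z^{1/2})$ on $\R^3$, with the center displaced off the axis, the flux through $\{\rho=\delta\}$ per unit length is independent of $\delta$ and proportional to the displacement.

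The hypothesis $\N_{(\cdot)}(0)>\frac12$ on nearby edges is exactly what removes these terms. Through \eqref{eq:7.3} it improves the bound on $\int_{\mathtt{B}'_\varepsilon}|\nabla\psi|^2$, over $\varepsilon$-balls centered on those edges, by a factor $\varepsilon^{\mu}$. The tube contributions then tend to zero (Step~11 of the proof of Lemma~7.2), which gives monotonicity on a uniform interval $(0,\kappa^{-1}\rho)$ for all points near the vertex. Only then does the fixed-$r$ comparison go through.

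Your sketch of the identity handles only edges through the center, where the radial field is tangent. Saying the hypothesis keeps ``the blow-up argument non-degenerate'' misplaces its role: there is no blow-up in this step. Two smaller corrections to that sketch:
\begin{enumerate}
\item Lemma~6.2 is needed for the cross term $\langle\nabla_\nu\psi,\nabla_X\psi\rangle$, where $X$ has a tangential component of size $|x|$. It is not needed for $|\nabla\psi|^2\langle X,\nu\rangle=O(\delta)\,|\nabla\psi|^2$.
\item The fourth bullet of \eqref{eq:6.3} weights by the distance to a point, not the distance to $\Gamma$.
\end{enumerate}
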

Fix a basepoint $p\in S^3$.  Let $\N_p(r)$ denote the frequency function defined by \eqref{eq:7.3}, with the integrals taken over the geodesic ball $B_r(p)$ and its boundary $\partial B_r(p)$.  By Lemma~7.1, $\N_p(r)$ has a limit as $r\to 0$; we set
\[
\N_p(0):=\lim_{r\to 0} \N_p(r).
\]
When $p$ is fixed, we often suppress the subscript and write $\N(r)$ and $\N(0)$. We write $\N_{(\cdot)}(0)$ for the resulting function $p\mapsto \N_p(0)$.

The proof of this lemma is given momentarily.

\medskip

The second lemma refers to numbers from the set $\{r_p: p\in S^3\}$ and the positive number $c$ that are described just prior to \eqref{eq:6.1}, and then in \eqref{eq:6.1} and Figure \ref{eq:6.2}.  What follows is the promised second lemma about the function $\N$.

\begin{lemma}\label{lem:7.2}
\emph{Let $\psi$ denote a $(\nabla^\dagger\nabla+\mathcal{R})$-eigensection from $\mathbb{H}$ with $\mathbb{L}$-norm equal to $1$.  There exists $\kappa>0$ with the following significance: Supposing that $p$ denotes a given point in $S^3$ and supposing that $r\in\bigl(0,\frac{1}{8c}\,r_p\bigr)$, then the corresponding version of $\N$ is almost everywhere differentiable on $\bigl(0,\frac{1}{8c}\,r_p\bigr)$; and its derivative (where $\N$ is differentiable) obeys the inequality below:}
\[
\frac{d\N}{dr}
\;\ge\;
\frac{1}{A \K^2}\int_{\partial \mathtt{B}_r}
\left|\nabla_r\psi-\frac{\N}{r}\psi\right|^2
\;-\;
\kappa(1+\N)\,r \, .
\]
\emph{Moreover, if $\rho>0$, and if $\N_{(\cdot)}(0)>\frac{1}{2}$ at each point from the part of $\Gamma$ inside the radius $\rho$ ball centered at a vertex of $\Gamma$, and if $p$ has distance at most $\frac{1}{2}\rho$ from that vertex, then the preceding inequality holds at any differentiable value for the function $\N$ from the whole interval $(0,\kappa^{-1}\rho)$.}
\end{lemma}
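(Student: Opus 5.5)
This is the standard Almgren monotonicity computation, carried out on the sphere and with care at $\Gamma$. Write $\N(r)=\frac{r\,D(r)}{H(r)}$, where
\[
H(r)=\int_{\partial\mathtt{B}_r}|\psi|^2=A\K^2,\qquad D(r)=\int_{\mathtt{B}_r}\Bigl(|\nabla\psi|^2+\langle\psi,\mathcal{R}\psi\rangle-\mathtt{E}|\psi|^2\Bigr).
\]
Then
\[
\frac{\N'}{\N}=\frac1r+\frac{D'}{D}-\frac{H'}{H}
\]
wherever $D\ne 0$; the case of small or negative $D$ is handled by working with $\N'$ directly, with $D'$ and $H'$ computed separately.

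\emph{Derivative of $H$.} The first bullet of \eqref{eq:7.4} gives
\[
H'=\frac{A'}{A}H+2\int_{\partial\mathtt{B}_r}\langle\psi,\nabla_r\psi\rangle=\Bigl(\frac{2}{r}+O(r)\Bigr)H+\frac{2H\N}{r}.
\]
The correction $\frac{2\cos r}{\sin r}-\frac{2}{r}=O(r)$ is the source of part of the $\kappa(1+\N)r$ error.

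\emph{Derivative of $D$.} By coarea, $D'(r)=\int_{\partial\mathtt{B}_r}\bigl(|\nabla\psi|^2+\langle\psi,\mathcal{R}\psi\rangle-\mathtt{E}|\psi|^2\bigr)$ for almost every $r$. This is why $\N$ is only claimed to be differentiable a.e., and it uses that $|\nabla\psi|^2$ is integrable by \eqref{eq:6.3}. The heart of the matter is the Rellich--Pohozaev identity. Multiply the equation \eqref{eq:6.4} by $\nabla_X\psi$, where $X=\sin r\,\partial_r$ is the conformal vector field on $S^3$ centred at $p$, and integrate over $\mathtt{B}_r$. One could equally take $X=r\partial_r$ in normal coordinates and absorb the $O(r^2)$ metric deviation. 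This yields
\[
\int_{\partial\mathtt{B}_r}|\nabla\psi|^2=2\int_{\partial\mathtt{B}_r}|\nabla_r\psi|^2+\frac{1}{r}\int_{\mathtt{B}_r}|\nabla\psi|^2\,(1+O(r^2))+O\Bigl(r\int_{\mathtt{B}_r}(1+\mathtt{E})|\psi|^2+\dots\Bigr).
\]
The curvature of $\mathbb{V}$ enters through $[\nabla_X,\nabla]$ and contributes $O(r)\int_{\mathtt{B}_r}|\psi||\nabla\psi|$. The terms with $\mathcal{R}$ and $\mathtt{E}$ contribute $O(1)\bigl(H+r^{-1}\int_{\mathtt{B}_r}|\psi|^2\bigr)$. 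Using $\int_{\mathtt{B}_r}|\psi|^2\le c\,rH$ (a consequence of $\K'\ge -\kappa r\K$ from Lemma~\ref{lem:7.1}'s lower bound $\N\ge-\kappa(1+\mathtt{E})r^2$) and $\int_{\mathtt{B}_r}|\psi||\nabla\psi|\le$ Cauchy--Schwarz in terms of $D$ and $rH$, all these error terms become $O\bigl((1+\N)r\bigr)$ after division by $D$ or $H/r$. Substituting and using $D=\frac{H\N}{r}=\int_{\partial\mathtt{B}_r}\langle\psi,\nabla_r\psi\rangle$ gives
\[
\frac{d\N}{dr}\ge\frac{2r}{H}\Bigl(\int_{\partial\mathtt{B}_r}|\nabla_r\psi|^2-\frac{\bigl(\int\langle\psi,\nabla_r\psi\rangle\bigr)^2}{H}\Bigr)-\kappa(1+\N)r.
\]
The bracket equals $\int_{\partial\mathtt{B}_r}\bigl|\nabla_r\psi-\frac{\N}{r}\psi\bigr|^2$, which yields the stated inequality up to the harmless positive factor (take the constant as stated, or note that $2r\ge r$ is not needed since one can keep only part of the square).

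\emph{Main obstacle: justifying the Pohozaev integration by parts across $\Gamma$.} Boundary terms on small tubes $\{\dist(\cdot,\Gamma)=\delta\}$ must vanish as $\delta\to0$. On the interior of edges this follows from Lemma~6.1 ($|\psi|\le\kappa\dist^{1/2}$) together with the Lemma~6.2 control of $\nabla_t\psi$ and standard interior estimates, giving $|\nabla\psi|\lesssim\dist^{-1/2}$ on average. The tube boundary term $\int|\nabla\psi|^2\,|\langle X,\nu\rangle|$ is then bounded by $\delta\cdot\delta^{-1}\cdot\delta\,|X\cdot\nu|$, which tends to $0$ because $X$ is nearly tangent to edges through $p$ and in any case the tube has area $O(\delta)$. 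This works only while $\mathtt{B}_r$ meets nothing but the edges through $p$, which is why $r<\frac{1}{8c}r_p$ by \eqref{eq:6.1}. For the vertex version, the hypothesis $\N_{(\cdot)}(0)>\frac12$ on edges near the vertex, combined with the frequency lower bounds at edge points (Lemma~\ref{lem:7.1}, third and fourth bullets, applied via $\K$-growth $|\psi|\lesssim\dist^{1/2+\epsilon}$), provides the decay needed to kill the tube terms uniformly up to the vertex. Near the vertex itself one uses a small ball whose boundary term is controlled by \eqref{eq:6.3}. This gives validity on all of $(0,\kappa^{-1}\rho)$.
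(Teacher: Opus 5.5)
Your route is the paper's. Multiplying \eqref{eq:6.4} by $\nabla_X\psi$ amounts to integrating $x_b\nabla_aT_{ab}$ for the tensor in \eqref{eq:7.13}, and your algebra correctly reproduces \eqref{eq:7.19}, including its prefactor $\frac{2r}{A\K^2}$. The gap is in the step you flag as the main obstacle.

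On a tube $\{\dist(\cdot,\Gamma)=s\}$ the boundary integrand is $2\langle\nabla_\nu\psi,\nabla_X\psi\rangle-\langle X,\nu\rangle|\nabla\psi|^2$. You name Lemma~6.2, but the only term you actually estimate is the second one. That estimate is correct: $|\langle X,\nu\rangle|\lesssim s$ against $|\nabla\psi|^2\lesssim s^{-1}$ on area $O(sr)$. The first piece is the dangerous one. From $|\nabla\psi|\lesssim\dist^{-1/2}$ and $|X|\le r$ alone it is only $O(r^2)$, which does not tend to zero, and the $O(s)$ area of the tube does not help.

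To kill it, write $\nabla_X\psi=\pm|x|\nabla_t\psi$ up to an error of size $O(s)|\nabla\psi|$; this is where tangency of $X$ to the edge really enters. Then combine Lemma~6.2 with the Hardy inequality \eqref{eq:4.2} to get $\int|\nabla_t\psi|^2\le c_0\varepsilon^2/r_p$ over the shell $\{\varepsilon/4<\dist(\cdot,\Gamma)<\varepsilon\}\cap\mathtt{B}_r$, as in \eqref{eq:7.27}. That is a bound on a solid shell, not on a single slice. So you must average over $s\in(\varepsilon/4,\varepsilon)$, which is exactly what the paper's smooth cut-off $\alpha_\varepsilon$ does, or else choose good slices. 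Cauchy--Schwarz then bounds the term by $c_0\,r\,r_p^{-1/2}\bigl(\int_{\dist(\cdot,\Gamma)<\varepsilon}|\nabla\psi|^2\bigr)^{1/2}+c_0\int_{\dist(\cdot,\Gamma)<\varepsilon}|\nabla\psi|^2$, which tends to $0$; this is \eqref{eq:7.26}--\eqref{eq:7.28}.

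Three further points concern the vertex version and the prefactor.

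\emph{Circularity and uniform exponent.} You may not cite the fourth bullet of Lemma~7.1, because its proof uses the second assertion of Lemma~7.2. The third bullet is upper semicontinuity of $p\mapsto\N_p(0)$. On its own it does not turn the pointwise hypothesis $\N_{(\cdot)}(0)>\frac12$ into a uniform exponent $\frac{1+\mu}{2}$ along the edges; the paper's Step~11 is equally terse on this. A uniform exponent can be taken from the quantization $\N_{(\cdot)}(0)\in\{\frac12,\frac32,\dots\}$ at interior edge points. The rescaling argument of Section~9 obtains this from the first assertion alone, so $\N_{(\cdot)}(0)>\frac12$ forces $\N_{(\cdot)}(0)\ge\frac32$.

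\emph{Order of limits.} Even then the decay is not uniform up to the vertex, since $z_p$ in Lemma~7.3 may grow like $r_p^{-\overline{\N}}$. So keep the vertex cut-off at a fixed scale $\delta$ and let the edge cut-off $\varepsilon\to0$ first; the edges not through $p$ then contribute terms that vanish with $\varepsilon$. Only afterwards let $\delta\to0$, using the fourth bullet of \eqref{eq:6.3}, as in Steps~7 and~11.

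\emph{Prefactor.} You cannot convert $\frac{2r}{A\K^2}$ into the stated $\frac{1}{A\K^2}$ for $r<\frac12$, because the inequality goes the wrong way. The stated prefactor is missing a factor of $r$ on dimensional grounds. What you and the paper actually prove carries $\frac{2r}{A\K^2}$. Only the sign of that term and its $r$-integral (used for \eqref{eq:9.11}) are needed later, and both survive with the factor $r$.
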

The proof of this lemma is also given momentarily.

\medskip

The next lemma states some facts about the function $\K$.

\begin{lemma}\label{lem:7.3}
\emph{There exists $\kappa>1$ with the following significance: Let $\psi$ denote a $(\nabla^\dagger\nabla+\mathcal{R})$-eigensection from $\mathbb{H}$ with $\mathbb{L}$-norm equal to $1$.  Fix a point from $\Gamma$ (denoted by $p$) to define the corresponding functions $\K$ and $\N$.  If $r\in\bigl(0,\frac{1}{4c}\,r_p\bigr)$, then
\[
\K(r)\le \kappa\, z_p\, r^{\N(0)}
\]
with $z_p$ being independent of $r$.  As a consequence,
\[
\int_{\partial\mathtt{B}_r} |\psi|^2 \le 4\pi\kappa\, z_p\, r^{2+2\N(0)}
\qquad\text{and}\qquad
\int_{\mathtt{B}_r} |\psi|^2 \le \frac{4\pi}{3}\,\kappa\, z_p\, r^{3+2\N(0)}.
\]
With regards to $z_p$: Suppose that $p$ is in the interior of an edge from $\Gamma$. Let $q$ denote the nearest vertex in $\Gamma$ to $p$ and let $\N_q(0)$ denote $q$'s version of the number $\N(0)$.  Define $\bar \N$ to be the maximum of $0$ and $\N(0)-\N_q(0)$.  Then, $z_p \le \kappa\, r_p^{-\bar \N}$.}
\end{lemma}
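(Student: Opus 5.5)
The plan is to integrate the identity $\frac{d}{dr}\ln\K=\N(r)/r$ from \eqref{eq:7.3}, using the almost monotonicity of $\N$ from Lemma~\ref{lem:7.2} to compare $\N(r)$ with $\N(0)$. First, from Lemma~\ref{lem:7.2}, discarding the non-negative square term, $\frac{d\N}{dr}\ge-\kappa(1+\N)r$ on $(0,\frac{1}{8c}r_p)$. Hence $\frac{d}{dr}\ln(1+\N)\ge -\kappa r$, which integrates to
\[
1+\N(r)\ \ge\ (1+\N(s))\,e^{-\kappa(r^2-s^2)/2}\qquad (0<s<r).
\]
Letting $s\to 0$ and using Lemma~\ref{lem:7.1} for the existence of $\N(0)$ gives $\N(r)\ge \N(0)-\kappa(1+\N(0))r^2$. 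I will also need an upper bound on $\N(0)$ in terms of a fixed scale so that the $r^2$ error is harmless. This follows from the same inequality run from $s$ up to a reference radius $r_*\sim\frac{1}{8c}r_p$. At $r_*$, the second bullet of \eqref{eq:7.4} together with \eqref{eq:6.3} and a lower bound on $\K(r_*)$ controls $\N(r_*)$.

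Next, integrate $\frac{d}{dr}\ln\K=\N/r\ge \N(0)/r-\kappa' r$ from $r$ to $r_*$:
\[
\ln\K(r_*)-\ln\K(r)\ \ge\ \N(0)\ln(r_*/r)-\kappa' r_*^2 .
\]
This gives $\K(r)\le \kappa\,\K(r_*)\,r_*^{-\N(0)}\,r^{\N(0)}$, so $z_p=\K(r_*)r_*^{-\N(0)}$. The integral bounds then follow immediately: $\int_{\partial\mathtt{B}_r}|\psi|^2=A(r)\K^2\le 4\pi r^2\K^2$ (the statement's power of $z_p$ is loose and will be absorbed into $\kappa$), and integrating the sphere bound in $r$ gives the ball bound. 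Since $r_*<\frac1{4c}r_p$ is all that is needed, the range $r<\frac1{4c}r_p$ is covered by enlarging $\kappa$, using the crude bound $\K\le c_0(1+\mathtt{E})$ from \eqref{eq:6.3} on the leftover interval.

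The main obstacle is the bound $z_p\le\kappa r_p^{-\bar\N}$ for an edge point $p$. The naive estimate is $z_p\le c_0 r_p^{-\N(0)}$, since $\K(r_*)$ is bounded by \eqref{eq:6.3}. To improve the exponent to $\bar\N=\max(0,\N(0)-\N_q(0))$, I would bootstrap through the vertex $q$. Let $d=\dist(p,q)\sim r_p$. On scales up to about $d$, the bound just proved applies at $p$. At scale $r\sim d$, the ball around $p$ lies inside a ball around $q$ of comparable radius. Applying the vertex version of the argument, which uses the second assertion of Lemma~\ref{lem:7.2} on a full neighbourhood of $q$ with $\rho$ fixed, gives $\K_p(r_*)\lesssim \K_q(2d)\lesssim d^{\N_q(0)}$. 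Therefore $z_p\lesssim d^{\N_q(0)-\N(0)}$, which is at most $\kappa r_p^{-\bar\N}$ because $d\le 1$.

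The delicate points in this last step are twofold. The first is checking that the hypothesis ``$\N_{(\cdot)}(0)>\tfrac12$ near the vertex'' needed for the vertex case of Lemma~\ref{lem:7.2} is available; I would take it from Lemma~\ref{lem:7.1}(c), with the semicontinuity bullets used if strictness is required. The second is comparing sphere averages about $p$ and about $q$. For that I would use the ball integrals together with the mean-value property, specifically the $\dist(\cdot,p)^{-1}$-weighted subharmonic bound \eqref{eq:6.6}.
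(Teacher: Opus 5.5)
Your route is viable, and its key step is the paper's argument run forward instead of by contradiction. The paper does the following:
\begin{itemize}
\item it shows $\K(\rho)\ge c_0^{-1}z_p\rho^{\N(0)}$ for $\rho\in[\frac{1}{4c}r_p,\frac{1}{2c}r_p)$;
\item it places the resulting annulus mass inside a ball about the nearest vertex $q$;
\item it pushes that mass out to radius $r_q\ge c_0^{-1}$ using the almost-monotonicity of $\N_q$;
\item it contradicts $\|\psi\|_{\mathbb{L}}=1$.
\end{itemize}
You use the same monotonicity at $q$ to get the upper bound $\K_q(s)\lesssim s^{\N_q(0)}$, with \eqref{eq:6.3} at scale $r_q$ replacing the normalization, and then transfer it to $p$. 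The ingredients are the same, but several of your steps fail as written.

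(i) The leftover interval $[r_*,\frac{1}{4c}r_p)$ is not covered by the crude bound $\K\le c_0(1+\mathtt{E})$. Since $z_p=\K(r_*)r_*^{-\N(0)}$ may be far smaller than $r_p^{-\N(0)}$, that bound does not give $\K(r)\le\kappa z_pr^{\N(0)}$, and you have no upper control on the growth of $\K$ above $r_*$. Replace $z_p$ by $\sup_{[r_*,\,r_p/4c]}\K(r)\,r^{-\N(0)}$; the transfer in (ii) bounds this quantity in the same way.

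(ii) The transfer cannot go through \eqref{eq:6.6}. That bound is global: its right side contains all the mass of $\psi$ on $S^3$, so it cannot produce the factor $r_p^{2\N_q(0)}$. Use instead the almost-monotonicity of $\K_p$ (from $\N_p\ge-\kappa(1+\mathtt{E})r^2$) together with ball containment, recalling $\dist(p,q)=r_p$:
\[
r_*^{3}\,\K_p(r_*)^2\lesssim\int_{\mathtt{B}_{2r_*}(p)}|\psi|^2\le\int_{\mathtt{B}_{2r_p}(q)}|\psi|^2\lesssim r_p^{3+2\N_q(0)} .
\]
This is exactly the paper's containment step, read as an upper bound.

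(iii) The vertex bound needs only the first assertion of Lemma~\ref{lem:7.2}, applied at $q$ itself on $(0,\frac{1}{8c}r_q)$. You should not invoke the second assertion. Its hypothesis $\N_{(\cdot)}(0)>\frac12$ is unavailable in this generality: item (c) of Lemma~\ref{lem:7.1} gives only $\ge\frac12$, the analysis after \eqref{eq:9.16} permits $\N(0)=\frac12$, and semicontinuity cannot make the inequality strict.

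(iv) No lower bound on $\K(r_*)$ is proved anywhere, so you cannot bound $\N(0)$ that way. The factor $e^{\kappa(1+\N(0))r_*^2}$ simply enters the constant, as it does in the paper's $c_0$. Likewise, the sphere and ball bounds naturally carry $z_p^2$. The discrepancy with the stated power $z_p$ cannot be absorbed into $\kappa$, because $z_p$ can be as large as $r_p^{-\bar\N}$.
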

\noindent\textbf{Proof of Lemma 7.3.}
Fix $p\in\Gamma$ and integrate \eqref{eq:7.3} between $r$ and $\frac{1}{4c}\,r_p$. Regarding that integral, Lemma~7.2 lead to the inequality
\begin{equation}\label{eq:7.5}
\K(r)\le c_0\,\K\!\left(\frac{1}{4c}\,r_p\right)\left(\frac{1}{r_p}\right)^{\N(0)} r^{\N(0)}.
\end{equation}
Let $z_p$ now denote $\K\!\left(\frac{1}{4c}\,r_p\right)\left(\frac{1}{r_p}\right)^{\N(0)}$. The lemma's integral inequalities follow from \eqref{eq:7.5}.

\smallskip
To see about a bound for $z_p$: Integrate \eqref{eq:7.3} again to see that $\K(\rho)\ge c_0^{-1} z_p\,\rho^{\N(0)}$ for any given $\rho\in\bigl[\frac{1}{4c}\,r_p,\frac{1}{2c}\,r_p\bigr)$.  This implies in turn that
\begin{equation}\label{eq:7.6}
\int_{\mathtt{B}_{\frac{r_p}{2c}}-\mathtt{B}_{\frac{r_p}{4c}}} |\psi|^2
\ge c_0^{-1}\,z_p\, r_p^{3+2\N(0)}.
\end{equation}
If $\frac{1}{2c}r_p$ is greater than $\frac{1}{1000\,c}$ times the shortest edge length, then \eqref{eq:7.6} will run afoul of the $\|\psi\|_{\mathbb{L}}=1$ constraint in the event that $z_p\ge c_0$.

To see about the case when $r_p$ is relatively small, let $q$ denote the closest vertex to $p$.  For any given positive number $\rho$, let $\mathtt{B}'_{\rho}$ denote the radius $\rho$ ball centered at $q$.  It then follows from \eqref{eq:7.6} using \eqref{eq:7.3} that the inequality below holds for $\rho=\frac{1}{c}r_p$:
\begin{equation}\label{eq:7.7}
\int_{\mathtt{B}'_{\rho}} |\psi|^2
\ge c_0^{-1}\,z_p\,\rho^{3+2\N(0)}.
\end{equation}

Let $\N'(0)$ denote the vertex $q$'s version of the number $\N(0)$.  Granted \eqref{eq:7.7}, use \eqref{eq:7.3} with $q$ replacing $p$ to see that
\begin{equation}\label{eq:7.8}
\int_{\mathtt{B}'_{r_q}} |\psi|^2
\ge c_0^{-1}\,z_p\left(\frac{r_q}{r_p}\right)^{3+2\N'(0)} r_p^{3+2\N(0)}.
\end{equation}

Since $r_q\ge c_0^{-1}$, this bound runs afoul of the $\|\psi\|_{\mathbb{L}}=1$ constraint if $z_p>c_0/r_p^{\bar \N}$ with $\bar \N$ denoting the maximum of $0$ and $\N(0)-\N'(0)$.

\medskip
\noindent\textbf{Proof of Lemma 7.1.}
The assertion made by the lemma's first bullet follows from the uniform $L^\infty$ bound for $|\psi|$ from the third bullet of \eqref{eq:6.3}.  Since other assertions of the lemma invoke the conclusions of Lemma~7.2, assume that lemma is true for now.

With regards to the assertion of the second bullet of Lemma~7.1 regarding a $r\to 0$ limit for $\N$: Lemma~7.2 says in effect that the derivative of $\N$ is no smaller than $-c_0(1+\N)r$ on an interval of the form $\bigl(0,\frac{1}{2c}r_p\bigr)$ or $\bigl(0,c_0^{-1}\rho\bigr)$ as the case may be.  Integration of this derivative inequality implies that a function of the form
\begin{equation}\label{eq:7.9}
r \longmapsto e^{c_0 r}\N(r)+c_0 r^2
\end{equation}
is non-decreasing on that same interval.  This non-decreasing property implies in turn the existence of a unique $r\to 0$ limit for $\N$ since $\N$ is bounded from below.

Granted that there is a unique $r\to 0$ limit for $\N$, then Item~a) of Lemma~7.1's second bullet follows by integrating~\eqref{eq:7.3}. Given what is said by Lemma~6.1, a completely analogous application of~\eqref{eq:7.3} leads directly to the conclusions of Item~c) of Lemma~7.1.  To see about Item~b) of that bullet, note that the bound in~\eqref{eq:4.2} holds with the integrals on both sides restricted to the ball of radius $r$ centered on the given vertex of $\Gamma$.  Given the formula in~\eqref{eq:7.4}, that bound leads to the lower bound below which holds when $r$ is sufficiently small:
\begin{equation}\label{eq:7.10}
\N(r)\ge c_0^{-1}\,\frac{r}{A\,\K^2}\int_0^r \K^2(s)\,ds.
\end{equation}
If the $r\to 0$ limit of $\N(r)$ is greater than $1$, then there is nothing to prove.  If that limit is less than $1$, then~\eqref{eq:7.3} implies that $\K(s)\ge \tfrac12\,\K(r)$ for $s$ between $\tfrac12 r$ and $r$; and then \eqref{eq:7.10} leads directly to a $c_0^{-1}$ lower bound for $\N(r)$ when $r$ is small.

With regards to the third bullet of Lemma~7.1: Let $p$ denote the limit point inside that edge of $\Gamma$.  Fix small value for $r$ and let $\mathtt{B}_r$ denote the ball of radius $r$ centered at $p$.  Given a point $q$ from that same edge of $\Gamma$, let $\mathtt{B}'_r$ denote the ball of radius $r$ centered at $q$.  A standard measure-theoretic fact is this: Given $\varepsilon>0$ but very small, there exists $\delta>0$ such that if $q$ is in that same edge and $\operatorname{dist}(p,q)<\delta$, then
\begin{equation}\label{eq:7.11}
\begin{aligned}
&\text{\textbullet}\quad
\int_{\mathtt{B}_r}\Bigl(|\nabla\psi|^2+\langle\psi,\mathcal{R}\psi\rangle
-\mathtt{E}|\psi|^2\Bigr)
\ \text{and}\ 
\int_{\mathtt{B}'_r}\Bigl(|\nabla\psi|^2+\langle\psi,\mathcal{R}\psi\rangle
-\mathtt{E}|\psi|^2\Bigr)
\ \textit{differ by at most }\varepsilon,
\\
&\text{\textbullet}\quad
\frac{1}{A(r)}\int_{\partial\mathtt{B}_r}|\psi|^2
\ \text{and}\ 
\frac{1}{A(r)}\int_{\partial\mathtt{B}'_r}|\psi|^2
\ \textit{differ by at most }\varepsilon.
\end{aligned}
\end{equation}

For that same $q$ as above, let $\N'(r)$ denote $q$'s version of $\N$.  It follows from the formula in the second bullet of~\eqref{eq:7.4} using~\eqref{eq:7.11} that if $\delta$ is sufficiently small (with $r$ fixed), then $\N(r)$ and $\N'(r)$ differ by at most $\varepsilon$.  Keeping this in mind, suppose that $r$ is small enough so that $\N(r)$ and $\N(0)$ also differ by at most $\varepsilon$.  Meanwhile, $\N'(r)$ is no smaller than $e^{-c_0\varepsilon}\N'(0)-c_0\varepsilon$ if $r$ is small and $q$ is very close to $p$.  (See what is said regarding \eqref{eq:7.9} in the proof above for the lemma's second bullet.  Note in this regard, that if $p$ is in the interior of an edge, and if $q$ is very close to $p$, then the distance from $p$ to $q$ will be very much smaller than $\frac{1}{2c}\,r_q$.)  Thus, if $r$ small to begin with and then $q$ is sufficiently close to $p$, then it follows that $\N(0)$ is no smaller than $e^{-c_0\varepsilon}\N'(0)-c_0\varepsilon$.  Since $\varepsilon$ can be any small positive number, this last inequality establishes the assertion from the lemma's third bullet.

With regards to the fourth bullet's assertion: A key input for the argument given above for the third bullet was the assertion that the function depicted in~\eqref{eq:7.9} is non-decreasing as long as $r$ is less than $\frac{1}{2c}\,r_p$.  That upper bound for $r$ can be taken to be independent of the point $p$ (thus $c_0^{-1}$) for any point sufficiently close to the given vertex on an edge that ends at that vertex if the conditions for the fourth bullet of Lemma~7.1 are met.  Granted this uniform upper bound for $r$ at those same edge points, then the argument used for the third bullet of Lemma~7.1 can be repeated almost verbatim.

\medskip
\noindent\emph{Proof of Lemma 7.2:}
The proof of the lemma has $11$ steps.  The first $10$ steps prove the assertion of Lemma~7.2 regarding the case where $r<\frac{1}{8c}\,r_p$.  The final step proves the assertion regarding the $\N_{(\cdot)}(0)>\frac{1}{2}$ assumption for points near a given vertex.

\medskip

\noindent\underline{\emph{Step 1:}}
To start the proof of Lemma~7.2, note first that the depiction of the function $\N$ using the second bullet in~\eqref{eq:7.4} implies that it is an almost everywhere differentiable function on $\bigl(0,\frac{1}{1000}\pi\bigr]$, and that its derivative at its differentiable points has the form
\begin{equation}\label{eq:7.12}
\frac{d\N}{dr}
=
-\frac{(1+2\N)\N}{r}
+\frac{r}{A\,\K^2}\int_{\partial\mathtt{B}_r} |\nabla\psi|^2
+r\,z(r),
\end{equation}
with $z(r)$ denoting a continuous function of $r$ with norm bounded by $c_0(1+\mathtt{E})$. (This bound for $z(r)$ follows directly from definition of $\K^2(r)$ as the average of $|\psi|^2$ on $\partial\mathtt{B}_r$.)

\medskip
\noindent\underline{\emph{Step 2:}}
Use of a certain symmetric tensor on $S^3-\Gamma$ (a section of $T^*S^3\otimes T^*S^3$) is the key idea for analyzing the right-hand side of \eqref{eq:7.12}. (The introduction of this tensor and its application in very similar contexts is ultimately due Almgren, but see also \cite{DF, H, HHL}.) This tensor is denoted by $T$ and it is defined via its components with respect to a chosen orthonormal frame for $T^*S^3$. These are the functions $\{T_{ab}\}_{a,b=1,2,3}$ with any given $T_{ab}$ defined below:
\begin{equation}\label{eq:7.13}
T_{ab}
=
\langle \nabla_a\psi,\nabla_b\psi\rangle
+\langle \nabla_b\psi,\nabla_a\psi\rangle
-\delta_{ab}|\nabla\psi|^2.
\end{equation}
Here, $\nabla_a$ is the directional derivative along the dual basis vector to the $1$-form labeled by the subscript $a$. The key observation for what follows is that the covariant divergence of $T$ has the schematic form below (repeated indices are summed over the index set $\{1,2,3\}$)
\begin{equation}\label{eq:7.14}
\nabla_a T_{ab}
=
\langle \psi,\, \mathcal{Z}_{ab}\nabla_a\psi\rangle
+\langle \mathcal{Z}_{ab}\psi,\, \nabla_a\psi\rangle.
\end{equation}
The notation here uses $\{\mathcal{Z}_{ab}\}_{a,b=1,2,3}$ to denote a collection of endomorphisms of $\mathbb{V}$ whose norms are bounded by $c_0(1+\mathtt{E})$.

\medskip
\noindent\underline{\emph{Step 3:}}
Use a Gaussian coordinate system based at $p$ for the radius $\frac{1}{100}\pi$ ball centered at $q$ to define a Euclidean coordinate chart for this ball. Let $\{x_1,x_2,x_3\}$ denote these Euclidean coordinates. The $1$-forms $\bigl\{\frac{1}{1+|x|^2}\,dx_a\bigr\}_{a=1,2,3}$ are an orthonormal basis for $T^*S^3$ on this ball. Use this basis to define the function $x_b\nabla_a T_{ab}$ on the ball (as in \eqref{eq:7.14}, repeated indices are summed).

Were $p$ in $S^3-\Gamma$ and were $r$ less than the distance from $p$ to $\Gamma$, then integrating the function $x_b\nabla_a T_{ab}$ over the radius $r$ ball centered at $p$ and then integrating by parts leads to the identity
\begin{align}\label{eq:7.15}
r\int_{\partial\mathtt{B}_r}\bigl(|\nabla_r\psi|^2-|\nabla^\partial\psi|^2\bigr)
=
-\int_{\mathtt{B}_r}|\nabla\psi|^2
+\mathfrak{C}(r)
\end{align}
with the notation being as follows: This identity is using $\nabla_r\psi$ to denote the covariant derivative of $\psi$ in the normal direction of $\partial\mathtt{B}_r$ and it is using $\nabla^{\partial}\psi$ to denote the covariant derivative of $\psi$ in the directions tangent to $\partial\mathtt{B}_r$. Meanwhile, $\mathfrak{C}$ in \eqref{eq:7.15} denotes a function of $r$ with the following norm bound:
\begin{equation}\label{eq:7.16}
|\mathfrak{C}(r)|
\le c_0\left(r^2\int_{\mathtt{B}_r} |\nabla\psi| + \int_{\mathtt{B}_r} |\psi|^2\right).
\end{equation}
To be sure: The restriction on $p$ and $r$ in the derivation of \eqref{eq:7.15} is invoked so that the integration by parts that leads to \eqref{eq:7.15} can't get hung up on $\Gamma$.

The explanation for why \eqref{eq:7.15} holds when the closure of $B_r$ intersects $\Gamma$ is given in subsequent steps; but (in some cases) with a bound for the corresponding version of $\mathfrak{C}$ that is not as strong as the bound in~\eqref{eq:7.16}.

\medskip
\noindent\underline{\emph{Step 4:}}
Assuming \eqref{eq:7.15} holds (whether or not $\mathfrak{C}$ obeys \eqref{eq:7.16}), that identity can be used to rewrite \eqref{eq:7.12} as done below:
\begin{equation}\label{eq:7.17}
\frac{d\N}{dr}
=
-\frac{(1+2\N)\N}{r}
+\frac{2r}{A \K^2}\int_{\partial\mathtt{B}_r} |\nabla_r\psi|^2
+\frac{1}{A \K^2}\int_{\mathtt{B}_r} |\nabla\psi|^2
-\frac{1}{A \K^2}\,\mathfrak{C}(r)
+r\,z(r).
\end{equation}
To proceed from here: Write $\nabla_r\psi$ in \eqref{eq:7.17} as $\nabla_r\psi-\frac{\N}{r}\psi+\frac{\N}{r}\psi$ and use the first bullet in \eqref{eq:7.4} to write the identity above as
\begin{equation}\label{eq:7.18}
\frac{d\N}{dr}
=
-\frac{\N}{r}
+\frac{2r}{A \K^2}\int_{\partial\mathtt{B}_r}\left|\nabla_r\psi-\frac{\N}{r}\psi\right|^2
+\frac{1}{A \K^2}\int_{\mathtt{B}_r} |\nabla\psi|^2
-\frac{1}{A \K^2}\,\mathfrak{C}(r)
+r\,z(r).
\end{equation}
With \eqref{eq:7.18} in hand, use the depiction of $\N$ from the second bullet in \eqref{eq:7.4} to write the terms in \eqref{eq:7.18} with the $\mathtt{B}_r$ integral of $|\nabla\psi|^2$ in terms of $\N$. Doing so leads to this inequality:
\begin{equation}\label{eq:7.19}
\frac{d\N}{dr}
=
\frac{2r}{A \K^2}\int_{\partial\mathtt{B}_r}\left|\nabla_r\psi-\frac{\N}{r}\psi\right|^2
-\frac{1}{A \K^2}\,\mathfrak{C}(r)
+r\,w(r)
\end{equation}
with $w(\cdot)$ denoting a continuous function of $r$ with a $c_0(1+\mathtt{E})$ norm bound.

\medskip
\noindent\underline{\emph{Step 5:}}
Suppose in this step that $\mathfrak{C}$ does obey the bound in \eqref{eq:7.16}. Noting that the function $\K$ is almost non-decreasing (this follows from \eqref{eq:7.3} and from the first bullet in Lemma~7.1), it follows that
\begin{equation}\label{eq:7.20}
\int_{\mathtt{B}_r} |\psi|^2 \le c_0\, r^2\, \K^2(r).
\end{equation}
Use this last bound for the $|\psi|^2$ integral on the right-hand side of \eqref{eq:7.16}, and use the second bullet in \eqref{eq:7.4} to write the $|\nabla\psi|^2$ integral on the right-hand side of \eqref{eq:7.16} in terms of $\N$. Doing both of those and then using the resulting bound for $\mathfrak{C}$ in \eqref{eq:7.19} leads directly to the inequality for the derivative of $\N$ that is asserted by Lemma~7.2.

With the preceding understood: As noted in Step~3, the inequality in \eqref{eq:7.16} holds when $p$ is in $S^3-\Gamma$ and when $r\le \frac{1}{2c}\,r_p$. The next step explains why this is also the case when $p$ is from the interior of an edge of $\Gamma$. Step~7 explains why this is the case when $p$ is a vertex in $\Gamma$. The last step in the proof (Step~11) explains why \eqref{eq:7.16} holds for the case under consideration in the final assertion of Lemma~7.2.

\medskip
\noindent\underline{\emph{Step 6:}}
To see about the cases when $\mathtt{B}_r$ intersects $\Gamma$ (which is always the case if the point $p$ is in $\Gamma$), fix $\varepsilon>0$ but very small (the $\varepsilon\to 0$ limit will be taken eventually) and for the purposes of this step, let
\begin{align}
\alpha_\varepsilon
:=\chi\!\left(1-\frac{1}{\varepsilon}\,\operatorname{dist}(\cdot,\Gamma)\right).
\end{align}
To be sure, this function is equal to $1$ where the distance to $\Gamma$ is greater than $\varepsilon$; and it is equal to zero where the distance to $\Gamma$ is less than $\frac14\,\varepsilon$. Also fix $\delta>100\varepsilon$ but very small (the $\delta\to 0$ limit will be taken also), let $\mathcal{V}$ denote the set of vertices of the graph $\Gamma$, and 
\begin{align}
\beta_\delta:=\chi\!\left(1-\frac{1}{\delta}\,\operatorname{dist}(\cdot,\mathcal{V})\right),
\end{align}
where
\begin{align}
\operatorname{dist}(\cdot,\mathcal{V}):=\min_{v\in\mathcal{V}}\dist(\cdot, v).
\end{align}
To be sure, this function is equal to $1$ where the distance to each vertex of $\Gamma$ is greater than $\delta$ and equal to $0$ where the distance to some vertex is less than $\frac14\,\delta$.

Now return to the Step~3 and instead of integrating $x_b\nabla_aT_{ab}$ over $\mathtt{B}_r$, integrate the function $\alpha_\varepsilon\beta_\delta\, x_b\nabla_aT_{ab}$ over $\mathtt{B}_r$. Then integrate by parts. (The function $\alpha_\varepsilon$ is used so that the support of the integrand has distance greater than $\frac14\,\varepsilon$ from $\Gamma$; and the function $\beta_\delta$ is used to further shrink the support so that its distance to any vertex of $\Gamma$ is greater than $\frac14\,\delta$).  The result of this integration by parts is an $\varepsilon$-dependent version of \eqref{eq:7.15} where $\mathfrak{C}(\cdot)$ can be written as a sum of three terms,
\begin{equation}\label{eq:7.21}
\mathfrak{C}(\cdot)
=
\mathfrak{C}_{1\varepsilon,\delta}(\cdot)
+\mathfrak{C}_{2\varepsilon,\delta}(\cdot)
+\mathfrak{C}_{3\varepsilon,\delta}(\cdot).
\end{equation}
with the distinction between these terms as follow: The term $\mathfrak{C}_{1\varepsilon,\delta}$ denotes a function of $r$ that is bounded by what is written on the right-hand side of \eqref{eq:7.16}; it is short-hand for an integral over $\mathtt{B}_r$ of terms with no derivatives on $\alpha_\varepsilon$ or $\beta_\delta$.  Meanwhile, $\mathfrak{C}_{2\varepsilon,\delta}$ signifies the function of $r$ that contains the integration by parts terms that do have derivatives of $\alpha_\varepsilon$ but none of $\beta_\delta$. And,
$\mathfrak{C}_{3\varepsilon,\delta}$ contains the integration by parts terms that have derivatives of $\beta_\delta$ but none of $\alpha_\varepsilon$. The terms $\mathfrak{C}_{2\varepsilon,\delta}$ and $\mathfrak{C}_{3\varepsilon,\delta}$ are depicted below in \eqref{eq:7.22} with the notation as follows: The first bullet's depiction of $\mathfrak{C}_{2\varepsilon,\delta}$ uses $\mathfrak{d}_\Gamma$ for the function $\operatorname{dist}(\cdot,\Gamma)$, and it uses $\alpha'_\varepsilon$ to denote the function on $S^3$ that is obtained by evaluating the derivative of $\chi$ at the value of the function $\left(1-\frac{1}{\varepsilon}\operatorname{dist}(\cdot,\Gamma)\right)$.  The second bullet has $\mathfrak{d}_{\mathcal{V}}$ denoting the function $\operatorname{dist}(\cdot,\mathcal{V})$ and it defines $\beta'_\delta$ analogously to $\alpha'_\varepsilon$.
\begin{itemize}
    \item 
$\mathfrak{C}_{2\varepsilon,\delta}(r)
= \frac{1}{\varepsilon}\int_{\mathtt{B}_r}
\beta_\delta\,\alpha'_\varepsilon\,\bigl(\nabla_a\mathfrak{d}_\Gamma\bigr)\,x_b T_{ab}.$
\smallskip
    \item 
$\mathfrak{C}_{3\varepsilon,\delta}(r)
= \frac{1}{\delta}\int_{\mathtt{B}_r}
\alpha_\varepsilon\,\beta'_\delta\,\bigl(\nabla_a\mathfrak{d}_{\mathcal{V}}\bigr)\,x_b T_{ab}.$
\listeqno
\label{eq:7.22}
\end{itemize}

The key concern with regard to $\mathfrak{C}_{2\varepsilon,\delta}$ and $\mathfrak{C}_{3\varepsilon,\delta}$ is whether they have respective limits as $\varepsilon$ and $\delta$ are taken to zero, and if so, whether those limits are zero.  Because the $\mathfrak{C}_{3\varepsilon,\delta}$ analysis is relatively straightforward, it is considered first below.

\medskip
\noindent\underline{\emph{Step 7:}}
This step explains why the contribution to $\mathfrak{C}_{3\varepsilon,\delta}$ has limit zero as $\delta\to 0$.  To see that this is so, note first that the integrand in the second bullet of \eqref{eq:7.22} is supported in the union of the radius $\delta$ balls centered at the vertices of $\Gamma$.  With that understood, let $q$ denote a given vertex and let $\mathtt{B}'_\delta$ denote the radius $\delta$ ball centered at $q$.  The contribution from $\mathtt{B}'_\delta$ is at most
\begin{equation}\label{eq:7.23}
c_0\, r_p \int_{\mathtt{B}'_\delta} \frac{1}{\operatorname{dist}(\cdot,q)}\,
|\nabla\psi|^2 \, .
\end{equation}
The inequality depicted by the fourth bullet in \eqref{eq:6.3} implies that \eqref{eq:7.23} is no larger than $c_0 r_p$, and then it follows from this last fact (by the dominated convergence theorem) that the $\delta\to 0$ limit of \eqref{eq:7.23} is zero.  (Note that $\varepsilon$ plays no role in this analysis.)

\medskip
\noindent\underline{\emph{Step 8:}}
The analysis of the integral in \eqref{eq:7.22} for $\mathfrak{C}_{2\varepsilon,\delta}$ will exploit the fact that the integrand is supported where the distance to the vertices of $\Gamma$ are greater than $\frac14\,\delta$ but where the distance to some edge of $\Gamma$ is less than $\varepsilon$.  In particular, if $\varepsilon$ is much smaller than $\delta$, then contribution to the integral for $\mathfrak{C}_{2\varepsilon,\delta}$ in \eqref{eq:7.22} comes from points that are much closer to some given edge of $\Gamma$ than they are to any vertex.

The subsequent analysis of the contribution of a given edge from $\Gamma$ to the integral for $\mathfrak{C}_{2\varepsilon,\delta}$ in \eqref{eq:7.22} depends whether the point $p$ is on that edge or not.  Suppose henceforth in this step that $p$ is not on a given edge that intersects $\mathtt{B}_r$.  (Note that if $\mathtt{B}_r$ intersects that edge, then $r\ge c_0^{-1} r_p$.)  Since the length of $\mathtt{B}_r$'s intersection with that edge is at most $c_0 r$, the part of that edge where $\beta_\delta$ is non-zero can be covered by $c_0 r\,\varepsilon^{-1}$
balls of radius $\varepsilon$ centered at points on this edge.  Let $q$ now denote the center point of one of those balls and let $\mathtt{B}'_\varepsilon$ denote
the radius $\varepsilon$ ball centered at $q$.  The contribution to $\mathfrak{C}_{2\varepsilon,\delta}$ in \eqref{eq:7.22} from the ball $\mathtt{B}'_\varepsilon$ is at most
\begin{equation}\label{eq:7.23b}
c_0\, r\,\frac{1}{\varepsilon}\int_{\mathtt{B}'_\varepsilon} |\nabla\psi|^2 \, .
\end{equation}

To see how big this can be, multiply both sides of \eqref{eq:6.5} by
\[
\chi\!\left(\frac{1}{\varepsilon}\,\operatorname{dist}(\cdot,q)-1\right),
\]
which is a function with compact support in $\mathtt{B}'_{2\varepsilon}$ that is equal to $1$ on $\mathtt{B}'_{\varepsilon}$.  Then integrate the resulting identity and after doing that, integrate by parts twice to remove derivatives from $|\psi|^2$ and put them on the cut--off function $\chi\!\left(\frac{1}{\varepsilon}\,\operatorname{dist}(\cdot,q)-1\right)$.  The
result of doing that leads to the bound below when $\varepsilon$ is very small:
\begin{equation}\label{eq:7.24}
\int_{\mathtt{B}'_{\varepsilon}} |\nabla\psi|^2
\le c_0\,\frac{1}{\varepsilon^2}\int_{\mathtt{B}'_{2\varepsilon}} |\psi|^2 \, .
\end{equation}

Crucially, Lemma~6.1 can be brought to bear if $\varepsilon<\frac{1}{4c}\,r_q$; hence the right-hand side of \eqref{eq:7.24} in this case is bounded by $c_0\varepsilon$.  Note that if
$\varepsilon<\frac{1}{1000c}\,\delta$, then $\varepsilon$ will be less than $\frac{1}{4c}\,r_q$ if the distance of $q$ to the nearest vertex of $\Gamma$ is greater than $\delta$.

Supposing henceforth that $\varepsilon$ is less than $\frac{1}{1000c}\,\delta$, then this $c\varepsilon$ bound for \eqref{eq:7.23b} implies a $c_0r$ bound for the contribution to the integral for $\mathfrak{C}_{2\varepsilon,\delta}$ in \eqref{eq:7.22} from an edge of $\Gamma$ that does not contain the point $p$. Note in particular that this bound is independent of both $\varepsilon$ and $\delta$.

\smallskip

\noindent\underline{\emph{Step 9:}}
This step considers the contribution to the integral for $\mathfrak{C}_{2\varepsilon,\delta}$ in \eqref{eq:7.22} from an edge that contains $p$ but with the additional assumption that $p$ is an interior point of that edge. It is also assumed in this step that $r$ is less than $\frac{1}{8c}\,r_p$ so as to eventually bring Lemma~6.2 to bear.  The exploitation of Lemma~6.2 uses the fact that if $\varepsilon$ is much less than $\frac{1}{8c}\,r_p$, then on the support of $\alpha'_{\varepsilon}$ inside the ball of radius $\frac{1}{8c}\,r_p$ centered at $p$, the vector $x_b$ is almost tangent to the edge in question.  This means in particular that
\begin{equation}\label{eq:7.25}
|x_b\nabla_b\,\mathfrak{d}_\Gamma| < c_0\varepsilon
\qquad\text{and}\qquad
|x_b-|x|\nu_b| < c_0\varepsilon\,|x| \, ,
\end{equation}
where $\{\nu_b\}_{b=1,2,3}$ denotes the components of the vector field $\frac{\partial}{\partial t}$ that appears in Lemma~6.2.

Let $N_{\varepsilon,\delta}$ denote the part of the radius $\varepsilon$ tubular neighborhood of the edge in question where the distance to $\Gamma$ is greater than $\frac14\,\varepsilon$ and where the distance to the vertex set $\mathcal{V}$ of $\Gamma$ is greater than $\frac14\,\delta$.  With $N_{\varepsilon,\delta}$ understood, it then follows as a consequence of the preceding bounds that the contribution to the integral for $\mathfrak{C}_{2\varepsilon,\delta}$ from the edge containing $p$ is at most
\begin{equation}\label{eq:7.26}
c_0\,r\,\frac{1}{\varepsilon}
\left(\int_{N_{\varepsilon,\delta}} |\nabla\psi|^2\right)^{1/2}
\left(\int_{N_{\varepsilon,\delta}} |\nabla_t\psi|^2\right)^{1/2}
\;+\;
c_0 r\int_{N_{\varepsilon,\delta}} |\nabla\psi|^2 \, .
\end{equation}

The rightmost integral in \eqref{eq:7.26} has limit $0$ as $\varepsilon\to 0$ because $|\nabla\psi|$ is in the Hilbert space $\mathbb{H}$.  With regards to the rightmost integral:  Lemma~6.2 says that $\nabla_t\psi$ is in $\mathbb{H}$, and bounds its $\mathbb{H}$-norm by $c_0\frac{1}{\sqrt{r_p}}$.  It follows from this fact and \eqref{eq:4.2} that
\begin{equation}\label{eq:7.27}
\int_{N_{\varepsilon,\delta}} |\nabla_t\psi|^2 \le c_0\varepsilon^2\,\frac{1}{r_p} \, .
\end{equation}
Use of the latter in the leftmost term of \eqref{eq:7.26} bounds the latter by
\begin{equation}\label{eq:7.28}
c_0\,\frac{r}{\sqrt{r_p}}\left(\int_{N_{\varepsilon,\delta}} |\nabla\psi|^2\right)^{1/2},
\end{equation}
which has limit zero as $\varepsilon\to 0$.  To summarize what was just proved:  If $p$ is in the interior of an edge of $\Gamma$ and if $r<\frac{1}{8c}\,r_p$, then the term $\mathfrak{C}_{2\varepsilon,\delta}$ has limit zero as $\varepsilon\to 0$.

\medskip
\noindent\underline{\emph{Step 10:}}
Now consider the case when $p$ is a vertex of $\Gamma$ with $r<\frac{1}{8c}\,r_p$. In this case, the integral for $\mathfrak{C}_{2\varepsilon,\delta}$ in \eqref{eq:7.22} is the sum of contributions from the part of the radius $\varepsilon$ tubular neighborhood of $p$'s incident edges that have distance between $\frac14\,\delta$ and $r$ from $p$.  If $\varepsilon$ is much smaller than $\delta$, then those tubular neighborhoods are disjoint.  In addition, the bounds in \eqref{eq:7.25} hold on each such neighborhood.  As a consequence, the analysis in Step~9 can be repeated with almost no changes to see that the $\varepsilon\to 0$ limit of $\mathfrak{C}_{2\varepsilon,\delta}$ is zero.

\medskip
\noindent\underline{\emph{Step 11:}}
Suppose in this step that $q$ is a given vertex of $\Gamma$ and that there exists $\rho>0$ such that $\N_{(\cdot)}(0)>\frac12$ at each point in $\Gamma$ with distance $\rho$ or less from $q$.  The plan for this case is to show that if $p\in S^3$ has distance less than $\frac12\,\rho$ from $q$, then the inequality for the derivative of $\N$ holds on $(0,\,c_0^{-1}\rho]$.  This will follow with a demonstration that the integral in \eqref{eq:7.22} for $\mathfrak{C}_{2\varepsilon,\delta}(r)$ has limit zero as $\varepsilon\to 0$ for fixed $\delta$ when $r\le c_0^{-1}\rho$.

To see about this limit, the first observation is the conclusions of Lemma~7.1 can be assumed and, in particular, the conclusions of the third bullet since those conclusions only require that the differential inequality from Lemma~7.2 hold at points in the interior of an edge and for $r<\frac{1}{8c}\,r_{(\cdot)}$.  This being the case, it then follows that for any give value of $\delta$, there exists $\mu>0$ such that if $p$ is a point with distance between $\frac18\,\delta$ and $\rho$ from the vertex $q$, then $\N_p(0)\ge \frac{1+\mu}{2}$.  It then follows from this that using \eqref{eq:7.3} that the integral on the right-hand side of \eqref{eq:7.24} is bounded by $c_0\varepsilon^{2+\mu}\,r_p^{-(1+\mu)}$.  Since $r_p$ is greater than $c_0^{-1}\delta$ by assumption, this in turn is bounded by $c_0\varepsilon^{2+\mu}\delta^{-(1+\mu)}$.

Since the domain of integration for \eqref{eq:7.22}'s integral defining $\mathfrak{C}_{2\varepsilon,\delta}$ can be covered by $c_0\varepsilon^{-1}$ balls of radius $\varepsilon$ centered at points on edges of $\Gamma$, the bound $\bigl|\mathfrak{C}_{2\varepsilon,\delta}\bigr|\le c_0\varepsilon^{1+\mu}\delta^{-(1+\mu)}$ follows directly from the preceding analysis.  The $\varepsilon\to 0$ limit of $\mathfrak{C}_{2\varepsilon,\delta}$ is therefore zero for any fixed choice of $\delta$ as required.

\section{Proofs for Lemmas 4.2 and 4.4}

This section uses parts of Lemmas~7.1--7.3 to prove Lemmas~4.2 and 4.4.

\medskip
\noindent\emph{Proof of Lemma 4.2:}
The proof explains why $|\psi|$ is H\"older continuous near $\Gamma$ with H\"older exponent that is no less than the minimum of $\frac12$ and the versions of the number $\N(0)$ that are defined by the vertices of $\Gamma$.  (The second bullet of Lemma~7.1 says, in part, that this minimum is positive.)  This explanation has $5$ steps, most riffing on discourses in \cite{T4}.
Even so, the details are written.  With regards to notation: Given a positive number $r$ and a point in $S^3$, the notation is such that $\mathtt{B}_r$ again denotes the radius $r$ ball centered at that point.

\medskip
\noindent\underline{\emph{Step 1:}}
Suppose that $d>0$ and that $q$ is a point in $S^3$ with $\operatorname{dist}(q,\Gamma)=d$.  As explained directly, the equation in \eqref{eq:6.4} on the radius $d$ ball centered at $q$ when written as
\begin{align}
\nabla^\dagger\nabla\psi = (\mathtt{E}-\mathcal{R})\psi
\end{align}
implies the inequalities below for the integral of $|\nabla\psi|^2$ on the radius $\frac{31}{32}d$ ball centered at $q$, and for the integral of $|\nabla^{\otimes 2}\psi|^2$ on the radius $\frac{30}{32}d$ ball centered at $q$:
\begin{itemize}
    \item \smallskip
$\int_{\mathtt{B}_{31d/32}} |\nabla\psi|^2
\le c_0(1+\mathtt{E})^2\,\frac{1}{d^2}\int_{\mathtt{B}_d} |\psi|^2$,
\item \smallskip
$\int_{\mathtt{B}_{30d/32}} |\nabla^{\otimes 2}\psi|^2
\le c_0(1+\mathtt{E})^2\,\frac{1}{d^4}\int_{\mathtt{B}_d} |\psi|^2$.
\listeqno
\label{eq:8.1}
\end{itemize}
\smallskip
To obtain the first bullet of \eqref{eq:8.1}: Construct a bump function (to be denoted by $\varsigma_d$) with values in $[0,1]$ that is equal to $0$ near the boundary of the radius $d$ ball centered at $q$ and equal to $1$ on the concentric radius $\frac{31}{32}d$ ball with respective $c_0\frac{1}{d}$ and $c_0\frac{1}{d^2}$ bounds on its differential and Hessian.  Write the identity in \eqref{eq:6.4} as $\nabla^\dagger\nabla\psi = (\mathtt{E}-\mathcal{R})\psi$, take the inner product of both sides with $\varsigma_d\psi$ and then integrate over $\mathtt{B}_r$.  Having done that, then integrate by parts on the left-hand side.  To obtain the second bullet of \eqref{eq:8.1}: Construct a second version of $\varsigma_d$ that equals $0$ on the complement of that concentric, radius $\frac{31}{32}d$ ball and equals $1$ on the concentric, radius $\frac{30}{32}d$ ball.  Take the square of the norm of both sides of the identity $\nabla^\dagger\nabla\psi = (\mathtt{E}-\mathcal{R})\psi$, multiply the result by this new version of $\varsigma_d$, and then integrate over $\mathtt{B}_d$. Two applications of integration by parts on the left-hand side lead to the second bullet of \eqref{eq:8.1}, given what is said by \eqref{eq:8.1}'s first bullet.

\medskip
\noindent\underline{\emph{Step 2:}}
What follows directly is an application of \eqref{eq:8.1}: Suppose that $x$ and $y$ are distinct points in $S^3$ with $\operatorname{dist}(x,y)<\frac{1}{100}\pi$. Given $x$ and $y$, let $d$ denote a number that obeys the bound $\frac{30}{32}d>\frac{1}{2}\operatorname{dist}(x,y)$ which is imposed so that both $x$ and $y$ are in the radius $\frac{30}{32}d$ ball centered at the midpoint of the geodesic arc between them.  Let $\psi$ denote for the moment a section of $\mathbb{V}\otimes\mathcal{I}$ over the radius $d$ ball centered at the midpoint on the short geodesic between $x$ and $y$ with $|\nabla^{\otimes 2}\psi|^2$ having finite integral on the concentric radius $\frac{30}{32}d$ ball. Then the H\"older norm bound below holds
\begin{equation}\label{eq:8.2}
\bigl||\psi(x)|-|\psi(y)|\bigr|
\le c_0\,\operatorname{dist}(x,y)^{1/2}\,
\left(
\int_{\mathtt{B}_{30d/32}} |\nabla^{\otimes 2}\psi|^2
+\frac{1}{d^4}\int_{\mathtt{B}_{30d/32}} |\psi|^2
\right)^{1/2},
\end{equation}
this being a consequence of dimension $3$ Sobolev inequalities.  (See e.g.\ \cite{GT} for the latter).

Suppose now that $\psi$ obeys \eqref{eq:6.4} and that the radius $d$ ball centered at the midpoint between $x$ and $y$ is disjoint from $\Gamma$.  According to \eqref{eq:8.1}, the function $|\nabla^{\otimes 2}\psi|^2$ does have finite integral over $\mathtt{B}_{30d/32}$; and then \eqref{eq:8.2} with \eqref{eq:8.1} lead to the next bound:
\begin{equation}\label{eq:8.3}
\bigl||\psi(x)|-|\psi(y)|\bigr|
\le c_0\,\operatorname{dist}(x,y)^{1/2}\,\frac{1}{d^2}
\left(\int_{\mathtt{B}_{30d/32}} |\psi|^2\right)^{1/2}.
\end{equation}
Hold on to the preceding inequality for the moment.

\medskip
\noindent\underline{\emph{Step 3:}}
Supposing that $p$ is a given point in $S^3-\Gamma$, let $p'$ denote a closest point to $p$ in $\Gamma$, the distance from $p$ to $p'$ being the number $r_p$. Suppose in this step and the next step that $r_p$ (which is $\operatorname{dist}(p,p')$) is less than $\frac{1}{8c}\,r_{p'}$ with $c$ defined by \eqref{eq:6.1}.  Thus the point $p$ is in one of the cones depicted in Figure~1 and $p'$ is from the interior of one of $\Gamma$'s edges.

Letting $\gamma$ denote the short geodesic between $p$ and $p'$, set $x_1$ to denote the point on $\gamma$ half-way between $p$ and $p'$. Thus, $\operatorname{dist}(p,x_1)=\frac12\,\operatorname{dist}(p,p')$.  Set $d=\frac12\,r_p$.  Of particular note is that $\frac{30}{32}d>\frac12\,
\operatorname{dist}(p,x_1)$ and that the radius $d$ ball centered at the midpoint between $x_1$ and $p$ is disjoint from $\Gamma$.  As a consequence, \eqref{eq:8.3} can be invoked with $x=x_1$ and $y=p$.

To say more about that version of \eqref{eq:8.3}, note also that the radius $d$ ball centered on the midpoint between $x_1$ and $p$ is contained in the radius $\frac32 r_p = 3d$ ball centered at $p'$.  Noting that the latter ball is inside the radius $\frac{1}{4c}\,r_{p'}$ ball centered at $p'$, Lemma~7.3 can be brought to bear to bound the integral that appears in this version of \eqref{eq:8.3}.  Bring that lemma to bear results in a bound of the form
\begin{equation}\label{eq:8.4}
\bigl||\psi(x_1)|-|\psi(p)|\bigr|
\le c_0 z_{p'}\,\operatorname{dist}(x_1,p)^{\N(0)}
= c_0 z_{p'}\,\frac{1}{\sqrt{2}}\operatorname{dist}(q,p)^{\N(0)}.
\end{equation}
with $z_{p'}$ coming from the $p'$ version of Lemma~7.3 and with $\N(0)$ coming from the $p'$ version of Lemma~7.1.  (Lemma~7.1 says in part that this version of $\N(0)$ is no smaller than $\frac12$).

With regards to $z_{p'}$:  Let $q$ denote the closest vertex to $p'$ so as to define Lemma~7.3's number $\overline{\N}=\max(0,\N(0)-\N_q(0))$ and then bound $z_{p'}$ by $c_0/r_{p'}^{\overline{\N}}$.  Granted the latter bound, and granted that $\operatorname{dist}(p,q)\le c_0\,r_{p'}$, the bound in \eqref{eq:8.4} leads directly to the H\"older bound:
\begin{equation}\label{eq:8.5}
\bigl||\psi(x_1)|-|\psi(p)|\bigr|
\le c_0\,\frac{1}{\sqrt{2}}\,\operatorname{dist}(q,p)^{\mu},
\end{equation}
with $\mu$ denoting the lesser of the numbers $\frac{1}{2}$ and the versions of $\N(0)$ that are defined by the vertices of $\Gamma$. For use below: Introduce $c_\ast$ to denote the version of the number $c_0$ that appears in \eqref{eq:8.5}.

To continue, let $x_2$ denote the point along $\gamma$ half way between $x_1$ and $p$. The analysis that lead to \eqref{eq:8.5} can be redone with $x_1$ replaced by $x_2$ and $p$ replaced by $x_1$ to see that
\begin{equation}\label{eq:8.6}
\bigl||\psi(x_2)|-|\psi(x_1)|\bigr|
\le c_\ast\,\operatorname{dist}(x_2,x_1)^{\mu}
= c_\ast\,\frac{1}{2}\,\operatorname{dist}(q,p)^{\mu}\,.
\end{equation}
Continue sequentially in this same vein with $x_n$ for $n\ge 3$ denoting the point on $\gamma$ with distance to $q$ being $2^{-n}\operatorname{dist}(p,q)$ to see that
\begin{equation}\label{eq:8.7}
\bigl||\psi(x_n)|-|\psi(x_{n-1})|\bigr|
\le c_\ast\,\operatorname{dist}(x_n,x_{n-1})^{\mu}
= c_\ast\,\frac{1}{2^{n/2}}\,\operatorname{dist}(q,p)^{\mu}\,.
\end{equation}

Since the sequence $\{x_n\}_{n=1,2,\dots}$ converges to $p'$, adding the corresponding versions of \eqref{eq:8.7} leads directly to the H\"older bound
\begin{equation}\label{eq:8.8}
\bigl||\psi(q)|-|\psi(p)|\bigr|
\le c_\ast\,\operatorname{dist}(q,p)^{\mu}\,.
\end{equation}
Note that \eqref{eq:8.8} implies that the function $|\psi|$ is H\"older continuous with exponent $\mu$ along the normal direction at any point from the interior of any edge of $\Gamma$. (Of course, this also follows directly from Lemma~6.1.)

\medskip
\noindent\underline{\emph{Step 4:}}
Lemma~6.1 says that $|\psi|$ extends across the interiors of the edges of $\Gamma$ to be zero on $\Gamma$.  This step explains why $|\psi|$ is H\"older continuous with exponent $\mu$ on a neighborhood of any point in the interior of an edge of $\Gamma$. By way of background for this explanation: Suppose that $q$ is an interior point on an edge of $\Gamma$ and that points $x$ and $y$ are in the radius $\frac{1}{128c}\,r_q$ ball centered at $q$.  Let $q_x$ and $q_y$ denote the respective closest points to $x$ and $y$ in $\Gamma$.  These will be in the interior
of the same edge as $q$ and with their corresponding numbers $r_{q_x}$ and $r_{q_y}$ being greater than $\frac14\,r_q$.  It follows as a consequence of what is said by \eqref{eq:8.8} that
\begin{equation}\label{eq:8.9}
\bigl||\psi(x)|-|\psi(q_x)|\bigr|
\le c_0\,\operatorname{dist}(x,q_x)^{\mu}
\qquad\text{and}\qquad
\bigl||\psi(y)|-|\psi(q_y)|\bigr|
\le c_0\,\operatorname{dist}(y,q_y)^{\mu}\,.
\end{equation}

By way of more background:  Let $\rho$ denote the smaller of $\operatorname{dist}(x,q_x)$ and $\operatorname{dist}(y,q_y)$.  If $\operatorname{dist}(x,y)<\frac{1}{1000}\,\rho$, then \eqref{eq:8.2} can be brought to bear with $d=\frac{1}{100}\,\rho$; and that inequality with the $r=\frac{1}{10}\rho$ version of Lemma~7.3 leads to the H\"older bound
\begin{equation}\label{eq:8.10}
\bigl||\psi(x)|-|\psi(y)|\bigr|
\le c_0\,\operatorname{dist}(x,y)^{\mu}\,.
\end{equation}

The inequalities in \eqref{eq:8.9} and \eqref{eq:8.10} can now be used to obtain a uniform, exponent $\mu$ H\"older bound for $|\psi|$ near the given point $p$. Indeed, this comes directly from \eqref{eq:8.10} in the event that $\operatorname{dist}(x,y)<\frac{1}{1000}\,\rho$ where $\rho$ again denotes the minimum of $\operatorname{dist}(x,q_x)$ and $\operatorname{dist}(y,q_y)$.  (Implicit here and in what follows in this step is that both $x$ and $y$ are in the radius $\frac{1}{128c}\,r_q$ ball centered at some given point $q$ in the interior of one
of $\Gamma$'s edges.)  In the event that $\operatorname{dist}(x,y)\ge \frac{1}{1000}\,\rho$, the desired H\"older bound is obtained from the inequality
\begin{equation}\label{eq:8.11}
\bigl||\psi(x)|-|\psi(y)|\bigr|
\le
|\psi(x)|+|\psi(y)|.
\end{equation}

To see how to the desired H\"older bound follows from \eqref{eq:8.11}, let $q_x$ and $q_y$ again denote the respective closest points to $x$ and $y$ in $\Gamma$.  Since both $|\psi(q_x)|$ and $|\psi(q_y)|$ are zero, the inequality in \eqref{eq:8.11} leads via \eqref{eq:8.9} to the one below:
\begin{equation}\label{eq:8.12}
\bigl||\psi(x)|-|\psi(y)|\bigr|
\le
c_0\,\operatorname{dist}(x,q_x)^\mu
+
c_0\,\operatorname{dist}(y,q_y)^\mu .
\end{equation}

To proceed from \eqref{eq:8.12} to the desired H\"older bound, assume (without loss of generality) that
\begin{align}
\operatorname{dist}(x,q_x)\le \operatorname{dist}(y,q_y).
\end{align}

Noting that $\operatorname{dist}(y,q_y)\le \operatorname{dist}(y,q_x)$ because $q_y$ is the closest point to $y$ in $\Gamma$, and noting also that
\begin{align}
\operatorname{dist}(y,q_x)\le \operatorname{dist}(x,y)+\operatorname{dist}(x,q_x)
\end{align}
which is less than $1001\,\operatorname{dist}(x,y)$, it then follows from
\eqref{eq:8.12} that
\begin{equation}\label{eq:8.13}
\bigl||\psi(x)|-|\psi(y)|\bigr|
\le
2\sqrt{1001}\,c_0\,\operatorname{dist}(x,y)^\mu .
\end{equation}
The bound above is the promised bound for when $\operatorname{dist}(x,y)$ is not less than $\frac{1}{1000}$ times the minimum of $\operatorname{dist}(x,\Gamma)$ and $\operatorname{dist}(y,\Gamma)$.

\medskip
\noindent\underline{\emph{Step 5:}}
Suppose in this step that $p$ is from $S^3-\Gamma$ and that $r_p = \operatorname{dist}(p,\Gamma)$ is not less than any version of $\frac{1}{8c}\,r_q$ when $q$ is a closest point to $p$ in $\Gamma$.  In this event, let $q$ henceforth denote a chosen, closest vertex to $p$.  If $r_p$ is greater than this vertex $q$'s version of $\frac{1}{8c}\,r_q$, then the distance from $p$ to $\Gamma$ is no less than $c_0^{-1}$ which is to say that $p$ is quite far from $\Gamma$ in which case $p$ is of no concern for purposes at hand.  Therefore, suppose in what follows that $r_p < \frac{1}{8c}\,r_q$ with $q$ denoting the chosen closest vertex from $\Gamma$.  This assumption and the initial assumption in this step imply that $p$ lies in the ball depicted in Figure~1, but that it is not inside the cones centered on the edges from the vertex $q$.  An important point to keep in mind for what follows is that $\operatorname{dist}(p,\Gamma)\ge c_0^{-1}\operatorname{dist}(p,q)$, this being a consequence of the positive lower bound for the solid angle cross sections of the cones that are depicted in Figure~1 and it is closer to the vertex center of that sphere than it is to the edge that is tangent to the disk.  Let $c_\ast$ denote the larger of this last version of $c_0$ and the number $c$ from \eqref{eq:6.1}.

To proceed in this case, let $\gamma$ denote the short geodesic from $q$ to $p$ and let $x_1$ denote the point on $\gamma$ that is halfway between that vertex $q$ and $p$.  Let
\begin{align}
d \;=\; \frac{1}{1000\,c_\ast}\,\operatorname{dist}(p,q).\label{eq:new8.17}
\end{align}

Fix equally spaced points along $\Gamma$ with distance $d/4$ apart, starting with $p$ such that the last one has distance $d/4$ or less from $x_1$.  Let $N$ denote the number of such points noting in particular that $N \leq 1000 c_*$.  Denote these points in order along $\Gamma$ moving from $p$ to $x_1$ by $y_1, \ldots, y_N$ and then set $y_{N+1}$ to be $x_1$.  Now, for each integer $k$ starting with $k = 1$ and ending with $N$, bring \eqref{eq:8.3} to bear sequentially with $x = y_k$ and $y = y_{k+1}$ keeping in mind that Lemma 7.3 can be invoked with p the given vertex because of~\eqref{eq:new8.17}. The result is the bound below:
\begin{equation}\label{eq:8.14}
\bigl||\psi(y_k)|-|\psi(x_{k+1})|\bigr|
\;\le\;
c_0\,\operatorname{dist}(p,q)^\mu \,.
\end{equation}
This implies in turn that
\begin{equation}\label{eq:8.15}
\bigl||\psi(x_1)|-|\psi(p)|\bigr|
\;\le\;
c_0\,\operatorname{dist}(p,q)^\mu \,.
\end{equation}

Let $x_2$ denote the point half way along $\gamma$ between $x_1$ and $q$. Repeat the preceding operation with $x_1$ replaced by $x_2$ and with $p$ replaced by $x_1$.  Having done that, repeat again with $x_2$ replaced by the point half way along $\gamma$ between $x_2$ and $q$, and with $x_1$ replaced by $x_2$.  Continuing sequentially in this way leads to a sequence $\{p,x_1,x_2,\dots\}$ that converges to $q$ and such that the following holds for each positive integer:
\begin{equation}\label{eq:8.16}
\bigl||\psi(x_n)|-|\psi(x_{n+1})|\bigr|
\;\le\;
c_0\,\frac{1}{2^{\mu n}}\,\operatorname{dist}(p,q)^\mu \,.
\end{equation}
Of course, these imply that
\begin{equation}\label{eq:8.17}
\bigl||\psi(p)|-|\psi(q)|\bigr|
\;\le\;
c_0\,\operatorname{dist}(p,q)^\mu \,.
\end{equation}

Coupled with the results of the previous step, this last inequality implies that $|\psi(q)|=0$.  Thus, $|\psi|$ vanishes at the vertices $\Gamma$ also. The arguments from the previous step can also be repeated with minor changes to see that $|\psi|$ is uniformly H\"older continuous with exponent $\mu$ on a neighborhood of each vertex.

\medskip

\noindent\emph{Proof of Lemma~4.4:} The proof has four steps.

\medskip
\noindent\underline{\emph{Step 1:}}
It follows from the assumptions and from Lemmas~7.1 and~7.2 that any $q\in\Gamma$ version of the number $\N(0)$ is no smaller than $1$. Indeed, if $\N(r)$ were less than $1-\varepsilon$ for any given sufficiently small but positive $r$, then Lemma~7.1's second bullet would imply that
\begin{align}
\N(r) < 1 - c_0^{-1}\varepsilon
\end{align}
for all smaller values of $r$. Then, by virtue of the definition of $\N$ via \eqref{eq:7.3}, this would run afoul of the assumption that the function $r \to \frac{1}{r}\,\K(r)$ is bounded as $r\to 0$. Thus, since $\N(\cdot)$ is (almost) a non-decreasing function of $r$ (see Lemma~7.2), it follows (again from
\eqref{eq:7.3}) that the inequalities
\begin{equation}\label{eq:8.18}
\int_{\partial\mathtt{B}_r} |\psi|^2 \le c_0\, r^{4}
\qquad \text{and} \qquad
\int_{\mathtt{B}_r} |\psi|^2 \le c_0\, r^{5}
\end{equation}
hold for all $r\in (0,c_0^{-1}]$. Since the rightmost bound is uniform with respect to the choice of the point in $\Gamma$ (see Lemma~7.3), that bound implies in turn the following bound:
\begin{equation}\label{eq:8.19}
\int_{\operatorname{dist}(\cdot,\Gamma)\le r} |\psi|^2 \le c_0\, r^{4}.
\end{equation}
(The exponent of $r$ drops from $5$ to $4$ because $\Gamma$ has non-zero, finite $1$-dimensional Hausdorff measure.) Hold onto this inequality for the moment.

\medskip
\noindent\underline{\emph{Step 2:}}
Let $\sigma$ denote a favorite, non-decreasing $[0,1]$-valued function on $\mathbb{R}$ that is equal to $0$ on $(-\infty,\tfrac{1}{4}]$ and equal to $1$ on $[\tfrac{3}{4},\infty)$. Given a positive number $r$, use this function to define a function on $S^3$ (denoted in what follows by $\sigma_r$) by the rule
\begin{align}
\sigma_r(\cdot)=\sigma\!\left(\frac{1}{r}\,\operatorname{dist}(\cdot,\Gamma)\right).
\end{align}

Write the identity in \eqref{eq:6.4} as $\nabla^{\dagger}\nabla\psi=(\mathtt{E}-\mathcal{R})\psi$, multiply each side by the function $\sigma_r$, and note that it implies in turn the identity
\begin{equation}\label{eq:8.20}
\nabla^{\dagger}(\sigma_r\nabla\psi)
=
(\mathtt{E}-\mathcal{R})\,\sigma_r\psi
-\nabla\sigma_r\cdot\nabla\psi \, .
\end{equation}
Of particular import is that $\sigma_r\nabla\psi$ has compact support on $S^3-\Gamma$ (as does each term on the right-hand side of \eqref{eq:8.20}).

Take the square of the norm of both sides of the identity in \eqref{eq:8.20} and then integrate the result over $S^3$. An integration by parts with an appeal to the top bullet in \eqref{eq:6.3} leads to the following inequality:
\begin{equation}\label{eq:8.21}
\int_{S^3}\bigl|\nabla(\sigma_r\nabla\psi)\bigr|^2
\le
c_0(1+\mathtt{E})^2
+
c_0\,\frac{1}{r^2}
\int_{\frac{r}{4}<\operatorname{dist}(\cdot,\Gamma)\le r} |\nabla\psi|^2 \, .
\end{equation}

\medskip
\noindent\underline{\emph{Step 3:}}
To see about the integral on the right-hand side of \eqref{eq:8.21}, do as before and write \eqref{eq:6.4} as $\nabla^{\dagger}\nabla\psi=(\mathtt{E}-\mathcal{R})\psi$, then take the inner product of both sides of that identity with $\psi$, then multiply the result by $\sigma_{r/8}(1-\sigma_{8r})$. Having done all of that, integrate the result over $S^3$. An integration by parts then results in the inequality below:
\begin{equation}\label{eq:8.22}
\int_{\,\frac{r}{4}<\operatorname{dist}(\cdot,\Gamma)\le r} |\nabla\psi|^2
\;\le\;
c_0\,\frac{1}{r^2}
\int_{\,\frac{r}{8}<\operatorname{dist}(\cdot,\Gamma)\le 2r} |\psi|^2 \,.
\end{equation}

\medskip
\noindent\underline{\emph{Step 4:}}
Use \eqref{eq:8.22} in \eqref{eq:8.21} to bound the latter's right-hand side integral, and then invoke \eqref{eq:8.19} to obtain the $r$-independent bound below for the $\mathbb{L}$-norm of $\nabla(\sigma_r\nabla\psi)$:
\begin{equation}\label{eq:8.23}
\int \bigl|\nabla(\sigma_r\nabla\psi)\bigr|^2
\;\le\;
c_0\,(1+\mathtt{E})^2 \,.
\end{equation}

It now follows by taking $r$ to zero in \eqref{eq:8.23} 
that $\nabla(\nabla\psi)$, the covariant derivative of $\nabla\psi$ is in the Hilbert space $\mathbb{L}$. This implies in turn that $\nabla\psi$ is in $\mathbb{H}$.

\section{Fourier Modes of $\psi$ Near an Edge of $\Gamma$}

Fix the vector bundle $\mathbb{V}$ and its connection; and then let $\psi$ denote a $(\nabla^{\dagger}\nabla + \mathcal{R})$--eigensection on $S^{3}-\Gamma$ with $\mathbb{L}$--norm equal to $1$. There is a standard way to use the properties of the frequency function $\N$ to `zoom' in on the behavior of $\psi$ near any given point in $\Gamma$. This zoom in process for a point in the interior of a given edge of $\Gamma$ is described in this section.

To do this zoom-in, let $p$ denote a point on the interior of an edge. Choose a Gaussian coordinate chart centered at $p$ to identify the ball of radius $\frac{1}{8c}\,r_{p}$ centered at $p$ with the same radius ball in $\mathbb{R}^{3}$. To be more specific: This chart should be chosen so that the edge containing $p$ corresponds to the part of the $x_{3}$--axis where $x_{3}$ is between $-\frac{1}{8c}\,r_{p}$ and $\frac{1}{8c}\,r_{p}$. The restriction of the line bundle $\mathcal{I}$ to the radius $\frac{1}{8c}\,r_{p}$ ball centered at $p$ is identified by these coordinates with the real line bundle that is
defined on the complement in this ball of the $x_{3}$ axis with monodromy $-1$ on linking circles with the $x_{3}$--axis.

With $\mathbb{V}$ now viewed as a vector bundle with connection over the radius $\frac{1}{8c}\,r_{p}$ ball centered at the origin in $\mathbb{R}^{3}$, pick an orthonormal frame for $\mathbb{V}$ at the origin and use parallel transport out along the rays in $\mathbb{R}^{3}$ from the origin to write $\mathbb{V}$ as a product bundle. The fiber of that product vector bundle is denoted by $V$ in what follows. Doing that identifies a section of $\mathbb{V}$ over that ball with a map from the $|x|<\frac{1}{8c}\,r_{p}$ ball to $V$. Likewise, a section of $\mathbb{V}\otimes\mathcal{I}$ over the complement of the $x_{3}$--axis in the $\frac{1}{8c}\,r_{p}$ ball centered at $p$ will be viewed as a section of the vector bundle $\mathcal{I}\times V$.

Supposing that $\lambda\in(0,1]$, define the rescaling map $\phi_{\lambda}:\mathbb{R}^{3}\to\mathbb{R}^{3}$ by the rule whereby $x=(x_{1},x_{2},x_{3})\mapsto \lambda x$. Parallel transport on radial geodesics from the points on the $|x|=1$ sphere in $\mathbb{R}^{3}$ identifies the line bundle $\mathcal{I}$ with its pull-back via $\phi_{\lambda}$. This identification is taken for granted henceforth. If $\psi$ denotes a section of the bundle $\mathcal{I}\times V$ on the complement of the $x_{3}$--axis in the $|x|\le \frac{1}{8c}\,r_{p}$ ball, then $\phi_{\lambda}^{*}\psi$ (which is the composition of first $\phi_{\lambda}$ and then $\psi$) defines a section of $\mathcal{I}\times V$ on the complement of the $x_3$--axis in the $|x|\le \frac{1}{8c}\,\lambda^{-1}r_p$ ball.

Given $\lambda\in\bigl(0,\frac{1}{16c}\,r_p\bigr]$, define the section $\psi_\lambda$ on this same domain by the rule whereby
\begin{equation}\label{eq:9.1}
x \longmapsto \psi_\lambda(x) \;=\; \frac{1}{\K(\lambda)}\,\psi(\lambda x)\,.
\end{equation}
The square of what is denoted by $\K(\lambda)$ in \eqref{eq:9.1} is the $r=\lambda$ version of what is depicted on the right-hand side of \eqref{eq:7.1}.  The division above by $\K(\lambda)$ is done so that the average of $|\psi_\lambda|^2$ over the $|x|=1$ sphere in $\mathbb{R}^3$ equals $1$.

Define a $\psi_\lambda$ version of the function $\K$ with domain $\bigl(0,\frac{1}{8c}\,\lambda^{-1}r_p\bigr)$ to be the positive square root of the function depicted below:
\begin{equation}\label{eq:9.2}
r \longmapsto \K_\lambda(r)^2 = \frac{1}{A(r)} \int_{\partial\mathtt{B}_r} |\psi_\lambda|^2\,,
\end{equation}
where $\mathtt{B}_r$ denotes here the $|x|\le r$ ball in $\mathbb{R}^3$ and $\partial\mathtt{B}_r$ denotes the $|x|=r$ boundary sphere.  Meanwhile, define the function $\N_\lambda$ on the same domain as $\K_\lambda$ by writing
\begin{equation}\label{eq:9.3}
\frac{d\K_\lambda}{dr} \;=\; \frac{\N_\lambda(r)}{r}\,\K_\lambda(r)\,.
\end{equation}

Of particular note is the fact that $\K_\lambda$ and $\psi$'s version of the function $\K$ (defined using the point $p$), and $\N_\lambda$ and $\psi$'s version of the function $\N$ (also defined using $p$) are related as follows:
\begin{equation}\label{eq:9.4}
\K_\lambda(r) \;=\; \K(\lambda)^{-1}\,\K(\lambda r)
\qquad\text{and}\qquad
\N_\lambda(r) \;=\; \N(\lambda r)\,.
\end{equation}

Granted \eqref{eq:9.4}, then Lemma~7.1's second bullet can be brought to bear to justify the following observation:
\begin{center}
\textit{Having fixed $R\ge 2$ and $\varepsilon>0$, there exists a positive number $\lambda_{R,\varepsilon}$ such that $|\N_\lambda(\cdot)-\N(0)|\le \varepsilon$ on $[0,R]$ when $\lambda\le \lambda_{R,\varepsilon}$.}
\end{center}
\listeqno
\label{eq:9.5}

This is to say that as $\lambda$ limits to zero, the values of $\N_\lambda$ converge to $\N(0)$ uniformly on any \emph{a priori} bounded subset of $[0,\infty)$. This convergence implies in turn (via \eqref{eq:9.3}) the following convergence assertion for $\K_\lambda$:

\begin{center}
\emph{Having fixed $R\ge 2$ and $\varepsilon>0$, there exists a positive number, $\lambda'_{R,\varepsilon}$ such that}
$\left|\frac{1}{r^{N(0)}}\,\kappa_\lambda(r)-1\right|
\le \varepsilon
\ \ \emph{for all } r\in\Bigl[\frac{1}{R},R\Bigr]
\emph{ when } \lambda\le \lambda'_{R,\varepsilon}.$
\end{center}
\listeqno
\label{eq:9.6}

The convergence of $\N_\lambda$ and $\K_\lambda$ have implications for the integral of $|\nabla\psi_\lambda|^2$, the following is the first (it comes from \eqref{eq:9.4}--\eqref{eq:9.6} and \eqref{eq:7.4}):
\begin{center}
\emph{Having fixed $R\in(1,\infty)$ and $\varepsilon>0$, there exists a positive number,
$\lambda''_{R,\varepsilon}$ such that when $\lambda\le \lambda''_{R,\varepsilon}$ and
$r\in\bigl[\frac{1}{R},R\bigr]$, then}
$\left|\frac{1}{r^{2N(0)+1}}\int_{\mathtt{B}_r}|\nabla\psi_\lambda|^2 - N(0)\right|
\le \varepsilon.$
\end{center}
\listeqno
\label{eq:9.7}
\\
In this last equation and subsequently, the connection that defines the covariant derivative $\nabla$ is the product connection on the product $V$ bundle.

The inequality in \eqref{eq:8.1} leads (via coordinate rescaling) to \emph{a priori} bounds on the second derivatives of $\psi_\lambda$ in an annulus around the $x_3$-axis. To write this precisely, fix $R>2$ and let $A_R$ denote the subset of $\mathbb{R}^3$ where the Euclidean coordinates $(x_1,x_2,x_3)$ are such that
\begin{align}
\frac{1}{R}< (x_1^2+x_2^2)^{1/2} < R
\quad\text{and}\quad
|x_3|<R.
\end{align}
The bound below holds for any given $R\ge 2$ when $\lambda$ is sufficiently small (the number $c_R$ below depends on $R$ but not on $\lambda$):
\begin{equation}\label{eq:9.8}
\int_{A_R}\bigl|\nabla^{\otimes 2}\psi_\lambda\bigr|^2 \le c_R\,R^{2\N(0)}.
\end{equation}

Moreover, given $R>2$, there are also $\lambda$-independent bounds for sufficiently small $\lambda$ on the integral over $A_R$ of the square of the norms of the derivatives of $\psi_\lambda$ to any given order:
\begin{equation}\label{eq:9.9}
\int_{A_R} \bigl|\nabla^{\otimes k}\psi_\lambda\bigr|^2 \le c_{R,k}\,R^{2\N(0)} .
\end{equation}

\noindent where $c_{R,k}$ depends on $R$ and $k$, but not $\lambda$.  Indeed, these follow using standard elliptic regularity tools given that $\psi_\lambda$ where $|x|\le \frac{1}{8c}\,\lambda^{-1}r_p$ obeys the rescaled version of \eqref{eq:6.4}'s eigenvalue equation, this being an equation that has the schematic form depicted below:
\begin{equation}\label{eq:9.10}
\nabla^{\dagger}\nabla\psi_\lambda
+\lambda\bigl(a_\lambda\cdot\nabla\psi_\lambda\bigr)
+\lambda^2\mathcal{T}_\lambda\psi_\lambda
=\lambda^2\mathtt{E}\,\psi_\lambda
=0 .
\end{equation}

\noindent Here, the norms of $a_\lambda$ and $\mathcal{T}_\lambda$ obey $|a_\lambda|\le c_0\lambda|x|$ and $|\mathcal{T}_\lambda|\le c_0$; and the norms of their derivatives obey $\bigl|\nabla^{\otimes k}a_\lambda\bigr|\le c_{\ast,k}\lambda^{k+1}(1+|x|)$ and $\bigl|\nabla^{\otimes k}\mathcal{T}_\lambda\bigr|\le c_{\ast,k}\lambda^{k}$ with $c_{\ast,k}$ signifying a constant that depends on $k$ but not on $\lambda$.

Another critically important bound follows from Lemma~7.2 which is the one below:

\begin{center}
\emph{Having fixed $R>2$ and $\varepsilon>0$, there exists a positive number, $\lambda''_{R,\varepsilon}$ such that} 

\emph{when $\lambda\le \lambda''_{R,\varepsilon}$ then $\int_{\B_R-\B_{1/R}}
\left|\nabla_r\psi_\lambda - \frac{\N(0)}{r}\,\psi_\lambda\right|^2
\le \varepsilon.$}
\end{center}
\listeqno\label{eq:9.11}

\noindent And, there is also the critically important bound which follows from Lemma~6.2 regarding the derivative of $\psi_\lambda$ in the $x_3$ direction:

\begin{center}
\emph{If $R>2$, and assuming that $\lambda$ is sufficiently small given $R$, then}
\begin{equation}\label{eq:9.12}
\int_{\mathtt{B}_R} |\nabla_3 \psi_\lambda|^2 \le c_0 \lambda .
\end{equation}
\end{center}

By way of an explanation for this last bound: The rescaling that defines $\psi_\lambda$ from $\psi$ identifies the integral in \eqref{eq:9.12} with an integral of $\frac{1}{\lambda}|\nabla_\nu \psi|^2$ over the radius $\lambda R$ ball in $S^3$ centered at the original point $p$ up to an $\mathcal{O}(\lambda R^2 \kappa^2(\lambda R))$ error which is due to the choice of the connection that defines the covariant derivative in \eqref{eq:9.12}. This identification results initially in the bound below (the notation has $\nu$ denoting the vector field in Lemma~6.2):
\begin{equation}\label{eq:9.13}
\int_{\mathtt{B}_R} |\nabla_3 \psi_\lambda|^2
\le
c_0\,\frac{1}{\lambda}\int_{\mathtt{B}_{\lambda R}} |\nabla_\nu \psi|^2
+ c_1 \lambda R^2 \kappa^2(\lambda R).
\end{equation}

Meanwhile, Lemma~6.2 gives an \emph{a priori} bound for the integral of the square of the covariant derivative of $\nabla_\nu\psi$ over any small radius ball centered at $p$; and that bound with the inequality in \eqref{eq:6.7} which holds now when $\psi$ is replaced by $\nabla_\nu\psi$ implies (via a dimension $3$ Sobolev inequality for $|\nabla_\nu\psi|$) that the integral of $|\nabla_\nu\psi|^6$ over $\mathtt{B}_{\lambda R}$ has an $R$ and $\lambda$ independent upper bound. Since
\begin{equation}\label{eq:9.14}
\int_{\mathtt{B}_{\lambda R}} |\nabla_\nu\psi|^2
\le
c_0(\lambda^3 R^3)^{2/3}
\left(\int_{\mathtt{B}_{\lambda R}} |\nabla_\nu\psi|^6\right)^{1/3},
\end{equation}
that uniform upper bound on the integral of $|\nabla_\nu\psi|^6$ over $\mathtt{B}_{\lambda R}$ with \eqref{eq:9.14} leads directly from \eqref{eq:9.13} to the asserted bound in \eqref{eq:9.12}.

The preceding \emph{a priori} bounds imply convergence of subsequences of the various $\psi_\lambda$'s as $\lambda\to 0$. The lemma below makes a precise statement and describes the limit.
\begin{lemma}\label{lem:9.1}
\emph{Suppose that $\{\lambda_j\}_{j=1,\ldots}$ is a decreasing sequence of $\lambda$-values with limit zero. There is a subsequence whose corresponding sequence of $\psi_\lambda$'s converges to a non-zero, section of $\mathcal{I}\times V$ in the $C^\infty$ topology on compact subsets of the complement in
$\mathbb{R}^3$ of the $x_3$-axis. This subsequence also converges in both the $\mathbb{L}$ and $\mathbb{H}$ topologies for the space of sections of $\mathcal{I}\times V$ over any ball in $\mathbb{R}^3$. Moreover, the limit (denoted by $\psi_{\ddagger}$) is harmonic in the sense that $\nabla^\dagger\nabla \psi_{\ddagger}=0$. It is also invariant with respect to translations in the $x_3$ direction; and it is equivariant under coordinate rescalings in the sense that $\phi_\lambda^*\psi_{\ddagger}=\lambda^{\N(0)}\psi_{\ddagger}$.}
\end{lemma}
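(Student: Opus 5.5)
The plan is to follow the standard blow-up compactness route, using the uniform bounds \eqref{eq:9.6}--\eqref{eq:9.12} collected above.

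\emph{Step 1: bounds and extraction.} Fix $R>2$. By \eqref{eq:9.6} and \eqref{eq:9.7}, the $\mathbb{L}$ and $\mathbb{H}$ norms of $\psi_\lambda$ on $\mathtt{B}_R$ are bounded independently of $\lambda$ once $\lambda$ is small. By \eqref{eq:9.9}, all higher derivatives are bounded on each annular region $A_R$. A diagonal argument over $R=2,3,\dots$ and $k=1,2,\dots$, together with Arzel\`a--Ascoli (applied on simply connected pieces where $\mathcal{I}$ is trivialized), yields a subsequence converging in $C^\infty$ on compact subsets of the complement of the $x_3$-axis. The Rellich lemma, using the $\mathbb{H}$ bound, upgrades this to $\mathbb{L}$ convergence on each ball.

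\emph{Step 2: properties of the limit.} Passing \eqref{eq:9.10} to the limit, the coefficient terms are $O(\lambda)$, so $\nabla^\dagger\nabla\psi_\ddagger=0$ away from the axis. The weak form on balls also passes to the limit, because $\psi_\ddagger$ lies in the weak $\mathbb{H}$ closure. By \eqref{eq:9.12}, $\nabla_3\psi_\lambda\to 0$ in $\mathbb{L}$, so $\nabla_3\psi_\ddagger=0$ and $\psi_\ddagger$ is invariant under $x_3$-translations. By \eqref{eq:9.11}, $\nabla_r\psi_\ddagger=\frac{\N(0)}{r}\psi_\ddagger$ almost everywhere on each shell $\mathtt{B}_R-\mathtt{B}_{1/R}$. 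Integrating along rays gives $\phi_\lambda^*\psi_\ddagger=\lambda^{\N(0)}\psi_\ddagger$.

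\emph{Step 3: nonvanishing.} The limit is non-zero because the average of $|\psi_\lambda|^2$ over the unit sphere equals $1$. Smooth convergence away from the axis, combined with the uniform bound $|\psi_\lambda|\le c\,\operatorname{dist}(\cdot,\text{axis})^{1/2}$ (a rescaled Lemma~6.1 divided by $\K(\lambda)$, controlled via \eqref{eq:9.6}), gives uniform integrability near the axis. Hence the sphere average of $|\psi_\ddagger|^2$ is $1$.

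\emph{Step 4: strong $\mathbb{H}$ convergence (main obstacle).} Weak compactness only gives $\int|\nabla\psi_\ddagger|^2\le\liminf\int|\nabla\psi_\lambda|^2$ on $\mathtt{B}_r$, so I must rule out energy concentration along the axis. The approach is to compute the limit energy directly. Homogeneity together with harmonicity gives, by the integration by parts $\int_{\mathtt{B}_r}|\nabla\psi_\ddagger|^2=\int_{\partial\mathtt{B}_r}\langle\psi_\ddagger,\nabla_r\psi_\ddagger\rangle$, the value $\N(0)r^{2\N(0)+1}$ in the normalization of \eqref{eq:9.7}. This integration by parts is legitimate because $\psi_\ddagger\in\mathbb{H}$ and the argument of Section~7, Steps~6--9, applies to it. By \eqref{eq:9.7}, the $\psi_\lambda$ energies converge to the same number. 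Equality of the norms plus weak convergence then gives strong convergence in $\mathbb{H}$. The delicate point is justifying the boundary identity for $\psi_\ddagger$; I would do this with the cutoff $\alpha_\varepsilon$ argument, using the $x_3$-invariance in place of Lemma~6.2.
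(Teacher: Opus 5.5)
Your overall route is the paper's. You get uniform $\mathbb{H}$ bounds from \eqref{eq:9.6}--\eqref{eq:9.7}, then weak $\mathbb{H}$ and strong $\mathbb{L}$ convergence on balls by Rellich and a diagonal argument. You get $C^\infty$ convergence off the axis from \eqref{eq:9.8}--\eqref{eq:9.9}, harmonicity from \eqref{eq:9.10}, homogeneity from \eqref{eq:9.11} and $x_3$-invariance from \eqref{eq:9.12}. Strong $\mathbb{H}$ convergence comes from convergence of the Dirichlet energies, which is what the paper's terse appeal to \eqref{eq:9.6} with \eqref{eq:9.10} amounts to. Your Step~4 (energy identity for $\psi_\ddagger$, then \eqref{eq:9.7} and weak convergence, i.e.\ Radon--Riesz) is a correct way to make that explicit.

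The step that fails as written is Step~3, and Step~4 depends on it. Rescaling Lemma~6.1 gives $|\psi_\lambda(x)|\le\kappa\,\lambda^{1/2}\K(\lambda)^{-1}\rho^{1/2}$, where $\rho=(x_1^2+x_2^2)^{1/2}$. By Lemma~7.3, $\K(\lambda)\le c\,\lambda^{\N(0)}$, so the prefactor is at least $c^{-1}\lambda^{1/2-\N(0)}$. This blows up whenever $\N(0)>\tfrac12$, which is exactly the case used in Section~10 ($\N(0)\ge\tfrac{m}{2}\ge\tfrac32$). \eqref{eq:9.6} cannot rescue it, since it only controls the ratios $\K(\lambda r)/\K(\lambda)$.

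Step~4 needs the conclusion that the unit-sphere average of $|\psi_\ddagger|^2$ equals $1$, so this must be repaired. The simplest fix avoids pointwise bounds entirely. Strong $\mathbb{L}$ convergence on the shell $\mathtt{B}_1-\mathtt{B}_{1/2}$ (your Step~1) together with \eqref{eq:9.6} gives $\int_{\mathtt{B}_1-\mathtt{B}_{1/2}}|\psi_\ddagger|^2=4\pi\int_{1/2}^1 s^{2+2\N(0)}\,ds>0$; this is how the paper gets non-vanishing. Combined with the homogeneity from Step~2, this forces the average of $|\psi_\ddagger|^2$ over $\partial\mathtt{B}_s$ to be $s^{2\N(0)}$. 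Alternatively, rerun the Green's function argument of Lemma~6.1 at unit scale on \eqref{eq:9.10}, so the constant depends only on local norms of $\psi_\lambda$.

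Separately, the boundary identity $\int_{\mathtt{B}_r}|\nabla\psi_\ddagger|^2=\int_{\partial\mathtt{B}_r}\langle\psi_\ddagger,\nabla_r\psi_\ddagger\rangle$ is easier than you suggest. A cutoff near the axis leaves an error bounded by $\bigl(\int_{\rho<\varepsilon}\rho^{-2}|\psi_\ddagger|^2\bigr)^{1/2}\bigl(\int_{\rho<\varepsilon}|\nabla\psi_\ddagger|^2\bigr)^{1/2}$. This tends to $0$ by the local form of the Hardy inequality \eqref{eq:4.2}, since $\psi_\ddagger\in\mathbb{H}$. The stress-tensor argument of Section~7 and the $x_3$-invariance are not needed for this.
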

\noindent\textbf{Proof of Lemma 9.1.}
Fix $R>1$. It follows from \eqref{eq:9.7} that the set $\{\psi_\lambda\}_{\lambda\in(0,1)}$ has an \emph{a priori} $\mathbb{H}$-norm bound on the $|x|\leq R$ ball in $\mathbb{R}^3$. This implies that any sequence from this set with $\lambda$ converging to zero along the sequence has a subsequence (hence denoted by $\{\psi_i\}_{i=1,2,\ldots}$) that converges weakly in the $\mathbb{H}$ topology and strongly in the $\mathbb{L}$-topology (the Rellich Lemma) on this ball. By taking diagonal subsequences for $R=2,3,\ldots$, no generality is lost by requiring that $\{\psi_i\}_{i=1,2,\ldots}$ converge weakly in the $\mathbb{H}$-topology on any $R>1$ version of the $|x|<R$ ball and strongly in the $\mathbb{L}$-topology on any such ball. What with \eqref{eq:9.6}, the Rellich lemma also implies that the limit section $\psi_{\ddagger}$ is non-zero and, more to the point, that
\begin{equation}\label{eq:9.15}
\int_{B_r} |\psi_{\ddagger}|^2 \;=\; 4\pi\, r^{\N(0)} \, .
\end{equation}

The convergence in \eqref{eq:9.6} with \eqref{eq:9.10} also implies that $\{\psi_i\}_{i=1,2,\ldots}$ converges to $\psi_{\ddagger}$ on any $|x|<R$ ball in the strong $\mathbb{H}$-topology.

\smallskip

In addition: The bounds supplied by \eqref{eq:9.8} and \eqref{eq:9.9} imply that $\{\psi_i\}_{i=1,2,\ldots}$ converges in the $C^\infty$ topology on any $R>1$ version of the set $A_R$. That fact coupled with \eqref{eq:9.10} implies that $\psi_{\ddagger}$ is a smooth section of $\mathcal{I}\times V$ on the complement of the $x_3$-axis in $\mathbb{R}^3$ and that it obeys $\nabla^\dagger\nabla\psi_{\ddagger}=0$. Meanwhile \eqref{eq:9.11} implies that $\nabla_r\psi_{\ddagger}=\frac{\N(0)}{r}\psi_{\ddagger}$ and \eqref{eq:9.12} implies that $\nabla_3\psi_{\ddagger}=0$.

\medskip

Let $\psi_{\ddagger}$ denote a $\lambda\to 0$ limit from Lemma~7.1 of a sequence from the set $\{\psi_\lambda\}_{\lambda\in(0,1]}$. The fact that $\psi_{\ddagger}$ is $\nabla^\dagger\nabla$ harmonic and that it is equivariant under coordinate rescalings and that it is annihilated by $\nabla_3$ determines any given $\mathcal{I}\times V$ component of $\psi_{\ddagger}$. To say more, let $\psi_{\ast}$ denote a component of $\psi_{\ddagger}$, thus a section of $\mathcal{I}$ over the complement in $\mathbb{R}^3$ of the $x_3$-axis. Write $\mathbb{R}^3$ as $\mathbb{C}\times\mathbb{R}$ with the $\mathbb{R}$ factor corresponding to the $x_3$ coordinate and with the $\mathbb{C}$ factor having the complex coordinate $z=x_1+i x_2$. Let $u_0$ denote a unit length, constant section of $\mathcal{I}$ over the complement of the $x_3$-axis and the negative real axis
(where $x_1<0$). Then $\psi_{\ast}$ must have the form
\begin{equation}\label{eq:9.16}
\psi_{\ast} \;=\; \bigl(\alpha z^{\N(0)} + \bar{\alpha}\,\bar{z}^{\N(0)}\bigr)\,u_0 \, ,
\end{equation}
with $\alpha$ denoting a complex number. This restriction on $\psi_{\ast}$ implies in turn (since $\psi_{\ast}$ is a section of $\mathcal{I}$ and $u_0$ is constant) that $\N(0)$ must come from the set $\bigl\{\tfrac12, \tfrac32, \ldots\bigr\}$.

\section{Proof of Theorem 1.2}

To set some background for the subsequent arguments: Having chosen one of the polytopes listed in Theorem~1.2, center that polytope in $\mathbb{R}^4$ so that its symmetry group is a finite subgroup of the group $\mathrm{SO}(4)$ with the latter acting linearly on $\mathbb{R}^4$. Let $G$ denote that subgroup. Now let $\Gamma$ denote the graph in $S^3$ that is given by the union of the radial projection to $S^3$ of the set of edges and vertices of that polytope. The group $G$ acts on $S^3-\Gamma$ and, according to Proposition~\ref{prop:lift} in Section~\ref{ch:pre}, it lifts in the manner described in Section~5 to an action on the line bundle $\mathcal{I}$. Moreover, these lifts are such that any given edge of $\Gamma$ is fixed by a $\mathbb{Z}/m\mathbb{Z}$ subgroup with $m$ being an odd integer and this $\mathbb{Z}/m\mathbb{Z}$ subgroup induces a linear action on the normal $2$-plane to any interior point of that edge (this is a $2\pi/m$ rotation of that normal plane).

Theorem~1.2 makes claims regarding homogeneous, $\mathbb{Z}/2$ harmonic $1$-forms, self-dual spinors and self-dual $2$-forms. The argument for the claims regarding $\mathbb{Z}/2$ harmonic $1$-forms and self-dual spinors are considered first. To start that argument, invoke Lemma~5.1 to obtain a non-trivial, $G$-invariant Laplace eigensection from $\mathcal{I}$'s version on $S^3$ of the Hilbert space $\mathbb{H}$. This section is denoted by $\psi$ and its eigenvalue is denoted by $\E$. This is to say that $\psi$ obeys the equation $-\Delta^{\perp}\psi=\E\psi$. To continue, let $q$ denote an interior point to an edge of $\Gamma$ and use this point to `zoom in' on the behavior of $\psi$ near $q$ as was done in Section~9. Doing that results in a version of $\psi_{*}$ that has the form depicted in~\eqref{eq:9.16}. Keeping in mind that the edge is fixed by a $\mathbb{Z}/m\mathbb{Z}$ subgroup of $G$, what is said in Section~5 (see the paragraph prior to Lemma~5.1) implies that the version of $\N(0)$ that appears in~\eqref{eq:9.16} must come from the set $\{\tfrac{m}{2},\tfrac{3m}{2},\dots\}$. Since $m$ is at least $3$, it then follows from the manner of convergence dictated in Section~9 that the conditions for bringing Lemma~4.4 to bear are met by this section $\psi$ which implies in particular that $d\psi$ is in the Hilbert space $\mathbb{H}$ and as such, it is a $\nabla^{\dagger}\nabla+\mathrm{Ric}$ eigensection from the $T^{*}S^{3}\otimes\mathcal{I}$ version of the Hilbert space $\mathbb{H}$. (Ric here denotes the Ricci curvature when viewed as an endomorphism of $T^{*}S^{3}$.) In particular, $|d\psi|$ extends across $\Gamma$ as a H\"older continuous function that vanishes on $\Gamma$.

Now let $r$ denote the radial coordinate on $\mathbb{R}^{4}$ and let $\pi$ denote the radial projection map from $\mathbb{R}^{4}\setminus\{0\}$ to $S^{3}$. Let $\kappa\equiv(1+\E)^{1/2}-1$. Then $\phi=r^{\kappa}\pi^{*}\psi$ is a homogeneous, $\mathbb{Z}/2$ harmonic function on $\mathbb{R}^{4}$ whose corresponding version of the set $Z$ is the closure of $\pi^{-1}\Gamma$ and whose corresponding real line bundle is the $\pi$-pull-back of the bundle $\mathcal{I}$ on $S^{3}$. (That real line bundle is subsequently denoted by $\mathcal{I}$ also.) The exterior derivative of $\phi$ (thus $d\phi$) is the promised homogeneous, harmonic, $\mathcal{I}$-valued $1$-form on $\mathbb{R}^{4}$ if $\E$ is greater than $3$ (because then $|d\phi|$ limits to zero as $r\to0$). With regards to this lower bound for $\E$: Lemmas~5.1 and~4.1 guarantee an infinite set of $\E$ in $(3,\infty)$.

To obtain a homogeneous, $\mathcal{I}$-valued, harmonic self-dual spinor, let $\eta$ denote a given, non-zero, constant anti-self-dual spinor on $\mathbb{R}^{4}$. Supposing that $\mathfrak{D}$ denotes the standard Dirac operator on $\mathbb{R}^{4}$ mapping anti-self-dual spinors to self-dual spinors (and vice-versa), then $\mathfrak{D}(\phi\eta)$ is a homogeneous, harmonic, $\mathcal{I}$ self-dual spinor on
$\mathbb{R}^{4}$ if $\E$ is greater than $3$.

The construction of Theorem~1.2's homogeneous $\mathbb{Z}/2\mathbb{Z}$ harmonic, self-dual $2$-forms requires more work. The following seven parts of this section exhibit that construction.

\medskip
\noindent\emph{Part 1:} The plan for what follows is to construct a version of what is denoted by $\nu^{\perp}$ in \eqref{eq:2.5} for certain positive values of $\kappa$. This $\nu^{\perp}$ will be a section of the $T^{*}S^{3}\otimes\mathcal{I}$ version of the Hilbert space $\mathbb{H}$ whose norm extends across $\Gamma$ as a H\"older continuous function that vanishes on $\Gamma$. Granted $\nu^{\perp}$, then the self-dual, $\mathcal{I}$ valued $2$-form that is depicted in \eqref{eq:2.4} is the desired, homogeneous, $\mathcal{I}$-valued harmonic self-dual $2$-form for Theorem~1.2.

The construction of $\nu^\perp$ starts by invoking Lemma~5.2 to obtain $G$-invariant, $(\nabla^\dagger\nabla + \mathrm{Ric})$-eigensections in the $T^*S^3\otimes\mathcal{I}$ version of the Hilbert space $\mathbb{H}$. Let $\psi$ denote such an eigensection and let $\E$ denote its eigenvalue. Let $d$ denote for the purposes of this proof the exterior derivative on $S^3$ and let $*$ denote the Hodge star on $S^3$. Granted this notation, set $\phi = *d*\psi$, this being a section of $\mathcal{I}$ which is a priori only in the Hilbert space $\mathbb{L}$. But note in this regard that if each edge of $\Gamma$ is fixed by a $\mathbb{Z}/m\mathbb{Z}$ subgroup of $G$ with $m$ odd and at least $5$, then (by virtue of \eqref{eq:5.8} and what is said in Section~9), the assumptions required by Lemma~4.4 will be
met and as a consequence, $\phi$ will come from $\mathcal{I}$'s version of $\mathbb{H}$.)

Anyway, with regards to $\phi$, this obeys the equation $-\Delta^\perp\phi = \E\phi$ because the eigenvalue equation $(\nabla^\dagger\nabla + \mathrm{Ric})\psi = \E\psi$ can be written as
\begin{equation}\label{eq:10.1}
-\bigl(*d(*d\psi) + d(*d\psi)\bigr) = \E\psi ,
\end{equation}
and because $\Delta^\perp\phi = *d(*d\phi)$. Note also that $\phi$ is a $G$-invariant section of $\mathcal{I}$.

If it is the case that $\phi$ is from $\mathbb{H}$ (which is guaranteed if $m>3$), then $d\phi$ is also in $\mathbb{H}$. This is a consequence of Lemma~4.4 whose requirements are met by $\phi$ because $\phi$ is a $G$-invariant, $(-\Delta^\perp)$-eigensection from $\mathbb{H}$. Indeed, being a $-\Delta^\perp$-eigensection from $\mathbb{H}$ implies that $\phi$ has its versions of \eqref{eq:9.16}, and being $G$-invariant implies that the values of $\N(0)$ that appear there are no smaller than $\tfrac{3}{2}$ (see Section~5).

Supposing that $\phi$ is in $\mathbb{H}$ and thus $d\phi$ too, it then follows that $y = \psi - d\phi$ is a coclosed section of $\mathbb{H}$ that obeys
\begin{equation}\label{eq:10.2}
*d(*dy) = \E y .
\end{equation}
Since $y$ is coclosed, it is a $G$-invariant, $(\nabla^\dagger\nabla + \mathrm{Ric})$-eigensection from the $T^*S^3\otimes\mathcal{I}$ version of $\mathbb{H}$. As explained in subsequent parts of this proof, $*dy$ is also from $\mathbb{H}$ (this follows directly from \eqref{eq:9.16} and \eqref{eq:5.8} if $m>3$). Granted that $*dy$ is from $\mathbb{H}$, then so is $y + \frac{1}{\sqrt{\E}}*dy$. This is the desired section $\nu^\perp$ for use in \eqref{eq:2.5}. (The number $\kappa$ is $\sqrt{\E}$.)

With regards to the situation when $m=3$: In that case, the top bullet of \eqref{eq:5.8} allows for the case $k=1$ which implies that $\psi$'s version of $\N(0)$ can be equal to $\tfrac{1}{2}$ which is too small to guarantee \emph{a priori} that $\phi$ (which is $*d*\psi$) is from $\mathbb{H}$. Even so, the manner of convergence to the rescaling limits of $\psi$ near the interior points of the edges of $\Gamma$ as described in Section~9 with what is said by \eqref{eq:5.10} suggests that $\phi$ is indeed in $\mathbb{H}$ notwithstanding the fact that $\nabla\psi$ is not in $\mathbb{H}$. This same manner of convergence to rescaling limits with \eqref{eq:5.10} also suggests that $*dy$ is in $\mathbb{H}$ notwithstanding the fact that $\nabla\psi$ is not.

\medskip
\noindent\emph{Part 2:} This part of the proof and Parts 3 and 4 explain why $\phi$ is indeed in $\mathbb{H}$. To start the explanation, bring Lemma~5.1 to bear so as to obtain a $G$-invariant section from $\mathcal{I}$'s version of the Hilbert space $\mathbb{H}$ (to be denoted by $\mathfrak{\upupsilon}$) that obeys the equation $-\Delta^{\perp}\upupsilon=\E\phi$. Let $\varsigma=\phi-\upupsilon$. This section $\varsigma$ is a $\Delta^{\perp}$-harmonic section of $\mathcal{I}$ from the Hilbert space $\mathbb{L}$ since $\Delta^{\perp}\phi$ and $\Delta^{\perp}\upupsilon$ are both $\E\phi$. A key point now is that $\varsigma\equiv 0$ if $\varsigma$ is in $\mathbb{H}$; and if $\varsigma\equiv 0$, then $\phi=\upupsilon$ and so $\phi$ is from $\mathbb{H}$ as claimed.

The question of whether or not $\varsigma$ is from $\mathbb{H}$ concerns its behavior near $\Gamma$ since it is smooth on $S^3-\Gamma$. To see about this, the rest of this part of the proof and all of Part~3 focus attention on some given point in the interior of an edge of $\Gamma$. This point is denoted subsequently by $p$. Rotate the $3$-sphere in $\mathbb{R}^4$ if needed so that the edge containing $p$ is an arc in the $|x|=1$ part of the $x_3=x_4$ plane in $\mathbb{R}^4$. Let $z=(x_3+i x_4)$ and then write $x_1+i x_2$ as $(1-|z|^2)^{1/2}e^{it}$. The $\mathbb{R}/2\pi\mathbb{Z}$ valued coordinate $t$ is chosen so that $p$ is the $t=0$ point on the $|z|=0$ great circle. It proves convenient to write $z$ in polar form as $z=\rho e^{i\theta}$ with $\rho>0$ and $\theta$ being $\mathbb{R}/2\pi\mathbb{Z}$ valued. The metric on a ball of radius less than $\pi$ in $S^3$ when written using the coordinates $\rho$, $\theta$ and $t$ has the form
\begin{equation}\label{eq:10.3}
\frac{1}{(1-\rho^2)}\,d\rho^2 + \rho^2\,d\theta^2 + (1-\rho^2)\,dt^2 \, .
\end{equation}

As noted in the proof of Lemma~6.2, the constant translations of $t$ are isometries. (These identify the normal disks to the given edge of $\Gamma$. The constant rotations of these normal disks (thus $\theta\to\theta+\mu$ with $\mu$ constant) are also isometries and they commute with those given by constant translations of $t$. These isometries will be exploited momentarily.)

Fix a constant, unit length section of $\mathcal{I}$ in the part of the $\rho<\frac{1}{1024c}\,r_p$ and $|t|<\frac{1}{1024c}\,r_p$ polydisc where $\theta\in(-\pi,\pi)$. Use that section to depict $\varsigma$ on that part of the polydisc as an $\mathbb{R}$-valued function. Fourier series can then be used to write $\varsigma$ in this polydisc as a sum,
\begin{equation}\label{eq:10.4}
\varsigma = \sum a_{k,n}(\rho)\, e^{i\left(k+{1\over 2}\right)\theta}\, e^{i1024\pi c\, n\, t/r_p} \, ,
\end{equation}
this sum being a double sum over the values of $k$ from $\mathbb{Z}$ and the values of $n$ from $\mathbb{Z}$. With regards to this Fourier decomposition: Keep in mind that $\varsigma$ is smooth on the complement of the $\rho=0$ locus and it is also in $\mathbb{L}$ (which is to say that $|\varsigma|^2$ has finite integral on $S^3$). It follows as a consequence that the sum in \eqref{eq:10.4} converges pointwise assuming $\rho$ is positive and both $\rho$ and $|t|$ are less than $\frac{1}{1024c}\,r_p$.

To say more about the functions $a_{k,n}(\cdot)$: Note first that $\overline{a}_{k,n}=a_{-k-1,-n}$ since $\varsigma$ is $\mathbb{R}$-valued. A second point of note: Because $\varsigma$ is harmonic, any given $a_{k,n}$ obeys the second order ordinary differential equation
\begin{equation}\label{eq:10.5}
-\frac{1}{\rho}\frac{d}{d\rho}\Bigl(\rho(1-\rho^2)\frac{d}{d\rho}a_{k,n}\Bigr)
+ \frac{\left(k+{1\over2}\right)^2}{\rho^2}\,a_{k,n}
+ \frac{\bigl(\pi n/2r\bigr)^2}{(1-\rho^2)}\,a_{k,n}
= 0,
\end{equation}
with $r$ denoting here $\frac{1}{1024c}\,r_p$. A key point regarding the preceding equation (which can be derived using standard theorems about ODEs) is that any solution must have the form
\begin{equation}\label{eq:10.6}
a_{k,n}(x)
=
\bigl(\alpha_{k,n}+\rho^2 \f_{k,n}\bigr)\,\rho^{k+\frac12}
+
\bigl(\beta_{k,n}+\rho^2 \h_{k,n}\bigr)\,\rho^{-(k+\frac12)},
\end{equation}
with $\alpha_{k,n}$ and $\beta_{k,n}$ being constant, and with $|\f_{k,n}|$ and $|\h_{k,n}|$ being bounded functions of $\rho$. (This is because $\rho=0$ is the only singular point of that equation.) Since $\varsigma$ is in $\mathbb{L}$, the depiction of $a_{k,n}$ in \eqref{eq:10.6} implies in particular (by taking $\rho\ll 1$) that $\alpha_{k,n}=0$ for $k\le -2$ and $\beta_{k,n}=0$ for $k\ge 1$. But there is more: Since $\varsigma$ is $G$ invariant, $\alpha_{k,n}$ and $\beta_{k,n}$ must be zero unless $2k+1 = 0 \bmod m$. These conditions require (in particular) that $\alpha_{-1,n}=\alpha_{0,n}=0$ and that
$\beta_{0,n}=\beta_{-1,n}=0$.

Because of the preceding constraints, the norm of $a_{k,n}$ obeys
\begin{equation}\label{eq:10.7}
|a_{k,n}|\le z_{k,n}\,\rho^{3/2}\,,
\end{equation}
with $z_{k,n}$ being constant. Here is more to keep in mind about the set $\{a_{k,n}\}$: The fact that $|\varsigma|^2$ has finite integral implies that the integral below is finite \begin{equation}\label{eq:10.8}
\sum \int_{0}^{\frac{r_p}{1024c}} |a_{k,n}|^2\,\rho\,d\rho \le c_0(1+\E)\,,
\end{equation}
where the sum is over the same set as in \eqref{eq:10.4}.\\

\noindent\textit{Part 3:} Given $\varepsilon>0$, let $\sigma_\varepsilon$ denote a smooth function of the coordinate $\rho$ that is equal to $1$ where $\rho<\varepsilon$ and equal to $0$ where $\rho>2\varepsilon$; and also with $|d\sigma_\varepsilon|\le c_0\,\varepsilon^{-1}$. The constructions that follow use $\sigma_p$ to denote the $\varepsilon=\frac{1}{2048c}\,r_p$ version of $\sigma_\varepsilon$. Multiply both sides of \eqref{eq:10.6} by $(1-\sigma_\varepsilon)\sigma_p\,\overline{a}_{k,n}$ and then integrate the result. Since the integrand has compact support on $\bigl(0,\frac{1}{1024c}\,r_p\bigr)$, an integration by parts can be done (and \eqref{eq:10.7} can be invoked) to obtain the following inequality:
\begin{equation}\label{eq:10.9}
\int_{0}^{\frac{r_p}{2048c}}
\left(
\left|\frac{d}{d\rho}a_{k,n}\right|^2
+\frac{\left(k+\frac12\right)^2}{\sin^2\rho}\,|a_{k,n}|^2
+\frac{\left(\frac{\pi n}{2r}\right)^2}{\cos^2\rho}\,|a_{k,n}|^2
\right)\rho\,d\rho
\le
c_0\,|z_{n,k}|^2\,\varepsilon^3
+
\frac{c_0}{r_p^2}\int_{0}^{\frac{r_p}{1024c}} |a_{k,n}|^2\,\rho\,d\rho \,.
\end{equation}

With \eqref{eq:10.9} in hand: Having fixed $\delta>0$ and a large integer $N$, it follows from \eqref{eq:10.9} and \eqref{eq:10.8} that there exists $\varepsilon_{\delta,N}>0$ such that if $\varepsilon<\varepsilon_{\delta,N}$, then
\begin{equation}\label{eq:10.10}
\sum^{N}\int_{0}^{\frac{r_p}{2048c}}
\sigma_\varepsilon
\left(
\left|\frac{d}{d\rho}a_{k,n}\right|^2
+\frac{\left(k+\frac12\right)^2}{\sin^2\rho}\,|a_{k,n}|^2
+\frac{\left(\frac{\pi n}{2r}\right)^2}{\cos^2\rho}\,|a_{k,n}|^2
\right)\rho\,d\rho
\le
\delta +\frac{c_0}{r_p^2}\,(1+\E),
\end{equation}
where the symbol $\displaystyle\sum^N$ indicates there is a sum over pairs $(k,n)$ such that $|k|<N$ and $|n|<N$. Another important point with regards to \eqref{eq:10.9}: Having fixed $\delta>0$ then that inequality also implies that there exists positive $\varepsilon'_{k,N}$ such that if $\varepsilon<\varepsilon'_{k,N}$, and $N'>N$, then
\begin{equation}\label{eq:10.11}
\sum^{N,N'} \int_{0}^{r_p/2048c}
\sigma_{\varepsilon}\!\left(
\left|\frac{d}{d\rho}a_{k,n}\right|^{2}
+\frac{\left(k+{1\over2}\right)^{2}}{\sin^{2}\rho}\,|a_{k,n}|^{2}
+\frac{(\pi n/2r)^{2}}{\cos^{2}\rho}\,|a_{k,n}|^{2}
\right)\rho\,d\rho
<\delta,
\end{equation}
where the symbol $\displaystyle\sum^{N,N'}$ indicates that the sum is over all pairs $(k, n)$ with $|k|$ and $|n|$ between $N$ and $N'$. The observations in \eqref{eq:10.10} and \eqref{eq:10.11} imply this: Set $r=\frac{1}{1024c}\,r_p$ and set $\chi_p$ to denote for now the function $\chi\bigl(2^{10}\dist(p,\cdot)/r-1\bigr)$. Then, the sequence
\begin{equation}\label{eq:10.12}
\left\{
\chi_p \sum^N a_{k,n}(\rho)\, e^{\mathrm{i}\left(k+{1\over2}\right)\theta}\, e^{\mathrm{i}\pi 1024 c\, n\, t/r_p}
\;:\; N=1,2,\ldots
\right\}
\end{equation}
is a Cauchy sequence in the $\mathtt{B}_r$ version of the Hilbert space $\mathbb{H}_{\mathtt{B}}$. This last conclusion implies in turn that $\chi_p\varsigma$ is in $\mathbb{H}$ since this sequence converges in the $\mathtt{B}_r$ version of $\mathbb{L}_{\mathtt{B}}$ to $\chi_p\varsigma$.

\medskip
\noindent\emph{Part 4:} To summarize: Part 3 established that $|\nabla\varsigma|^2$ is integrable on the complement in $S^3$ of any open set containing the vertices of $\Gamma$. With this understood, the focus in this part of the proof is on the behavior of $\nabla\varsigma$ near some given vertex of $\Gamma$. Use $p$ now to denote that vertex. Fix $\varepsilon\in \bigl(0,\frac{1}{2048c}\,r_p\bigr)$ but very small. Let $\chi_\varepsilon$ and $\chi_p$ denote the respective $r=\varepsilon$ and $r=\frac{1}{2048c}\,r_p$ versions of the function $\chi\bigl((\dist(\cdot,p)-1)/r\bigr)$. Since $-\Delta^{\perp}\varsigma=0$, it is also true that
\begin{equation}\label{eq:10.13}
-\frac{1}{2}\Delta^{\perp}|\varsigma|^{2}+|\nabla\varsigma|^{2}=0 \, .
\end{equation}

Multiply both sides of this last identity by $(1-\chi_\varepsilon)\chi_p\varsigma$ and then integrate the result over the radius $\frac{1}{1024c}\,r_p$ ball centered at $p$. Two instances of integration by parts leads from that integral identity to the inequality below:
\begin{equation}\label{eq:10.14}
\int (1-\chi_\varepsilon)\chi_p|\nabla\varsigma|^{2}
\le \frac{c_0}{\varepsilon^{2}}\int (1-\chi_{\varepsilon/16})\chi_{2\varepsilon}|\varsigma|^{2}
+ c_{p,\E}\, ,
\end{equation}
where $c_{p,\E}$ is a positive number that is determined by $p$ and $\E$. This last inequality leads directly to an \textit{a priori} bound for the integral of $|\nabla\varsigma|^2$ on the radius $\frac{1}{4096c}\,r_p$ ball centered at $p$ if, for every small but positive $\varepsilon$, there exists a $c_0\varepsilon^2$ bounds for the integral of $\varsigma$ on radius $2\varepsilon$ ball centered at $p$.

Find such a bound: Let $\mathtt{B}_r$ denote for the moment the $r=\frac{1}{1024c}\,r_p$ radius ball in $S^3$ centered at $p$. Let $v$ denote the valency of the vertex $p$. The edges of $\Gamma$ that are incident to $p$ intersect $\partial\mathtt{B}_r$ as a configuration of $v$ distinct points; denote
this set by
\begin{align}
\mathfrak{p} := \Gamma \cap \partial\mathtt{B}_r .
\end{align}
A chosen Gaussian coordinate system centered at $p$ identifies $\partial\mathtt{B}_r$ with the radius $r$ sphere centered at the origin in $\mathbb{R}^3$, and then the radial projection from the origin in $\mathbb{R}^3$ identifies that sphere with the radius $1$ sphere in $\mathbb{R}^3$. That radius $1$ sphere is denoted by $S^2$. Under this identification, the image of $\mathfrak{p}\subset \partial\mathtt{B}_r$ is a configuration of $v$ points in $S^2$, which we also denote by $\mathfrak{p}$. Radial pull-back from $\partial \mathtt{B}_r$ of the line bundle $\mathcal{I}$ defines a real line bundle on $S^2-\mathfrak{p}$ which will be denoted by $\mathcal{I}$ also.

Borrowing from \cite{taubes2024topological}, let $\mathbb{H}_{\mathfrak{p}}$ denote the Hilbert space that is obtained by taking the completion of the space of compactly supported sections of the $S^2-\mathfrak{p}$ version of $\mathcal{I}$ using the norm whose square is the functional
\begin{equation}\label{eq:10.15}
\f \;\longmapsto\; \int_{S^2} |d\f|^2 \, .
\end{equation}
Define the Hilbert space $\mathbb{L}_{\mathfrak{p}}$ to be the completion of that same space of sections of $\mathcal{I}$ on $S^2-\mathfrak{p}$ using the norm whose square sends a section to the $S^2$ integral of the square of its norm.

Let $\Delta_2$ denote the Laplace operator on $S^2$ for the round metric but with it acting on sections of $\mathcal{I}$ over $S^2-\mathfrak{p}$.  The section $\f$ of $\mathcal{I}$ on $S^2-\mathfrak{p}$ is said to be an eigensection in $\mathbb{H}_{\mathfrak{p}}$ for $-\Delta_2$ when it obeys the equation $-\Delta_2 \f = \lambda \f$ with $\lambda$ being a non-negative real number.  (The number $\lambda$ is $\f$'s eigenvalue.) Proposition 1.1 from Taubes-Wu \cite{taubes2024topological} makes an assertion to the effect that the set of eigenvalues of these eigensections from $\mathbb{H}_{\mathfrak{p}}$ is discrete, bounded away from zero and has no accumulation points.  This same theorem also asserts that each such eigenvalue has finite multiplicity. Meanwhile, Proposition 1.1 \cite{taubes2024topological} makes an assertion to the effect that $\mathbb{L}_{\mathfrak{p}}$ has an orthonormal basis of $(-\Delta_2)$-eigensections from $\mathbb{H}_{\mathfrak{p}}$. Let $\{\f_j\}_{j=1,2,\ldots}$ denote this orthonormal basis, labeled using the convention that the eigenvalue of any given version of $\f_j$ is no larger than that of $\f_{j+1}$.

Returning to $\varsigma$: Let $r = \frac{1}{1024c}\,r_p$ and let $\mathtt{B}_r$ denote again the radius $r$ ball centered at $p$.  Let $\rho$ now denote the function on $\mathtt{B}_r$ giving the distance to $p$.  The section $\varsigma$ can be written on $\mathtt{B}_r$ using that orthonormal basis of $(-\Delta_2)$-eigensections from $\mathbb{H}_{\mathfrak{p}}$ as
\begin{equation}\label{eq:10.16}
\varsigma = \sum \aaa_j(\rho)\, \f_j
\end{equation}
where the sum is over the indexing set $j\in\{1,2,\ldots\}$ for the $\mathbb{L}_{\mathfrak{p}}$-orthonormal basis of $(-\Delta_2)$-eigensections.  The coefficient function set $\{\aaa_j(\rho)\}_{j=1,2,\ldots}$ obey the ordinary differential equation
\begin{equation}\label{eq:10.17}
-\frac{1}{\sin^2\rho}\,\frac{d}{d\rho}\!\left(\sin^2\rho\,\frac{d}{d\rho}\aaa_j\right)
+\lambda_j\,\frac{1}{\sin^2\rho}\,\aaa_j
=0 \, .
\end{equation}
Because $|\varsigma|^2$ is integrable on $S^3$, the function set $\{\aaa_j\}$ are constrained to obey the bound
\begin{equation}\label{eq:10.18}
\sum \int_{0}^{r} |\aaa_j|^2 \sin^2\rho\, d\rho \;\leq\; c'_{p,\E}
\end{equation}
with $c'_{p,\E}$ denoting another positive number that depends only on $p$ and $\E$.

Fix a non-negative integer $j$ and set $\aaa\equiv\aaa_j$ and $\lambda\equiv\lambda_j$.  Then, introduce the numbers
\begin{equation}\label{eq:10.19}
\mu \equiv \frac12\Bigl(-1 + (1+4\lambda)^{1/2}\Bigr)
\qquad\text{and}\qquad
\mu' \equiv \frac12\Bigl(1 + (1+4\lambda)^{1/2}\Bigr)\, .
\end{equation}
Standard ordinary differential equation technology can be used to see that the function $\aaa(\cdot)$ must have the form in \eqref{eq:10.20}:
\begin{equation}\label{eq:10.20}
\aaa(\rho)
=
\alpha\bigl(1+\rho^2\mathfrak{s}\bigr)\rho^\mu
+
\beta\bigl(1+\rho^2 \mathfrak{t}\bigr)\rho^{-\mu'},
\end{equation}
where $\alpha$ and $\beta$ are $j$-dependent real numbers, and $\mathfrak{s}$ and $\mathfrak{t}$ are $j$-dependent functions of $\rho$ obeying $|\mathfrak{s}|+|\mathfrak{t}|\le c_j$ with $c_j$ being a positive real number.

Granted \eqref{eq:10.20}, it then follows that the contribution to the small $\varepsilon$ integral in \eqref{eq:10.14} from the $\alpha\rho^\mu$ term in \eqref{eq:10.20} will limit to zero as $\varepsilon$ limits to zero because $\mu$ is positive. Meanwhile, the $\beta\rho^{-\mu'}$ term in \eqref{eq:10.20} will be absent ($\beta=0$) if $\mu'\ge\frac32$ because if that isn't the case, then the presence of the $\beta\rho^{-\mu'}$ term with $\mu'\ge\frac32$ runs afoul of the constraint in \eqref{eq:10.18}. Since $\mu'$ is no smaller than $\frac32$ when $\lambda\ge\frac34$, the issue to address is whether a $(-\Delta_2)$-eigensection from $\mathbb{H}_p$ with eigenvalue less than $\frac34$ can appear in \eqref{eq:10.16}.

To see about a lower bound for those $\lambda_j$: The key input for the analysis of those eigenvalues is the fact that each edge of $\Gamma$ is invariant under the action of an order $3$ subgroup of $G$ that acts freely on the normal disks to interior points of that edge. That subgroup for an incident edge to $p$ acts as an order three rotation on the constant radius spheres centered at the vertex $p$. With this understood, fix a point (to be denoted by $q$) from the configuration $\mathfrak{p}$. The order $3$ subgroup of $G$ that fixed $\mathfrak{p}$ acts to permute the other points in $\mathfrak{p}$. Each $\f_j$ that appears in \eqref{eq:10.16} must be invariant with regards to this $\mathbb{Z}/3\mathbb{Z}$ subgroup which implies in turn that $|\f_j|$ near $q$ is bounded by $c_0\,\operatorname{dist}(\cdot,q)^{3/2}$ (See \cite[Proposition 2.2, Equation 4.6]{taubes2020examples}).

To explore the ramifications of this observation, introduce by way of notation $\nabla_2$ to denote the metric covariant derivative on sections of $T^*S^2$. The section $d\f_j$ of $T^*S^2\otimes\mathcal{I}$ obeys the differential equation
\begin{equation}\label{eq:10.21}
\nabla_2^\dagger\nabla_2(d\f_j) + d\f_j = \lambda_j\,d\f_j \, .
\end{equation}

Having fixed $\varepsilon>0$ but tiny, let 
\begin{align}
\chi_\varepsilon=\chi\bigl(1-\varepsilon^{-1}\operatorname{dist}(\cdot,\mathfrak{p})\bigr),
\end{align}
where\begin{align}
\displaystyle\dist(\cdot,\mathfrak{p}):=\min_{p\in\mathfrak{p}}\dist(\cdot,p).
\end{align}
To be sure, this function has value $1$ where the distance to $\mathfrak{p}$ is greater than $\varepsilon$, and it has value zero where the distance to $\mathfrak{p}$ is less than $\varepsilon/4$. Take the inner product of both sides of the equation in \eqref{eq:10.21} with $\chi_\varepsilon\,d\f_j$ and then integrate the result over $S^2$. Since $\chi_\varepsilon$ is zero near $\mathfrak{p}$, the result after an integration by parts leads to this:
\begin{equation}\label{eq:10.22}
\int_{S^2}\chi_\varepsilon\Bigl(|\nabla_2 d\f_j|^2 + |d\f_j|^2\Bigr)
\le
\lambda_j\int_{S^2}|d\f_j|^2 + \mathfrak{e},
\end{equation}
with $\mathfrak{e}$ being the term from the integration by parts with derivatives on $\chi_\varepsilon$. In particular, the norm of $\mathfrak{e}$ obeys the bound
\begin{equation}\label{eq:10.23}
|\mathfrak{e}|
\leq c_0\,\varepsilon^{-2}\int_{S^2}\chi_{\varepsilon/8}\bigl(1-\chi_{4\varepsilon}\bigr)\,|d \f_j|^2 \, .
\end{equation}

To see about the integral on the right in \eqref{eq:10.23}, multiply both sides of the eigenvalue equation $-\Delta_2 \f_j=\lambda_j \f_j$ by $\chi_{\varepsilon/8}\bigl(1-\chi_{4\varepsilon}\bigr) \f_j$ and then integrate the result. An integration by parts leads in turn to the inequality below:
\begin{equation}\label{eq:10.24}
\int_{S^2}\chi_{\varepsilon/8}\bigl(1-\chi_{4\varepsilon}\bigr)\,|d \f_j|^2
\leq c_0\,\varepsilon^{-2}\int_{S^2}\bigl(1-\chi_{8\varepsilon}\bigr)\,|\f_j|^2 \, .
\end{equation}

This last inequality leads to a $c_0\varepsilon^{3}$ bound for the integral over $S^2$ of $\int_{S^2}\chi_{\varepsilon/8}\bigl(1-\chi_{4\varepsilon}\bigr)|d \f_j|^2$ since $|\f_j|<c_0\varepsilon^{3/2}$ on the domain of the integral on the left hand side of \eqref{eq:10.23}. That bound with \eqref{eq:10.23} implies in turn that $|\mathfrak{e}|\leq c_0\varepsilon$ which has limit zero as $\varepsilon\to 0$. With this understood, take $\varepsilon$ to zero in \eqref{eq:10.22} to see that $\lambda_j$ must be greater than $1$. (Note that in this regards that $d\f_j$ can't be identically zero since $\mathcal{I}$ is not the trivial line bundle.)\\

\noindent\emph{Part 5:} To summarize where things stand at this point: Sections 2--4 have established that the $T^*S^3\otimes\mathcal{I}$ valued $1$-form $y\equiv \psi-d\phi$ is in the $T^*S^3\otimes\mathcal{I}$ version of the Hilbert space $\mathbb{H}$. This $1$-form is coclosed and it obeys \eqref{eq:10.2}. It is also $G$-invariant. This part of the proof and Part~6 explain why $\ast d y$ is also in this same version of $\mathbb{H}$. This part focuses on the behavior $\ast d y$ near interior points of the edges of $\Gamma$. The next part of the proof considers $\ast d y$ near the vertices of $\Gamma$.

Supposing that $p$ denotes a point in the interior of an edge of $\Gamma$, reintroduce the coordinates $(z,t)$ from Part~2 of this section for a neighborhood of $p$ in $S^3$.
(These are defined just prior to \eqref{eq:10.3}.) Thus, $z$ is a complex coordinate and $t$ is $\mathbb{R}$-valued; and both are zero at the point $p$. With regards to \eqref{eq:10.3}, keep in mind that $\rho=|z|$. The metric in \eqref{eq:10.3} when written using $z$ and $t$ has the form below:
\begin{equation}\label{eq:10.25}
dz\otimes d\bar z + dt^2 + \frac{1}{1-|z|^2}\bigl(z\,d\bar z + \bar z\,dz\bigr)^2 + |z|^2\,dt^2 \, .
\end{equation}

Now write $y$ using the coordinate basis as
\begin{equation}\label{eq:10.26}
y = w_t\,dt + w\,dz + \bar w\,d\bar z \, .
\end{equation}
(This is the depiction in \eqref{eq:5.4} if $w$ is written using its real and imaginary parts as $w=w_1-iw_2$.)

Let $\beta_p$ denote for the moment the $r=\frac{1}{8192c}r_p$ version of $\chi\!\left(\frac{\operatorname{dist}(\cdot,p)}{r}-1\right)$ which is equal to $1$ near $p$ and equal to zero where $\operatorname{dist}(\cdot,p)$ is greater than $\frac{1}{4096c}r_p$. Lemma~6.2 guarantees that $\beta_p\,\nabla_t y$ is in $\mathbb{H}$. As will be explained momentarily, the fact that $y$ is $G$-invariant implies that $\beta_p\,dw_t$ is also in $\mathbb{H}$.  Granted these last two points, it then follows that $|\nabla(*dy)|^2$ has finite integral on the radius $\frac{1}{2\times 8192\,c}\,r_p$ ball centered at $p$ if this is so for $|\nabla(\partial w/\partial\bar z)|^2$.  For the latter, this follows algebraically if $\beta_p\,dw_t$ and $\beta_p\,\nabla_t y$ are in $\mathbb{H}$ using the two identities $*d(*dy)=\E\,y$ and $d^{\ast}y=0$.

To see about $dw_t$, the first key point is that $w_t$ in the radius $\frac{1}{1024\,c}\,r_p$ ball centered at $p$ obeys a differential equation that has the schematic form below:
\begin{equation}\label{eq:10.27}
-\widehat{\Delta}\,w_t \;+\; 2z\,\frac{\partial}{\partial t}w \;+\; 2\bar z\,\frac{\partial}{\partial t}\bar w
\;=\; \E\,w_t \,,
\end{equation}
with $\widehat{\Delta}$ denoting the operator depicted below (the notation reintroduces the nonnegative function $\rho$ and the $\mathbb{R}/2\pi\mathbb{Z}$ valued function $\theta$ by writing $z$ as $\rho e^{i\theta}$):
\begin{equation}\label{eq:10.28}
\widehat{\Delta}
=\frac{1-\rho^2}{\rho}\,\frac{\partial}{\partial\rho}\!\left(\rho\,\frac{\partial}{\partial\rho}\right)
+\frac{1}{\rho^2}\,\frac{\partial^2}{\partial\theta^2}
+\frac{1}{1-\rho^2}\,\frac{\partial^2}{\partial t^2}\,.
\end{equation}

Fourier series (mimicking what was done in Part~2) will be used with \eqref{eq:10.27} to analyze the behavior of $w_t$.  To this end, reintroduce from Part~2 the product structure for $\mathcal{I}$ where $\theta\in(-\pi,\pi)$.  Having done that, write
\begin{equation}\label{eq:10.29}
\beta_p\,w_t
=\sum u_{k,n}(\rho)\,e^{i(k+\frac12)\theta}\,e^{\,i\,1024\pi c\,n\,t/r_p}\,.
\end{equation}

Introduce by way of notation,
\begin{align}
\mathfrak{k} \equiv \beta_p\Bigl(2z\,\frac{\partial}{\partial t}w+2\bar z\,\frac{\partial}{\partial t}\bar w\Bigr)
\end{align}
and write $\mathfrak{k}$ similarly as
\begin{equation}\label{eq:10.30}
\mathfrak{k}
=\sum \chi_{k,n}(\rho)\,e^{i(k+\frac12)\theta}\,e^{\,i\,1024\pi c\,n\,t/r_p}\,.
\end{equation}
Note in particular: The fact that $\beta_p\,\nabla_t y$ is in $\mathbb{H}$ implies that
\begin{equation}\label{eq:10.31}
|\chi_{k,n}| \le c_0\,\rho \,.
\end{equation}

Now it follows from \eqref{eq:10.31} and \eqref{eq:10.28} that $u_{k,n}$ can be written as
\begin{equation}\label{eq:10.32}
u_{k,n}
=\bigl(\alpha_{k,n}+\rho^2 \f_{k,n}\bigr)\,\rho^{k+\frac12}
+\bigl(\beta_{k,n}+\rho^2 \mathfrak{h}_{k,n}\bigr)\,\rho^{-(k+\frac12)}
+\mathfrak{e}_{k,n}\,,
\end{equation}
where $|\mathfrak{e}_{k,n}|\le c_0\,\rho^3$ and where $|\f_{k,n}|$ and $|\mathfrak{h}_{k,n}|$ are bounded functions of $\rho$.  Because $w_t$ is in $\mathbb{H}$ and it is $G$-invariant, an argument essentially a verbatim copy of that used subsequent to (10.6) can be used to deduce that $\alpha_{k,n}=0$ for $k\le 0$ and that $\beta_{k,n}=0$ for $k\ge -1$. Thus,
\begin{equation}\label{eq:10.33}
|u_{k,n}| \le c_0 z_{k,n}\,\rho^{3/2}\,.
\end{equation}

With this bound in hand, the arguments from Part~3 can be copied almost verbatim to see
that $\beta_p\nabla w_t$ is in $\mathbb{H}$ as claimed.\\

\noindent\emph{Part 6:}
This part of Section~10 and the final part explain why $|\nabla(\ast dy)|^2$ is integrable near the vertices of $\Gamma$. To keep the notation succinct, let $q$ now denote $\ast dy$. Keep in mind from Part~5 that $|\nabla q|^2$ is integrable on sufficiently small radius balls centered at the interior points to the edges of $\Gamma$.

With the preceding in mind, let $\rho$ now denote the function on the $\frac{1}{10^6c}\,r_p$ ball centered at $p$ that gives the distance to $p$. Introduce as before, a Gaussian coordinate system centered at $p$ to identify this ball centered at $p$ with the same radius ball centered at the origin in $\mathbb{R}^3$. Use $\rho$ now to denote the radial coordinate on $\mathbb{R}^3$. Let $S^2$ again denote the $\rho=1$ sphere in $\mathbb{R}^3$ and let $\mathfrak{p}\in S^2$ denote the intersection between $S^2$ and the radial projection to $S^2$ of the intersection of $\Gamma$ with any constant $\rho\in\bigl(0,\frac{1}{10^6c}\,r_p\bigr)$ sphere. (The graph $\Gamma$ in the $\rho<\frac{1}{10^6c}\,r_p$ ball is the intersection of that ball with the cone on the configuration $\mathfrak{p}$.)

Let $q_\rho$ denote the $d\rho$ component of $q$ on that $\rho<\frac{1}{10^6c}\,r_p$ ball (this is the pairing between $q$ and the dual vector field $\frac{\partial}{\partial\rho}$). The rest of this part of the proof focuses on $q_\rho$ and in so doing, it proves that $|dq_\rho|^2$ is integrable on small $\rho$ balls.

This $q_\rho$ is a section of $\mathcal{I}$ on the complement of $\Gamma$ in that ball. By virtue of $q$ obeying \eqref{eq:10.2}, this section $q_\rho$ obeys the equation below:
\begin{equation}\label{eq:10.34}
-\frac{1}{\sin^2\rho}\frac{d^2}{d\rho^2}\bigl(\sin^2\rho\,q_\rho\bigr)
-\frac{1}{\sin^2\rho}\Delta_2 q_\rho
=
\E\,q_\rho,
\end{equation}
with $\Delta_2$ denoting (as before) the standard Laplacian on the $S^2$. To see what this equation requires of $q_\rho$, expand $q_\rho$ as was done with $\varsigma$ in \eqref{eq:10.16} using the $\mathbb{H}_{\mathfrak{p}}$ eigensections of $\Delta_2$:
\begin{equation}\label{eq:10.35}
q_\rho=\sum \mathfrak{b}_j(\rho)\,\f_j\,.
\end{equation}
(Remember in this regard that $\{\f_1,\f_2,\dots\}$ is an $\mathbb{L}_{\mathfrak{p}}$-orthonormal basis of eigensections of $\Delta_2$ from $\mathbb{H}_{\mathfrak{p}}$. In what follows $\lambda_j$ for $j\in\{1,2,\dots\}$ denotes the eigenvalue of the corresponding $\f_j$.) Of particular note is that the sum in \eqref{eq:10.35} is convergent in the $\mathbb{L}$-topology on the radius $\frac{1}{10^6c}\,r_p$ ball centered at the origin in $\mathbb{R}^3$. This is to say that
\begin{equation}\label{eq:10.36}
\lim_{N\to\infty}\int_{0}^{\frac{r_p}{10^{6}c}}
\left|\,q_\rho-\textstyle\sum^{N} \mathfrak{b}_j \f_j\,\right|^{2}=0.
\end{equation}
where the notation here and below has $\sum^{N}$ signifying that the sum is over the indices $j$ from the set $\{1,\dots,N\}$.

By virtue of \eqref{eq:10.34}, any given $\mathfrak{b}_j$ obeys the equation below:
\begin{equation}\label{eq:10.37}
-\frac{1}{\sin^{2}\rho}\,\frac{d^{2}}{d\rho^{2}}\bigl(\mathfrak{b}_j\sin^{2}\rho\bigr)
+\frac{1}{\sin^{2}\rho}\,\lambda_j \mathfrak{b}_j
=\mathtt{E}\,\mathfrak{b}_j \, .
\end{equation}
This equation requires in turn that $\mathfrak{b}_j$ have the form
\begin{equation}\label{eq:10.38}
\mathfrak{b}_j=\alpha\,\rho^{\mu}\bigl(1+\rho^2\mathfrak{s}\bigr)
+\beta\,\rho^{-\mu'}\bigl(1+\rho^2 \mathfrak{t}\bigr),
\end{equation}
with $\alpha$ and $\beta$ being constant, with $\mu$ and $\mu'$ being the numbers depicted below in \eqref{eq:10.39} and with $\mathfrak{s}$ and $\mathfrak{t}$ being functions of $\rho$ with bounded norm (which can depend on $j$):
\begin{equation}\label{eq:10.39}
\mu=\Bigl(-\frac{3}{2}+\frac{1}{2}\bigl(1+4\lambda_i\bigr)^{1/2}\Bigr)
\quad\text{and}\quad
\mu'=\Bigl(\frac{3}{2}+\frac{1}{2}\bigl(1+4\lambda_i\bigr)^{1/2}\Bigr).
\end{equation}

With regards to $\beta$: The number $\beta$ must be zero because if $\beta$ is non-zero, then the $\psi=y$ and $x=p$ version of the integral that is depicted in the fourth bullet of \eqref{eq:6.3} would diverge, because $\mu'$ is greater than $2$ since $\lambda_j$ is positive.

With regards to $\mu$: As explained directly, $\mu$ is greater than $-0.4$. This is because $q_\rho$ is invariant with respect to the $\mathbb{Z}/3\mathbb{Z}$ subgroup of $G$ that fixes any given edge incident to $p$. Given such an edge, this $\mathbb{Z}/3\mathbb{Z}$ subgroup acts as a rotation of $S^2$ that fixes the corresponding point in $\mathfrak{p}$. The $\f_j$ must be invariant with respect to theses subgroups which implies (as explained in \cite{taubes2020examples}) that $|\f_j|$ is bounded by a constant multiple of $\dist(\cdot,p)^{3/2}$. That implies in turn that $d \f_j$ is from the $T^*S^2\otimes\mathcal{I}$ version of $\mathbb{H}_\mathfrak{p}$ which implies in turn (as noted previously) that $\lambda_j$ is strictly greater than $1$ (see \eqref{eq:10.22} and the subsequent remarks).

Any $r\in(0,\frac{1}{10^{6}c}\,r_p)$ version of the bound below follows directly since $\mu>-0.4$ (this bound has $c_j$ being independent of $r$):
\begin{equation}\label{eq:10.40}
\int_{0}^{r}|\mathfrak{b}_j|^2\,d\rho<c_j\,r^{0.2}\, .
\end{equation}

The preceding bounds with the bounds implied by the $\varepsilon\to 0$ limit of \eqref{eq:10.22} can be used to see that $|\nabla q_\rho|^2$ is integrable near $p$. This is done using \eqref{eq:10.34} by first approximating $q_\rho$ using a sequence of finite sum versions of \eqref{eq:10.35} with the $N$'th element in this sequence (for $N=1,2,\dots$) being $\Sigma^{N} \mathfrak{b}_j \f_j$. Let $q_{\rho,N}$ denote this approximation. A key fact now is that $q_{\rho,N}$ obeys \eqref{eq:10.34} also. Granted that, fix $\varepsilon$ to be positive but very small. Letting $r$ denote the $\frac{1}{4\times 10^6c}\,r_p$, multiply both sides of the $q_{\rho,N}$ version of
\eqref{eq:10.34} by $\chi(1-\rho/\varepsilon)\chi(\rho/r-1)\,q_{\rho,N}\sin^2\rho$. Having done that, then integrate and use integration by parts to see that
\begin{equation}\label{eq:10.41}
\int_{\mathrm{B}_r-\mathrm{B}_\varepsilon}|\nabla q_{\rho,N}|^2
\leq
c_0\,\varepsilon^{-2}\int_{\mathrm{B}_{2\varepsilon}}|q_{\rho,N}|^2
+
c_0\,r^{-2}\int_{\mathrm{B}_{2r}-\mathrm{B}_r}|q_{\rho,N}|^2 \,.
\end{equation}
(Note that the derivation of this bound uses $\varepsilon=0$ version of \eqref{eq:10.22} and the fact that the eigenvalues $\lambda_j$ for the $\f_j$'s that appear in
\eqref{eq:10.35} are greater than $1$.) To finish from here:
The $r=2\varepsilon$ version of \eqref{eq:10.40} implies that the leftmost term on the right-hand side of \eqref{eq:10.41} has an $\varepsilon\to 0$ limit; and also that this limit is $0$. The latter fact and the dominated convergence theorem leads from \eqref{eq:10.41} and \eqref{eq:10.36} to an $N$-independent bound for the integral of $|\nabla q_{\rho,N}|^2$ on $\mathrm{B}_r$. And, much the same manipulations can be used to conclude that $\left\{\chi\left(\frac{\rho}{r}-1\right)\nabla q_{\rho,N}\right\}_{N=1,2,\dots}$ is a Cauchy sequence in the $\mathrm{B}_r$ version of the Hilbert space $\mathbb{H}_{\mathrm{B}}$. That last fact implies that $\chi\left(\frac{\rho}{r}-1\right)q_\rho$ is in $\mathbb{H}_{\mathrm{B}}$ too.\\

\noindent
\emph{Part 7:} The fact that $\chi\left(\frac{\rho}{r}-1\right)q_\rho$ is in $\mathbb{H}_{\mathrm{B}}$ will be used momentarily to prove that the bound below in \eqref{eq:10.42} holds for any given value of $s\leq \frac{1}{10^8c}\,r_p$ with $c_\ast$ independent of $s$.
\begin{equation}\label{eq:10.42}
\int_{\mathrm{B}_s}\frac{1}{\rho^2}\,|q|^2 \leq c_\ast \,.
\end{equation}

Suppose for the moment that \eqref{eq:10.42} holds. As explained directly, that bound implies that the whole of $\chi\left(\frac{\rho}{r}-1\right)q$ is in the $T^\ast S^3\otimes\mathcal{I}$ version of $\mathbb{H}_{\mathrm{B}}$. This is done by first noting that \eqref{eq:6.5} holds with $\psi=q$ since $q$ obeys $\nabla^\dagger\nabla q + \mathrm{Ric}\cdot q=\mathtt{E}q$ (because $d^\ast q=0$ and $\ast d(\ast d q)=\mathtt{E}q$). Keeping the latter fact in mind, fix $\varepsilon>0$ but much less $r$ (which is $\frac{1}{4\times 10^6c}\,r_p$) and multiply both sides of the $\psi=q$ version of \eqref{eq:6.5} by $\chi\left(1-\frac{\rho}{\varepsilon}\right)\chi\left(\frac{\rho}{r}-1\right)$. Then integrate the result over the radius $2r$ ball centered at $q$ and integrate by parts in the resulting identity to obtain the inequality
\begin{equation}\label{eq:10.43}
\int_{\mathrm{B}_r-\mathrm{B}_\varepsilon}|\nabla q|^2
\leq
c_0\,\varepsilon^{-2}\int_{\mathrm{B}_{2\varepsilon}}|q|^2
+
c_0\,r^{-2}\int_{\mathrm{B}_{2r}}|q|^2 \,.
\end{equation}
(The integration by parts does not get hung up on the edges of $\Gamma$ in $\mathrm{B}_{2r}$ because $|\nabla q|^2$ is integrable near interior points of the edges.)

The right-hand side of \eqref{eq:10.43} is bounded as $\varepsilon$ limits to zero by virtue of the $s=2\varepsilon$ version of \eqref{eq:10.42}. The existence of that limit implies that $|\nabla q|^2$ is integrable on $\mathrm{B}_r$ and that in turn implies (with another appeal to \eqref{eq:10.42}) the desired conclusion that $\chi\left(\frac{\rho}{r}-1\right)q$ is in the $\mathrm{B}=\mathrm{B}_r$ version of $\mathbb{H}_{\mathrm{B}}$. (That other appeal is used in a straightforward way to see
that $\chi\left(1-\frac{\rho}{\varepsilon}\right)\chi\left(\frac{\rho}{r}-1\right)q$ converges in $\mathbb{H}_{\mathrm{B}}$ as $\varepsilon$ limits to zero.)

With the preceding understood, the subsequent paragraphs derive \eqref{eq:10.42}.  To this end, note first that \eqref{eq:10.42} is obeyed with $q$ replaced by $q_\rho$ because $\chi(\frac{\rho}{r}-1)\,q_\rho$ is in $\mathbb{H}_{\mathtt{B}}$ and thus the $\psi=\chi(\frac{\rho}{r}-1)\,q_\rho$ version of \eqref{eq:4.2} holds.

The rest of $q$ is $q-q_\rho\,d\rho$; this is written in what follows as $\hat{q}$ and it is viewed as a $\rho$--dependent, $\mathcal{I}$--valued $1$--form on $S^2-\mathfrak{p}$.  (To be sure about what this means: At any given value of $\rho$, the norm of $\hat{q}$ as an $\mathcal{I}$--valued section of $T^*S^2$ is $\sin\rho$ times its norm as an $\mathcal{I}$--valued section of $T^*S^3$.)  The $d\rho$ component of the identity $*dq=\E y$ can be depicted using $\hat{q}$ in the manner of the leftmost equation in \eqref{eq:10.44}. The rightmost equation in \eqref{eq:10.44} uses $\hat{q}$ to depict the identity $d{*}q=0$.  The notation in \eqref{eq:10.44} uses $\hat{d}$ and $\hat{*}$ to denote the respective exterior derivative along $S^2$ and the Hodge star on $S^2$.  Also, \eqref{eq:10.44} uses $y_\rho$ to denote the $d\rho$ component of $y$.
\begin{equation}\label{eq:10.44}
\frac{1}{\sin^2\rho}\,\hat{*}\,\hat{d}\hat{q}=\E\,y_\rho
\qquad\text{and}\qquad
\frac{1}{\sin^2\rho}\,\hat{*}\bigl(\hat{d}\,\hat{*}\hat{q}\bigr)
=-\frac{\partial}{\partial\rho}q_\rho-\frac{2\cos\rho}{\sin\rho}\,q_\rho.
\end{equation}

Now comes the first key observation: The square of the norm of the right-hand side of both equations in \eqref{eq:10.44} is integrable on $\mathtt{B}_r$.  As a consequence, there exists $c_{\ddagger}$ such that for any given small but positive $\varepsilon$,
\begin{equation}\label{eq:10.45}
\int_{\mathtt{B}_r-\mathtt{B}_\varepsilon}
\frac{1}{\sin^4\rho}\Bigl(|\hat{d}\hat{q}|^2+|\hat{d}\,\hat{*}\hat{q}|^2\Bigr)
<c_{\ddagger}.
\end{equation}

Now comes the second key observation: Since $|\nabla q|^2$ is integrable near points in the interior of the edges of $\Gamma$, the $\mathcal{I}$--valued $1$--form $\hat{q}$ at all but a measure zero set of $\rho$ values is in the $T^*S^2\otimes\mathcal{I}$ version of the Hilbert space $\mathbb{H}_\mathfrak{p}$ (which implies that the square of the norm of its $S^2$--covariant derivative is integrable on $S^2$ at these values of $\rho$).  That implies in turn that some integration by parts can be used (with no hang-up on the edges of $\Gamma$ in $\mathtt{B}_r-\mathtt{B}_\varepsilon$) to write \eqref{eq:10.45} as done below (with $\hat{\nabla}$ denoting the round metric's covariant derivative and with $|\cdot|_{S^2}$ denoting the round metric's norm on $1$--forms and tensors).
\begin{equation}\label{eq:10.46}
\int_{\mathtt{B}_r-\mathtt{B}_\varepsilon}
\frac{1}{\sin^4\rho}\Bigl(|\hat{\nabla}\hat{q}|_{S^2}^2+|\hat{q}|_{S^2}^2\Bigr)
<c_{\ddagger}.
\end{equation}

Because the $S^3$ norm of $\hat{q}$ is $\frac{1}{\sin\rho}|\hat{q}|_{S^2}$, this last inequality leads directly to \eqref{eq:10.42} with $q$ replaced by $\hat{q}$ by taking $\varepsilon$ ever smaller with limit zero.

\part{Polytope Symmetry, Equivariance, and Topological Invariants}

\medskip
 \section{Quaternionic Representation, Regular 4-Polytopes, and Schl\"afli Symbols}\label{ch:pre} 

Part~I develops the analytic framework for $\mathbb{Z}/2$--harmonic objects on $S^3\setminus\Gamma$ with values in a real line bundle $\mathcal{I}\to S^3\setminus\Gamma$.
Part~II verifies the geometric properties needed to apply the analytic results of Part~I to the graphs $\Gamma\subset S^{3}$ obtained from regular $4$--polytopes.  Concretely, we will need (i) an explicit description of the induced action of $G=\Sym^{+}(P)$ on $S^{3}\setminus\Gamma$ and its compatibility with the $\{\pm1\}$--local system $I$, (ii) the existence of odd--order cyclic stabilizers along edges of $\Gamma$ acting by rotations on the normal $2$--planes, and (iii) a coherent lift of the $G$--action to the associated connected double
cover, yielding fiberwise linear automorphisms of $\mathcal{I}$.

This section develops the background for Part~II.
Section~11.1 records the quaternionic model for \(\mathrm{SO}(4)\), used to
write elements of \(G\) explicitly and to compute their fixed--point sets,
vertex actions, and facet actions. Sections~11.2 and~11.3 recall the
regular \(4\)-polytopes, their chambers, radial projections, Schl\"afli symbols,
and local link/figure conventions. Finally, Sections~11.4 and~11.5 extract the
structural consequences used later: even valency of the radial graph,
odd--order cyclic edge stabilizers, liftability of the \(G\)-action to \(I\),
and standard presentations of the relevant rotation subgroups.

\subsection{Quaternionic representation of $\SO(4)$ and the $G$--action}\label{subsec:quat}

Since each regular $4$--polytope $P\subset\R^{4}$ is centered at the origin, its rotational symmetry group $G=\Sym^{+}(P)\subset\SO(4)$ acts linearly on $\R^{4}$ and hence on $S^{3}$. It is convenient to work in the quaternionic model $\R^{4}\cong\mathbb H$, where elements of $\SO(4)$ are represented (up to sign) by pairs in $S^{3}\times S^{3}$ and many fixed--set computations reduce to quaternion algebra.
Let
\[
\HH=\{\,w+xi+yj+zk : w,x,y,z\in\R\,\},
\qquad
i^{2}=j^{2}=k^{2}=ijk=-1.
\]
Write $\Re(p)=w$ and $\Im(p)=xi+yj+zk$, so that $\HH=\R\oplus \Im\HH$ with $\Im\HH\cong\R^{3}$.  Quaternion conjugation is $\overline{p}=w-xi-yj-zk$ and the norm is
\[
|p|^{2}=p\overline{p}=w^{2}+x^{2}+y^{2}+z^{2}.
\]
We identify \(\mathbb R^4 \cong \mathbb H\) by
\[
(x_{1},x_{2},x_{3},x_{4}) \longleftrightarrow x_{1}+x_{2}i+x_{3}j+x_{4}k,
\]
so that
\(
\langle p,q\rangle = \Re(p\bar q),
|p|^2 = p\bar p.
\)
In particular, the quaternion norm agrees with the Euclidean norm.\\

\noindent\textbf{The Spin(4) covering of \(SO(4)\).}
Let \(S^{3}=\{q\in\HH:|q|=1\}\cong \SU(2)\).
Define
\[
\Pi:S^{3}\times S^{3}\to SO(4),\qquad
\Pi(q_{L},q_{R})(p)=q_{L}\,p\,q_{R}^{-1}.
\]
Then \(\Pi\) is a surjective homomorphism with kernel \(\{(1,1),(-1,-1)\}\cong \mathbb{Z}_{2}\); hence \(SO(4)\cong (S^{3}\times S^{3})/\{\pm(1,1)\}\). Indeed, \(\Pi\) is a group homomorphism because \(\Pi(q_{L},q_{R})=\!L_{q_{L}}\,R_{q_{R}^{-1}}\) and composition corresponds to quaternion multiplication.  If \(\Pi(q_{L},q_{R})=\mathrm{id}\), then \(q_{L}p \, q_{R}^{-1}=p\) for all \(p\), whence \(q_{L}=q_{R}\) and \(q_{L}\) commutes with \(i,j,k\), so \(q_{L}=\pm1\). Surjectivity follows from \(\Spin(4)=S^{3}\!\times\!S^{3}\) being the universal cover of \(SO(4)\). Thus,
\[
\SO(4)\ \cong\ \dfrac{\SU(2)_{L}\times\SU(2)_{R}}{\{(1,1),(-1,-1)\}}.
\]
Thus every $g\in SO(4)$ can be represented (nonuniquely) by a pair
$(q_L,q_R)\in S^{3}\times S^{3}$, unique up to the simultaneous sign change
$(q_L,q_R)\mapsto(-q_L,-q_R)$ \cite[\S 2.7, p.~42]{stillwell2008naive}.\\

\noindent\textbf{Left/right multiplication and the special case \(\Ad_q\).}
Identifying \(\HH\cong\R^{4}\) via the ordered basis \((1,i,j,k)\), left and right
multiplication are \(\R\)--linear maps
\[
L_{q}(p)=q\,p,\qquad R_{q}(p)=p\,q.
\]
If \(|q|=1\) then \(L_{q},R_{q}\in SO(4)\). An important special case is \(q_L=q_R=a\in S^3\), where \(\Ad_a(x):=\Pi(a,a)(x)=axa^{-1}\). More precisely, suppose $a\notin\{\pm1\}$, so write
$a=\cos\theta+u\sin\theta$ with $u\in\Im\mathbb H$, $|u|=1$, and
$0<\theta<\pi$. For $x\in u^\perp\subset\Im\mathbb H$, one has $ux=-xu$, and
\[
\operatorname{Ad}_a(x)=axa^{-1}
 = \cos(2\theta)x+\sin(2\theta)ux .
\]
Since $x\mapsto ux$ is the quarter-turn on $u^\perp$, this is rotation by
angle $2\theta$ about the axis $\operatorname{Span}_{\mathbb R}\{u\}$ in
$\Im\mathbb H$. Also, $\operatorname{Ad}_a(1)=1$. Consequently, for
$a\notin\{\pm1\}$, the fixed-point set of $\operatorname{Ad}_a$ in
$\mathbb R^4\simeq\mathbb H=\mathbb R\oplus\Im\mathbb H$ is the $2$-plane
$\operatorname{Span}_{\mathbb R}\{1,u\}$. In the exceptional cases
$a=\pm1$, $\operatorname{Ad}_a$ is the identity and its fixed-point set is all
of $\mathbb R^4$. 

\subsection{Regular 4-polytopes, chambers, and barycentric subdivision}\label{subsec:reg}
This section introduces definitions leading to convex regular $4$--polytopes, whose 1-skeleta supply local models for $\mathbb{Z}/2$ harmonic forms. Regular 4-polytopes are defined in terms of chambers. The following exhibits some basic notions.

\begin{definition}[Convex polytope {\cite[Def.~A.1.1]{Davis2008}}]\label{def:conv}
A \emph{convex polytope} $P$ in an affine space $\mathbb{A}$ is the convex hull of a finite subset of $\mathbb{A}$. Its dimension is the dimension of the affine subspace it spans. 
\end{definition}

\begin{definition}[Supporting hyperplane {\cite[p.~402, Appendix~A.1]{Davis2008}}]
Suppose $X$ is a closed convex subset of $\mathbb{A}$. An affine hyperplane $H$ is a supporting hyperplane of $X$ if $X\cap H \neq \emptyset$ and if $X$ is contained in one of the half-spaces bounded by $H$. (That is to say, $X$ lies on one side of $H$.) 
\end{definition}

\begin{definition}[Face of a polytope {\cite[p.~402, Appendix~A.1]{Davis2008}}]
Suppose $P$ is a convex polytope and $H$ a \emph{supporting hyperplane}. Then $P\cap H$ is also a convex polytope, called a face of $P$. A proper face is a face in the set of all nonempty faces of a convex polytope $P$
other than $P$ itself.
The collection of all proper faces $\partial P$ is the boundary complex of $P$.
\end{definition}

\begin{definition}[Chain {\cite[p.~126, Sec.~7.2]{Davis2008}}]
For any poset $\mathcal{P}$, a chain is a nonempty totally ordered subset of $\mathcal{P}$. A chain not properly contained in a larger chain is called a maximal chain.
\end{definition}

\begin{figure}
    \centering\hskip 30mm
    \includegraphics[width=0.45\linewidth]{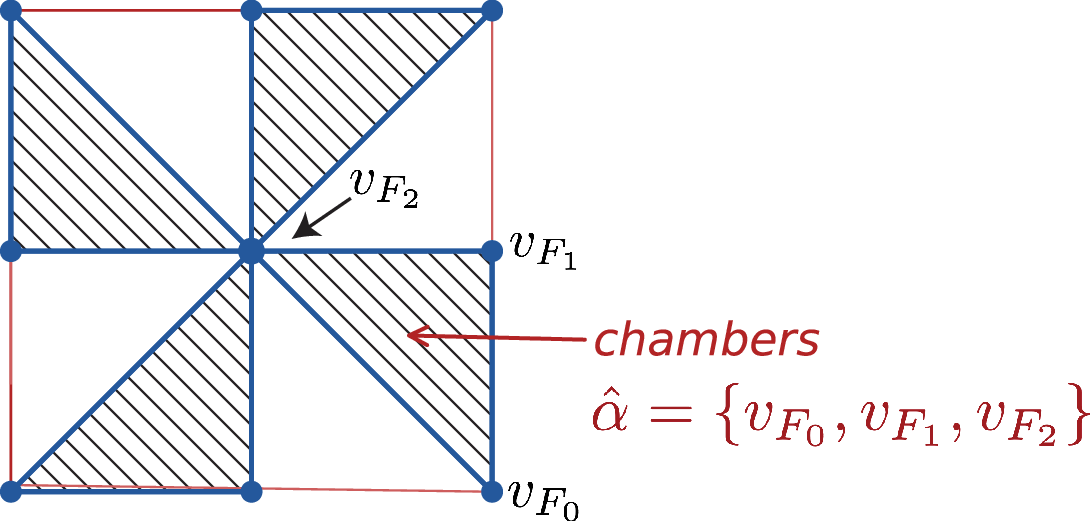}
\caption{Illustration of barycentric subdivision \(b(\partial P)\) of a two-dimensional boundary
complex of a 3-dimensional polytope $P$. For a maximal chain of proper faces
\(\alpha=(F_0<F_1<F_2)\) in $\partial P$,
the simplex
\(
\widehat{\alpha}=[v_{F_0},v_{F_1},v_{F_2}]
\)
is a top-dimensional simplex of \(b(\partial P)\), i.e. a chamber.}    \label{fig:chamber}
\end{figure}

\begin{definition}[Barycentric subdivision {\cite[Prop.~A.3.9 and following]{Davis2008}}]\label{def:bary}
Let $P$ be a convex polytope. Choose for each face $F\le P$ a point $v_F$ in the relative interior of $F$. For each chain $\alpha=(F_1<\cdots<F_k)\in\Flag(F(P))$, let $\widehat{\alpha}$ be the simplex spanned by $v_{F_1},\dots,v_{F_k}$. The simplices $\{\widehat{\alpha}\}$ form a simplicial complex $bP$, called the \emph{barycentric subdivision} of $P$.
\end{definition}

Applied to the boundary complex \(\partial P\), this construction gives the barycentric
subdivision \(b(\partial P)\); see Fig.~\ref{fig:chamber}. Its top-dimensional simplices are precisely the chambers.

\begin{definition}[Chamber {\cite[p.~94]{Davis2008}}]\label{def:chamber}
A \emph{chamber} of a convex polytope $P^n$ is a maximal chain of proper faces of $P$, $F_0 < F_1 < \cdots < F_{n-1}$, where $\dim F_i = i$. Equivalently, a chamber is a top-dimensional simplex of $b(\partial P)$, and such a simplex is identified with a maximal chain $\{F_0<F_1<\cdots<F_{n-1}\}$ of faces of $\partial P$.
\end{definition}

\begin{definition}[Regular polytope {\cite[p.~94]{Davis2008}; \cite[Def.~B.2.1, p.~424]{Davis2008}}]
\label{def:regular}A polytope in Euclidean space is \emph{regular} if its symmetry group $\Isom(P)$
acts transitively on the set of chambers, which is denoted $\Cham(b(\partial P))$.
\end{definition}

Recall the radial projection of the $1$--skeleton of a regular $4$--polytope is defined as
\[
\Gamma \;=\;\Bigl\{\frac{x}{|x|}\;:\;x\in P^{(1)}\setminus\{0\}\Bigr\}\;\subset\;S^3.
\]
For each polytope edge \([u,v]\) of a centered regular polytope, the radial
image of \([u,v]\) is contained in
\(\operatorname{span}_{\mathbb R}\{u,v\}\cap S^3\). Since \(0\notin [u,v]\),
the endpoints \(u\) and \(v\) are not antipodal, so
\(\operatorname{span}_{\mathbb R}\{u,v\}\) is a two-plane and its intersection
with \(S^3\) is a great circle. Thus the radial image of \([u,v]\) is a
geodesic arc in \(S^3\). This verifies the geodesic-edge hypothesis used in Part~I, most directly in
the local edge estimates of Lemmas~6.1, 6.2, and~6.4, and hence in the proof
of Lemma~4.2.

\subsection{Links, edge figures, Schl\"afli symbols and local models}\label{sec:schla}
Schl\"afli symbols record the local incidence data of a regular $4$--polytope in a way compatible with the chamber structure of the barycentric subdivision. Schl\"afli symbols play an essential role in determining the geometric and topological requirements on polytopes prescribed by Section 3, and also provide a description of the symmetry group of regular polytopes. We use them to encode vertex figures, edge figures, and the cyclic order of facets around an edge. 

We first recall the notion of a link.
For a simplex \(\tau\) of an abstract simplicial complex \(\Delta\), the link is
\[
\operatorname{Lk}_{\Delta}(\tau)
:=\{\sigma\in\Delta \mid \sigma\cap\tau=\emptyset,\ \sigma\cup\tau\in\Delta\}
\]
\cite[Def.~2.13, p.~11]{kozlov2008combinatorial}.
Geometrically, when the complex is embedded, the link of a vertex can be realized
by intersecting the complex with a sufficiently small sphere centered at that vertex \cite[p.~27]{kozlov2008combinatorial}. For a general face \(\sigma\), the face link is described using the local transversal model \(B^d\times Q\), where \(d=\dim\sigma\). The link of \(\sigma\) is the link of the central vertex in the transverse complex \(Q\) \cite[p.~27]{kozlov2008combinatorial}. Equivalently, for an edge \(E\subset \partial P\), the edge link is obtained by taking an interior point \(x\in E\) and considering the small spherical section \(S_\varepsilon(x)\cap\partial P\). For an edge $E$ of a $4$--polytope, the edge link is this one-dimensional cycle in the local spherical section. The edge figure is the polygonal $2$--cell associated to that cycle; equivalently, if $r$ facets meet cyclically around $E$, then the edge figure is an $r$--gon whose boundary is the edge link; see Examples~\ref{eg:edge-link}, \ref{eg:edge-link2}, and Fig.~\ref{fig:beach}, where the edge link is the equatorial polygonal cycle.

\begin{figure}
    \centering
    \includegraphics[width=0.9\linewidth]{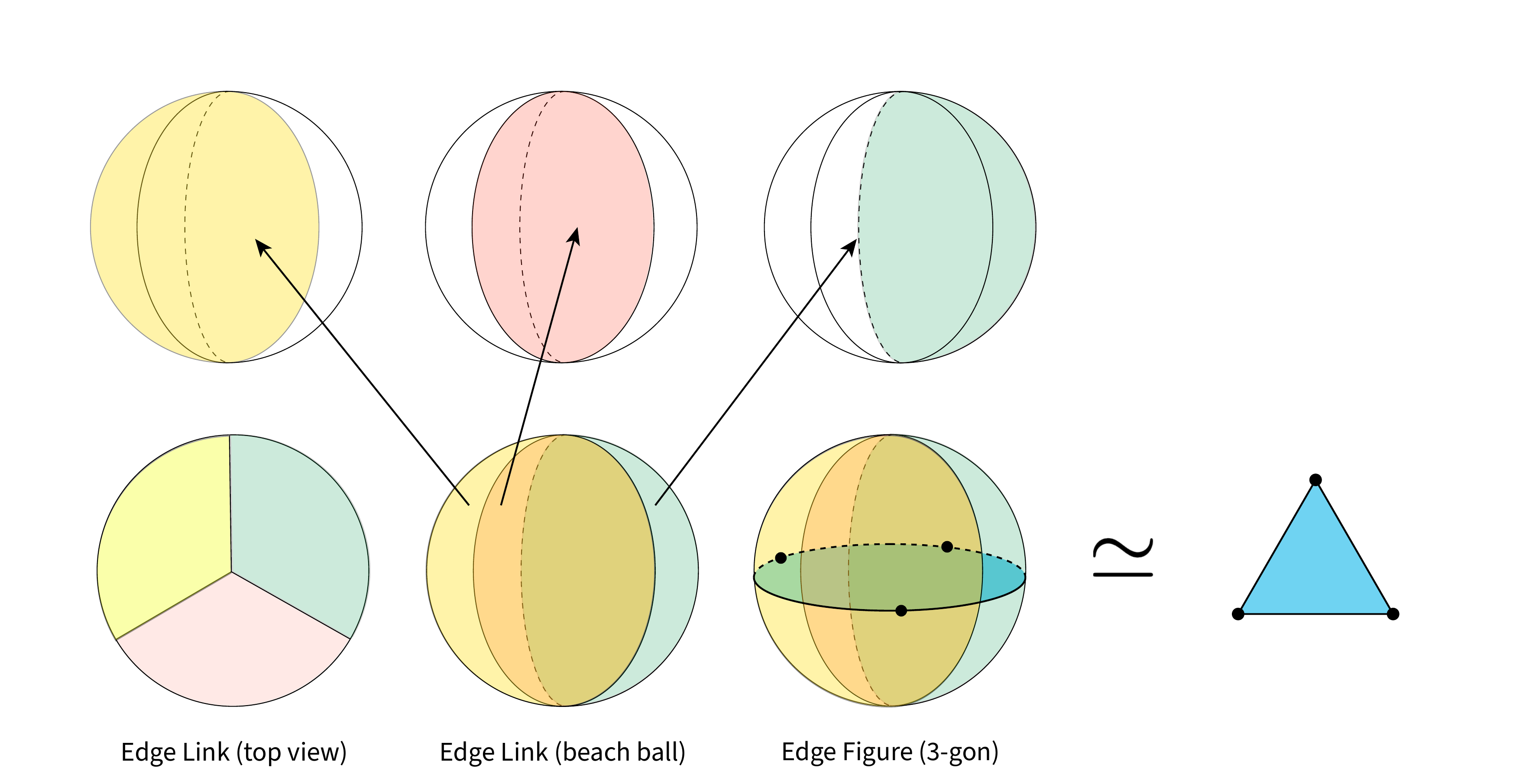}
    \caption{Local spherical section \(\partial P\cap S_\varepsilon(x)\) at a point \(x\in E^\circ\) of an edge \(E\), for \(\varepsilon>0\) sufficiently small. Its equatorial polygonal cycle records the cyclic order of the facets meeting along \(E\). The illustration shows the case \(r=3\), where the edge figure is a triangle, and colors distinguish the incident facets.}
    \label{fig:beach}
\end{figure}

\begin{example}\label{eg:edge-link}
 Let \(P=[0,1]^3\) be the ordinary cube and let
\[
E=\{(t,1,1):0\leq t\leq 1\}
\]
be an edge. Near an interior point of \(E\), the boundary complex \(\partial P\) is locally \(B^1\times Q\), where \(B^1\) is the interval direction along \(E\), and \(Q\) is a \(V\)-shaped complex coming from the two square faces incident to \(E\). The edge link can be obtained by removing the $\sigma$ direction and taking the link at $\sigma$ (now a vertex). In this example, the face direction \(\sigma\) is the \(E\)-direction; after removing it, the link of the central vertex in \(Q\) consists of two points, which is the edge figure.
\end{example}

\begin{example}\label{eg:edge-link2}
Let \(P=[0,1]^4\) be the \(4\)-cube and let
\[
E=\{(t,1,1,1):0\leq t\leq 1\}.
\]
Near an interior point of \(E\), the boundary complex \(\partial P\) is locally \(B^1\times Q\), where  \(B^1\) is the interval direction along \(E\), and \(Q\) is the transverse complex determined by the three cubic facets incident to \(E\). The edge link is a $3$--cycle recording the cyclic order of the three incident cubic facets, and the edge figure is the associated $3$--gon.
\end{example}

\subsubsection{Schl\"afli symbols and edge figures.}
For a regular \(4\)-polytope with Schl\"afli symbol \(\{p,q,r\}\), each
\(2\)--face is a \(p\)--gon; each facet is a regular polyhedron of type
\(\{p,q\}\), so within each facet \(q\) such \(p\)--gons meet at each vertex;
and exactly \(r\) facets meet cyclically around each edge, so the edge figure
is an \(r\)-gon. Fig.~\ref{fig:beach} shows the case \(r=3\), in which the edge figure is a triangle.
For an edge $E$, the full stabilizer in the symmetry group acts on the edge figure by the dihedral group $D_r$, and the orientation--preserving stabilizer contains a cyclic subgroup $C_r$ acting on the normal $2$--plane by rotation through angle $2\pi/r$.
The vertex figure is the regular polyhedron of type \(\{q,r\}\); equivalently, the local figure at each vertex is \(\{q,r\}\). In particular, the valence of each vertex is the number of vertices of the vertex figure. The Schl\"afli symbols for regular 4-polytopes \cite[p.~414]{coxeter1969introduction} are exhibited in Table~\ref{tab:regular4polytopes-figures}.

\begin{table}[htbp]
\centering
\small
\renewcommand{\arraystretch}{1.2}
\setlength{\tabcolsep}{8pt}
\begin{tabular}{@{}lclll@{}}
\toprule
\textbf{Polytope} 
& \textbf{Schl\"afli symbol} 
& \thead{Facet} 
& \thead{Vertex figure} 
& \thead{Edge figure} \\
\midrule
\(5\)-cell   
& \(\{3,3,3\}\) 
& \(\{3,3\}\) tetrahedron  
& \(\{3,3\}\) tetrahedron 
& \(\{3\}\) 3-gon \\

\(8\)-cell   
& \(\{4,3,3\}\) 
& \(\{4,3\}\) cube
& \(\{3,3\}\) tetrahedron 
& \(\{3\}\) 3-gon\\

\(16\)-cell  
& \(\{3,3,4\}\) 
& \(\{3,3\}\) tetrahedron   
& \(\{3,4\}\) octahedron 
& \(\{4\}\) 4-gon\\

\(24\)-cell  
& \(\{3,4,3\}\) 
& \(\{3,4\}\) octahedron
& \(\{4,3\}\) cube
& \(\{3\}\) 3-gon\\

\(120\)-cell 
& \(\{5,3,3\}\) 
& \(\{5,3\}\) dodecahedron
& \(\{3,3\}\) tetrahedron 
& \(\{3\}\) 3-gon\\

\(600\)-cell 
& \(\{3,3,5\}\) 
& \(\{3,3\}\)  tetrahedron   
& \(\{3,5\}\) icosahedron 
& \(\{5\}\) 5-gon\\
\bottomrule
\end{tabular}
\caption{Vertex and edge figures of the convex regular \(4\)-polytopes.}
\label{tab:regular4polytopes-figures}
\end{table}

\subsection{Even valency of the radial graph.}

This subsection extracts from Table~\ref{tab:regular4polytopes-figures} the
even-valency condition needed for the line bundle \(\mathcal I\) on
\(S^3\setminus\Gamma\).

\begin{proposition}[Even valency of the regular 4-polytopes]\label{prop:even}
Let \(P\subset\R^{4}\) be the regular $4$-polytope with vertex set \(\left\{V_{1},\dots,V_{n}\right\}\subset \mathbb{R}^{4}\), and let \(\Gamma\subset S^{3}\) be the radial projection of the \(1\)--skeleton of \(P\). Set \(G:=\Sym^{+}(P)\subset SO(4)\). Then each vertex of \(\Gamma\) has even valency.
\end{proposition}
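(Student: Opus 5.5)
The plan is to compute the valency of a vertex of \(\Gamma\) directly from the local incidence data recorded by the Schl\"afli symbol, and then read off parity from Table~\ref{tab:regular4polytopes-figures}. First I will check that radial projection does not change valency. The map \(x\mapsto x/|x|\) restricted to \(\partial P\) is a homeomorphism onto \(S^3\), because \(P\) is convex and contains the origin in its interior. It therefore carries the \(1\)--skeleton \(P^{(1)}\) homeomorphically onto \(\Gamma\), sending vertices to vertices and edges to the geodesic arcs described in Section~\ref{subsec:reg}. So the valency of the vertex \(V_i/|V_i|\) in \(\Gamma\) equals the number of polytope edges of \(P\) incident to \(V_i\).

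Next, that number equals the number of vertices of the vertex figure. To see this, intersect \(\partial P\) with a small sphere \(S_\varepsilon(V_i)\), as in the description of links in Section~\ref{sec:schla}. Each edge at \(V_i\) meets this sphere in exactly one point, and these points are the vertices of the vertex figure. For a regular polytope with symbol \(\{p,q,r\}\), the vertex figure is the regular polyhedron \(\{q,r\}\). Regularity (transitivity of \(\Isom(P)\) on chambers, Definition~\ref{def:regular}) implies in particular transitivity on vertices, so every vertex has the same valency. The valency is therefore the vertex count of \(\{q,r\}\), which is \(4\pi/(\text{angle defect})\) or, more concretely, \(4r/(4-(q-2)(r-2))\) by Euler's formula for \(\{q,r\}\).

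Finally, I will evaluate this case by case using the last column pair of Table~\ref{tab:regular4polytopes-figures}. A tetrahedron \(\{3,3\}\) has \(4\) vertices, which covers the \(5\)-cell, \(8\)-cell and \(120\)-cell. An octahedron \(\{3,4\}\) has \(6\) vertices (\(16\)-cell), a cube \(\{4,3\}\) has \(8\) (\(24\)-cell), and an icosahedron \(\{3,5\}\) has \(12\) (\(600\)-cell). All of these are even, which gives the claim and is consistent with the valencies quoted after Lemma~3.1.

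The only step needing care is the identification of the edges at \(V_i\) with the vertices of the vertex figure. This requires that the link of \(V_i\) in \(\partial P\) is combinatorially the polyhedron \(\{q,r\}\) and not some subdivision of it. I would handle this via the chamber description. A chamber containing \(V_i\) as \(F_0\) determines an edge \(F_1\ni V_i\), and the stabilizer of \(V_i\) acts transitively on the chambers with \(F_0=V_i\). These chambers correspond exactly to the flags of the polyhedron \(\{q,r\}\) (edge \(\leftrightarrow\) vertex, \(2\)--face \(\leftrightarrow\) edge, facet \(\leftrightarrow\) face). Under this correspondence, the edges at \(V_i\) biject with the vertices of \(\{q,r\}\).
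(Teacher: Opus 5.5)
Your proposal is correct and follows the same route as the paper: the valency of a vertex of $\Gamma$ equals the number of vertices of the vertex figure $\{q,r\}$, which Table~\ref{tab:regular4polytopes-figures} gives as $4,4,6,8,4,12$, all even. Your extra justifications (that radial projection $\partial P\to S^3$ is a homeomorphism, and that the link of a vertex is combinatorially $\{q,r\}$ via the chamber structure) make explicit what the paper takes as standard.

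One incidental slip: the closed formula $4r/(4-(q-2)(r-2))$ counts the \emph{faces} of $\{q,r\}$, not its vertices. For example, it gives $6$ for the cube and $20$ for the icosahedron. The vertex count is $4q/(4-(q-2)(r-2))$, which is what your $4\pi/(\text{angle defect})$ expression actually yields. Your case-by-case count does not rely on the closed formula, so the argument is unaffected.
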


\begin{proof}
As the valency of each vertex is the number of vertices of the vertex figure, it is enough to list the vertex figures of the regular \(4\)--polytopes.
For the \(5\)--cell and \(8\)--cell, the vertex figure is a tetrahedron;
for the \(16\)--cell, it is an octahedron;
for the \(24\)--cell, it is a cube;
for the \(120\)--cell, it is a tetrahedron;
and for the \(600\)--cell, it is an icosahedron.
Hence, the corresponding numbers of vertices are
\(
4, 4, 6, 8, 4, 12,
\)
respectively. All of these are even. Therefore, each vertex of \(\Gamma\) has even valency.
\end{proof}

\subsection{Odd-order edge stabilizers and lift of the symmetry action to \(\mathcal{I}\).}

The following proposition is verified case by case in Sections~\ref{sec:5cell}--\ref{sec:600cell}; see
Propositions~~\ref{prop:12.3(2)}, \ref{prop:13.1}, \ref{prop:14.1}, \ref{prop:15.1}, and \ref{prop:16.1} for the model-dependent proofs of the odd-order edge stabilizers.

\begin{proposition}[Odd-order edge stabilizers]\label{prop:odd-stab}
Let \(P\) be one of the regular \(4\)-polytopes appearing in Theorem~\ref{thm:main}, let \(\Gamma \subset S^3\) be the radial projection of its \(1\)-skeleton, and let \(G \le SO(4)\) be the orientation-preserving symmetry group of \(P\). For each edge \(E \subset \Gamma\), let \(W_E \subset \R^4\) denote the unique \(2\)-plane through the origin such that \(E \subset W_E \cap S^3\). Then there exists an odd integer \(m>1\) and a cyclic subgroup \(C_m \subset G\) such that \(C_m\) fixes \(E\) pointwise. Moreover, the induced action of \(C_m\) on \(W_E^\perp\) is faithful, and a generator of \(C_m\) acts on \(W_E^\perp\) by rotation through angle \(2\pi/m\).
\end{proposition}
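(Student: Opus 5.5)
The plan is to reduce the statement to a single model edge per polytope and then use the Schläfli data in Table~\ref{tab:regular4polytopes-figures}. The polytopes in Theorem~\ref{thm:main} have edge-figure entries $r=3$ (5-cell, 8-cell, 24-cell, 120-cell) and $r=5$ (600-cell). In each case $r$ is odd, and we will take $m=r$. Since $P$ is regular, $\Isom(P)$ acts transitively on chambers, hence on edges. Conjugating a stabilizer by an element carrying one edge to another preserves every property in the statement, and the same holds for $G=\Sym^{+}(P)$. So it suffices to treat one edge $E=[u,v]$ per polytope, with $W_E=\Span_{\R}\{u,v\}$ as explained after Definition~\ref{def:regular}.

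For that edge, regularity supplies the construction. Consider the chambers containing the flag $F_0<F_1=E$. The stabilizer of $(F_0,F_1)$ in $\Isom(P)$ acts simply transitively on the completions $F_2<F_3$. These completions correspond to the flags of the $r$-gon edge figure, and there are $2r$ of them. The stabilizer is therefore dihedral of order $2r$. Every element of it fixes $u$ and $v$, since it fixes the vertex $F_0$ and the edge $E$, so it fixes $W_E$ pointwise. It thus acts on $W_E^\perp$ as an element of $O(2)$.

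Next I show this action is faithful. An isometry of $\R^4$ that is trivial on both $W_E$ and $W_E^\perp$ is the identity. So the stabilizer embeds in $O(W_E^\perp)\cong O(2)$ as a dihedral group $D_r$. Its index-$2$ rotation subgroup $C_r$ consists of maps with determinant $+1$ on $W_E^\perp$, hence with determinant $+1$ on $\R^4$, so $C_r\subset SO(4)\cap\Isom(P)=G$. A faithful cyclic subgroup of $SO(2)$ of order $r$ is generated by the rotation through $2\pi/r$. After choosing the generator suitably, this gives the angle $2\pi/m$ with $m=r$ odd. The edge $E\subset W_E\cap S^3$ is fixed pointwise, as required.

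The main obstacle is justifying that the flag stabilizer has order exactly $2r$ and acts on the edge figure as the full $D_r$; this step needs the chamber-transitivity to be simply transitive. I would argue it directly: an isometry fixing a chamber fixes the barycenters $v_{F_0},\dots,v_{F_3}$, which span $\R^4$ affinely together with the center $0$, so it is the identity. As a cross-check, and to match Propositions~\ref{prop:12.3(2)}--\ref{prop:16.1}, I would also exhibit the generator explicitly in the quaternionic model of \S\ref{subsec:quat}. Choose coordinates with $W_E=\Span_{\R}\{1,u\}$ for a unit imaginary $u$, and take $g=\Pi(a,\bar a')$-type elements whose restriction to $\Span\{1,u\}$ is trivial. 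For example, a pair $(q_L,q_R)$ with $q_L=e^{w\pi/r}$ and $q_R=e^{-w\pi/r}$ for suitable unit imaginaries. The explicit check is then that $g$ permutes the vertex set.
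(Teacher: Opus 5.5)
Your proof is correct and takes essentially the paper's route: the edge figure is an $r$-gon with $r=3$ or $r=5$ (both odd), the pointwise stabilizer of $E$ acts faithfully on $W_E^\perp$ as the dihedral symmetry group of that $r$-gon, and its rotation subgroup $C_r\subset G$ gives the claim with $m=r$; your simple-transitivity and determinant arguments supply details the paper only asserts, while the paper's case-by-case Propositions~\ref{prop:12.3(2)}--\ref{prop:16.1} exhibit the generators explicitly as $\Ad_q$ and $\Ad_p$. One small correction to your optional quaternionic cross-check: with the paper's convention $\Pi(q_L,q_R)(x)=q_L\,x\,q_R^{-1}$, fixing $\Span_{\R}\{1,u\}$ pointwise forces $q_L=q_R\in\Span_{\R}\{1,u\}$, so the generator is $\Ad_{a}$ with $a=\cos(\pi/r)+u\sin(\pi/r)$ (exactly $q=\tfrac12(1+i+j+k)$ for $r=3$ and $p$ for $r=5$), not a pair with $q_R=e^{-w\pi/r}$.
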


\begin{proof}
Let \(P\) have Schl\"afli symbol \(\{p,q,r\}\). For a regular \(4\)-polytope, exactly \(r\) facets meet cyclically around each edge, so the edge figure is a regular \(r\)-gon. Hence the orientation-preserving pointwise stabilizer of an edge acts faithfully on the normal \(2\)-plane as the rotation group of this \(r\)-gon, and therefore contains a cyclic subgroup \(C_r\) whose generator acts by rotation through angle \(2\pi/r\).
For the regular \(4\)-polytopes in Theorem~\ref{thm:main}, one has \(r=3\) for the \(5\)-, \(8\)-, \(24\)-, and \(120\)-cell, and \(r=5\) for the \(600\)-cell. Thus \(r\) is odd in every case, and the proposition follows with \(m=r\).
\end{proof}

\smallskip
\noindent{\it Remark.}
Among the six convex regular $4$-polytopes, the only one not discussed here is the $16$-cell. In the local models of Section~5, the $\mathbb{Z}/m\mathbb{Z}$-action lifts to the line bundle $\mathcal{I}$ only when $m$ is odd, and in all of the cases listed here, the corresponding edge stabilizer has odd order. By contrast, the $16$-cell has Schl\"afli symbol $\{3,3,4\}$, so its edge figure is a square; equivalently, four tetrahedral facets meet around each edge, and the nontrivial rotations in the cyclic edge stabilizer have orders $2$ and $4$, not odd order.
Therefore, the odd-order lifting criterion from Section~5 fails at an interior edge of the 16-cell, so the construction used in Part~II does not apply to it.

\begin{proposition}[Lift of symmetry to \(\mathcal{I}\)]\label{prop:lift}
Let \(P\) be one of the regular \(4\)-polytopes from Theorem~\ref{thm:main}, let \(\Gamma \subset S^3\) be the radial projection of its \(1\)-skeleton, and let \(G \le SO(4)\) be the orientation-preserving symmetry group of \(P\). Then the induced action of \(G\) on \(S^3 \setminus \Gamma\) lifts to an action on the total space of the line bundle \(\mathcal{I} \to S^3 \setminus \Gamma\) defined in Section~\ref{sec:3}.
\end{proposition}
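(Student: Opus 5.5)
The plan is to build the lift from a $G$-invariant ``cut surface'' for $\mathcal{I}$, rather than from an abstract obstruction computation. Recall from the proof of Lemma~3.1 that $\mathcal{I}$ is determined by a class in $H^1(S^3-\Gamma;\mathbb{Z}/2)$. Such a class can be realized as follows. Take a mod~$2$ $2$-chain $\Sigma$ in $S^3$ whose mod~$2$ boundary is $\Gamma$. Let $\mathcal{I}$ be the line bundle whose sections are real functions on $S^3-(\Gamma\cup\Sigma)$ that change sign across the interior of each $2$-cell of $\Sigma$.

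Its holonomy around a loop is $(-1)^{\#(\text{loop}\cap\Sigma)}$. A linking circle of an edge $E$ meets $\Sigma$ in as many points (mod $2$) as there are $2$-cells of $\Sigma$ abutting $E$, so it picks up holonomy $-1$ exactly when that number is odd. Since $H_1(S^3-\Gamma;\mathbb{Z})$ is generated by linking circles of edges (Alexander duality, as in the proof of Lemma~3.1), any bundle with Möbius restriction to every linking circle is isomorphic to the bundle of Lemma~3.1. Hence it suffices to exhibit such a $\Sigma$ that is moreover $G$-invariant as a set.

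The candidate is the radial projection to $S^3$ of the union of all $2$-faces of $P$. Each $2$-face is a $p$-gon, so it radially projects to a geodesic polygon. Distinct $2$-faces meet only along edges or at vertices. $\Sigma$ is $G$-invariant because $G=\Sym^{+}(P)$ permutes the $2$-faces. The key computation is its boundary mod~$2$. By the discussion of Schl\"afli symbols in Section~\ref{sec:schla}, the $2$-faces containing a given edge $E$ correspond to the vertices of the edge figure, which is an $r$-gon, so there are exactly $r$ of them. Hence
\begin{align}
\partial\Sigma \equiv r\cdot\Gamma \pmod 2 .
\end{align}
By Table~\ref{tab:regular4polytopes-figures} (equivalently Proposition~\ref{prop:odd-stab}), $r\in\{3,5\}$ is odd for the five polytopes in Theorem~\ref{thm:main}, so $\partial\Sigma\equiv\Gamma$. (This also explains why the $16$-cell, with $r=4$, is excluded.) Near an interior point of an edge, the $r$ sheets of $\Sigma$ divide a small normal disk into $r$ sectors. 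Going once around the edge therefore crosses $r$ sheets and gives holonomy $(-1)^r=-1$, as required.

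With this model, the lift is tautological. For $g\in G$, define $\delta_g$ by sending the fiber of $g^*\mathcal{I}$ over $x$, which in the cut model is $\mathbb{R}$ at $g(x)$, identically to $\mathbb{R}$ at $x$. In other words, $g$ acts on sections by $s\mapsto s\circ g$. This is well defined precisely because $g(\Sigma)=\Sigma$: the sign-change locus is preserved, so $s\circ g$ again changes sign exactly across $\Sigma$. The cocycle identity $\delta_g\circ\delta_{g'}=\delta_{g'g}$ holds automatically, since composition of functions is associative and no signs are ever introduced.

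The step I expect to need the most care is verifying that this cut model is an honest smooth Euclidean line bundle near the vertices and near the singular locus of $\Sigma$. Away from $\Gamma$, the $2$-cells of $\Sigma$ meet only in their interiors' closures along $\Gamma$ itself. The local trivializations are defined on small balls meeting at most one sheet, with transition functions $\pm1$. So the only point to check is that $\Sigma-\Gamma$ is a disjoint union of embedded open disks in $S^3-\Gamma$. This follows from convexity of $P$: distinct $2$-faces intersect only in common lower-dimensional faces, and radial projection is injective on $\partial P$.
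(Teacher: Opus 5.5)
Your proof is correct, and it takes a genuinely different route from the paper's.

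The paper argues abstractly. Lemma~\ref{lem:12.3} shows that every $g\in G$ preserves the monodromy character, and the lifting criterion (Lemma~\ref{lem:12.2}) then lifts each $g$ to the connected double cover, up to the deck involution $\tau$. Lemma~\ref{lem:12.5} assembles these lifts into a central extension $1\to\langle\tau\rangle\to\widetilde G\to G\to 1$. The splitting is then checked polytope by polytope (Propositions~\ref{prop:5cell-splitting}, \ref{prop:13.2}, \ref{prop:14.2}, \ref{prop:15.2}, \ref{prop:16.2}) from Coxeter presentations of $[p,q,r]^+$. Lifts of odd-order generators are corrected by $\tau$, and even-order generators and involutive products are shown to have fixed points off $\Gamma$, which gives lifts of the same order. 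Lemma~\ref{lem:12.4} converts the splitting into bundle maps.

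You instead build $\mathcal{I}$ $G$-equivariantly from the outset, as the cut bundle of the projected $2$-skeleton. The splitting is then canonical, and no generators, relations or fixed-point checks are needed. Your only inputs are uniqueness of $\mathcal{I}$ (meridians generate $H_1(S^3-\Gamma;\mathbb{Z})$) and oddness of $r$. That is the same parity that produces the odd-order edge stabilizers, so your argument shows why the two conditions go together; $\partial\partial\Sigma=0$ even recovers the even valency.

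What each route buys:
\begin{itemize}
\item Yours is shorter and uniform over the five polytopes. It also describes $G$-invariant sections explicitly: they are $G$-invariant functions on the open $3$-cells that change sign across the $2$-faces.
\item The paper's framework is more general. It needs only that $G$ preserve $\Gamma$, and it places the obstruction in the extension class rather than in the existence of an invariant dual surface.
\end{itemize}

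For the $8$- and $24$-cell groups, which admit nontrivial characters $G\to\{\pm1\}$, your splitting may differ from the paper's by such a character. This is harmless: such characters are trivial on the odd-order edge stabilizers that the later analysis uses.

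Two small points. Your worry about vertices is vacuous, since they lie in $\Gamma$. What deserves an explicit sentence instead is that $\Sigma-\Gamma$ is closed in $S^3-\Gamma$ (there are finitely many faces, with boundaries in $\Gamma$), so each point of $S^3-\Gamma$ has a ball meeting at most one sheet. You should also note that the isomorphism from your model to the bundle of Section~\ref{sec:3} can be normalized to an isometry.
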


\begin{proof}
The claim is verified case-by-case in Sections~\ref{sec:5cell}--\ref{sec:600cell}. In the \(5\)-cell case,  Proposition~\ref{prop:5cell-splitting} proves that the resulting \(\mathbb{Z}/2\)-extension splits, and Lemma~\ref{lem:12.4} converts the coherent lift on the connected double cover into fiberwise linear bundle automorphisms of the associated line bundle \(\mathcal{I}_5 \to M_5\). The remaining 8-, 24-, 120-, and 600-cell cases are proved in
Propositions~\ref{prop:13.2}, \ref{prop:14.2}, \ref{prop:15.2}, and \ref{prop:16.2}, respectively. Hence the induced \(G\)-action on \(S^3 \setminus \Gamma\) lifts to the total space of \(\mathcal{I}\) in every case listed in Theorem~1.2.
\end{proof}

We record the following standard presentations for the rotation subgroups of the regular 4-polytopes~\cite[Eq.~(4), p.~1311]{johnson1999quadratic}.
For the regular $4$-simplex $\{3,3,3\}$, the group representation for the rotation subgroup is:
\begin{equation}\label{eq:5}
[3,3,3]^+
=
\left\langle
\sigma_1,\sigma_2,\sigma_3
\;\middle|\;
\sigma_1^3=\sigma_2^3=\sigma_3^3
=
(\sigma_1\sigma_2)^2
=
(\sigma_2\sigma_3)^2
=
(\sigma_1\sigma_2\sigma_3)^2
=
1
\right\rangle.
\end{equation}
This group is isomorphic to 
\[
\langle r,s\mid r^2=s^3=(sr)^5=1\rangle\cong A_5
\]
under the map
\(
s\longmapsto \sigma_1, r\longmapsto \sigma_3\sigma_2.
\)

The regular $8$-cell has rotation subgroup:
\begin{equation}\label{eq:8}
[4,3,3]^+
=
\left\langle
\sigma_1,\sigma_2,\sigma_3
\;\middle|\;
\sigma_1^4=\sigma_2^3=\sigma_3^3
=
(\sigma_1\sigma_2)^2
=
(\sigma_2\sigma_3)^2
=
(\sigma_1\sigma_2\sigma_3)^2
=
1
\right\rangle.
\end{equation}

The rotation subgroup of the symmetry group of the regular $24$-cell is:
\begin{equation}\label{eq:24}
[3,4,3]^+
=
\left\langle
\sigma_1,\sigma_2,\sigma_3
\;\middle|\;
\sigma_1^3=\sigma_2^4=\sigma_3^3
=
(\sigma_1\sigma_2)^2
=
(\sigma_2\sigma_3)^2
=
(\sigma_1\sigma_2\sigma_3)^2
=
1
\right\rangle.
\end{equation}

For the regular $120$-cell, its rotation subgroup is:
\begin{equation}\label{eq:120}
[5,3,3]^+
=
\left\langle
\sigma_1,\sigma_2,\sigma_3
\;\middle|\;
\sigma_1^5=\sigma_2^3=\sigma_3^3
=
(\sigma_1\sigma_2)^2
=
(\sigma_2\sigma_3)^2
=
(\sigma_1\sigma_2\sigma_3)^2
=
1
\right\rangle.
\end{equation}

For the regular $600$-cell, its rotation subgroup is:
\begin{equation}\label{eq:600}
[3,3,5]^+
=
\left\langle
\sigma_1,\sigma_2,\sigma_3
\;\middle|\;
\sigma_1^3=\sigma_2^3=\sigma_3^5
=
(\sigma_1\sigma_2)^2
=
(\sigma_2\sigma_3)^2
=
(\sigma_1\sigma_2\sigma_3)^2
=
1
\right\rangle.
\end{equation}

 \section{Monodromy, Connected Double Covers, and Fiberwise Lifts}
\label{sec:monodromy-double-cover-lifts}

This section discusses the covering-space and line-bundle lemmas used later. Starting from a graph complement $M=S^3\setminus \Gamma$ and a real line
bundle $\mathcal J\to M$ with monodromy character
\(
\mu_{\mathcal J}:\pi_1(M)\to \{\pm 1\},
\)
this section explains:
(i) how monodromy behaves under pullback (Lemma~\ref{lem:12.1}),
(ii) when a homeomorphism of $M$ lifts to the connected double cover associated to
$\ker(\mu_{\mathcal J})$ (Lemma~\ref{lem:12.2}),
(iii) how such lifts form a central $\mathbb Z/2$-extension of the
symmetry group (Lemma~\ref{lem:12.5}), and
(iv) how a coherent choice of lifts induces fiberwise linear automorphisms of the
associated real line bundle (Lemma~\ref{lem:12.4}).

In the polytope applications, Lemma~\ref{lem:12.3} is the concrete criterion used to verify the
hypothesis of Lemma~\ref{lem:12.2}: if a symmetry preserves $\Gamma$ and permutes its edges, then it
preserves the monodromy character. Sections~\ref{sec:5cell}-\ref{sec:600cell} verify the odd-order edge stabilizer and coherent lift for each model.

\subsection{Monodromy and the associated connected double cover}
\label{subsec:monodromy-connected-double-cover}

We begin with the relation between pullback of real line bundles and pullback of their
monodromy characters. This gives the bundle criterion underlying the lifting
statements used later.

\begin{lemma}[Pullback monodromy]\label{lem:12.1}
Let \(f:(X,x_0)\to (Y,y_0)\) be continuous between path-connected spaces, and let
$\mathcal J\to Y$ be a real line bundle. Let
\(
\mu_{\mathcal J}:\pi_1(Y,y_0)\to\{\pm1\}
\)
be its monodromy character. Then
\(
\mu_{f^*\mathcal J}=\mu_{\mathcal J}\circ f_*:\pi_1(X,x_0)\to\{\pm1\}.
\)
If in addition, \(X=Y\) and \(X\) is paracompact, then
\(f^*\mathcal J\simeq \mathcal J\) if and only if \(\mu_{\mathcal J}\circ f_*=\mu_{\mathcal J}.\)
\end{lemma}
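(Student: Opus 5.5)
The first assertion is a statement about parallel transport. I would use the description of a real line bundle with structure group $\{\pm1\}$ by its monodromy: for a loop $\gamma$ at $y_0$, $\mu_{\mathcal J}([\gamma])$ is the sign by which parallel transport around $\gamma$ (equivalently, the holonomy of the flat $\{\pm1\}$ local system underlying $\mathcal J$) acts on the fiber $\mathcal J_{y_0}$.

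\textbf{Pullback of monodromy.} Let $\gamma$ be a loop at $x_0$.
\begin{itemize}
\item The fiber of $f^*\mathcal J$ over $\gamma(t)$ is canonically $\mathcal J_{f(\gamma(t))}$.
\item A continuous nonvanishing section of $f^*\mathcal J$ along $\gamma$ is the same thing as a continuous nonvanishing section of $\mathcal J$ along $f\circ\gamma$.
\item Monodromy is the sign comparing the initial and final values of such a section. These values agree for the two bundles, so $\mu_{f^*\mathcal J}([\gamma])=\mu_{\mathcal J}([f\circ\gamma])=\mu_{\mathcal J}(f_*[\gamma])$.
\end{itemize}
Homotopy invariance of this sign is standard: lift the homotopy to the double cover of unit vectors, which is a covering space. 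This gives well-definedness on $\pi_1$.

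\textbf{The isomorphism criterion.} Assume now $X=Y$ is paracompact.
\begin{itemize}
\item Real line bundles on a paracompact space are classified by $w_1\in H^1(X;\mathbb Z/2)$. For path-connected $X$, the universal coefficient theorem and Hurewicz give
\[
H^1(X;\mathbb Z/2)\cong \operatorname{Hom}(\pi_1(X,x_0),\mathbb Z/2),
\]
and under this isomorphism $w_1(\mathcal J)$ corresponds to $\mu_{\mathcal J}$.
\item Alternatively, without invoking $w_1$: a line bundle is determined by its unit sphere bundle. That bundle is a double cover, classified by its monodromy homomorphism up to conjugacy. Conjugacy is trivial here because $\{\pm1\}$ is abelian.
\item Either way, two line bundles are isomorphic if and only if their monodromy characters agree. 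Applying this to $f^*\mathcal J$ and $\mathcal J$, and using the first part, gives $f^*\mathcal J\simeq\mathcal J$ iff $\mu_{\mathcal J}\circ f_*=\mu_{\mathcal J}$.
\end{itemize}

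\textbf{Main obstacle: basepoints.} If $f(x_0)\neq x_0$, then $f_*$ lands in $\pi_1(X,f(x_0))$. It must be composed with a change-of-basepoint isomorphism along a path $\beta$ from $f(x_0)$ to $x_0$. Different choices of $\beta$ change this isomorphism by conjugation, and characters into the abelian group $\{\pm1\}$ are conjugation invariant. So the condition $\mu_{\mathcal J}\circ f_*=\mu_{\mathcal J}$ is independent of $\beta$.

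The converse direction, building an explicit isomorphism from equal characters, I would do directly. Fix an identification of fibers at $x_0$. Define the map over any point $x$ by transporting along a path from $x_0$ to $x$. Equality of characters is exactly what makes this independent of the chosen path.

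Paracompactness is used only to guarantee the equivalence between bundles and classifying data, for example the existence of metrics or reduction to structure group $\{\pm1\}$. On a manifold such as $S^3\setminus\Gamma$ this is automatic.
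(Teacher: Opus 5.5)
Your proof follows the paper's. The first part is the same observation that $\gamma^*(f^*\mathcal J)\cong(f\circ\gamma)^*\mathcal J$ for a loop $\gamma$. Your main route for the converse is the paper's combination of $w_1$-classification on paracompact spaces with $H^1(X;\mathbb Z/2)\cong\operatorname{Hom}(\pi_1(X,x_0),\mathbb Z/2)$. Your handling of the basepoint when $f(x_0)\neq x_0$, via conjugation invariance of $\{\pm1\}$-valued characters, is correct and makes explicit something the paper leaves unstated.

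There is one step that fails as stated, and it fails in exactly the same way in the paper's proof. It is the claim that on a path-connected paracompact space two line bundles are isomorphic if and only if their monodromy characters agree.

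\begin{itemize}
\item The identification $H^1\cong\operatorname{Hom}(\pi_1,\mathbb Z/2)$ from universal coefficients and Hurewicz is a statement about singular cohomology.
\item On a general paracompact space, line bundles are classified by $[X,\mathbb{R}P^\infty]$, i.e.\ by \v{C}ech $H^1(X;\mathbb Z/2)$.
\item These two agree for manifolds and CW complexes, but not in general.
\end{itemize}

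The Warsaw circle $W$ is a counterexample. It is compact metric and path-connected with $\pi_1(W)=0$. Yet the pullback $L$ of the M\"obius bundle under the natural map $W\to S^1$ is nontrivial: a trivialization would be a continuous square root of that map, and it is forced to change sign after going once around. Take $f$ constant. Then $\mu_L\circ f_*=\mu_L$ holds trivially, while $f^*L$ is trivial, so the converse is false without a further hypothesis.

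Your two alternative arguments use that missing hypothesis implicitly. Classifying the unit-vector double cover by its monodromy relies on the lifting criterion. The path-transport isomorphism is continuous only if nearby points are joined by short paths. Both require $X$ to be locally path-connected.

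If you add that hypothesis (automatic for $S^3\setminus\Gamma$, the only case used later), either of your direct arguments becomes a complete proof. Paracompactness is then used only to put metrics on the bundles.
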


\begin{proof}
Fix $x_0\in X$ and set $y_0:=f(x_0)$. Let $\alpha:(S^1,1)\to (X,x_0)$ be a based loop. Then there is a canonical isomorphism of real line bundles over $S^1$,
\(
\alpha^*(f^*\mathcal J)\cong (f\circ \alpha)^*\mathcal J.
\)
Hence the monodromy of $f^*\mathcal J$ along $\alpha$ equals the monodromy of $\mathcal J$ along $f\circ\alpha$, so
\(
\mu_{f^*\mathcal J}([\alpha])
=
\mu_{\mathcal J}([f\circ\alpha])
=
(\mu_{\mathcal J}\circ f_*)([\alpha]).
\)
Thus
\(
\mu_{f^*\mathcal J}=\mu_{\mathcal J}\circ f_*.
\)

Now assume $X=Y$ and $X$ is paracompact. If $f^*\mathcal J\simeq \mathcal J$, then isomorphic line bundles have the same monodromy character, so
\(
\mu_{\mathcal J}
=
\mu_{f^*\mathcal J}
=
\mu_{\mathcal J}\circ f_*.
\)

Conversely, suppose that $\mu_{\mathcal J}\circ f_*=\mu_{\mathcal J}$. By the first part,
\(
\mu_{f^*\mathcal J}=\mu_{\mathcal J}.
\)
Under the canonical identification
\(
H^1(X;\mathbb Z/2)\cong \operatorname{Hom}(\pi_1(X,x_0),\mathbb Z/2),
\)
the first Stiefel--Whitney class $w_1(\mathcal L)$ of a real line bundle $\mathcal L$ corresponds to its monodromy character $\mu_{\mathcal L}$ (after identifying $\{\pm1\}\cong \mathbb Z/2$). Therefore,
\(
w_1(f^*\mathcal J)=w_1(\mathcal J).
\)
Since $X$ is paracompact, the first Stiefel--Whitney class classifies real line bundles on $X$ 
\cite[Ch.~17, Theorem~3.4, p.~250]{husemoller1966fibre}. Therefore $w_1(f^*\mathcal J)=w_1(\mathcal J)$ implies $f^*\mathcal J\simeq \mathcal J$.
\end{proof}

\begin{lemma}[Edge--permuting symmetries preserve monodromy]\label{lem:12.3}
Let $\Gamma\subset S^{3}$ be a finite graph and set $X:=S^{3}\setminus\Gamma$. Let $\mathcal J\to X$ be a real line bundle with monodromy character $\mu_{\mathcal J}:\pi_1(X)\to\{\pm1\}$ such that \(\mu_{\mathcal J}([m_e])=-1\) for every edge meridian $m_e$ about an edge $e\subset\Gamma$. Let $f:S^{3}\to S^{3}$ be a homeomorphism preserving $\Gamma$ and permuting its edges, and denote by $f:X\to X$ the induced homeomorphism. Then
\(
\mu_{\mathcal J}\circ f_*=\mu_{\mathcal J}.
\)
\end{lemma}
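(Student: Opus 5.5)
The plan is to show that $\mu_{\mathcal J}\circ f_*$ and $\mu_{\mathcal J}$ agree on a generating set for $H_1(X;\mathbb Z/2)$. Since both are homomorphisms into the abelian group $\{\pm1\}$, each factors through $H_1(X;\mathbb Z)$, and in fact through $H_1(X;\mathbb Z/2)$. So it suffices to check equality on a set of classes that generates $H_1(X;\mathbb Z/2)$.

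\emph{Step 1: edge meridians generate.} By Alexander duality, $H_1(S^3\setminus\Gamma;\mathbb Z)\cong H^1(\Gamma;\mathbb Z)$. Under this isomorphism, the meridian $m_e$ of an edge $e$ corresponds to the linking-number functional with $e$. Hence the classes $[m_e]$, $e$ ranging over the edges, generate $H_1(X;\mathbb Z)$. The only relations among them are those at each vertex, where the meridians of the incident edges, suitably oriented, sum to zero. This is exactly the picture used in the proof of Lemma~3.1. Alternatively, one can argue directly: a small sphere around each vertex together with small tubes along the edges gives a regular neighborhood, and a Mayer--Vietoris argument on $S^3=N(\Gamma)\cup X$ shows that $H_1(X)$ is generated by meridians of the edge tubes.

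\emph{Step 2: $f$ carries meridians to meridians.} Since $f$ is a homeomorphism of $S^3$ preserving $\Gamma$ and mapping each edge $e$ onto an edge $f(e)$, the loop $f\circ m_e$ is a small loop linking $f(e)$ once. Concretely, take $m_e$ to bound a small disk $D$ meeting $\Gamma$ transversally in a single interior point of $e$. Then $f(D)$ is a disk meeting $\Gamma$ in a single interior point of $f(e)$, and $f(\partial D)$ lies in a small neighborhood of that point minus $\Gamma$.

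This is the step where care is needed, because a general homeomorphism need not be smooth or transverse. To handle it, I would work inside a small ball $B$ around the point $x=D\cap e$, chosen so that $B\cap\Gamma$ is an unknotted arc and $f(B)$ meets only $f(e)$. Shrinking if necessary, choose a ball $B'$ around $f(x)$ inside $f(B)$ so that the pair $(B',B'\cap\Gamma)$ is a standard ball–arc pair. Then $f(\partial D)$ and a standard meridian of $f(e)$ are both loops in $f(B)\setminus f(e)$. By Alexander duality applied locally (or by computing the linking number with $f(e)$), each has linking number $\pm1$ with $f(e)$, so they are homologous up to sign in $X$. Because $\mathbb Z/2$ coefficients are being used, the sign does not matter. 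Therefore $f_*[m_e]=[m_{f(e)}]$ in $H_1(X;\mathbb Z/2)$.

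\emph{Step 3: conclude.} For every edge $e$,
\[
(\mu_{\mathcal J}\circ f_*)([m_e])=\mu_{\mathcal J}([m_{f(e)}])=-1=\mu_{\mathcal J}([m_e]),
\]
using the hypothesis that every edge meridian has monodromy $-1$. Since the $[m_e]$ generate $H_1(X;\mathbb Z/2)$ by Step 1, and both characters factor through this group, it follows that $\mu_{\mathcal J}\circ f_*=\mu_{\mathcal J}$. Basepoint issues are harmless: the characters take values in an abelian group, so they are insensitive to conjugation, and changing basepoint along any path does not affect the conclusion.

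I expect Step 2 to be the main obstacle, specifically making the claim ``$f$ maps meridians to meridians'' rigorous for a merely continuous homeomorphism. The fallback is the linking-number characterization. A class in $H_1(X;\mathbb Z/2)$ is detected, via Alexander duality, by its mod~$2$ linking with the cycles of $\Gamma$. A homeomorphism of $S^3$ preserves mod~$2$ linking numbers because linking number is a homological invariant. Hence the class $f_*[m_e]$ has mod~$2$ linking $1$ exactly with the cycles through $f(e)$, which is precisely the linking behavior of $[m_{f(e)}]$, and so $f_*[m_e]=[m_{f(e)}]$.
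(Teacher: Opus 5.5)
Your proposal is correct and follows the paper's proof essentially step for step. Both arguments show that $f$ carries each edge meridian to the meridian of $f(e)$ up to orientation: the paper states this as conjugacy to $[m_{f(e)}]^{\pm1}$ in $\pi_1(X)$, and you state it in $H_1(X;\mathbb Z/2)$. Both then conclude because the two characters factor through $H_1(X;\mathbb Z/2)$, which is generated by the meridian classes. Your mod-$2$ linking-number argument via Alexander duality is a useful addition, since the paper asserts the meridian-to-meridian step for a general homeomorphism $f$ without justification.
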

\begin{proof}
Since $f$ preserves $\Gamma$ and permutes its edges, the class $f_*([m_e])$ is conjugate in $\pi_1(X)$ to $[m_{f(e)}]^{\pm1}$. Here $[m_{f(e)}]^{\pm1}$ denotes either $[m_{f(e)}]$ or $[m_{f(e)}]^{-1}$; the sign records
whether $f$ preserves the chosen orientation of the meridian. Because every element of $\{\pm1\}$ equals its own inverse, $\mu_{\mathcal J}(\gamma^{-1})=\mu_{\mathcal J}(\gamma)$ for all $\gamma\in\pi_1(X)$. Also, the target $\{\pm1\}$ is abelian; therefore, $\mu_{\mathcal J}$ is invariant under conjugacy. Thus, for each edge $e$,
\(
(\mu_{\mathcal J}\circ f_*)([m_e])
=\mu_{\mathcal J}(f_*[m_e])
=\mu_{\mathcal J}\bigl([m_{f(e)}]^{\pm1}\bigr)
=-1
=\mu_{\mathcal J}([m_e]).
\)

Thus $\mu_{\mathcal J}\circ f_*$ and $\mu_{\mathcal J}$ agree on all edge meridians. Since $\{\pm1\}$ is abelian, any homomorphism $\chi:\pi_1(X)\to\{\pm1\}$ factors through the
abelianization $\mathrm{ab}:\pi_1(X)\twoheadrightarrow H_1(X;\mathbb Z)$, i.e.\ there is a unique
$\bar\mu:H_1(X;\mathbb Z)\to\{\pm1\}$ such that
\[
\begin{array}{ccc}
\pi_1(X) & \xrightarrow{\ \chi\ } & \{\pm1\}\\
\downarrow{\mathrm{ab}} & & \|\\
H_1(X;\mathbb Z) & \xrightarrow{\ \bar\mu\ } & \{\pm1\}.
\end{array}
\]
If $\bar\mu:H_1(X;\mathbb Z)\to\{\pm1\}$ is the induced map, then for every $a\in H_1(X;\mathbb Z)$ we have
\(
\bar\mu(2a)=\bar\mu(a)^2=1.
\)
Hence $\bar\mu$ kills $2H_1(X;\mathbb Z)$, so it factors through
\(
H_1(X;\mathbb Z)/2H_1(X;\mathbb Z)\cong H_1(X;\mathbb Z/2).
\)
As in the proof of Lemma~3.1, the classes of the edge meridians generate $H_1(X;\mathbb Z/2)$, hence a $\{\pm1\}$-valued character factoring through $H_1(X;\mathbb Z/2)$ is determined by its values on the classes $\{[m_e]\}$. It follows that $\mu_{\mathcal J}\circ f_*=\mu_{\mathcal J}$.
\end{proof}

\begin{lemma}[Lifting criterion]\label{lem:12.2}
Let \(X\) be path-connected and locally path-connected, and let
\(\mu:\pi_1(X)\to\{\pm1\}\) be a nontrivial character. Suppose
\(p:\widetilde X\to X\) is the connected double cover corresponding to
\(H:=\ker(\mu)\), and let \(\tau\) denote its nontrivial deck involution.
For a homeomorphism $f:X\to X$, the following are equivalent:
\begin{enumerate}
\item $f$ admits a lift $\widetilde f:\widetilde X\to\widetilde X$ with $p\circ \widetilde f=f\circ p$;
\item $f_*(H)=H$;
\item $\mu\circ f_*=\mu$.
\end{enumerate}
When these hold, any two lifts differ by composition with $\tau$.
\end{lemma}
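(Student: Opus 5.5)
The proof is standard covering space theory, and I would organize it as the cycle (1)$\Rightarrow$(2)$\Rightarrow$(3)$\Rightarrow$(1), followed by a separate argument for the uniqueness statement. Throughout, fix a basepoint $x_0\in X$ and a point $\tilde x_0\in p^{-1}(x_0)$, so that $p_*\pi_1(\widetilde X,\tilde x_0)=H$. Since $H$ has index two it is normal, so $H$ is the image of $p_*$ at every point of $p^{-1}(x_0)$. This normality lets me avoid tracking basepoints carefully. To compare $f_*(H)$ with $H$, I would transport $f_*$ to $\pi_1(X,x_0)$ by conjugating along a path from $f(x_0)$ back to $x_0$. Because $H$ is normal, the resulting subgroup does not depend on which path is chosen.

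\emph{(2)$\Leftrightarrow$(3).} Here I would use that $f_*$ is an automorphism, because $f$ is a homeomorphism. If $\mu\circ f_*=\mu$, then $f_*$ maps $\ker\mu$ onto $\ker\mu$, which gives $f_*(H)=H$. Conversely, suppose $f_*(H)=H$. Then $\mu\circ f_*$ is a character with the same kernel $H$. The only surjection $\pi_1(X)/H\cong\mathbb Z/2\to\{\pm1\}$ is the nontrivial one, so $\mu\circ f_*=\mu$.

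\emph{(1)$\Leftrightarrow$(2).} I would apply the lifting criterion to the map $f\circ p:\widetilde X\to X$. The space $\widetilde X$ is connected and locally path-connected, and $f\circ p$ lifts through $p$ exactly when $(f\circ p)_*\pi_1(\widetilde X)=f_*(H)$ is contained in $p_*\pi_1(\widetilde X,\tilde y)=H$ for some $\tilde y$ over $f(x_0)$. That condition is $f_*(H)\subseteq H$. For the converse inclusion, $f_*(H)$ has index two because $f_*$ is an automorphism, so $f_*(H)\subseteq H$ already forces equality. Conversely, if a lift $\widetilde f$ exists, then $f_*(H)=(p\circ\widetilde f)_*\pi_1(\widetilde X)\subseteq H$, and (2) follows again by the index count. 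One point still needs care if one wants $\widetilde f$ to be a homeomorphism: applying the same construction to $f^{-1}$ produces a lift of $f^{-1}$. Composing the two lifts gives a lift of the identity, which is either $\mathrm{id}$ or $\tau$, so after adjusting by $\tau$ the lifts are mutually inverse.

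\emph{Uniqueness up to $\tau$.} Let $\widetilde f$ and $\widetilde f'$ be two lifts. Then $\widetilde f(\tilde x_0)$ and $\widetilde f'(\tilde x_0)$ both lie in the two-point fiber over $f(x_0)$. If the two points agree, the unique lifting property on the connected space $\widetilde X$ gives $\widetilde f=\widetilde f'$. If they differ, then $\tau\circ\widetilde f$ is also a lift, and it agrees with $\widetilde f'$ at $\tilde x_0$, because $\tau$ swaps the two points of every fiber. Uniqueness then gives $\widetilde f'=\tau\circ\widetilde f$.

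None of these steps is a real obstacle. The only delicate point is the basepoint bookkeeping in (1)$\Leftrightarrow$(2), and normality of $H$ handles it.
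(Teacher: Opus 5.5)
Your proposal is correct and follows essentially the same route as the paper. Both use the covering-space lifting criterion applied to $f\circ p$ together with the index-two count for (1)$\Leftrightarrow$(2), a kernel comparison for (2)$\Leftrightarrow$(3), and transitivity of $\{\mathrm{id},\tau\}$ on the two-point fibers plus uniqueness of lifts for the last claim. The differences are cosmetic: you handle basepoints through normality of $H$ rather than the paper's two subgroups $H_0,H_1$, and you prove two biconditionals rather than the cycle you announce, which is harmless.
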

\begin{proof}
Fix $x_0\in X$ and $\tilde x_0\in p^{-1}(x_0)$, and set $x_1:=f(x_0)$. Choose $\tilde x_1\in p^{-1}(x_1)$ and write, with $p_*$ denoting
the homomorphism induced by $p$ on the fundamental groups,
\[
H_0:=p_*\pi_1(\widetilde X,\tilde x_0)\le \pi_1(X,x_0),
\qquad
H_1:=p_*\pi_1(\widetilde X,\tilde x_1)\le \pi_1(X,x_1).
\]
Since $p:\widetilde X\to X$ is a connected double cover, the corresponding subgroups
have index $2$ \cite[Proposition~1.32]{HatcherAT}:
\[
[\pi_1(X,x_0):H_0]=[\pi_1(X,x_1):H_1]=2.
\]
By the covering-space lifting criterion \cite[Proposition~1.33]{HatcherAT}, there exists a lift $\widetilde f$ with $\widetilde f(\tilde x_0)=\tilde x_1$ and $p\circ\widetilde f=f\circ p$ if and only if
\(
f_*(H_0)\subset H_1,
\)
where $f_*:\pi_1(X,x_0)\to\pi_1(X,x_1)$ is induced by $f$. Indeed, applying the criterion to
$f\circ p:(\widetilde X,\tilde x_0)\to (X,x_1)$ and to the covering
$p:(\widetilde X,\tilde x_1)\to (X,x_1)$ gives the condition
\[
(f\circ p)_*\pi_1(\widetilde X,\tilde x_0)
\subset
p_*\pi_1(\widetilde X,\tilde x_1),
\]
which is precisely $f_*(H_0)\subset H_1$.

If such a lift $\widetilde f$ exists, then $(p\circ\widetilde f)_*=(f\circ p)_*$ gives a commutative diagram
\[
\begin{tikzcd}
\pi_1(\widetilde X,\tilde x_0) \arrow[r,"\widetilde f_*"] \arrow[d,"p_*"'] &
\pi_1(\widetilde X,\tilde x_1) \arrow[d,"p_*"] \\
\pi_1(X,x_0) \arrow[r,"f_*"'] &
\pi_1(X,x_1)
\end{tikzcd}
\]
Then for any
$[\widetilde\gamma]\in \pi_1(\widetilde X,\tilde x_0)$ we have
\[
f_*\bigl(p_*([\widetilde\gamma])\bigr)
=
(f\circ p)_*([\widetilde\gamma])
=
(p\circ \widetilde f)_*([\widetilde\gamma])
=
p_*\bigl(\widetilde f_*([\widetilde\gamma])\bigr).
\]
Since $\widetilde\gamma$ is a loop at $\tilde x_0$ and $\widetilde f(\tilde x_0)=\tilde x_1$, the path $\widetilde f\circ \widetilde\gamma$ is a loop at $\tilde x_1$. Hence
\(
\widetilde f_*([\widetilde\gamma])\in \pi_1(\widetilde X,\tilde x_1),
\)
so
\[
f_*\bigl(p_*\pi_1(\widetilde X,\tilde x_0)\bigr)
\subset
p_*\pi_1(\widetilde X,\tilde x_1).
\]
Therefore,
\[
f_*(H_0)
=f_*\bigl(p_*\pi_1(\widetilde X,\tilde x_0)\bigr)
\subset p_*\pi_1(\widetilde X,\tilde x_1)
=H_1.
\]
Moreover,
\[
[\pi_1(X,x_1):f_*(H_0)]
=
[\pi_1(X,x_0):H_0]
=
2,
\]
because $f_*:\pi_1(X,x_0)\to\pi_1(X,x_1)$ is an isomorphism. Since $H_1$ also has index $2$ in $\pi_1(X,x_1)$, the inclusion $f_*(H_0)\subset H_1$ forces equality:
\(
f_*(H_0)=H_1.
\)
Conversely, $f_*(H_0)=H_1$ implies $f_*(H_0)\subset H_1$, hence the required lift exists. This proves \textnormal{(1)}$\Leftrightarrow$\textnormal{(2)}.

For (2)$\Leftrightarrow$(3), if $f_*(H)=H$, then for any loop $[\alpha]\in\pi_1(X,x_0)$ we have $[\alpha]\in H \iff f_*([\alpha])\in H$, hence
\[
\mu(f_*([\alpha]))=
\begin{cases}
1 & \text{if } [\alpha]\in H,\\
-1 & \text{if } [\alpha]\notin H,
\end{cases}
\qquad\text{so}\qquad
\mu\circ f_*=\mu.
\]
Conversely, if $\mu\circ f_*=\mu$ and $[\alpha]\in H$, then $\mu([\alpha])=1$ implies $\mu(f_*([\alpha]))=1$, hence $f_*([\alpha])\in H$; thus $f_*(H)\subset H$. Since $\mu\circ f_*=\mu$ implies $\mu\circ (f^{-1})_*=\mu$, the same argument applied to $f^{-1}$ gives $(f^{-1})_*(H)\subset H$.
Applying $f_*$ to both sides gives $H\subset f_*(H)$, hence $f_*(H)=H$.

Finally, if $\widetilde f_1,\widetilde f_2$ are two lifts of $f$, then $\widetilde f_1(\tilde x_0)$ and $\widetilde f_2(\tilde x_0)$ both lie in the fiber $p^{-1}(x_1)$. Since $\Deck(p)=\{\id,\tau\}$ acts transitively on this two-point fiber, there exists $\delta\in\{\id,\tau\}$ such that
\(
\delta\bigl(\widetilde f_1(\tilde x_0)\bigr)=\widetilde f_2(\tilde x_0).
\)
Then $\delta\circ \widetilde f_1$ and $\widetilde f_2$ are lifts of $f$ that agree at $\tilde x_0$, so by uniqueness of lifts they are equal. Hence
\(
\widetilde f_2=\delta\circ \widetilde f_1,
\)
i.e.\ $\widetilde f_2=\widetilde f_1$ or $\widetilde f_2=\tau\circ \widetilde f_1$.
\end{proof}

\subsection{Symmetries preserving monodromy and the lift extension}
\label{subsec:symmetries-monodromy-lift-extension}

In the polytope cases, the relevant symmetries preserve the graph $\Gamma$ and permute its edges. Since the monodromy character is prescribed on edge meridians, this implies invariance of monodromy and hence existence of lifts to the associated connected double cover.

Given a group $G$ acting on $M$ by homeomorphisms, define the group of lifts by
\[
\widetilde G
:=
\{\widetilde g\in \mathrm{Homeo}(\widetilde M)\mid
\exists\, g\in G \text{ such that } p\circ \widetilde g=g\circ p\},
\]
where $p:\widetilde M\to M$ is the connected double cover associated to
$\ker(\mu_{\mathcal J})$.
The next lemma packages the individual lifts into a short exact sequence and identifies splitting with a choice of lifting group actions to the double cover preserving the group homomorphism.

\begin{lemma}[Group of lifts and the central extension]\label{lem:12.5}
Let \(p:\widetilde M\to M\) be a connected double cover with deck involution \(\tau\), and let \(G\) be a group acting on \(M\) by homeomorphisms. Let \(\widetilde G\) denote the set of lifts of elements of \(G\) to \(\widetilde M\). Assume that every \(g\in G\) admits at least one lift to \(\widetilde M\). Then \(\widetilde G\) is a group under composition, and the projection \(\pi:\widetilde G\to G\) defined by \(\pi(\tilde g)=g\) fits into a short exact sequence
\[
1\longrightarrow \langle\tau\rangle\longrightarrow \widetilde G
\xrightarrow{\ \pi\ } G\longrightarrow 1.
\]
Moreover, this sequence splits if and only if one can choose, for each \(g\in G\), a lift
\(s(g)\in \widetilde G\) such that \(s(g_1g_2)=s(g_1)s(g_2)\) for all \(g_1,g_2\in G\).
\end{lemma}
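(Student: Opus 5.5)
The plan is to verify the group axioms for $\widetilde G$ directly, construct $\pi$, identify its kernel using Lemma~\ref{lem:12.2}, and then handle the splitting statement by unwinding definitions.

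\textbf{Group structure.} If $\tilde g_1,\tilde g_2\in\widetilde G$ lie over $g_1,g_2$, then
\[
p\circ\tilde g_1\tilde g_2=g_1\circ p\circ\tilde g_2=g_1g_2\circ p,
\]
so the composite lies over $g_1g_2\in G$. The identity of $\widetilde M$ lies over $\id$. For inverses, apply $p\circ\tilde g=g\circ p$ on both sides by $g^{-1}$ and $\tilde g^{-1}$ to get $p\circ\tilde g^{-1}=g^{-1}\circ p$. Here $\tilde g^{-1}$ exists because a lift of a homeomorphism is a homeomorphism: a lift of $g^{-1}$ composed with $\tilde g$ is a lift of $\id$, hence is $\id$ or $\tau$, so $\tilde g$ is invertible.

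\textbf{The projection $\pi$.} I need $\pi$ to be well defined, i.e.\ the underlying $g$ is unique. Since $p$ is surjective, $g\circ p=g'\circ p$ forces $g=g'$. (As a map to $G$ this presumes the $G$-action is effective; if it is not, one should work with pairs $(g,\tilde g)$ instead, and I will note this.) The multiplicativity computation above shows $\pi$ is a homomorphism, and the standing hypothesis that every $g$ has a lift gives surjectivity.

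\textbf{Kernel and centrality.} $\ker\pi$ consists of the lifts of $\id_M$, i.e.\ the deck transformations. By the last clause of Lemma~\ref{lem:12.2} applied to $f=\id$, these are exactly $\{\id,\tau\}=\langle\tau\rangle$. Since $\tau\ne\id$ ($p$ is connected of degree 2), this kernel is $\mathbb Z/2$. The extension is central: $\tilde g\tau\tilde g^{-1}$ lies over $\id$ and is not the identity, hence equals $\tau$. This gives the exact sequence.

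\textbf{Splitting.} A splitting is by definition a homomorphism $s:G\to\widetilde G$ with $\pi\circ s=\id_G$. Such a section is precisely a choice of one lift $s(g)$ over each $g$ satisfying $s(g_1g_2)=s(g_1)s(g_2)$. Conversely, any such multiplicative choice of lifts is a homomorphic section, so both directions are tautological.

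The only genuinely delicate point I expect is the identification of the kernel, which is what forces the use of Lemma~\ref{lem:12.2}'s uniqueness-up-to-$\tau$ statement. Everything else is formal.
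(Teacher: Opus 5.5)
Your proof is correct and follows essentially the same route as the paper's. Both go through composition and identity, then inverses via a lift of $g^{-1}$: the paper adjusts that lift by $\tau$ so that $\tilde h\tilde g=\id$ and checks $\tilde g\tilde h=\id$ the same way, which is the two-sided version of your left-inverse remark. The remaining steps also match: $\pi$ is well defined by surjectivity of $p$, the kernel is the deck group $\{\id,\tau\}$, centrality follows from the same conjugation argument, and the splitting criterion is tautological.

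Your caveat that $\pi$ is well defined into $G$ only for an effective action is a fair refinement that the paper passes over silently. It is harmless in the applications, where $G\subset SO(4)$ acts effectively on the open dense set $M=S^3\setminus\Gamma$.
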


\begin{proof}
Since \(p\circ \mathrm{id}_{\widetilde M}=\mathrm{id}_M\circ p\), one has
\(\mathrm{id}_{\widetilde M}\in \widetilde G\). If \(\tilde g_i\) is a lift of \(g_i\) for
\(i=1,2\), then
\[
p\circ(\tilde g_1\tilde g_2)=(p\circ\tilde g_1)\tilde g_2
=(g_1\circ p)\tilde g_2
=g_1\circ(p\circ\tilde g_2)
=(g_1g_2)\circ p,
\]
so \(\tilde g_1\tilde g_2\) is a lift of \(g_1g_2\).

Now let \(\tilde g\in \widetilde G\) be a lift of \(g\in G\). Since \(g^{-1}\in G\), by hypothesis there exists a lift \(\tilde h\) of \(h=g^{-1}\). 
Then $\tilde h\tilde g$ and $\mathrm{id}_{\widetilde M}$ are lifts of $\mathrm{id}_M$ under $p$, since $p\circ(\tilde h\tilde g)=(p\circ\tilde h)\circ\tilde g=(h\circ p)\circ\tilde g
=h\circ(g\circ p)=\mathrm{id}_M\circ p$
and
$p\circ\mathrm{id}_{\widetilde M}=\mathrm{id}_M\circ p$.
Choose \(\tilde x_0\in \widetilde M\), and replace \(\tilde h\) by \(\tau\circ \tilde h\) if necessary so that \(\tilde h(\tilde g(\tilde x_0))=\tilde x_0\). Then \(\tilde h\tilde g\) and \(\mathrm{id}_{\widetilde M}\) are lifts of \(\mathrm{id}_M\) which agree at \(\tilde x_0\), so by uniqueness of lifts \(\tilde h\tilde g=\mathrm{id}_{\widetilde M}\). Similarly, \(\tilde g\tilde h=\mathrm{id}_{\widetilde M}\). Hence \(\tilde h=\tilde g^{-1}\), so \(\tilde g^{-1}\in \widetilde G\). Therefore \(\widetilde G\) is a group under composition.

The projection \(\pi\) is well defined. Indeed, if
\(\widetilde g\) covers both \(g_1\) and \(g_2\), then
\(
g_1 \circ p = p \circ \widetilde g = g_2 \circ p .
\)
Since \(p\) is surjective, \(g_1=g_2\). Hence \(\pi(\widetilde g)\) is
unambiguous. The map \(\pi\) is surjective by the hypothesis that every \(g\in G\)
admits a lift. It remains to check that \(\pi\) is a homomorphism.
Suppose that \(\pi(\tilde g_i)=g_i\) for \(i=1,2\). Then
\(
p\circ(\tilde g_1\tilde g_2)
=
(p\circ \tilde g_1)\circ \tilde g_2
=
(g_1\circ p)\circ \tilde g_2
=
g_1\circ(g_2\circ p)
=
(g_1g_2)\circ p .
\)
Therefore
\(
\pi(\tilde g_1\tilde g_2)=g_1g_2
=
\pi(\tilde g_1)\pi(\tilde g_2).
\)

Its kernel consists of lifts of \(\mathrm{id}_M\), that is, the deck transformations of the connected double cover \(p\). Hence \(\ker(\pi)=\{\mathrm{id}_{\widetilde M},\tau\}=\langle\tau\rangle\), and therefore
\[
1\longrightarrow \langle\tau\rangle\longrightarrow \widetilde G
\xrightarrow{\ \pi\ } G\longrightarrow 1
\]
is exact. The subgroup \(\langle\tau\rangle\) is central in \(\widetilde G\). Indeed,
if \(\widetilde g\in\widetilde G\), then
\(\widetilde g\tau\widetilde g^{-1}\) is a deck transformation of \(p\).
It is not the identity, since otherwise \(\tau=\mathrm{id}\). Hence
\(\widetilde g\tau\widetilde g^{-1}=\tau\).

Finally, a splitting \(s:G\to \widetilde G\) with \(\pi\circ s=\mathrm{id}_G\) assigns to each \(g\in G\) a lift \(s(g)\) of \(g\), and since \(s\) is a homomorphism it satisfies \(s(g_1g_2)=s(g_1)s(g_2)\) for all \(g_1,g_2\in G\). Conversely, any choice of lifts \(s(g)\) with \(s(g_1g_2)=s(g_1)s(g_2)\) defines a homomorphism \(s:G\to \widetilde G\) such that \(\pi\circ s=\mathrm{id}_G\). This proves the final claim.
\end{proof}

\subsection{From cover lifts to fiberwise linear bundle automorphisms}
\label{subsec:cover-lifts-to-bundle-automorphisms}

A lift on the connected double cover determines a fiberwise linear automorphism of the
associated real line bundle. Replacing a chosen lift by its composition with the deck
involution changes the induced bundle map by a sign.
Lemma~\ref{lem:12.4} connects lifts of $G$ on the double cover $p:\widetilde M\to M$ to fiberwise linear lifts on the associated real line bundle $\mathcal I\to M$ and explains how the deck involution $\tau$ produces the sign ambiguity in bundle automorphisms.

\begin{lemma}[From lifts of the double cover to fiberwise lifts]\label{lem:12.4}
Let \(p:\widetilde M\to M\) be a connected double cover with deck involution \(\tau\), and let \(\mathcal J\to M\) be the associated real line bundle, realized as the quotient
\[
\mathcal J \;\cong\; (\widetilde M\times\R)\big/\big((\tau\tilde x,u)\sim(\tilde x,-u)\big).
\]
Suppose \(g:M\to M\) is a homeomorphism admitting a lift \(\tilde g:\widetilde M\to\widetilde M\), so that \(p\circ\tilde g=g\circ p\). Then:
\begin{enumerate}
\item \(\tilde g\) commutes with \(\tau\).
\item The formula \(\widehat g\big([\tilde x,u]\big):=[\tilde g(\tilde x),u]\) defines a well-defined fiberwise linear bundle automorphism \(\widehat g:\mathcal J\to\mathcal J\) covering \(g\).
\item Replacing \(\tilde g\) by the other lift \(\tau\circ\tilde g\) replaces \(\widehat g\) by \(-\widehat g\).
\end{enumerate}
\end{lemma}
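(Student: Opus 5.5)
The three assertions can be checked directly from the quotation model of \(\mathcal J\). The only tools needed are the deck group \(\{\id,\tau\}\), the uniqueness of lifts, and the relation \(p\circ\tilde g=g\circ p\).

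For item (1), observe that \(\tilde g\tau\tilde g^{-1}\) covers \(g\circ\id_M\circ g^{-1}=\id_M\). Indeed,
\[
p\circ\tilde g\tau\tilde g^{-1}=g\circ p\circ\tau\circ\tilde g^{-1}=g\circ p\circ\tilde g^{-1}=g g^{-1}\circ p=p .
\]
Here \(\tilde g\) is invertible: either it is a homeomorphism by the argument of Lemma~\ref{lem:12.5}, or one lifts \(g^{-1}\) by Lemma~\ref{lem:12.2}. Hence \(\tilde g\tau\tilde g^{-1}\) is a deck transformation, so it lies in \(\{\id,\tau\}\). It cannot be \(\id\), because then \(\tau=\id\), which contradicts connectedness of the double cover (the fibres have two points). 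Therefore \(\tilde g\tau=\tau\tilde g\). This is the same centrality argument used in Lemma~\ref{lem:12.5}.

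For item (2), well-definedness is the key step, and it uses (1):
\[
\widehat g([\tau\tilde x,-u])=[\tilde g\tau\tilde x,-u]=[\tau\tilde g\tilde x,-u]=[\tilde g\tilde x,u].
\]
So the formula respects the relation \((\tau\tilde x,u)\sim(\tilde x,-u)\). Continuity follows because \(\tilde g\times\id_{\R}\) is continuous on \(\widetilde M\times\R\) and commutes with the involution \((\tilde x,u)\mapsto(\tau\tilde x,-u)\), so it descends to the quotient. The map covers \(g\), since the bundle projection sends \([\tilde x,u]\mapsto p(\tilde x)\) and \(p\tilde g=gp\). It is fiberwise linear, because each fiber over \(x=p(\tilde x)\) is identified with \(\R\) via \(u\mapsto[\tilde x,u]\), and in these charts \(\widehat g\) is the identity \(u\mapsto u\). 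Finally, \(\widehat g\) is an automorphism: the lift \(\tilde g^{-1}\) of \(g^{-1}\) induces \(\widehat{g^{-1}}\), and \(\widehat{g^{-1}}\) is a two-sided inverse of \(\widehat g\) by functoriality, \(\widehat{\tilde g_1\tilde g_2}=\widehat{\tilde g_1}\,\widehat{\tilde g_2}\).

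For item (3), Lemma~\ref{lem:12.2} says any other lift is \(\tau\circ\tilde g\). Then
\[
[\tau\tilde g\tilde x,u]=[\tilde g\tilde x,-u]=-\widehat g([\tilde x,u]),
\]
using the defining relation and the fact that scalar multiplication on a fiber acts on the \(\R\)-coordinate.

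None of these steps is a real obstacle. The one point that needs care is item (1): one must check that \(\tilde g\) is invertible and that the cover is connected, so that \(\tau\neq\id\).
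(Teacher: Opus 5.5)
Your proposal is correct and follows essentially the same route as the paper. You make $\tilde g$ invertible by lifting $g^{-1}$ via Lemma~\ref{lem:12.2} and adjusting by $\tau$, deduce (1) because $\tilde g\tau\tilde g^{-1}$ is a non-identity deck transformation, use (1) for well-definedness in (2), and read off (3) from the defining relation; your explicit continuity check, via the $\tau$-equivariance of $\tilde g\times\id_{\R}$, is a small point the paper leaves implicit.
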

\begin{proof}
Let $\mu_\mathcal{J}:\pi_1(M)\to\{\pm1\}$ denote the monodromy character of \(\mathcal{J}\). Equivalently, \(p:\widetilde M\to M\) is the connected double cover corresponding to \(\ker(\mu_\mathcal{J})\).
Since $\tilde g$ is a lift of \(g\), Lemma~\ref{lem:12.2} gives
\(
\mu_\mathcal{J}\circ g_*=\mu_\mathcal{J}.
\)
Let $h=g^{-1}$. Since $h_*=g_*^{-1}$ and
$\mu_{\mathcal J}\circ g_*=\mu_{\mathcal J}$, composing on the right by
$g_*^{-1}$ gives
\(
\mu_{\mathcal J}\circ h_*=\mu_{\mathcal J}.
\)
Hence, by Lemma~\ref{lem:12.2}, \(h\) admits a lift \(\tilde h:\widetilde M\to\widetilde M\).
Choose \(\tilde x_0\in\widetilde M\), and replace \(\tilde h\) by \(\tau\circ \tilde h\) if
necessary so that
\(
\tilde h(\tilde g(\tilde x_0))=\tilde x_0.
\)
Then \(\tilde h\circ \tilde g\) is a lift of \(\id_M\), since
\[
p\circ (\tilde h\circ \tilde g)
=(p\circ \tilde h)\circ \tilde g
=(h\circ p)\circ \tilde g
=h\circ (g\circ p)
=\id_M \circ p.
\]
It agrees with \(\id_{\widetilde M}\) at \(\tilde x_0\), so by uniqueness of lifts,
\(
\tilde h\circ \tilde g=\id_{\widetilde M}.
\)
Similarly,
\[
p\circ (\tilde g\circ \tilde h)
=(p\circ \tilde g)\circ \tilde h
=(g\circ p)\circ \tilde h
=g\circ (h\circ p)
=p,
\]
and \((\tilde g\circ \tilde h)(\tilde g(\tilde x_0))=\tilde g(\tilde x_0)\), so again by
uniqueness of lifts,
\(
\tilde g\circ \tilde h=\id_{\widetilde M}.
\)
Thus \(\tilde h=\tilde g^{-1}\), and in particular \(\tilde g\) is a homeomorphism.

\smallskip

\noindent
(1) Since \(p\) is a connected \(2\)-fold cover, \(\Deck(p)=\{\id,\tau\}\).
The conjugate \(\tilde g\tau\tilde g^{-1}\) is a deck transformation, since
\[
p\circ(\tilde g\tau\tilde g^{-1})
=(p\circ\tilde g)\circ\tau\circ\tilde g^{-1}
=(g\circ p)\circ\tau\circ\tilde g^{-1}
=g\circ(p\circ\tau)\circ\tilde g^{-1}
=g\circ p\circ\tilde g^{-1}
=p.
\]
Hence \(\tilde g\tau\tilde g^{-1}\in\Deck(p)\).
If \(\tilde g\tau\tilde g^{-1}=\id\), then \(\tau=\id\), a contradiction. Therefore
\(
\tilde g\tau\tilde g^{-1}=\tau,
\)
that is,
\(
\tilde g\tau=\tau\tilde g.
\)

\smallskip

\noindent
(2) If \((\tau\tilde x,u)\sim(\tilde x,-u)\), then using (1),
\[
[\tilde g(\tau\tilde x),u]=[\tau\tilde g(\tilde x),u]=[\tilde g(\tilde x),-u],
\]
so \(\widehat g\) is well-defined on the quotient. It is fiberwise linear because
\begin{align*}
\widehat g([\tilde x,u]+[\tilde x,v])
&=[\tilde g(\tilde x),u+v]
 =\widehat g([\tilde x,u])+\widehat g([\tilde x,v]),\\
\widehat g(\lambda[\tilde x,u])
&=[\tilde g(\tilde x),\lambda u]
 =\lambda\,\widehat g([\tilde x,u]).
\end{align*}
Moreover, if \(\pi:\mathcal J\to M\) denotes the bundle projection, then
\[
\pi(\widehat g([\tilde x,u]))
=p(\tilde g(\tilde x))
=g(p(\tilde x))
=g(\pi([\tilde x,u])),
\]
so \(\widehat g\) covers \(g\).

Since \(\tilde g^{-1}\) is a lift of \(g^{-1}\), the same construction applied to \(\tilde g^{-1}\) gives an inverse bundle map. Hence \(\widehat g\) is a bundle automorphism.

\smallskip
\noindent
(3) If \(\tilde g'=\tau\circ\tilde g\), then
\(
\widehat g'([\tilde x,u])
=[\tau\tilde g(\tilde x),u]
=[\tilde g(\tilde x),-u]
=-\,\widehat g([\tilde x,u]).
\)
\end{proof}

\begin{remark}
Part~(2) may also be viewed through the functoriality of associated bundles. Since
\(
\mathcal J \cong \widetilde M\times_{\mathbb Z/2}\mathbb R_{\mathrm{sgn}},
\)
Proposition~6.2 of Husemoller implies that this principal-bundle action induces a coherent fiberwise linear \(G\)-action on \(\mathcal I\) \cite{husemoller1966fibre}.
\end{remark}

\section{Order--$3$ Isometries on Regular $5$--cell} \label{sec:5cell}
Let $P_5\subset \mathbb R^4$ be the regular $5$-cell, let $\Gamma_5\subset S^3$ be the
radial projection of its $1$-skeleton, set $M_5:=S^3\setminus \Gamma_5$, and let
$G_5:=\mathrm{Sym}^+(P_5)\cong A_5$. By Section~\ref{sec:monodromy-double-cover-lifts}, the remaining tasks are to identify
the odd-order edge stabilizers needed for Proposition~\ref{prop:odd-stab} and to split the associated short exact sequence
\[
1\to \langle \tau_5\rangle \to \widetilde G_5 \to G_5 \to 1.
\]

\subsection{The 5--cell model.}\label{subsec:5cell}
A $5$--cell (regular $4$--simplex) is the convex hull of five points in $\mathbb R^4$ with all pairwise distances equal; see e.g.\ \cite{coxeter1973regular}. It has Schl\"afli symbol $\{3,3,3\}$, so its facets and vertex figures are regular tetrahedra. In this subsection we fix explicit vertices $V_1^{(5)},\dots,V_5^{(5)}\in S^3\subset\mathbb R^4\simeq\mathbb H$ adapted to the quaternionic order--$3$ rotation $\Ad_{q}$, and we record the induced permutation of vertices and tetrahedral facets under $\Ad_{q}$.

The symmetry group of the $5$--cell is isomorphic to \(S_{5}\), acting by permuting the vertices, and its rotational subgroup is $A_{5}$.
Let
\[
F:=\mathrm{span}_{\mathbb R}\{1,i+j+k\}\subset\mathbb H
\]
denote the fixed $2$--plane of $\Ad_{q}$ in $\mathbb R^4\simeq\mathbb H$. We choose $V_1^{(5)},V_2^{(5)}\in F\cap S^3$ and $V_3^{(5)},V_4^{(5)},V_5^{(5)}$ so that $\Ad_{q}$ cyclically permutes $\{V_3^{(5)},V_4^{(5)},V_5^{(5)}\}$.

The five vertices
\[
\begin{array}{c}
V_{1}^{(5)}=\left( 1,\; 0,\; 0,\; 0\right), \ \ \
V_{2}^{(5)}=\left( -\frac14,\; s,\; s,\; s\right), \ \ \
V_{3}^{(5)}=\left( -\frac14,\; s,\,-s,\,-s\right),\\
V_{4}^{(5)}=\left( -\frac14,\,-s,\; s,\,-s\right), \ \ \ 
V_{5}^{(5)}=\left( -\frac14,\,-s,\,-s,\; s\right)
\end{array}
\]
where $s:=\frac{\sqrt5}{4}$. One checks that $V_i^{(5)}\in S^3$ and that $\|V_i^{(5)}-V_j^{(5)}\|^2=5/2$ for $i\neq j$; hence the $V_i^{(5)}$ form a regular $4$--simplex whose centroid is the origin. Thus \(P_5\) has \(5\) vertices, \(10\) edges, and \(5\) tetrahedral \(3\)-cells. 

Using the identification $\R^{4}\cong\HH$ from Section~\ref{subsec:quat}, consider
\[
q=\frac12(1+i+j+k)\in S^{3},
\qquad
\Ad_{q}(x)=q\,x\,(q)^{-1}.
\]
On $\Im\HH\cong\R^{3}$ it is a $120^{\circ}$ rotation about the axis $\Span\{i+j+k\}$, and in $\R^{4}$ its fixed--point set is the plane $\Span\{1,\ i+j+k\}$.
This choice of $q$ will be used in Sections~\ref{sec:5cell}--\ref{sec:120cell}.

The images of vertices under \(\Ad_{q}\) are as follows:
\[
\Ad_{q} :\;
V_{3}^{(5)}\;\longrightarrow\;V_{4}^{(5)}\;\longrightarrow\;V_{5}^{(5)}\;\longrightarrow\;V_{3}^{(5)},
\qquad
V_{1}^{(5)},V_{2}^{(5)}\;\text{fixed.}
\]

For any index \(i\), let  
\[
T_{i}^{(5)}\;:=\;\Hull{\{V_{1}^{(5)},V_{2}^{(5)},V_{3}^{(5)},V_{4}^{(5)},V_{5}^{(5)}\}\setminus\{V_{i}^{(5)}\}}
\]
be the regular tetrahedral facet opposite \(V_{i}^{(5)}\).
Every facet has edge length \(\sqrt{5/2}\) and all ten triangular faces are congruent equilateral triangles of side \(\sqrt{5/2}\). Any two distinct facets share exactly one triangular face, so the adjacency graph is the complete graph \(K_{5}\).

By computing the quaternion conjugation, $\Ad_{q}$ fixes the $2$--plane $F=\operatorname{span}_{\R}\{1,i+j+k\}$ pointwise and rotates $(F)^\perp$ by angle $2\pi/3$. Hence $\Fix(\Ad_{q}|_{S^3})=F\cap S^3$, so $\Ad_{q}$ fixes $V_1^{(5)},V_2^{(5)}$ and cyclically permutes $V_3^{(5)},V_4^{(5)},V_5^{(5)}$. Consequently, the induced permutation on facets is
\(
\left(T_1^{(5)}\right)\left(T_2^{(5)}\right)\left(T_3^{(5)}\,T_4^{(5)}\,T_5^{(5)}\right),
\)
i.e.\ $\Ad_{q}:T_3^{(5)}\mapsto T_4^{(5)}\mapsto T_5^{(5)}\mapsto T_3^{(5)}$.

\subsubsection{Lifted facet decomposition and the double cover.}\label{sssec:pm-labels}
Let $\Gamma_5\subset S^3$ be the radial projection of the $1$-skeleton of $P_5$, and set
\(
M_5:=S^3\setminus \Gamma_5.
\)
Let
\(
\mu_5:\pi_1(M_5)\to\{\pm1\}
\)
denote the monodromy character determined by $\mu_5([m_e])=-1$ on edge meridians.
Let
\[
\mathcal T_i^{(5)}:=\pi\left(T_i^{(5)}\right)\subset S^3,\qquad \mathring{\mathcal T}_i^{(5)}:=\mathcal T_i^{(5)}\setminus \Gamma_5\subset M_5.
\]
Let $p_5:\widetilde M_5\to M_5$ be the associated connected double cover corresponding to the kernel of the monodromy character $\mu_5$ and let $\tau_5$ denote its nontrivial deck involution.
Therefore, the preimage $p_5^{-1}\left(\mathring{\mathcal T}_i^{(5)}\right)$ is the disjoint union of two copies of \(\mathring{\mathcal T}_i^{(5)}\); after fixing one choice of sheet we write
\[
p_5^{-1}\left(\mathring{\mathcal T}_i^{(5)}\right)=\mathcal T_i^{(5),+}\sqcup \mathcal T_i^{(5),-},
\qquad
\tau_5\left(\mathcal T_i^{(5),\pm}\right)=\mathcal T_i^{(5),\mp}.
\]
For $i\neq j$ let $F_{ij}^{(5)}:=\mathcal T_i^{(5)}\cap \mathcal T_j^{(5)}$ denote the common triangular face, and set $\mathring{F}_{ij}^{(5)}:=F_{ij}^{(5)}\setminus \Gamma_5\subset M_5$. Then 
$p_5^{-1}\left(\mathring{F}_{ij}^{(5)}\right)=F_{ij}^{(5),+}\sqcup F_{ij}^{(5),-}$ with $\tau_5\left(F_{ij}^{(5),\pm}\right)=F_{ij}^{(5),\mp}$.

Since \(\Ad_q\) preserves \(\Gamma_5\) and permutes its edges, Lemma~\ref{lem:12.3} gives \(\mu_5\circ (\Ad_q)_*=\mu_5\). Hence, by Lemma~\ref{lem:12.2}, the symmetry \(\Ad_q\in G_5\) admits a lift to \(\widetilde M_5\), and any two such lifts differ by composition with \(\tau_5\). Since $\Ad_q$ has order $3$ on $M_5$, $\widetilde{\Ad}_q^{\,3}\in\{\id,\tau_5\}$; consequently, one lift has order $3$ and the other has order $6$. We fix once and for all the order--$6$ lift $\widetilde{\Ad}_q$ characterized by
\(
\widetilde{\Ad}_q^{\,3}=\tau_5.
\)
Its action on the lifted facets is the permutation
\[
\widetilde{\Ad}_q:\quad
\left(\mathcal T_1^{(5),+}\ \mathcal T_1^{(5),-}\right)\,\left(\mathcal T_2^{(5),+}\ \mathcal T_2^{(5),-}\right)\,
\left(\mathcal T_3^{(5),+}\ \mathcal T_4^{(5),-}\ \mathcal T_5^{(5),+}\ \mathcal T_3^{(5),-}\ \mathcal T_4^{(5),+}\ \mathcal T_5^{(5),-}\right),
\]
as illustrated in Fig.~\ref{fig:tetra}.

\begin{figure}
    \centering
    \includegraphics[width=0.8\textwidth]{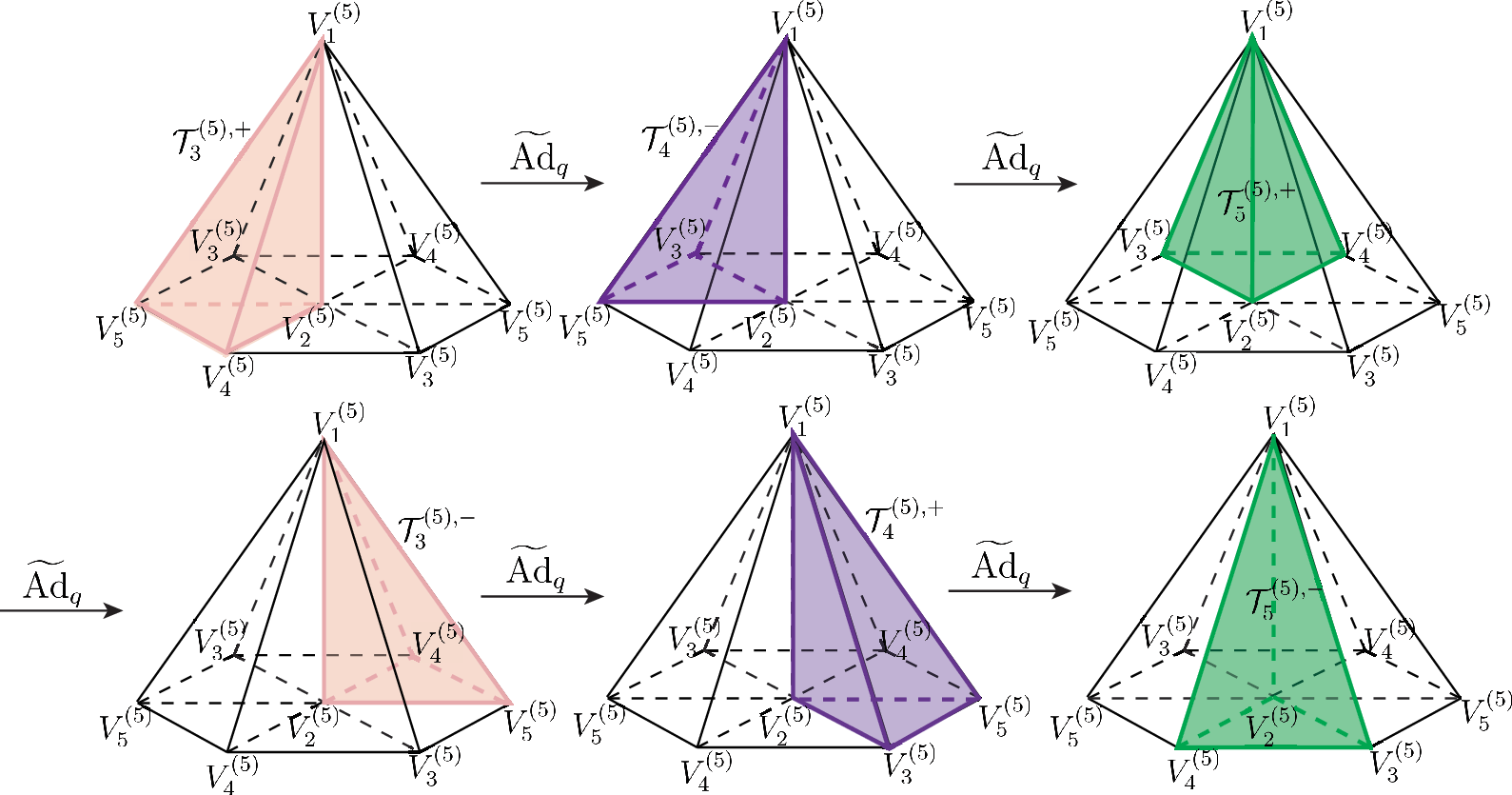}
\caption{Lifted facet dynamics for the order--6 lift
\(\widetilde{\operatorname{Ad}}_q\) over the edge
\([V_1^{(5)},V_2^{(5)}]\). Here \(T_i^{(5),\pm}\) denotes the two lifts
of the facet \(T_i^{(5)}\), and \(\tau_5\) is the deck involution.
Reading left to right across the first row and then the second row gives
\(
T_3^{(5),+}\to T_4^{(5),-}\to T_5^{(5),+}\to
T_3^{(5),-}\to T_4^{(5),+}\to T_5^{(5),-},
\)
with \(\widetilde{\operatorname{Ad}}_q^{\,3}=\tau_5\).}
\label{fig:tetra}
\end{figure}

\subsection{Invariance of $\mu_5$ and lifting to $\widetilde M_5$}
This subsection shows that for 5-cells, the natural $G_5=\Sym^+(P_5)\cong A_5$--action on $M_5$ preserves the monodromy character $\mu_5:\pi_1(M_5)\to\{\pm1\}$, and hence lifts to the connected double cover $p_5:\widetilde M_5\to M_5$. 
Let \(\mathcal{I}_5\to M_5\) denote the real line bundle associated to
\(p_5:\widetilde M_5\to M_5\). Proposition~\ref{prop:12.3(2)} supplies, for each edge of \(P_5\), an element of odd order in \(G_5\) fixing that edge pointwise, to fulfill the 5-cell case of Proposition~\ref{prop:odd-stab}. 
By Proposition~\ref{prop:even}, every vertex of \(\Gamma_5\) has valency \(4\), so the assignment \(\mu_5([m_e])=-1\) for each edge meridian \(m_e\) is well-defined. Since every \(g\in G_5\) preserves \(\Gamma_5\) and permutes its edges, Lemma~\ref{lem:12.3} implies that $\mu_5\circ g_*=\mu_5$ for every $g\in G_5$. By Lemma~\ref{lem:12.2}, this is equivalent to saying that each \(g\in G_5\) lifts to the connected double cover \(p_5:\widetilde M_5\to M_5\), uniquely up to the deck involution \(\tau_5\). Lemma~\ref{lem:12.5} packages these individual lifts into the short exact sequence
\[
1\longrightarrow \langle\tau_5\rangle \longrightarrow \widetilde G_5
\xrightarrow{\ \pi\ } G_5\longrightarrow 1.
\]
Proposition~\ref{prop:5cell-splitting} proves that this extension splits. Equivalently, the \(G_5\)-action on \(M_5\) lifts coherently to \(\widetilde M_5\). Then Lemma~\ref{lem:12.4} shows that this coherent lift induces fiberwise linear bundle automorphisms of the associated line bundle \(\mathcal{I}_5 \to M_5\).

\begin{proposition}[Order--$3$ symmetries of the $5$--cell]\label{prop:12.3(2)}
Let \(P_5\subset\R^{4}\) be the regular \(5\)--cell with vertex set \(\left\{V^{(5)}_{1},\dots,V^{(5)}_{5}\right\}\subset \mathbb{R}^{4}\), and let \(\Gamma_5\subset S^{3}\) be the radial projection of the \(1\)--skeleton of \(P_5\). Set \(G_5:=\Sym^{+}(P_5)\cong A_{5}\subset SO(4)\). Let \(E^{(5)}_0:=\left[V^{(5)}_1,V^{(5)}_2\right]\). For any edge \(E^{(5)}=\left[V^{(5)}_i,V^{(5)}_j\right]\) of \(P_5\) there exists \(h\in G_5\) with \(h\left(E^{(5)}_0\right)=E^{(5)}\), and the two nontrivial order--\(3\) elements of \(G_5\) fixing \(E^{(5)}\) pointwise are \(h\Ad_q h^{-1}\) and its inverse. Each restricts to $W_{E^{(5)}}^\perp$, where $W_{E^{(5)}}:=\Span_{\R}\left\{V^{(5)}_i,V^{(5)}_j\right\}$, to a rotation by \(\pm 2\pi/3\). In particular, \(G_5\) contains exactly \(20\) elements of order \(3\).
\end{proposition}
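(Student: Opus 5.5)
The plan is to reduce everything to the model edge \(E^{(5)}_0\) and then transport by conjugation. The 5-cell is regular, so \(G_5\cong A_5\) acts on the vertex set as the alternating group. This action is transitive on unordered pairs of vertices. Given \(\{i,j\}\), I will pick a permutation in \(S_5\) sending \(\{1,2\}\) to \(\{i,j\}\). If it is odd, I compose it with the transposition \((1\,2)\) to make it even. The result is an \(h\in G_5\) with \(h(E^{(5)}_0)=E^{(5)}\).

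Next I identify the pointwise stabilizer of \(E^{(5)}_0\) in \(G_5\). An element fixes \(E^{(5)}_0\) pointwise exactly when it fixes \(V^{(5)}_1\) and \(V^{(5)}_2\). Since the action on vertices is faithful, the stabilizer is the subgroup of \(A_5\) fixing \(1\) and \(2\), namely \(A_3\cong\mathbb Z/3\) acting on \(\{3,4,5\}\). The computation in Section~\ref{subsec:5cell} shows that \(\Ad_q\) fixes \(V^{(5)}_1,V^{(5)}_2\) and cycles \(V^{(5)}_3\to V^{(5)}_4\to V^{(5)}_5\), so \(\Ad_q\) generates this stabilizer. Its two nontrivial elements are therefore \(\Ad_q\) and \(\Ad_q^{-1}\).

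On the planes, the fixed plane of \(\Ad_q\) is \(F=\Span\{1,i+j+k\}\). Because \(V^{(5)}_1=1\) and \(V^{(5)}_2=-\tfrac14+s(i+j+k)\) are linearly independent, \(F=W_{E^{(5)}_0}\). By the \(\Ad_a\) formula in Section~\ref{subsec:quat} with \(a=q=\cos(\pi/3)+u\sin(\pi/3)\), where \(u=(i+j+k)/\sqrt3\), \(\Ad_q\) acts on \(F^\perp=u^\perp\cap\Im\HH\) as rotation by \(2\pi/3\), and \(\Ad_q^{-1}\) acts by \(-2\pi/3\).

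For a general edge, conjugation gives the result. The stabilizer of \(E^{(5)}\) is \(h\,\mathrm{Stab}(E^{(5)}_0)\,h^{-1}\), so its nontrivial elements are \(h\Ad_q h^{-1}\) and its inverse. Since \(h\) is linear and orthogonal, it carries \(W_{E^{(5)}_0}\) onto \(W_{E^{(5)}}\) and the orthogonal complements onto each other. It preserves orientation, so conjugating by it keeps each rotation angle \(\pm2\pi/3\) on the normal plane.

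For the count, the order-3 elements of \(A_5\) are the 3-cycles, of which there are \(\binom53\cdot2=20\). Equivalently, each of the 10 edges contributes 2 elements, and these are distinct because an order-3 element of \(A_5\) fixes exactly two vertices and hence exactly one edge. The one point needing care is orientation: \(h\) must be orientation preserving, and it is, since \(G_5\subset\SO(4)\). Then a positively oriented basis of \(W_{E^{(5)}_0}^\perp\) maps to a consistently oriented basis of \(W_{E^{(5)}}^\perp\), so the angle is preserved.
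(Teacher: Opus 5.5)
Your proof is correct and follows essentially the same route as the paper: \(\Ad_q\) fixes \(F=W_{E^{(5)}_0}\) and rotates \(F^\perp\) by \(2\pi/3\), transitivity of \(A_5\) on vertex pairs supplies \(h\), conjugation by \(h\) transports the picture to \(E^{(5)}\), and the count comes from the \(3\)-cycles. Your identification of the pointwise stabilizer as \(A_3\) via faithfulness, and your distinctness check in the count, make explicit what the paper leaves implicit. Two cosmetic points remain: the transposition \((1\,2)\) must be composed on the right, as \(\pi\circ(1\,2)\). Also, orientation-preservation of \(h\) is not actually needed, since \(W_{E^{(5)}}^\perp\) has no preferred orientation and ``rotation by \(\pm 2\pi/3\)'' is only meaningful up to that sign anyway.
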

\begin{proof}
By \S\ref{subsec:5cell}, the map \(\Ad_q\in G_5\subset SO(4)\) has order \(3\) and fixes pointwise the \(2\)--plane \(F=\Fix(\Ad_q)\), rotating \(F^\perp\) by \(2\pi/3\). Moreover, $V_1^{(5)}$ and $V_2^{(5)}$ lie in $F$, so $\mathrm{Ad}_q$ fixes the edge $E_0^{(5)}$ pointwise. Since $V_1^{(5)}$ and $V_2^{(5)}$ are linearly independent and $F$ is a $2$--plane, we have
\[
F=\operatorname{span}_{\mathbb R}\{V_1^{(5)},V_2^{(5)}\}=W_{E_0^{(5)}}.
\]
Therefore $\mathrm{Ad}_q$ restricting to $W_{E_0^{(5)}}^\perp$ is a rotation by $2\pi/3$. Since \(G_5\cong A_5\) acts transitively on the \(2\)--element subsets of \(\{1,\dots,5\}\), for any edge \(E^{(5)}\) there exists \(h\in G_5\) with \(h\left(E^{(5)}_0\right)=E^{(5)}\). Then \(h\Ad_q h^{-1}\) fixes \(E^{(5)}\) pointwise and fixes \(W_{E^{(5)}}=h\bigl(W_{E^{(5)}_0}\bigr)\) pointwise. Hence its restriction to \(W_{E^{(5)}}^\perp\) is a rotation by \(\pm 2\pi/3\).
Any order--\(3\) symmetry fixing both endpoints of \(E^{(5)}\) must act as a \(3\)--cycle on the remaining three vertices; there are exactly two such \(3\)--cycles. As \(P_5\) has \(\binom52=10\) edges, \(G_5\) contains exactly \(20\) elements of order \(3\).
\end{proof}
 
\subsection{From lifts of the double cover to fiberwise lifts.}\label{sssec:cover-to-bundle}
Given $g^*\mathcal{I} \simeq \mathcal{I}$, Lemma~\ref{lem:12.1} gives $\mu \circ g_* = \mu$, thus $\mu$ is $G$--invariant. Therefore, from Lemma~\ref{lem:12.2}, one knows that every $g\in G_5$ admits a lift. We then define the \emph{group of lifts}
\[
\widetilde G_5:=\{\widetilde g\in \mathrm{Diff}(\widetilde M_5)\mid \exists g\in G_5\text{ such that }
p_5\circ\widetilde g=g\circ p_5\}.
\]

Lemma~\ref{lem:12.4} shows that for each \(g\in G_5\), a choice of lift \(\widetilde g:\widetilde M_5\to \widetilde M_5\) induces a fiberwise linear bundle automorphism \(\widehat g:\mathcal I_5\to\mathcal I_5\), and replacing \(\widetilde g\) by \(\tau_5\circ\widetilde g\) replaces \(\widehat g\) by \(-\widehat g\). Thus a splitting of the extension in Lemma~\ref{lem:12.5} is equivalent to a coherent choice of bundle maps \(\widehat g\) satisfying \(\widehat{gh}=\widehat g\circ \widehat h\) for all \(g,h\in G_5\). Proposition~\ref{prop:5cell-splitting} constructs such a splitting when \(G_5=\Sym^{+}(P_5)\cong A_5\).

\begin{proposition}[Splitting for the $5$--cell group]\label{prop:5cell-splitting}
The \(G_5\)-action on \(M_5\) lifts to a fiberwise linear action by bundle automorphisms
\(
\widehat g:\mathcal I_5\to \mathcal I_5
\)
covering \(g\), such that
\(
\widehat{gh}=\widehat g\circ \widehat h
\)
for all \(g,h\in G_5\).
\end{proposition}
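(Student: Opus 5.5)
By Lemma~\ref{lem:12.5} and Lemma~\ref{lem:12.4}, it suffices to show that the central extension
\[
1\to\langle\tau_5\rangle\to\widetilde G_5\to G_5\cong A_5\to 1
\]
splits. Once a homomorphic section $s:G_5\to\widetilde G_5$ exists, Lemma~\ref{lem:12.4}(2) converts each $s(g)$ into a fiberwise linear automorphism $\widehat g$ of $\mathcal I_5$. The multiplicativity $\widehat{gh}=\widehat g\circ\widehat h$ then follows directly from the formula $\widehat g([\tilde x,u])=[s(g)(\tilde x),u]$.

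Central $\mathbb Z/2$-extensions of $A_5$ are classified by $H^2(A_5;\mathbb Z/2)\cong\mathbb Z/2$. The only nonsplit one is the binary icosahedral group $2I\cong SL(2,5)$. That group has a unique element of order $2$, and every element of order $3$ in $A_5$ lifts there only to elements of orders $3$ and $6$, which gives no contradiction by itself. The useful test is instead the lifts of involutions. In $2I$, every lift of an order-$2$ element of $A_5$ has order $4$. In the split extension $A_5\times\mathbb Z/2$, some lift has order $2$. So the plan is to exhibit one involution $g\in G_5$ and a lift $\tilde g$ with $\tilde g^2=\mathrm{id}$. This forces $\widetilde G_5\not\cong 2I$, hence the extension splits. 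Concretely, the section restricts to the index-two subgroup of $\widetilde G_5$ that maps isomorphically to $A_5$; equivalently, take the commutator subgroup $[\widetilde G_5,\widetilde G_5]$, which then maps isomorphically onto $A_5$.

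To produce the involution, I would take the element realizing the double transposition $(V_1V_2)(V_3V_4)$. It is a rotation of $\mathbb R^4$ by $\pi$ in two orthogonal planes, i.e.\ it acts as $-\mathrm{id}$ on $\mathbb R^4$ restricted to a suitable decomposition, or by angles $(\pi,0)$. Since it is a product of two transpositions with $V_5$ fixed, it fixes a $2$-plane pointwise and acts by $-1$ on the complement. Its fixed $2$-plane contains $V_5$ and the midpoints of $[V_1,V_2]$ and $[V_3,V_4]$. On $S^3$ the fixed circle $C$ meets $\Gamma_5$ only at $V_5$, and perhaps at points where $C$ crosses edges; the edges $[V_1,V_2]$ and $[V_3,V_4]$ are rotated into themselves and meet $C$ at their midpoints.

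A lift $\tilde g$ satisfies $\tilde g^2\in\{\mathrm{id},\tau_5\}$. To decide which, pick a point $x$ on $C\setminus\Gamma_5$ and a lift $\tilde x$. Since $g$ fixes $x$, one of the two lifts fixes $\tilde x$. That lift squares to a lift of the identity fixing $\tilde x$, so it squares to $\mathrm{id}$ by uniqueness of lifts. Hence an order-$2$ lift exists as soon as $C\setminus\Gamma_5\neq\emptyset$, which is clear because $C$ is a great circle and $\Gamma_5$ meets it in finitely many points.

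The main obstacle is to check the fixed circle carefully, namely that $g$ really fixes a point of $M_5$ rather than acting freely. I would compute it explicitly in the quaternionic coordinates of \S\ref{subsec:5cell}, or note that $g$ fixes the barycenter direction of $V_5$ and the midpoint of $[V_1,V_2]$ together with their span. With this done, the same fixed-point argument applies uniformly. As a cross-check, the lift of $\Ad_q$ fixing a point of $F\cap S^3\setminus\Gamma_5$ has order $3$, which is consistent with the splitting.
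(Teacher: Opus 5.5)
Your proof is correct. Its geometric core is the same as the paper's: an involution of $G_5$ whose fixed great circle meets $M_5$ has a lift $\tilde g$ with $\tilde g^{2}=\mathrm{id}$. The paper uses an abstract involution $r$ and the eigenvalue argument. Your explicit fixed plane $\operatorname{span}_{\mathbb R}\{V_5,V_1+V_2\}$ for $(V_1V_2)(V_3V_4)$ is also correct.

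The difference lies in how this input becomes a splitting.
\begin{itemize}
\item The paper uses the presentation $A_5=\langle r,s\mid r^2=s^3=(rs)^5=1\rangle$ and checks the relations by hand. It lifts $s$ to order $3$ by the odd-order adjustment and $r$ to order $2$ via the fixed point. Since $\tau_5$ is central and $5$ is odd, replacing $\tilde r$ by $\tau_5\tilde r$ if needed gives $(\tilde r\tilde s)^5=\mathrm{id}$ without spoiling $\tilde r^{2}=\mathrm{id}$.
\item You instead quote $H^2(A_5;\mathbb Z/2)\cong\mathbb Z/2$ and identify the nonsplit extension with $2I$, whose only involution is central. An order-$2$ lift of an involution then forces the split class, with section image $[\widetilde G_5,\widetilde G_5]$.
\end{itemize}

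Your route pinpoints the only possible obstruction: lifts of involutions, not of the order-$3$ or order-$5$ elements. It also produces a canonical section, which is in fact unique because $A_5$ is perfect. The cost is importing the Schur multiplier of $A_5$ and the structure of $SL(2,5)$. The paper's route is elementary, and it is the same template the paper reuses for the $8$-, $24$-, $120$- and $600$-cell groups.

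One slip to fix: the phrase ``a rotation by $\pi$ in two orthogonal planes'' describes $-\mathrm{id}$. That map acts freely on $S^3$ and would break the fixed-point step. The correct description, which you give immediately afterwards, is rotation angles $(\pi,0)$.
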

\begin{proof}
Let \(H_5:=\ker(\mu_5)\). By Lemma~\ref{lem:12.3}, for every \(g\in G_5\) we have \(\mu_5\circ g_*=\mu_5\).
Since \(p_5:\widetilde M_5\to M_5\) is the connected double cover corresponding to \(H_5\), Lemma~\ref{lem:12.2} implies that \(g_*(H_5)=H_5\) and that every \(g\in G_5\) admits a lift to \(\widetilde M_5\), unique up to composition with the deck involution \(\tau_5\).

Let \(\pi:\widetilde G_5\to G_5\) be given by \(\pi(\widetilde g)=g\).
By Lemma~\ref{lem:12.5}, this fits into a short exact sequence
\[
1\longrightarrow \langle\tau_5\rangle\longrightarrow \widetilde G_5
\xrightarrow{\ \pi\ } G_5\longrightarrow 1,
\]
so it suffices to construct a splitting \(\sigma:G_5\to\widetilde G_5\).

According to Eq.~(\ref{eq:5}), \(G_5\cong A_5\) has the presentation
\[
A_5=\langle r,s\mid r^2=1,\ s^3=1,\ (rs)^5=1\rangle.
\]

\noindent\emph{Step 1: choose a lift of \(s\) of order \(3\).}
By Lemma~\ref{lem:12.2}, \(s\) admits a lift \(\widetilde s\) to \(\widetilde M_5\). Since \(s^3=\id\), the element \(\widetilde s^{\,3}\) is a deck transformation, so
\(
\widetilde s^{\,3}\in\{\id,\tau_5\}.
\)
If \(\widetilde s^{\,3}=\tau_5\), replace \(\widetilde s\) by \(\tau_5\widetilde s\). By Lemma~\ref{lem:12.4}(1), every lift commutes with \(\tau_5\), hence
\(
(\tau_5\widetilde s)^3
=\tau_5^{\,3}\widetilde s^{\,3}
=\tau_5\cdot\tau_5
=\id.
\)
Thus we may choose a lift \(\widetilde s\) such that
\(
\widetilde s^{\,3}=\id.
\)

\smallskip\noindent\emph{Step 2: choose a lift of \(r\) of order \(2\).}
As an involution in \(A_5\), \(r\) is a double transposition on the five vertices, so it fixes exactly one vertex of \(P_5\). 
Since \(r^2=\id\), the minimal polynomial of \(r\) divides \(x^2-1=(x-1)(x+1)\), so \(r\) is diagonalizable with eigenvalues in \(\{\pm1\}\). As \(r\in SO(4)\), this implies \(\det(r)=1\), so the multiplicity of the eigenvalue \(-1\) is even. Since \(r\neq \pm\id\),
the only possibility is that both \(+1\) and \(-1\) occur with multiplicity \(2\). Hence \(\ker(r-\mathrm{id})\) is a \(2\)-plane in \(\mathbb R^4\), and
\(\operatorname{Fix}(r|S^3)=\ker(r-\mathrm{id})\cap S^3\) is a great circle.
This circle cannot be contained in \(\Gamma_5\); otherwise \(r\) would fix the
radial image of some edge pointwise, whereas the orientation-preserving
pointwise stabilizer of an edge is cyclic of order \(3\) by the triangular
edge figure. Hence \(\operatorname{Fix}(r|S^3)\cap M_5\ne\emptyset\). Choose \(x_r\in \Fix(r|_{S^3})\setminus \Gamma_5\), a lift \(\widetilde x_r\in p_5^{-1}(x_r)\), and let \(\widetilde r\) be the unique lift of \(r\) such that \(\widetilde r(\widetilde x_r)=\widetilde x_r\). Then \(\widetilde r^{\,2}\) is a deck transformation, since it covers \(r^2=\id\). Because \(\widetilde r^{\,2}\) fixes \(\widetilde x_r\) and \(\tau_5\) acts freely, it follows that
\(
\widetilde r^{\,2}=\id.
\)
The same argument also works for $\tau_5 \widetilde r.$

\smallskip
\noindent\emph{Step 3: enforce the relation \((rs)^5=1\) upstairs.}
Since \(p_5\circ(\widetilde r\widetilde s)=(rs)\circ p_5\), iterating gives
\[
p_5\circ(\widetilde r\widetilde s)^5=(rs)^5\circ p_5=p_5.
\]
Hence \((\widetilde r\widetilde s)^5\) is a lift of \(\id_{M_5}\), i.e.\ a deck transformation. If \((\widetilde r\widetilde s)^5=\tau_5\), replace \(\widetilde r\) by \(\tau_5\widetilde r\). By Lemma~\ref{lem:12.4}(1), every lift commutes with \(\tau_5\), so \(\tau_5\) is central in \(\widetilde G_5\). Therefore
\(
(\tau_5\widetilde r)^2=\tau_5^2\widetilde r^{\,2}=\id
\)
and
\(
((\tau_5\widetilde r)\widetilde s)^5
=\tau_5^{\,5}(\widetilde r\widetilde s)^5
=\tau_5\cdot\tau_5
=\id.
\)
Hence the chosen lifts satisfy the defining relations, so
\(r\mapsto \widetilde r\) and \(s\mapsto \widetilde s\) extend to a homomorphism
\(
\sigma:G_5\to \widetilde G_5
\)
with \(\pi\circ\sigma=\id_{G_5}\).

For each \(g\in G_5\), let \(\widehat g:\mathcal I_5\to\mathcal I_5\) be the bundle map induced
by the lift \(\sigma(g)\) via Lemma~\ref{lem:12.4}.
Then \(\widehat{gh}=\widehat g\circ\widehat h\) follows from
\(\sigma(gh)=\sigma(g)\sigma(h)\).
\end{proof}

\begin{remark}[The two lifts of $Ad_q$ in the 5-cell case]
Let $\sigma:G_5\to \widetilde G_5$ denote the splitting constructed in the proof of
Proposition~\ref{prop:5cell-splitting}. Then $\sigma(Ad_q)$ is a lift of $Ad_q$ of order $3$. The lift fixed
in \S\ref{sssec:pm-labels} to describe the lifted facets is the other lift, namely
\(
\widetilde{Ad}_q=\tau_5\circ \sigma(Ad_q),
\) and hence \(\widetilde{Ad}_q^{\,3}=\tau_5.\)
Thus Proposition~\ref{prop:5cell-splitting} uses the coherent order--$3$ lift, whereas \S\ref{sssec:pm-labels} exhibits the order--$6$ lift.
\end{remark}
\section{Order--$3$ Isometries on Regular $8$--cell}
\label{sec:8cell}
Let \(P_8 := [-1,1]^4 \subset \R^4\) denote the regular \(8\)-cell, let \(\Gamma_8 \subset S^3\) be the radial projection of its \(1\)-skeleton, and set \(M_8 := S^3 \setminus \Gamma_8\). Let \(G_8 := \Sym^{+}(P_8)\) denote the orientation-preserving symmetry group of \(P_8\). The goal of this section is to prove the \(8\)-cell cases of Propositions~\ref{prop:odd-stab} and~\ref{prop:lift}. Using the explicit model of \(P_8\), Proposition~\ref{prop:13.1} shows that for each edge \(E^{(8)}\subset P_8\) there are two nontrivial order--\(3\) elements of \(G_8\) fixing \(E^{(8)}\) pointwise and acting on \(W_{E^{(8)}}^\perp\) by rotation through \(\pm 2\pi/3\); this gives the \(8\)-cell case of Proposition~\ref{prop:odd-stab}. Proposition~\ref{prop:even} shows that every vertex of \(\Gamma_8\) has valency \(4\), so the assignment \(\mu_8([m_e])=-1\) on edge meridians is well defined. Proposition~\ref{prop:13.2} proves that this extension splits and therefore $G_8$ lifts via a group homomorphism.

\subsection{The $8$--cell model.}\label{subsec:8cell}

\begin{figure}[t]
  \centering
  \includegraphics[width=0.8\textwidth]{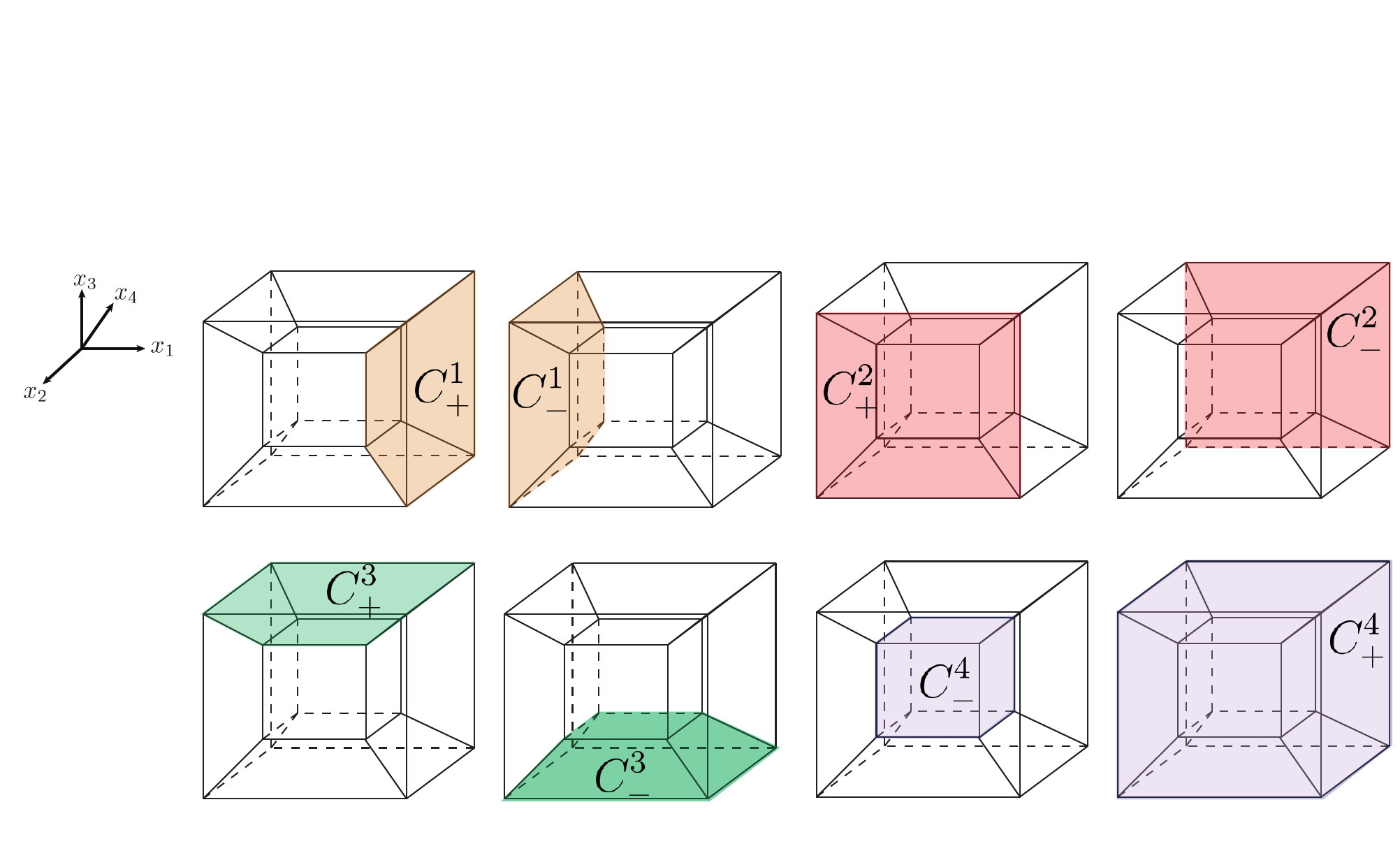}
  \caption{The eight Euclidean cubic facets of the \(8\)-cell \(P_8=[-1,1]^4\):
  \(C^1_{+}=(+1,r,s,t)\), \(C^1_{-}=(-1,r,s,t)\),
  \(C^2_{+}=(r,+1,s,t)\), \(C^2_{-}=(r,-1,s,t)\),
  \(C^3_{+}=(r,s,+1,t)\), \(C^3_{-}=(r,s,-1,t)\),
  \(C^4_{+}=(r,s,t,+1)\), \(C^4_{-}=(r,s,t,-1)\),
  with \(r,s,t\in[-1,1]\).}
  \label{fig:8cell-facets}
\end{figure}

The regular \(8\)-cell is the \(4\)-dimensional cube; see e.g.\ \cite{coxeter1973regular}. It has Schl\"afli symbol \(\{4,3,3\}\), so its facets are Euclidean cubes and its vertex figures are regular tetrahedra. 
\(P_8\) has \(16\) vertices, \(32\) edges, and \(8\) cubic \(3\)-cells. Its edge length is \(2\), and every vertex has distance \(2\) from the origin.
The vertices of \(P_{8}\) are the \(16\) sign vectors \((\pm1,\pm1,\pm1,\pm1)\); we enumerate them lexicographically as \(V^{(8)}_{1},\dots,V^{(8)}_{16}\), so that
\[
V^{(8)}_{1}=(+1,+1,+1,+1),
V^{(8)}_{8}=(+1,-1,-1,-1),
V^{(8)}_{9}=(-1,+1,+1,+1),
V^{(8)}_{16}=(-1,-1,-1,-1).
\]
For \(i\in\{1,2,3,4\}\) and \(\varepsilon\in\{\pm1\}\), let
\[
C^{i}_{\varepsilon}:=\{(x_{1},x_{2},x_{3},x_{4})\in \partial P_{8}\;:\;x_{i}=\varepsilon\}
\]
denote the cubic facet obtained by fixing the \(i\)-th coordinate. Two distinct facets \(C^{i}_{\varepsilon}\) and \(C^{j}_{\delta}\) meet along a square face if and only if \(i\neq j\); the opposite pair \(C_{+}^{i}\) and \(C_{-}^{i}\) are disjoint. Figure~\ref{fig:8cell-facets} records this labeling of the eight Euclidean cubic facets.

In coordinates, \(\Ad_{q}\) acts by
\begin{equation}\label{eq:8cell-adq}
\Ad_{q}(x_{1},x_{2},x_{3},x_{4})=(x_{1},x_{4},x_{2},x_{3}).
\end{equation}
Since \(\mathrm{Ad}_q\) fixes the first coordinate and cyclically permutes
the last three, the coordinate formula (14.1) shows that \(\mathrm{Ad}_q\)
has order \(3\) and preserves \(P_8\), hence permutes the vertices, edges,
and facets of \(P_8\). By quaternion conjugation, \(\Ad_q\) fixes the \(2\)-plane 
\(
F=\Span_{\R}\{1,\ i+j+k\}
\)
pointwise and rotates \(F^\perp\) by angle \(2\pi/3\). Hence \(\Ad_q\) fixes pointwise the two edges
$
E_{+}^{(8)}=\left[V^{(8)}_1,V^{(8)}_9\right]$ and $E_{-}^{(8)}=\left[V^{(8)}_8,V^{(8)}_{16}\right],
$
preserves the two facets \(C^1_\pm\), and cyclically permutes
\(
(C^2_+ , C^3_+ , C^4_+ )(
C^2_- , C^3_- ,C^4_- ).
\)
This facet motion is illustrated schematically in Figure~\ref{fig:8cell-adq}: the two facets \(C^1_\pm\) are setwise invariant, while \(C^2_\pm,C^3_\pm,C^4_\pm\) form two \(3\)-cycles.

\begin{figure}[t]
  \centering
  \includegraphics[width=\textwidth]{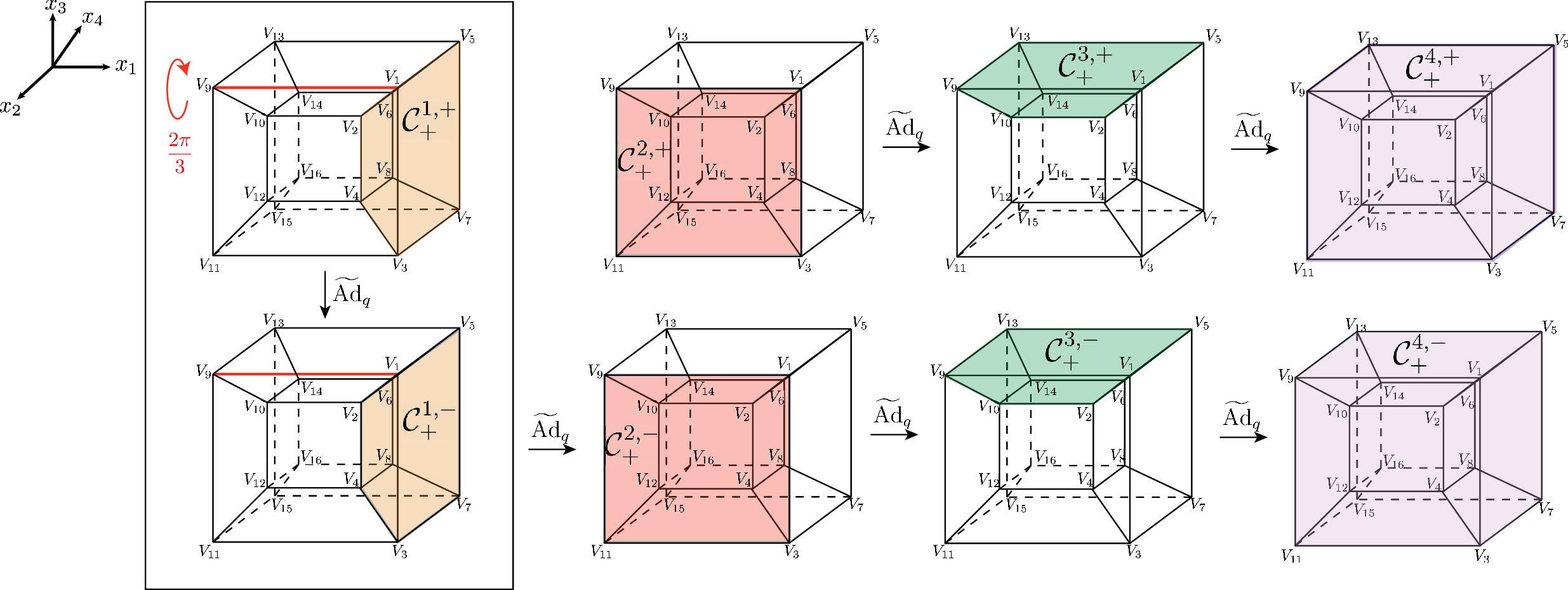}
  \caption{Lifted facet dynamics for the order--3 lift
\(\widetilde{\operatorname{Ad}}_q\). We write \(V_i:=V_i^{(8)}\).
 The figure displays the two cycles over
the base facets \(C^2_+,C^3_+,\) and \(C^4_+\). The two cycles are:
\(
C^{2,+}_{+}\to C^{3,+}_{+}\to C^{4,+}_{+}\) (first row), and \(
C^{2,-}_{+}\to C^{3,-}_{+}\to C^{4,-}_{+}\) (second row).
The analogous cycles over \(C^2_-,C^3_-,C^4_-\) are not shown. The lifted
facets over \(C^1_+\) and \(C^1_-\) are fixed by this order--3 lift.}
\label{fig:8cell-adq}
\end{figure}

\subsubsection{Lifted facet decomposition and the double cover.}\label{sssec:8cell-pm-labels}
Let
\(
\mu_8:\pi_1(M_8)\to\{\pm1\}
\)
denote the monodromy character determined by $\mu_8([m_e])=-1$ on edge meridians, let
\(
p_8:\widetilde M_8\to M_8
\)
be the associated connected double cover, and let $\tau_8$ denote its nontrivial deck involution.
For \(i\in\{1,2,3,4\}\) and \(\varepsilon\in\{+,-\}\), set
\[
\mathcal C_{\varepsilon}^i:=\pi(C^i_{\varepsilon})\subset S^3,
\qquad
\mathring{\mathcal C}^i_{\varepsilon}:=\mathcal C_{\varepsilon}^i\setminus \Gamma_8|_{\mathcal C_{\varepsilon}^i}\subset M_8.
\]
The eight sets \(\{\mathring{\mathcal C}^i_{\varepsilon}\}_{i,\varepsilon}\) are the facets in $M_8$ determined by the cubical cell decomposition of $S^3$ obtained from the boundary complex of $P_8$ by radial projection.
For \(i\neq j\) and \(\varepsilon,\delta\in\{+,-\}\), let
\[
F_{ij}^{\varepsilon,\delta}:=C^{i}_{\varepsilon}\cap C^{j}_{\delta},
\qquad
\mathcal{F}_{ij}^{\varepsilon,\delta}:=\pi(F_{ij}^{\varepsilon,\delta})\subset S^3,
\qquad
\mathring{F}_{ij}^{\varepsilon,\delta}:=\mathcal{F}_{ij}^{\varepsilon,\delta}\setminus \Gamma_8|_{\mathcal{F}_{ij}^{\varepsilon,\delta}}.
\]
Then \(\mathcal{F}_{ij}^{\varepsilon,\delta}\) is the radial projection of a square face of \(\partial P_{8}\).

The preimage \(p_8^{-1}(\mathring{\mathcal C}_{i,\varepsilon})\) is the disjoint union of two copies of
\(\mathring{\mathcal C}_{i,\varepsilon}\); after fixing one choice of sheet we write
\[
p_8^{-1}(\mathring{\mathcal C}^i_{\varepsilon})=\mathcal C_{\varepsilon}^{i,+}\sqcup \mathcal C_{\varepsilon}^{i,-},
\qquad
\tau_8(\mathcal C_{\varepsilon}^{i,\pm})=\mathcal C_{\varepsilon}^{i,\mp}.
\]
Likewise,
\[
p_8^{-1}(\mathring{\mathcal{F}}_{ij}^{\varepsilon,\delta})
=
\mathcal{F}_{ij}^{\varepsilon,\delta,+}\sqcup \mathcal{F}_{ij}^{\varepsilon,\delta,-},
\qquad
\tau_8(\mathcal{F}_{ij}^{\varepsilon,\delta,\pm})=\mathcal{F}_{ij}^{\varepsilon,\delta,\mp}.
\]

By Lemma~\ref{lem:12.3} and Lemma~\ref{lem:12.2}, the symmetry \(\Ad_q\in G_8 \subset SO(4)\) admits exactly two lifts to \(\widetilde M_8\), differing by composition with $\tau_8$. Since \(\Ad_q\) has order \(3\), one lift has order \(3\) and the other has order \(6\). 
Let \(\widetilde{\operatorname{Ad}}_q\) denote the order--3 lift. Then
\(\tau_8\circ\widetilde{\operatorname{Ad}}_q\) is the order--6 lift, characterized by
\(
(\tau_8\circ\widetilde{\operatorname{Ad}}_q)^3=\tau_8 .
\)
After choosing the sheet labels accordingly, its action on the lifted facets is
\[
\tau_8 \widetilde{\Ad}_q:\quad
(\mathcal C_{+}^{1,+}\ \mathcal C_{+}^{1,-})\, (\mathcal C_{-}^{1,+}\ \mathcal C_{-}^{1,-})\,
(\mathcal C_{+}^{2,+}\ \mathcal C_{+}^{3,-}\ \mathcal C_{+}^{4,+}\ \mathcal C_{+}^{2,-}\ \mathcal C_{+}^{3,+}\ \mathcal C_{+}^{4,-})\,
(\mathcal C_{-}^{2,+}\ \mathcal C_{-}^{3,-}\ \mathcal C_{-}^{4,+}\ \mathcal C_{-}^{2,-}\ \mathcal C_{-}^{3,+}\ \mathcal C_{-}^{4,-})\,
\]
while the action of $\widetilde{\Ad}_q$ 
\[
\widetilde{\Ad}_q:(\mathcal C_{+}^{2,+}\ \mathcal C_{+}^{3,+}\ \mathcal C_{+}^{4,+}) (\mathcal C_{+}^{2,-}\ \mathcal C_{+}^{3,-}\ \mathcal C_{+}^{4,-})(\mathcal C_{-}^{2,+}\ \mathcal C_{-}^{3,+}\ \mathcal C_{-}^{4,+}) (\mathcal C_{-}^{2,-}\ \mathcal C_{-}^{3,-}\ \mathcal C_{-}^{4,-})
\]
is illustrated in Fig.~\ref{fig:8cell-adq}.

\begin{proposition}[Order--3 symmetries of the $8$-cell]\label{prop:13.1}
Let \(P_8 \subset \mathbb{R}^4\) be the regular \(8\)-cell with vertices
\(
\left\{V^{(8)}_1,\dots,V^{(8)}_{16}\right\}\subset \mathbb{R}^4,
\)
and let \(\Gamma_8 \subset S^3\) be the radial projection of the \(1\)-skeleton of \(P_8\). Set
\(
G_8 := \operatorname{Sym}^{+}(P_8)\subset SO(4).
\)
Let
\(
E^{(8)}_0 := \left[V^{(8)}_1,V^{(8)}_9\right].
\)
For any edge \(E^{(8)}=\left[V_i^{(8)},V_j^{(8)}\right]\) of \(P_8\), there exists \(h\in G_8\) with \(h\left(E^{(8)}_0\right)=E^{(8)}\), and the two nontrivial order--\(3\) elements of \(G_8\) fixing \(E^{(8)}\) pointwise are
\(
h\operatorname{Ad}_qh^{-1}
\quad\text{and}\quad
\left(h\operatorname{Ad}_qh^{-1}\right)^{-1}.
\)
Each restricts on
\(
W_{E^{(8)}}^\perp
\), where \(W_{E^{(8)}} := \operatorname{span}_{\mathbb{R}}\left\{V_i^{(8)},V_j^{(8)}\right\},
\)
to a rotation by \(\pm 2\pi/3\). In particular, $G_8$ contains exactly 32 elements of order 3.
\end{proposition}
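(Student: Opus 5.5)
The argument follows the template of Proposition~\ref{prop:12.3(2)}, using the explicit coordinate formula \eqref{eq:8cell-adq}. The first step is to check the base edge. The coordinate formula shows that $\Ad_q$ fixes $V^{(8)}_1=(1,1,1,1)$ and $V^{(8)}_9=(-1,1,1,1)$. These two vectors are linearly independent, and both lie in the fixed $2$-plane $F=\Span_{\R}\{1,i+j+k\}$, so
\[
W_{E^{(8)}_0}=\Span_{\R}\{V^{(8)}_1,V^{(8)}_9\}=F .
\]
Since $\Ad_q$ rotates $F^\perp$ by $2\pi/3$, it fixes $E^{(8)}_0$ pointwise and acts on $W_{E^{(8)}_0}^\perp$ as the stated rotation. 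Its inverse $\Ad_q^{-1}=\Ad_q^2$ acts by $-2\pi/3$.

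The second step is transitivity of $G_8$ on edges. An edge of $P_8$ joins two sign vectors that differ in exactly one coordinate. The group of signed permutation matrices with determinant $+1$ lies in $G_8$. Such a matrix can first permute coordinates so that the differing coordinate becomes the first one. It can then flip signs so that the remaining coordinates become $+1$. If the resulting determinant is $-1$, composing with the flip of $x_1$ fixes the determinant, and this flip preserves the edge $E^{(8)}_0$ because it just swaps its two endpoints. This produces $h\in G_8$ with $h(E^{(8)}_0)=E^{(8)}$. Then $h\Ad_q h^{\mp1}$ fixes $W_{E^{(8)}}=h(W_{E^{(8)}_0})$ pointwise and is orthogonally conjugate to a $\pm2\pi/3$ rotation on $W_{E^{(8)}}^\perp=h(W_{E^{(8)}_0}^\perp)$.

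The third step, which I expect to be the only point needing care, is showing these are the \emph{only} nontrivial order-$3$ elements fixing $E^{(8)}$ pointwise, and then counting. An element of $SO(4)$ fixing $E^{(8)}$ pointwise fixes $W_{E^{(8)}}$ pointwise, so it is determined by its action on $W_{E^{(8)}}^\perp$, where it acts as a rotation in $SO(2)$. This rotation must preserve the edge figure, which is a triangle by Table~\ref{tab:regular4polytopes-figures}. Concretely, for $E^{(8)}_0$ the element permutes the three neighbours $(1,-1,1,1)$, $(1,1,-1,1)$, $(1,1,1,-1)$ of $V^{(8)}_1$ other than $V^{(8)}_9$. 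The pointwise stabilizer therefore embeds in the rotation group $C_3$ of this triangle, so it equals $\{\id,\Ad_q,\Ad_q^{-1}\}$. Conjugating by $h$ gives the same conclusion for every edge.

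For the count, note that each element of order $3$ in $G_8$ is a nontrivial rotation with a $2$-dimensional fixed plane. It is not a pure isoclinic rotation, because it fixes the axis directions as in the base case after conjugation. It also has to fix at least one edge. To see this, observe that the order-$3$ elements of the hyperoctahedral rotation group of order $192$ correspond to $3$-cycles of coordinates combined with sign changes. Each such element fixes exactly two antipodal vertex pairs joined by edges, which is the configuration $E_\pm^{(8)}$ seen for $\Ad_q$. Each order-$3$ element therefore fixes pointwise exactly the two antipodal edges in its plane, so it is counted twice in the edge tally. The $32$ edges times $2$ elements per edge gives $64$, and dividing by $2$ gives $32$. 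I would double-check this count by enumerating directly: there are $4$ choices of fixed coordinate, $2$ cyclic orientations, and $4$ sign patterns, for $4\cdot2\cdot4=32$ elements.
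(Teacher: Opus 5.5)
Your proposal is correct and follows essentially the same route as the paper: the base case uses the fixed plane $F=W_{E^{(8)}_0}$ of $\Ad_q$, an edge-transitive $h\in G_8$ moves it to an arbitrary edge, uniqueness comes from the triangular edge figure, and the count of order-$3$ signed $3$-cycles ($8$ three-cycles times $4$ admissible sign patterns) is exactly your final $4\cdot 2\cdot 4$ enumeration. Your explicit signed-permutation construction of $h$, and your check that the stabilizer permutes the three remaining neighbours of $V^{(8)}_1$, make concrete what the paper takes from regularity and the Schl\"afli symbol (minor slips: $h\Ad_q h^{\mp1}$ should read $h\Ad_q^{\pm1}h^{-1}$, and the ``not isoclinic after conjugation'' remark is circular as phrased, though your explicit form, in which the element fixes $e_a$, settles it).
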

\begin{proof}The coordinate formula
\(
\operatorname{Ad}_q(x_1,x_2,x_3,x_4)=(x_1,x_4,x_2,x_3)
\)
shows that \(\operatorname{Ad}_q\in G_8\) has order \(3\) and preserves \(P_8=[-1,1]^4\). By quaternion conjugation, \(\operatorname{Ad}_q\) fixes the \(2\)-plane
\[
F=\operatorname{span}_{\mathbb{R}}\{1,i+j+k\}
\]
pointwise and rotates \(F^\perp\) by \(2\pi/3\). Since \(V^{(8)}_1\) and \(V^{(8)}_9\) lie in \(F\), the edge
\(
E^{(8)}_0=\bigl[V^{(8)}_1,V^{(8)}_9\bigr]
\)
is fixed pointwise by \(\operatorname{Ad}_q\). Because \(V^{(8)}_1\) and \(V^{(8)}_9\) are linearly independent and \(F\) is a \(2\)-plane, one has
\[
F=\operatorname{span}_{\mathbb{R}}\{V^{(8)}_1,V^{(8)}_9\}=W_{E^{(8)}_0}.
\]
Therefore \(\operatorname{Ad}_q\) restricting to \(W_{E^{(8)}_0}^\perp\) is a rotation by \(2\pi/3\).

Since \(P_8\) is regular, \(G_8\) acts transitively on its edges, so for any edge \(E^{(8)}\) there exists \(h\in G_8\) with \(h\left(E^{(8)}_0\right)=E^{(8)}\). Then \(h\operatorname{Ad}_q h^{-1}\) fixes \(E^{(8)}\) pointwise and restricts on \(W_{E^{(8)}}^\perp\) to a rotation by \(\pm2\pi/3\).

To see that there are no further nontrivial order--\(3\) elements fixing $E^{(8)}$ pointwise, note that the \(8\)-cell has Schl\"afli symbol \(\{4,3,3\}\), so the edge figure is a triangle. Hence, the orientation-preserving action fixing \(E\) pointwise is cyclic of order \(3\). 

Now we show the rotation group $G_8$ of the $8$-cell has exactly $32$ elements of order $3$. View $G_8$ as the orientation-preserving permutation group of $P_8$, every $g\in G_8$ acts on the standard basis $\{e_1,e_2,e_3,e_4\}$ by
\(
g(e_i)=\pm e_{\sigma(i)}
\)
for some permutation $\sigma\in S_4$.
If $g$ has order $3$, then $\sigma$ must have order dividing $3$. Since $\sigma\neq \mathrm{id}$, it follows
that $\sigma$ must be a $3$-cycle. There are exactly $8$ such $3$-cycles in $S_4$. To determine when $g$ has order $3$, compute $g^3$ on the basis vectors, i.e.,
\(
g^3(e_i)=\varepsilon^3_i e^3_{\sigma(i)}.
\)
So $\varepsilon_a=1$ for some $a\in\{1,2,3,4\}$ and $\varepsilon_b\varepsilon_c\varepsilon_d=1$ for some $\{b,c,d\}\in\{1,2,3,4\}\setminus\{a\}$. For $g^3$ to be the identity, the product must be $+1$, and there are exactly four triples $(\varepsilon_b,\varepsilon_c,\varepsilon_d)\in\{\pm1\}^3$ with product $1$:
\[
(+,+,+),\qquad (+,-,-),\qquad (-,+,-),\qquad (-,-,+).
\]
So each of the $8$ underlying $3$-cycles has exactly $4$ lifts of order $3$, for a total of 32 actions. Finally, if $g^3=\mathrm{id}$, then
\(
\det(g)^3=1.
\)
Since $\det(g)=\pm1$, this forces $\det(g)=1$. Thus every element of order $3$ is orientation-preserving, so all such elements already lie in $G_8$.
\end{proof}

\subsection{The lift group \(\widetilde G_{8}\) and the associated $\mathbb{Z}/2$--extension.}

From Lemma~\ref{lem:12.2} and Lemma~\ref{lem:12.3}, one knows that \(\mu_8\) is \(G_8\)--invariant and every \(g\in G_8\) admits a lift. Define the \emph{group of lifts}
\[
\widetilde G_8
:=
\{\widetilde g\in \operatorname{Diff}(\widetilde M_8)\mid \exists g\in G_8 \text{ such that }
p_8\circ\widetilde g=g\circ p_8\}.
\]
Then Lemma~\ref{lem:12.5} gives a short exact sequence
\[
1\longrightarrow\langle\tau_8\rangle\longrightarrow \widetilde G_8
\xrightarrow{\ \pi\ } G_8\longrightarrow 1.
\]
Lemma~\ref{lem:12.4} shows that for each \(g\in G_8\), a choice of lift \(\widetilde g:\widetilde M_8\to\widetilde M_8\) induces a fiberwise linear bundle automorphism \(\widehat g:\mathcal I_8\to \mathcal I_8\), and replacing \(\widetilde g\) by \(\tau_8\circ\widetilde g\) replaces \(\widehat g\) by \(-\widehat g\). Thus a splitting of the extension above is equivalent to a coherent choice of bundle maps \(\widehat g\) satisfying \(\widehat{gh}=\widehat g\circ\widehat h\) for all \(g,h\in G_8\).

\begin{proposition}[Splitting for the $8$--cell group]\label{prop:13.2}
The \(G_8\)--action on \(M_8\) lifts to a coherent choice of fiberwise linear bundle automorphisms
\(
\widehat g:\mathcal I_8\to \mathcal I_8
\)
covering \(g\), such that \(\widehat{gh}=\widehat g\circ\widehat h\) for all \(g,h\in G_8\).
\end{proposition}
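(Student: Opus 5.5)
Following the 5-cell argument (Proposition~\ref{prop:5cell-splitting}), the plan is to build a splitting $\sigma:G_8\to\widetilde G_8$ of the extension from Lemma~\ref{lem:12.5}, and then pass to bundle maps via Lemma~\ref{lem:12.4}. Lemma~\ref{lem:12.3} gives $\mu_8\circ g_*=\mu_8$ for every $g\in G_8$, so by Lemma~\ref{lem:12.2} each $g$ lifts, uniquely up to $\tau_8$. By Lemma~\ref{lem:12.4}(1), $\tau_8$ is central. So it suffices to pick lifts of generators that satisfy the defining relations of a presentation of $G_8$.

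We use the presentation \eqref{eq:8} of $[4,3,3]^+$ with generators $\sigma_1,\sigma_2,\sigma_3$ and relations
\[
\sigma_1^4=\sigma_2^3=\sigma_3^3=(\sigma_1\sigma_2)^2=(\sigma_2\sigma_3)^2=(\sigma_1\sigma_2\sigma_3)^2=1.
\]
The lifts are chosen in three steps.

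\emph{Step 1.} Lift the order-$3$ generators $\sigma_2,\sigma_3$ to lifts of order $3$. Take any lift; its cube is either $\id$ or $\tau_8$, and in the latter case multiply by $\tau_8$, exactly as in Step~1 of the 5-cell proof.

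\emph{Step 2.} Handle each relation of the form $(\cdot)^2=1$ by a fixed-point argument. Any $g\in G_8$ of order $2$ with $g\neq-\id$ fixes a great circle or great $2$-sphere pointwise. If that fixed set meets $M_8$, the lift that fixes a point of $p_8^{-1}(\Fix g)$ squares to $\id$, because $\tau_8$ acts freely. The case $g=-\id\in G_8$ (the antipodal map) acts freely on $S^3$ and needs separate treatment. For it, compute the monodromy directly: $\widetilde{(-\id)}^{\,2}$ equals $\id$ or $\tau_8$ according as $\mu_8$ evaluates to $\pm1$ on the loop $\gamma\cdot(-\gamma)$, where $\gamma$ runs from $x$ to $-x$. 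Since the $\tau_8$-ambiguity in a lift cancels in its square, this value does not depend on which lift is chosen. Consequently the splitting can only exist if a suitable $\gamma$ crosses an even number of edge-spanning surfaces, and we check this by hand using the explicit cube.

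\emph{Step 3.} Lift $\sigma_1$, of order $4$. Its lift satisfies $\tilde\sigma_1^{\,4}\in\{\id,\tau_8\}$, and replacing $\tilde\sigma_1$ by $\tau_8\tilde\sigma_1$ does not change $\tilde\sigma_1^{\,4}$. So this is a genuine obstruction, not a normalization. We would compute it via $\tilde\sigma_1^{\,2}$, which is a lift of the involution $\sigma_1^2$: if $\sigma_1^2$ has fixed points in $M_8$, its fixing lift squares to $\id$. The remaining freedom, one $\tau_8$ factor for each of $\tilde\sigma_1,\tilde\sigma_2,\tilde\sigma_3$, is then used to fix the three relations $(\sigma_1\sigma_2)^2$, $(\sigma_2\sigma_3)^2$, $(\sigma_1\sigma_2\sigma_3)^2$. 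Flipping $\tilde\sigma_1$ toggles the first and third relations, and the available flips must be consistent with Step~1. Here is a caveat: flipping an order-$3$ lift breaks $\tilde\sigma^3=\id$. So rather than flipping, verify each product relation by the fixed-point argument of Step~2, using explicit coordinates for the products. Each of $\sigma_1\sigma_2$, $\sigma_2\sigma_3$, $\sigma_1\sigma_2\sigma_3$ is a half-turn about an axis through, respectively, an edge midpoint, a face center and a cell center of the appropriate flag, and we show its fixed great circle meets $M_8$.

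The main obstacle is the parity check: showing that each relator evaluates to $\id$ rather than $\tau_8$, in particular for $\sigma_1^4$ and for any element acting freely on $S^3$. This is a monodromy computation. For each involution relator we would take explicit coordinates from \eqref{eq:8cell-adq} and the cube $[-1,1]^4$, and verify that its fixed circle is not contained in $\Gamma_8$. For $\sigma_1^2$ in particular, take $\sigma_1$ to be the quarter-turn in the $(x_1,x_2)$-plane; then $\sigma_1^2$ fixes the $(x_3,x_4)$ great circle, which passes through facet centers and so lies partly in $M_8$. If every relator's lift has a fixed point in $\widetilde M_8$, the relations hold upstairs, $\sigma$ exists, and Lemma~\ref{lem:12.4} yields $\widehat{gh}=\widehat g\circ\widehat h$.
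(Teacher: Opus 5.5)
Your proposal is correct and follows essentially the paper's proof: normalize the lifts of the order-$3$ generators, get $\tilde\sigma_1^{\,4}=\id$ from a fixed point on the $x_1=x_2=0$ great circle in $M_8$ (the paper uses $\sigma_1$, you use $\sigma_1^2$), and verify $(\tilde\sigma_1\tilde\sigma_2)^2=(\tilde\sigma_2\tilde\sigma_3)^2=(\tilde\sigma_1\tilde\sigma_2\tilde\sigma_3)^2=\id$ because the corresponding half-turns have fixed circles meeting $M_8$. Two corrections: your remark that flipping $\tilde\sigma_1$ ``toggles'' the product relations is false, since $\tau_8$ is central of order $2$ and so $(\tau_8\tilde g)^2=\tilde g^{\,2}$ (which is exactly why the fixed-point argument applies to whatever product of lifts you chose); and the separate $-\id$ check is unnecessary, because none of the relator involutions equals $-\id$, and once the relations hold upstairs the lift assigned to $-\id$ by the splitting squares to $\id$ automatically.
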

\begin{proof}
Let $H_8:=\ker(\mu_8)$. Since $\mu_8\circ g_*=\mu_8$ for every $g\in G_8$, the connected double cover $p_8$ corresponding to $H_8$ is preserved by $G_8$. Hence every $g\in G_8$ admits a lift to $\widetilde M_8$, unique up to composition with the deck involution $\tau_8$. By Lemma~\ref{lem:12.5}, there is a short exact sequence
\[
1\longrightarrow \langle \tau_8\rangle \longrightarrow \widetilde G_8
\xrightarrow{\ \pi\ } G_8 \longrightarrow 1.
\]
It suffices to construct a splitting
\(
\sigma:G_8\to \widetilde G_8.
\)
Choose generators $\sigma_1,\sigma_2,\sigma_3\in G_8$ by
\[
\sigma_1(x_1,x_2,x_3,x_4)=(-x_2,x_1,x_3,x_4),
\]
\[
\sigma_2(x_1,x_2,x_3,x_4)=(x_3,x_1,x_2,x_4),
\]
\[
\sigma_3(x_1,x_2,x_3,x_4)=(x_1,x_4,x_2,x_3)=\operatorname{Ad}_q(x_1,x_2,x_3,x_4).
\]
These are orientation--preserving symmetries of $P_8=[-1,1]^4$. According to Eq.~(\ref{eq:8}), the group representation is:
\[
G_8 \cong [4,3,3]^+
=
\langle \sigma_1,\sigma_2,\sigma_3 \mid \sigma_1^4=\sigma_2^3=\sigma_3^3=(\sigma_1\sigma_2)^2=(\sigma_2\sigma_3)^2=(\sigma_1\sigma_2\sigma_3)^2=1\rangle.
\]
We now choose lifts of $\sigma_1,\sigma_2,\sigma_3$ with the same orders upstairs.

\smallskip
\noindent
{\it Step 1: choose a lift of $\sigma_1$ with $ \widetilde{\sigma}_1^{\,4}=1$.}
The fixed-point set of $\sigma_1$ on $S^3$ is
\[
\operatorname{Fix}(\sigma_1|_{S^3})=\{x_1=x_2=0\}\cap S^3,
\]
a great circle. This circle cannot be contained in \(\Gamma_8\); otherwise
\(\sigma_1\) would fix the radial image of some edge pointwise, whereas the
orientation-preserving pointwise stabilizer of an edge is cyclic of order
\(3\) by the triangular edge figure. Hence
\(\operatorname{Fix}(\sigma_1)\cap M_8\ne\emptyset\). Choose
\(
x_{\sigma_1}\in \operatorname{Fix}(\sigma_1)\cap M_8\), \(\widetilde x_{\sigma_1}\in p_8^{-1}(x_{\sigma_1}),
\)
and let $\widetilde{\sigma}_1$ be the unique lift of $\sigma_1$ such that
\(
\widetilde{\sigma}_1(\widetilde x_{\sigma_1})=\widetilde x_{\sigma_1}.
\)
Then $\widetilde{\sigma}_1^{\,4}$ is a deck transformation, since it covers $\sigma_1^4=\mathrm{id}$. Because $\widetilde{\sigma}_1^{\,4}$ fixes $\widetilde x_{\sigma_1}$ and the nontrivial deck involution $\tau_8$ acts freely, it follows that
\(
\widetilde{\sigma}_1^{\,4}=\mathrm{id}.
\)

\smallskip

\noindent \it Step 2: choose lifts of \(\sigma_2\) and \(\sigma_3\) with \(\widetilde{\sigma}_2^{\,3}=\widetilde{\sigma}_3^{\,3}=\id\). \normalfont
By Lemma~\ref{lem:12.2}, \(\sigma_2\) and \(\sigma_3\) admit lifts \(\widetilde{\sigma}_2,\widetilde{\sigma}_3\) to \(\widetilde M_8\). Since \(\sigma_2^3=\sigma_3^3=\id\), the elements \(\widetilde{\sigma}_2^{\,3}\) and \(\widetilde{\sigma}_3^{\,3}\) are deck transformations, so
\(
\widetilde{\sigma}_2^{\,3},\widetilde{\sigma}_3^{\,3}\in\{\id,\tau_8\}.
\)
If \(\widetilde{\sigma}_2^{\,3}=\tau_8\), replace \(\widetilde{\sigma}_2\) by \(\tau_8\widetilde{\sigma}_2\); similarly for \(\widetilde{\sigma}_3\). Since every lift commutes with \(\tau_8\),
\[
(\tau_8\widetilde{\sigma}_2)^3=\tau_8^{\,3}\widetilde{\sigma}_2^{\,3}=\tau_8\cdot\tau_8=\id,
\qquad
(\tau_8\widetilde{\sigma}_3)^3=\tau_8^{\,3}\widetilde{\sigma}_3^{\,3}=\tau_8\cdot\tau_8=\id.
\]
Thus we may choose lifts \(\widetilde{\sigma}_2,\widetilde{\sigma}_3\) such that
\(
\widetilde{\sigma}_2^{\,3}=\widetilde{\sigma}_3^{\,3}=\id.
\)

\smallskip
\noindent
{\it Step 3: verify the involution relations upstairs.}
A direct computation gives
\[
\sigma_1\sigma_2(x_1,x_2,x_3,x_4)=(-x_1,x_3,x_2,x_4),
\]
\[
\sigma_2\sigma_3(x_1,x_2,x_3,x_4)=(x_2,x_1,x_4,x_3),
\]
\[
\sigma_1\sigma_2\sigma_3(x_1,x_2,x_3,x_4)=(-x_1,x_2,x_4,x_3).
\]
Hence
\[
\operatorname{Fix}((\sigma_1\sigma_2)|_{S^3})=\{x_1=0,\ x_2=x_3\}\cap S^3,
\]
\[
\operatorname{Fix}((\sigma_2\sigma_3)|_{S^3})=\{x_1=x_2,\ x_3=x_4\}\cap S^3,
\]
\[
\operatorname{Fix}((\sigma_1\sigma_2\sigma_3)|_{S^3})=\{x_1=0,\ x_3=x_4\}\cap S^3.
\]
Each is a great circle. None is contained in \(\Gamma_8\); otherwise the
corresponding involution would fix the radial image of some edge pointwise,
impossible because the orientation-preserving pointwise stabilizer of an
edge is cyclic of order \(3\). Hence each meets \(M_8\).
Now set
\(
\widetilde g_{12}:=\widetilde{\sigma}_1\,\widetilde{\sigma}_2\),
\(\widetilde g_{23}:=\widetilde{\sigma}_2\,\widetilde{\sigma}_3\),
\(\widetilde g_{123}:=\widetilde{\sigma}_1\,\widetilde{\sigma}_2\,\widetilde{\sigma}_3.
\)
These are lifts of $\sigma_1\sigma_2$, $\sigma_2\sigma_3$, and $\sigma_1\sigma_2\sigma_3$, respectively. Since
\[
(\sigma_1\sigma_2)^2=(\sigma_2\sigma_3)^2=(\sigma_1\sigma_2\sigma_3)^2=\mathrm{id}
\]
in $G_8$, each of
$\widetilde g_{12}^{\,2}$,
$\widetilde g_{23}^{\,2}$, and
$\widetilde g_{123}^{\,2}$
is a deck transformation.

We claim that each of these deck transformations is the identity. Let $g\in\{\sigma_1\sigma_2,\sigma_2\sigma_3,\sigma_1\sigma_2\sigma_3\}$ and let $\widetilde g$ denote the corresponding lift above. Choose $x\in \operatorname{Fix}(g)\cap M_8$ and $\widetilde x\in p_8^{-1}(x)$. Since $g(x)=x$, the point $\widetilde g(\widetilde x)$ lies in the two-point fiber $p_8^{-1}(x)$, so
\(
\widetilde g(\widetilde x)\in\{\widetilde x,\tau_8(\widetilde x)\}.
\)
In either case,
\(
\widetilde g^{\,2}(\widetilde x)=\widetilde x.
\)
But $\widetilde g^{\,2}$ is a deck transformation, and the nontrivial deck involution $\tau_8$
has no fixed points. Therefore
\(
\widetilde g^{\,2}=\mathrm{id}.
\)
Applying this to $g=\sigma_1\sigma_2,\sigma_2\sigma_3,\sigma_1\sigma_2\sigma_3$ gives
\(
(\widetilde{\sigma}_1\,\widetilde{\sigma}_2)^2
=
(\widetilde{\sigma}_2\,\widetilde{\sigma}_3)^2
=
(\widetilde{\sigma}_1\,\widetilde{\sigma}_2\,\widetilde{\sigma}_3)^2
=
\mathrm{id}.
\)

Thus, the chosen lifts satisfy all defining relations of the presentation of $G_8$. Therefore
the assignment
\(
\sigma_1\mapsto \widetilde{\sigma}_1,
\sigma_2\mapsto \widetilde{\sigma}_2,
\sigma_3\mapsto \widetilde{\sigma}_3
\)
extends to a homomorphism
\(
s:G_8\to \widetilde G_8
\)
such that
\(
\pi\circ s=\mathrm{id}_{G_8}.
\)
Hence the extension splits.

Finally, for each \(g \in G_8\), let \(\widehat g : \mathcal{I}_8 \to \mathcal{I}_8\) be the fiberwise linear bundle automorphism induced by the lift \(s(g)\) via Lemma~\ref{lem:12.4}. Since \(s\) is a homomorphism, these bundle maps satisfy \(\widehat{gh} = \widehat g \circ \widehat h\) for all \(g,h \in G_8\). This proves the proposition.
\end{proof}

\begin{remark}[Order--$3$ vs.\ order--$6$ lifts for $\Ad_q$]\label{rm:13.5}
The splitting in Proposition~\ref{prop:13.2} produces a coherent lift \(\sigma(\Ad_q)\in\widetilde G_8\)
of order \(3\). The order--\(6\) lift \(\tau_8 \circ \widetilde{\Ad}_q\) fixed above for the facet dynamics is the
other lift, which satisfies
\(
(\tau_8\circ\sigma(\Ad_q))^{3}=\tau_8.
\)
\end{remark}

\section{Order--$3$ Isometries on Regular $24$--cell}\label{sec:24cell}

The goal of this section is to verify the \(24\)-cell case of Propositions~\ref{prop:odd-stab} and \ref{prop:lift}. The \(24\)-cell-specific geometric input is Proposition~\ref{prop:14.1} below, which shows that for each edge of \(P_{24}\) there is an order-\(3\) element of \(G_{24}\) fixing that edge pointwise and acting on the normal \(2\)-plane by rotation through angle \(\pm 2\pi/3\); this gives the \(24\)-cell case of Proposition~\ref{prop:odd-stab}. The even-valency statement needed to define the monodromy character \(\mu_{24}\) is already supplied by Proposition~\ref{prop:even}. Proposition~\ref{prop:14.2} then shows that the associated \(\mathbb Z/2\)-extension splits, and hence that the \(G_{24}\)-action lifts coherently to the double cover and to the associated line bundle.

\subsection{The 24-cell Model}

A $24$--cell \(P_{24}\) is the convex hull of the $24$ unit Hurwitz quaternions in
\(\mathbb H\simeq \mathbb R^4\),
which has \(24\) vertices, \(96\) edges, and \(24\) octahedral \(3\)-cells. Its edge length is \(1\), and since \(V_{24}\subset S^3\), every vertex has distance \(1\) from the origin.
Let
\[
\mathcal V_{24}
:=
\{\pm 1,\pm i,\pm j,\pm k\}
\;\cup\;
\left\{
\frac{\pm 1\pm i\pm j\pm k}{2}
\right\}
\subset S^3\subset \mathbb H,
\]
We enumerate them lexicographically as \(V^{(24)}_{1},\dots,V^{(24)}_{24}\), where
the \(16\) half-Hurwitz vertices are ordered by their sign vectors
\((\varepsilon_0,\varepsilon_1,\varepsilon_2,\varepsilon_3)\in\{\pm1\}^4\)
with \(+\) preceding \(-\). Thus
\[
V^{(24)}_{1}=1, \quad 
V^{(24)}_{8}=-k, \quad 
V^{(24)}_{9}={1\over 2}(1+i+j+k), \quad
V^{(24)}_{24}={1\over 2}(-1-i-j-k).
\]
Note that
\(
q:=\frac12(1+i+j+k)=V^{(24)}_9,\)
Recall
\[
\Ad_q(w+xi+yj+zk)=w+zi+xj+yk,
\]
Denote \(V^{(24)}_i\) by \(V_i\) for brevity,
we obtain
\[
\Ad_q
=
(V_3\,V_5\,V_7)(V_4\,V_6\,V_8)
(V_{10}\,V_{13}\,V_{11})
(V_{12}\,V_{14}\,V_{15})
(V_{18}\,V_{21}\,V_{19})
(V_{20}\,V_{22}\,V_{23}),
\]
and it fixes
\(
V_1, V_2, V_9, V_{16}, V_{17}, V_{24}.
\)
Two vertices $V^{(24)}_i$ and $V^{(24)}_j$ are joined by an edge if and only if $\left|V^{(24)}_i - V^{(24)}_j\right| = 1$. Define
\(
P_{24}:=\operatorname{Hull}(\mathcal V_{24})\subset \mathbb R^4.
\)
This is the regular $24$--cell. It has Schl\"afli symbol \(\{3,4,3\}\); in particular, its facets are regular octahedra, its $2$--faces are equilateral triangles, its vertex figures are cubes, and its edge figure is a triangle. Set \(G_{24}:=\Sym^{+}(P_{24})\subset SO(4).\)

To assign labels to Fig.~\ref{fig:24cell}, we first fix the reference edge
\(
E_0^{(24)}=[1,q]=[V_1,V_9].
\)
Let \(N(v)\) denote the set of vertices adjacent to \(v\). The distance formulas give
\begin{align*}
& N(V_1)=\{V_9,V_{10},V_{11},V_{12},V_{13},V_{14},V_{15},V_{16}\},\\
& N(V_9)=\{V_1,V_3,V_5,V_7,V_{10},V_{11},V_{13},V_{17}\},
\end{align*}
and therefore
\(
N(V_1)\cap N(V_9)=\{V_{10},V_{11},V_{13}\}.
\)

For the pair \(\{V_{11},V_{13}\}\), one finds
\[
N(V_1)\cap N(V_{11})\cap N(V_{13}) = \{V_9,V_{15}\},
\qquad
N(V_9)\cap N(V_{11})\cap N(V_{13})=\{V_1,V_7\}.
\]
Thus the first octahedral facet through \(E_0^{(24)}\) is
\(
\mathcal O_1
=
\{V_1,V_7,V_9,V_{11},V_{13},V_{15}\}.
\)

For the pair \(\{V_{10},V_{13}\}\), one finds
\[
N(V_1)\cap N(V_{10})\cap N(V_{13}) = \{V_9,V_{14}\},
\qquad
N(V_9)\cap N(V_{10})\cap N(V_{13}) = \{V_1, V_5\}.
\]
Thus the second octahedral facet through \(E_0^{(24)}\) is
\(
\mathcal O_{2}
=
\{V_1,V_5,V_9,V_{10},V_{13},V_{14}\}.
\)

For the pair \(\{V_{10},V_{11}\}\), one finds
\[
N(V_1)\cap N(V_{10})\cap N(V_{11}) = \{V_9,V_{12}\},
\qquad
N(V_9)\cap N(V_{10})\cap N(V_{11}) = \{V_1,V_3\}.
\]
Thus the third octahedral facet through \(E_0^{(24)}\) is
\(
\mathcal O_{3}
=
\{V_1,V_3,V_9,V_{10},V_{11},V_{12}\}.
\)

\begin{proposition}[Order--$3$ symmetries of the $24$--cell]
\label{prop:14.1}
Let \(P_{24}\subset \mathbb R^4\) be a regular \(24\)--cell, let \(\Gamma_{24}\subset S^3\) be the radial projection of the
\(1\)--skeleton of \(P_{24}\), and set
\(
G_{24}:=\Sym^{+}(P_{24})\subset SO(4).
\)
Let
\(
E_0^{(24)}:=[1,q].
\)
For any edge \(E^{(24)}=\left[V^{(24)}_i,V^{(24)}_j\right]\) of \(P_{24}\), there exists \(h\in G_{24}\) with
\(h\left(E_0^{(24)}\right)=E^{(24)}\), and the two nontrivial order--\(3\) elements of \(G_{24}\) fixing \(E^{(24)}\)
pointwise are \(h\Ad_q h^{-1}\) and its inverse. Each restricts on
\(
W_{E^{(24)}}^\perp,
W_{E^{(24)}}:=\operatorname{span}_{\mathbb R}\left\{V^{(24)}_i,V^{(24)}_j\right\},
\)
to a rotation by \(\pm 2\pi/3\). 
In particular, \(G_{24}\) contains exactly \(32\) elements of order \(3\) fixing an edge pointwise.
\end{proposition}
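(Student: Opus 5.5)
Following the proofs of Propositions~\ref{prop:12.3(2)} and \ref{prop:13.1}, the plan has three parts: the reference edge, transport by edge-transitivity, and uniqueness plus a count.

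\emph{Reference edge.} Since $q=\tfrac12(1+i+j+k)=V_9$, both endpoints $V_1=1$ and $V_9=q$ of $E_0^{(24)}$ lie in the plane $F=\Span_{\R}\{1,i+j+k\}$. By \S\ref{subsec:quat}, $\Ad_q$ fixes $F$ pointwise and rotates $F^\perp$ by $2\pi/3$. The vectors $1$ and $q$ are linearly independent, so $F=W_{E_0^{(24)}}$. The explicit cycle decomposition of $\Ad_q$ on $\mathcal V_{24}$ recorded above shows that $\Ad_q$ permutes the vertices. It preserves distances, hence permutes the edges, and therefore $\Ad_q\in G_{24}$ has order $3$. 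It also cyclically permutes the three octahedra $\mathcal O_1,\mathcal O_2,\mathcal O_3$ through $E_0^{(24)}$; one checks $V_{11}\mapsto V_{10}\mapsto V_{13}$, which carries each octahedron to the next.

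\emph{Transport to an arbitrary edge.} $P_{24}$ is regular, so $\Isom(P_{24})$ acts transitively on chambers and hence on edges. To obtain $h$ in the orientation-preserving subgroup, I would argue as follows. If an orientation-reversing $h'$ carries $E_0^{(24)}$ to $E^{(24)}$, compose it with a reflection preserving $E_0^{(24)}$. The reflection in the hyperplane through $0$, $E_0^{(24)}$ and the midpoint of the opposite vertices of $\mathcal O_1$ works; concretely, the reflection fixing $1,q$ and swapping $V_{11}\leftrightarrow V_{13}$ is a symmetry of the chamber structure. Given $h$, the conjugate $h\Ad_qh^{-1}$ fixes $hW_{E_0^{(24)}}=W_{E^{(24)}}$ pointwise, acts on $W_{E^{(24)}}^\perp=h(F^\perp)$ by a rotation conjugate to the $2\pi/3$ rotation, and hence rotates by $\pm2\pi/3$ according to orientation.

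\emph{Uniqueness and count.} Any $g\in G_{24}$ fixing $E^{(24)}$ pointwise fixes $W_{E^{(24)}}$ and, lying in $SO(4)$, acts on $W_{E^{(24)}}^\perp$ as a rotation. It permutes the three facets around the edge cyclically, since the edge figure is a triangle by Table~\ref{tab:regular4polytopes-figures}. The action on the edge figure is faithful, so this stabilizer embeds in $C_3$. Hence the nontrivial elements are exactly $h\Ad_qh^{-1}$ and its inverse. For the count, the main obstacle is that distinct edges may share the same cyclic group. Indeed $\Ad_q$ fixes the six vertices $\pm1,\pm q,\pm\bar q'$ lying in $F$, namely $V_1,V_2,V_9,V_{16},V_{17},V_{24}$, which form a hexagonal great circle containing six edges. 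So each such $C_3$ fixes six edges pointwise, and the $96$ edges yield $96/6=16$ subgroups and $32$ elements of order $3$. I would verify this by checking that $F\cap\Gamma_{24}$ is exactly this hexagon of six edges, and that two such planes $F$ are either equal or meet only at $0$.
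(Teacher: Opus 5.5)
\paragraph{Comparison with the paper.} Your outline is the paper's proof: the reference edge via $\Ad_q$, transport by edge-transitivity, and a count via the hexagon partition. In two places you are more careful than the paper. First, you justify that the transporting $h$ can be taken orientation-preserving. Your reflection fixing $1,q$ and swapping $V_{11}\leftrightarrow V_{13}$ is the coordinate swap $x_2\leftrightarrow x_3$, which visibly preserves $\mathcal V_{24}$; say so explicitly rather than appealing to ``the chamber structure''. Second, you prove that the pointwise stabilizer of an edge is exactly $C_3$ using the triangular edge figure, which the paper's proof of this proposition leaves implicit.

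\paragraph{The step that fails.} In your final verification, the claim that two such planes $F$ are either equal or meet only at $0$ is false. Each vertex has valency $8$, and a hexagon through a vertex uses two of its edges. So four hexagon planes pass through every vertex, and any two of them share the line through that vertex. For example, $F=\Span_{\R}\{1,i+j+k\}$ and $\Span_{\R}\{1,V_{10}\}=\Span_{\R}\{1,i+j-k\}$ are distinct, since $V_{10}=\tfrac12(1+i+j-k)\notin F$. The second is the plane of the edge $[1,V_{10}]$, hence the fixed plane of that edge's stabilizer. These two planes meet in $\R\cdot 1$.

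\paragraph{The fix.} You only need the weaker fact that distinct hexagon planes share no edge, and this is immediate. If an edge $E$ lies in the fixed plane $F'$ of some order-$3$ element, then $W_E\subseteq F'$, so $F'=W_E$ by dimension. This is exactly how the paper obtains the partition of the $96$ edges into disjoint hexagons. With it, the fibers of $E\mapsto$ (pointwise stabilizer of $E$) have exactly six elements. That gives $16$ subgroups of order $3$ and $32$ elements of order $3$.

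\paragraph{Minor points.} The fixed vertices are $\pm1,\pm q,\pm\bar q$ with $\bar q=-q^2$; the symbol $\bar q'$ is a typo. Also, $\Ad_q$ cycles the octahedra as $\mathcal O_1\to\mathcal O_3\to\mathcal O_2$, not in index order.
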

\begin{proof}
We first show that \(\Ad_q\in G_{24}\). As
\(
\Ad_q(\mathcal{V}_{24})=\mathcal{V}_{24}
\)
and \(\Ad_q \in SO(4)\),
\(
\Ad_q\in \Sym^+(P_{24})=G_{24}.
\)
Also, 
\(
\Fix(\Ad_q)=\operatorname{span}_{\mathbb R}\{1,i+j+k\}
\)
and \(\Ad_q\) rotates \(\Fix^\perp(\Ad_q)\) by angle \(2\pi/3\). So
\(\Ad_q\) fixes the reference edge
\(E_0^{(24)}\) pointwise. 
Therefore \(\Ad_q\) restricting on \({W_{E_0^{(24)}}^\perp}\) is a rotation by \(2\pi/3\).
Since \(P_{24}\) is regular, \(G_{24}\) acts transitively on its edges. Hence for any edge \(E\) there exists \(h\in G_{24}\) with \(h(E_0^{(24)})=E\). Then \(h\Ad_q h^{-1}\) fixes \(E\) pointwise and restricts on \(W_E^\perp\) to a rotation by \(2\pi/3\), while its inverse restricts to a rotation by \(-2\pi/3\), as illustrated in Fig.~\ref{fig:24cell}.

The \(24\)--cell has \(96\) edges, and
\(
\Fix(\Ad_q)\cap \mathcal V_{24}
=
\{\pm1,\ \pm q,\ \pm q^2\}.
\)
These six vertices lie on the unit circle in \(\Fix(\Ad_q)\) in the cyclic order
\(
1,\ q,\ q^2,\ -1,\ -q,\ -q^2,
\)
hence form a regular hexagon. By the adjacency criterion, two vertices $u$ and $v$ are connected by an edge if and only if \(\Re(u\bar v)=\frac12\). So the edges of \(P_{24}\) contained in \(\Fix(\Ad_q)\) are exactly the six sides of this hexagon. 
If two such hexagons share an edge
\(E=[V_i^{(24)},V_j^{(24)}]\), then their fixed planes both contain
\(W_E=\operatorname{span}_{\mathbb R}\{V_i^{(24)},V_j^{(24)}\}\). Since both
fixed planes are two-dimensional, the fixed planes coincide, and hence the
hexagons coincide.
It follows that the \(96\) edges of \(P_{24}\) are partitioned into disjoint hexagons of size \(6\), and each corresponds to exactly two nontrivial order--\(3\) elements fixing an edge pointwise, for a total of $32$.
\end{proof}

\begin{figure}[t]
  \centering
  \includegraphics[width=0.8\textwidth]{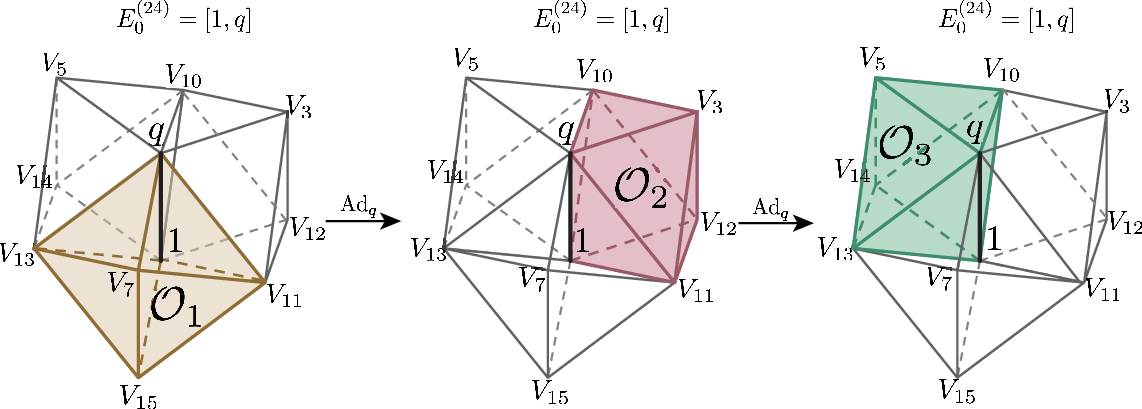}
  \caption{The three octahedral facets \(\mathcal{O}_1,\mathcal{O}_2,\mathcal{O}_3\) of the 24-cell
\(P_{24}\) meeting along the reference edge
\(E^{(24)}_0=[1,q]\), together with the induced facet dynamics under
\(\operatorname{Ad}_q\).}
  \label{fig:24cell}
\end{figure}

\subsection{The lift group \(\widetilde G_{24}\) and the associated $\mathbb{Z}/2$--extension.}
Let
\(
M_{24}:=S^3\setminus \Gamma_{24},
\)
and let
\(
\mu_{24}:\pi_1(M_{24})\to\{\pm1\}
\)
be the monodromy character determined by
\(
\mu_{24}([m_e])=-1
\)
for each edge meridian \(m_e\), which is well-defined by
Proposition~\ref{prop:even}. Let
\(
p_{24}:\widetilde M_{24}\to M_{24}
\)
be the associated connected double cover corresponding to
\(
H_{24}:=\ker(\mu_{24}),
\)
let \(\tau_{24}\) denote its nontrivial deck involution, and let
\(
\mathcal I_{24}\to M_{24}
\)
be the associated real line bundle.
Define
\[
\widetilde G_{24}
:=
\left\{
\widetilde g:\widetilde M_{24}\to \widetilde M_{24}
\;\middle|\;
\exists\, g\in G_{24}\text{ such that }p_{24}\circ \widetilde g=g\circ p_{24}
\right\}.
\]

\begin{proposition}[Splitting for the $24$--cell group]
\label{prop:14.2}
The \(G_{24}\)-action on \(M_{24}\) lifts to a coherent choice of fiberwise linear bundle
automorphisms
\(
\widehat g:\mathcal{I}_{24}\to \mathcal{I}_{24}
\)
covering \(g\in G_{24}\), such that
\[
\widehat{gh}=\widehat g\circ \widehat h
\qquad\text{for all } g,h\in G_{24}.
\]
\end{proposition}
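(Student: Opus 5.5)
We follow the template of Propositions~\ref{prop:5cell-splitting} and~\ref{prop:13.2}. First, by Proposition~\ref{prop:even} every vertex of \(\Gamma_{24}\) has valency \(8\), so \(\mu_{24}\) is well defined. Every \(g\in G_{24}\) preserves \(\Gamma_{24}\) and permutes its edges, so Lemma~\ref{lem:12.3} gives \(\mu_{24}\circ g_*=\mu_{24}\). Lemma~\ref{lem:12.2} then says each \(g\) lifts to \(\widetilde M_{24}\), uniquely up to \(\tau_{24}\). Lemma~\ref{lem:12.5} yields the central extension \(1\to\langle\tau_{24}\rangle\to\widetilde G_{24}\to G_{24}\to1\). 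By Lemma~\ref{lem:12.4}, it therefore suffices to construct a splitting \(s:G_{24}\to\widetilde G_{24}\), and we do this using the presentation \eqref{eq:24},
\[
[3,4,3]^+=\langle\sigma_1,\sigma_2,\sigma_3\mid \sigma_1^3=\sigma_2^4=\sigma_3^3=(\sigma_1\sigma_2)^2=(\sigma_2\sigma_3)^2=(\sigma_1\sigma_2\sigma_3)^2=1\rangle .
\]

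The first step is to fix explicit quaternionic generators \(\sigma_1,\sigma_2,\sigma_3\in G_{24}\). Take \(\sigma_3=\Ad_q\), and choose \(\sigma_1,\sigma_2\) as maps of the form \(p\mapsto a p b^{-1}\), with \(a,b\) unit Hurwitz quaternions (or \(\Ad\) of suitable elements of the binary octahedral group), arranged so that the relations hold. We then verify these relations directly by quaternion multiplication, as was done with coordinate formulas in the proof of Proposition~\ref{prop:13.2}. This step is routine but fiddly. If the three generators turn out to be awkward to find, an alternative is to exhibit the group \(G_{24}\cong (2T\times 2T)\rtimes\ldots/\pm\) through a simpler presentation; we would first try to adapt the Coxeter reflections of \(\{3,4,3\}\) and take products of adjacent reflections.

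Next we choose the lifts. For \(\sigma_1\) and \(\sigma_3\) (order \(3\)), pick any lifts \(\widetilde\sigma_1,\widetilde\sigma_3\). Their cubes lie in \(\{\id,\tau_{24}\}\); if a cube equals \(\tau_{24}\), replace the lift by \(\tau_{24}\) times it. Since \(\tau_{24}\) is central, this makes the cube \(\id\). For \(\sigma_2\) (order \(4\)), observe that \(\sigma_2\in SO(4)\) with \(\sigma_2^4=1\). If its fixed set on \(S^3\) is nonempty, it is a great circle. That circle cannot lie in \(\Gamma_{24}\), because the orientation-preserving pointwise stabilizer of an edge has order \(3\) (Proposition~\ref{prop:14.1}). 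So \(\sigma_2\) fixes a point \(x\in M_{24}\), and we take the lift \(\widetilde\sigma_2\) fixing a chosen point of \(p_{24}^{-1}(x)\). Then \(\widetilde\sigma_2^4\) is a deck transformation with a fixed point, hence equals \(\id\). If instead \(\sigma_2\) has no fixed points on \(S^3\), we use the order-\(4\) trick with \(\sigma_2^2\) where possible. Failing that, we write \(\sigma_2=(\sigma_1\sigma_2)\sigma_1^{-1}\) and define \(\widetilde\sigma_2\) from a lift of the involution \(\sigma_1\sigma_2\) together with \(\widetilde\sigma_1\). Its fourth power is then forced by the remaining relations.

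Finally, for the three involution relations we argue exactly as in Step~3 of the proof of Proposition~\ref{prop:13.2}. We check that each of \(\sigma_1\sigma_2\), \(\sigma_2\sigma_3\), \(\sigma_1\sigma_2\sigma_3\) is a nontrivial involution in \(SO(4)\), other than \(-\id\), with \(+1\)-eigenspace a \(2\)-plane. Its fixed great circle is not contained in \(\Gamma_{24}\), since edge stabilizers have order \(3\), so the involution fixes some point \(x\in M_{24}\). Any lift \(\widetilde g\) then sends \(\widetilde x\) into \(\{\widetilde x,\tau_{24}\widetilde x\}\), so \(\widetilde g^2\) fixes \(\widetilde x\) and must be \(\id\). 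This holds automatically for the composite lifts \(\widetilde\sigma_1\widetilde\sigma_2\), etc.

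The main obstacle is the eigenvalue check. An involution in \(SO(4)\) could equal \(-\id\), which acts freely on \(S^3\). In that case the fixed-point argument fails, and the square of the lift could be \(\tau_{24}\). Since \(-\id\in G_{24}\) (the vertex set is centrally symmetric), we must confirm from the explicit generators that none of the three products equals \(-\id\). If one of them does, we would instead absorb the sign by re-choosing \(\widetilde\sigma_1\) as \(\tau_{24}\widetilde\sigma_1\). However, that replacement would change \(\widetilde\sigma_1^3\), so we would then need to redo the parity bookkeeping in \(H^2(G_{24};\mathbb Z/2)\).
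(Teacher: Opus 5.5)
Your strategy is the same as the paper's: the presentation \eqref{eq:24}, the $\tau_{24}$-adjustment for the two order-$3$ generators, a lift of the order-$4$ generator that fixes a point of a fiber, and the fixed-point argument for the three involution relations. The gap is that the only two non-formal checks are left open: that $\sigma_2$ fixes a point of $M_{24}$, and that none of $\sigma_1\sigma_2$, $\sigma_2\sigma_3$, $\sigma_1\sigma_2\sigma_3$ equals $-\id$.

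Both of your contingency plans for these checks fail, for the same reason. Since $\tau_{24}$ is central of order $2$, consider any lifted relator in which every generator occurs an even number of times; here that means $\sigma_2^4$ and the three squared products. Its value in $\{\id,\tau_{24}\}$ does not depend on the choice of lifts. Consequently:
\begin{itemize}
\item Re-choosing $\widetilde\sigma_1$ as $\tau_{24}\widetilde\sigma_1$ leaves $(\widetilde\sigma_1\widetilde\sigma_2)^2$ unchanged, so it cannot absorb a sign.
\item Defining $\widetilde\sigma_2$ through a lift of $\sigma_1\sigma_2$ and $\widetilde\sigma_1$ gives just another lift of $\sigma_2$, with the same fourth power. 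Nothing in the other relations forces that fourth power to be $\id$.
\end{itemize}
If any of these invariants were $\tau_{24}$, the extension would simply not split, and no bookkeeping in $H^2(G_{24};\mathbb Z/2)$ would change that. So a fixed point of $\sigma_2$ is genuinely required. It is not automatic for an element of order $4$: the map $p\mapsto ip$ preserves the Hurwitz units, lies in $G_{24}$, has order $4$, and acts freely on $S^3$.

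The paper closes both points as follows.
\begin{itemize}
\item \emph{The order-$4$ generator.} It fixes explicit generators (following Johnson), with $\sigma_2(x_1,x_2,x_3,x_4)=(x_1,x_2,-x_4,x_3)$. Its fixed set $\{x_3=x_4=0\}\cap S^3$ is a great circle. This circle is not contained in $\Gamma_{24}$, because the orientation-preserving pointwise stabilizer of an edge has order $3$. So your fixed-point lift of $\sigma_2$ then works.
\item \emph{The three involutions.} It rules out $\pm\id$ by orders alone. If $\sigma_1\sigma_2=\pm\id$, then $\sigma_2^4=\sigma_1^{-4}=\sigma_1^{-1}\neq\id$; the same argument handles $\sigma_2\sigma_3$. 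If $\sigma_1\sigma_2\sigma_3=\pm\id$, then $(\sigma_1\sigma_2)^2=\sigma_3^{-2}\neq\id$. With $\pm\id$ excluded, the eigenvalue argument gives a fixed great circle not contained in $\Gamma_{24}$, and your final step goes through.
\end{itemize}

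Your alternative of taking the $\sigma_i$ as products of adjacent Coxeter reflections would also settle both points uniformly. In that case $\sigma_2$ and each of the three products is a product of two reflections. Each therefore fixes a $2$-plane through a $2$-face center or a facet center, and that point projects off $\Gamma_{24}$. You would, however, have to actually make that choice and verify it.
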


\begin{proof}
By Proposition~\ref{prop:even}, each vertex of \(\Gamma_{24}\) has valency \(8\), so the assignment \(\mu_{24}([m_e])=-1\) on edge meridians is well-defined. Since every \(g\in G_{24}\) preserves \(\Gamma_{24}\) and permutes its
edges, Lemma~\ref{lem:12.3} gives \(\mu_{24}\circ g_*=\mu_{24}\).
Hence \(g_*(H_{24})=H_{24}\), so every \(g\in G_{24}\) admits a lift to \(\widetilde M_{24}\), unique up to composition with the deck involution \(\tau_{24}\). It follows that
\[
1\longrightarrow \langle \tau_{24}\rangle \longrightarrow \widetilde G_{24}
\xrightarrow{\ \pi\ } G_{24}\longrightarrow 1
\]
is a short exact sequence, so it suffices to construct a splitting
\(
\sigma:G_{24}\to \widetilde G_{24}.
\)
\noindent \it Step 1: choose lifts of \(\sigma_1\) and \(\sigma_3\) with \(\widetilde{\sigma}_1^{\,3}=\widetilde{\sigma}_3^{\,3}=\id\). \normalfont
By Lemma~\ref{lem:12.2}, \(\sigma_1\) and \(\sigma_3\) admit lifts \(\widetilde{\sigma}_1,\widetilde{\sigma}_3\) to \(\widetilde M_{24}\).
Since \(\sigma_1^3=\sigma_3^3=\id\), the elements \(\widetilde{\sigma}_1^{\,3}\) and \(\widetilde{\sigma}_3^{\,3}\) are deck
transformations,
\(
\widetilde{\sigma}_1^{\,3},\widetilde{\sigma}_3^{\,3}\in\{\id,\tau_{24}\}.
\)
If \(\widetilde{\sigma}_1^{\,3}=\tau_{24}\), replace \(\widetilde{\sigma}_1\) by \(\tau_{24}\widetilde{\sigma}_1\); similarly for
\(\widetilde{\sigma}_3\). Since every lift commutes with \(\tau_{24}\),
\[
(\tau_{24}\widetilde{\sigma}_1)^3=\tau_{24}^{\,3}\widetilde{\sigma}_1^{\,3}=\tau_{24}\cdot\tau_{24}=\id,
\qquad
(\tau_{24}\widetilde{\sigma}_3)^3=\tau_{24}^{\,3}\widetilde{\sigma}_3^{\,3}=\tau_{24}\cdot\tau_{24}=\id.
\]
Thus we may choose lifts \(\widetilde{\sigma}_1,\widetilde{\sigma}_3\) such that
\(
\widetilde{\sigma}_1^{\,3}=\widetilde{\sigma}_3^{\,3}=\id.
\)

\smallskip
\noindent\emph{Step 2: choose a lift of \(\sigma_2\) of order \(4\).}
Following \cite[Eq.~(4), p.~1311]{johnson1999quadratic}, take the following standard
generators of \([3,4,3]^+\): 
\[
\begin{aligned}
\sigma_1(x_1,x_2,x_3,x_4) =& (x_1,x_4,x_2,x_3),\\
\sigma_2(x_1,x_2,x_3,x_4) =& (x_1,x_2,-x_4,x_3),\\
\sigma_3(x_1,x_2,x_3,x_4) =&
\frac12\bigl(
x_1+x_2+x_3+x_4,\;
x_1+x_2-x_3-x_4,\;\\
&x_1-x_2+x_3-x_4,\;
-x_1+x_2+x_3-x_4
\bigr).
\end{aligned}
\]
Thus, \(
C:=\Fix(\sigma_2|_{S^3})=\{x_3=x_4=0\}\cap S^3
\)
is a great circle.
We claim that \(C\not\subset \Gamma_{24}\). Indeed, if \(C\subset \Gamma_{24}\), then \(C\) contains the radial projection of some edge \(E=[u,v]\) of \(P_{24}\). Hence \(\sigma_2\) fixes the edge \(E\) pointwise. 
Since the 24-cell has Schl\"afli symbol \(\{3,4,3\}\), its edge figure is a triangle. Therefore the orientation-preserving pointwise stabilizer of an edge acts faithfully on that triangle, so it is a subgroup of $C_3$ (rotation group of order 3). Hence any nontrivial element fixing an edge pointwise has order 3, whereas \(\sigma_2\) has order \(4\). Therefore, \(C\cap M_{24}\neq\varnothing\). Choose \(x_{\sigma_2}\in C\cap M_{24}\) and \(\widetilde x_{\sigma_2}\in p_{24}^{-1}(x_{\sigma_2})\), and let \(\widetilde{\sigma}_2\) be the unique lift of \(\sigma_2\) such that
\(
\widetilde{\sigma}_2(\widetilde x_{\sigma_2})=\widetilde x_{\sigma_2}.
\)
Then \(\widetilde{\sigma}_2^{\,4}\) is a deck transformation. Because \(\widetilde{\sigma}_2^{\,4}\) fixes \(\widetilde x_{\sigma_2}\), and the nontrivial deck involution acts freely, it follows that
\(
\widetilde{\sigma}_2^{\,4}=\operatorname{id}.
\)

\smallskip
\noindent\emph{Step 3: verify the involution relations upstairs.}
Set
\[
\widetilde g_{12}:=\widetilde{\sigma}_1\widetilde{\sigma}_2,\qquad
\widetilde g_{23}:=\widetilde{\sigma}_2\widetilde{\sigma}_3,\qquad
\widetilde g_{123}:=\widetilde{\sigma}_1\widetilde{\sigma}_2\widetilde{\sigma}_3.
\]
These are lifts of \(\sigma_1\sigma_2\), \(\sigma_2\sigma_3\), and \(\sigma_1\sigma_2\sigma_3\), respectively. Since
\(
(\sigma_1\sigma_2)^2=(\sigma_2\sigma_3)^2=(\sigma_1\sigma_2\sigma_3)^2=\operatorname{id}
\)
in \(G_{24}\), each of
\(
\widetilde g_{12}^{\,2},
\widetilde g_{23}^{\,2},
\widetilde g_{123}^{\,2}
\)
is a deck transformation. Let \(g\in\{\sigma_1\sigma_2,\sigma_2\sigma_3,\sigma_1\sigma_2\sigma_3\}\), and let \(\widetilde g\) denote the corresponding lift above. We first show that \(\operatorname{Fix}(g)\cap M_{24}\neq\varnothing\). Since \(g^2=\operatorname{id}\), it is enough to rule out \(g=\pm\operatorname{id}\). If \(\sigma_1\sigma_2=\pm\operatorname{id}\), then \(\sigma_1=\pm \sigma_2^{-1}\), contradicting \(\sigma_1^3=\operatorname{id}\) and \(\sigma_2^4=\operatorname{id}\); similarly, \(\sigma_2\sigma_3\neq \pm\operatorname{id}\). If \(\sigma_1\sigma_2\sigma_3=\pm\operatorname{id}\), then \(\sigma_1\sigma_2=\pm \sigma_3^{-1}\), which contradicts \((\sigma_1\sigma_2)^2=\operatorname{id}\) and \(\sigma_3^3=\operatorname{id}\). Thus \(g\neq \pm\operatorname{id}\), so by the same eigenvalue argument as in Proposition~\ref{prop:5cell-splitting},
\(
C_g:=\operatorname{Fix}(g|_{S^3})
\)
is a great circle. If \(C_g\subset \Gamma_{24}\), then \(C_g\) contains the radial projection of some edge \(E=[u,v]\) of \(P_{24}\). 
But no element of order \(2\) fixes an edge pointwise, because the
edge figure of the \(24\)-cell is a triangle, so the orientation-preserving
pointwise stabilizer of an edge is cyclic of order \(3\).
This contradiction shows that \(C_g\cap M_{24}\neq\varnothing\). Choose \(x\in C_g\cap M_{24}\) and \(\widetilde x\in p_{24}^{-1}(x)\), then \(\widetilde g(\widetilde x)\) lies in the two-point fiber
\(
p_{24}^{-1}(x)=\{\widetilde x,\tau_{24}(\widetilde x)\}.
\)
In either case,
\(
\widetilde g^{\,2}(\widetilde x)=\widetilde x.
\)
Since \(\widetilde g^{\,2}\) is a deck transformation and the nontrivial deck involution \(\tau_{24}\) acts freely, it follows that
\(
\widetilde g^{\,2}=\operatorname{id}.
\)
Thus
\[
(\widetilde{\sigma}_1\widetilde{\sigma}_2)^2
=
(\widetilde{\sigma}_2\widetilde{\sigma}_3)^2
=
(\widetilde{\sigma}_1\widetilde{\sigma}_2\widetilde{\sigma}_3)^2
=
\operatorname{id}.
\]
Hence, the chosen lifts satisfy all defining relations of the presentation; therefore, the assignments
\(
\sigma_1\mapsto \widetilde{\sigma}_1,
\sigma_2\mapsto \widetilde{\sigma}_2,
\sigma_3\mapsto \widetilde{\sigma}_3
\)
extend to a homomorphism
\(
\sigma:G_{24}\longrightarrow \widetilde G_{24}
\)
with \(\pi\circ \sigma=\mathrm{id}_{G_{24}}\). Hence the extension splits.
For each \(g\in G_{24}\), let \(\widehat g:\mathcal{I}_{24}\to \mathcal{I}_{24}\) be the fiberwise linear bundle
automorphism induced by the lift \(\sigma(g)\) via Lemma~\ref{lem:12.5}. Since \(\sigma\) is a
homomorphism, these satisfy \(\widehat{gh}=\widehat g\circ \widehat h\) for all \(g,h\in G_{24}.\) This proves the proposition.
\end{proof}

\section{Order--$3$ Isometries on Regular $120$--cell}\label{sec:120cell}

The goal of this section is to verify the \(120\)--cell case of Propositions~\ref{prop:lift} and~\ref{prop:odd-stab}. Using the model in \S\ref{subsec:120cell-model}, Proposition~\ref{prop:15.1} supplies the odd--order edge stabilizers required for Proposition~\ref{prop:odd-stab}. The even--valency statement is already provided by Proposition~\ref{prop:even}. Proposition~\ref{prop:15.2} then proves that the associated \(\mathbb Z/2\)--extension splits.

\subsection{The $120$--cell model.}\label{subsec:120cell-model}

A $120$--cell is the regular convex $4$--polytope with Schl\"afli symbol
\(
\{5,3,3\}.
\)
Its facets are regular dodecahedra, its $2$--faces are regular pentagons, its vertex figures are regular tetrahedra, and its edge figure is a triangle.
\(P_{120}\) has \(600\) vertices, \(1200\) edges, and \(120\) dodecahedral \(3\)-cells, denoted $\mathcal{D}_i$ as depicted in Fig.~\ref{fig:120cell}. Its edge length is \(3-\sqrt{5}\), and every listed vertex has norm \(2\sqrt{2}\), so every vertex has distance \(2\sqrt{2}\) from the origin.
Let
\(
\phi:=\frac{1+\sqrt5}{2}.
\)
We use the standard coordinate model of the regular $120$--cell,
whose $600$ vertices are the union of the following seven families:
\begin{align*}
\text{Family 1: } & 2(\pm1,\pm1,0,0)\ \text{and all permutations},\\
\text{Family 2: } &(\pm\sqrt5,\pm1,\pm1,\pm1)\ \text{and all permutations},\\
\text{Family 3: } &(\pm\phi^{-2},\pm\phi,\pm\phi,\pm\phi)\ \text{and all permutations},\\
\text{Family 4: } &(\pm\phi^2,\pm\phi^{-1},\pm\phi^{-1},\pm\phi^{-1})\ \text{and all permutations},\\
\text{Family 5: } &(\pm\phi^2,\pm\phi^{-2},\pm1,0)\ \text{and all even permutations},\\
\text{Family 6: } &(\pm\sqrt5,\pm\phi^{-1},\pm\phi,0)\ \text{and all even permutations},\\
\text{Family 7: } &(\pm2,\pm1,\pm\phi,\pm\phi^{-1})\ \text{and all even permutations}.
\end{align*}
Recall that \(\Ad_q(x):=q\,x\,q^{-1}\) with \(q:=\frac12(1+i+j+k)\in S^3,\) and \(\Ad_q(x_1,x_2,x_3,x_4)=(x_1,x_4,x_2,x_3).\) Thus \(\Ad_q\) fixes the first coordinate and cyclically permutes the last three. Therefore, \(\Ad_q\) preserves the seven-family vertex set above: Families \(1\)--\(4\) are closed under all permutations, and Families \(5\)--\(7\) are closed under even permutations. Thus
\(
\Ad_q\in G_{120}.
\)

Let
\(
V_{120}:=\{V^{(120)}_1,\dots,V^{(120)}_{600}\}\subset \mathbb R^4\simeq \mathbb H
\)
denote this set, and define
\(
P_{120}:=\operatorname{Hull}(V_{120})\subset \mathbb R^4.
\)
Let \(\Gamma_{120}\subset S^3\) be the radial projection of the \(1\)--skeleton of \(P_{120}\),
set
\(
M_{120}:=S^3\setminus \Gamma_{120},
\)
and write
\(
G_{120}:=\Sym^{+}(P_{120})\subset SO(4).
\)

\begin{lemma}\label{lem:120}
Let \(e=[u,v]\) be an edge of the regular \(120\)-cell. Then the plane
\(
P_e:=\operatorname{span}\{u,v\}
\)
contains exactly \(12\) vertices of the \(120\)-cell and exactly \(6\) edges of the \(120\)-cell.
\end{lemma}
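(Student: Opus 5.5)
The plan is to reduce to one explicit edge and then count by direct computation. Since $P_{120}$ is regular, $G_{120}$ acts transitively on its edges, and every $h\in G_{120}$ is linear. Such an $h$ carries $P_e$ to $P_{h(e)}$ and preserves both the vertex set and the edge set. So it suffices to treat a single convenient edge. I will use the fixed plane
\[
F=\operatorname{span}_{\mathbb R}\{1,\,i+j+k\}=\{(a,b,b,b)\}
\]
of $\Ad_q$, which lies in $G_{120}$ as shown above. For this I must exhibit one edge of $P_{120}$ whose endpoints both lie in $F$; then $P_e=F$ for that edge.

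\emph{Step 1: vertices in $F$.} A vertex lies in $F$ exactly when its last three coordinates are equal. I will check the seven families one by one.
\begin{itemize}
\item Family 1 has two zero and two nonzero entries, so it never has three equal trailing coordinates.
\item Families 5--7 have three pairwise distinct absolute values among any three coordinates, so they contribute nothing either.
\item Families 2--4 contribute precisely the points $(\pm\sqrt5,\pm(1,1,1))$, $(\pm\phi^{-2},\pm\phi(1,1,1))$ and $(\pm\phi^{2},\pm\phi^{-1}(1,1,1))$, where the three trailing signs agree. That gives $4+4+4=12$ vertices.
\end{itemize}

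\emph{Step 2: the edge criterion.} All vertices have norm $2\sqrt2$ and the edge length is $3-\sqrt5$. Hence two vertices $u,v$ span an edge iff
\[
\langle u,v\rangle \;=\; 8-\tfrac12(3-\sqrt5)^2 \;=\; 1+3\sqrt5 .
\]
The ``if'' direction needs care, because the minimal distance between vertices equals the edge length. I will justify it through the standard fact that the edges of $P_{120}$ are exactly the nearest-neighbor pairs, since each vertex has exactly $4$ neighbors at minimal distance, matching its valency. An edge contained in $F$ has both endpoints in $F$. So the edges in $F$ are exactly the pairs among the $12$ vertices of Step 1 with inner product $1+3\sqrt5$.

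\emph{Step 3: count the pairs.} Computing the relevant inner products:
\begin{itemize}
\item $(\sqrt5,1,1,1)$ paired with $(\phi^2,\phi^{-1},\phi^{-1},\phi^{-1})$ gives $\sqrt5\phi^2+3\phi^{-1}=1+3\sqrt5$, so this pair is an edge. This also supplies the reference edge needed for the reduction above.
\item $(\sqrt5,1,1,1)$ paired with $(\phi^{-2},\phi,\phi,\phi)$ gives $3\sqrt5-1$, which is not an edge.
\end{itemize}
The remaining pairs are handled using the symmetries $x\mapsto -x$ and $(a,b,b,b)\mapsto(\pm a,\pm b,\pm b,\pm b)$, which act within the $12$ points; they cut the $\binom{12}{2}$ inner products down to a handful of computations. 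I expect each of the $12$ points to have exactly one partner at inner product $1+3\sqrt5$. That partner is the neighbor obtained by switching between the $\sqrt5$ family and the $\phi^2$ family, or between the $\phi^{-2}$ family and the $\sqrt5$ family with an appropriate sign. The outcome should be a perfect matching, hence $6$ edges.

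The main obstacle is this bookkeeping in Step 3, together with the nearest-neighbor justification in Step 2. The inner products involve $\phi$-identities such as $\phi^2=\phi+1$ and $\phi^{-2}=(3-\sqrt5)/2$. I will organize the check by writing all twelve points in polar form on the circle of radius $2\sqrt2$ in $F$ and reading off which angular gaps have cosine $(1+3\sqrt5)/8$.
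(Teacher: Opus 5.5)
Your strategy is the paper's: use edge-transitivity to reduce to one explicit edge, list the vertices in its plane family by family, and count the pairs at distance $3-\sqrt5$. The only difference is the representative edge.
\begin{itemize}
\item The paper takes $u=(2,2,0,0)$, $v=(\sqrt5,\phi,0,\phi^{-1})$. Its plane $\{x_3=0,\ x_1=x_2+x_4\}$ picks up six Family~1 and six Family~6 vertices, the latter found via $\phi+\phi^{-1}=\sqrt5$ and the even-permutation constraint.
\item You take the fixed plane $F=\{(a,b,b,b)\}$ of $\Ad_q$. There membership is the trivial test $x_2=x_3=x_4$ and only Families~2--4 contribute.
\end{itemize}
Your reference edge is exactly the edge $E^{(120)}_0$ of Proposition~\ref{prop:15.1}, so your computation describes $F\cap\Gamma_{120}$ directly. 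Your Step~1 enumeration is correct.

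Your forecast in Step~3 is wrong in one respect, although the count of $6$ survives. Write
\[
a_{\varepsilon,s}=(\varepsilon\sqrt5,s,s,s),\qquad b_{\varepsilon,s}=(\varepsilon\phi^{-2},s\phi,s\phi,s\phi),\qquad c_{\varepsilon,s}=(\varepsilon\phi^{2},s\phi^{-1},s\phi^{-1},s\phi^{-1}),
\]
with $\varepsilon,s=\pm1$. The pairs with inner product $1+3\sqrt5$ are:
\begin{itemize}
\item $\{a_{\varepsilon,s},c_{\varepsilon,s}\}$, since $\sqrt5\phi^2+3\phi^{-1}=1+3\sqrt5$;
\item $\{b_{+,s},b_{-,s}\}$, since $-\phi^{-4}+3\phi^2=1+3\sqrt5$.
\end{itemize}
So the $\phi^{-2}$-points pair with each other, not with the $\sqrt5$-family. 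Your own value $3\sqrt5-1$ already rules out the $\sqrt5$-family partner, and such a pairing would give each $a_{\varepsilon,s}$ two partners. All remaining pairs have inner products in $\{\pm2,\pm4,-8,\pm(3\sqrt5-1),-(1+3\sqrt5)\}$. The matching is therefore $4+2=6$ edges. In polar form on $F$, the twelve points alternate angular gaps $\arccos\frac{1+3\sqrt5}{8}\approx15.5^\circ$ (edges) and $\approx44.5^\circ$. Record this matching in your write-up rather than the predicted one.

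Finally, the ``if'' direction in Step~2, which the paper also uses tacitly, can be done without counting nearest neighbours. Let $u,v$ be a closest pair of vertices on the sphere. Suppose another vertex $w$ satisfied $\langle w,u+v\rangle\ge\langle u,u+v\rangle$. Then $|w-\tfrac{u+v}2|\le\tfrac12|u-v|$, hence $\min(|w-u|,|w-v|)^2\le\tfrac12|u-v|^2$, contradicting minimality. So the hyperplane $\langle x,u+v\rangle=\langle u,u+v\rangle$ meets $P_{120}$ exactly in $[u,v]$, and $[u,v]$ is an edge. Since all edges are congruent, the minimal distance is $3-\sqrt5$, and every pair at that distance is an edge.
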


\begin{proof}
Since the \(120\)-cell is regular, its symmetry group is transitive on edges. Hence it is enough to prove the claim for one specific edge.
Take
\(
u=(2,2,0,0),
v=(\sqrt5,\phi,0,\phi^{-1}).
\)
A direct computation gives
\(
\|u-v\|^2
=(3-\sqrt5)^2,
\)
so \([u,v]\) is an edge.
Thus
\[
\operatorname{span}\{u,v\}=\{(x_1,x_2,x_3,x_4)\in\mathbb R^4 : x_3=0,\ x_1=x_2+x_4\}.
\]
Family \(1\) contributes exactly the six vertices
\[
A_0=(2,2,0,0),\quad
A_1=(2,0,0,2),\quad
A_2=(0,-2,0,2),
\]
\[
A_3=(-2,-2,0,0),\quad
A_4=(-2,0,0,-2),\quad
A_5=(0,2,0,-2).
\]
Family \(6\) contributes exactly the six vertices
\[
B_0=(\sqrt5,\phi,0,\phi^{-1}),\quad
B_1=(\phi,-\phi^{-1},0,\sqrt5),\quad
B_2=(-\phi^{-1},-\sqrt5,0,\phi),
\]
\[
B_3=(-\sqrt5,-\phi,0,-\phi^{-1}),\quad
B_4=(-\phi,\phi^{-1},0,-\sqrt5),\quad
B_5=(\phi^{-1},\sqrt5,0,-\phi).
\]
Therefore,
\(P_e\) contains exactly \(12\) vertices of the \(120\)-cell.
A direct computation shows that the distance \(3-\sqrt5\) occurs exactly for the six pairs:
\(
A_0B_0,\ A_1B_1,\ A_2B_2,\ A_3B_3,\ A_4B_4,\ A_5B_5.
\)
\end{proof}

\begin{proposition}[Order--$3$ symmetries of the $120$--cell]\label{prop:15.1}
Let \(P_{120}\subset \mathbb R^{4}\) be a regular $120$--cell. Let \(\Gamma_{120}\subset S^{3}\) be the radial projection of its $1$--skeleton, and set \(G_{120}:=\Sym^{+}(P_{120})\subset SO(4)\). Let \(E^{(120)}_{0}:=\left[V^{(120)}_1,V^{(120)}_2\right]\) denote the edge incident to \(V^{(120)}_1\) that is fixed by \(\Ad_q\). For any edge \(E^{(120)}_{ij}:=\left[V^{(120)}_i,V^{(120)}_j\right]\) of \(P_{120}\), there exists \(h\in G_{120}\) with \(h(E^{(120)}_{0})=E^{(120)}_{ij}\), and the two nontrivial order--$3$ elements of \(G_{120}\) fixing \(E^{(120)}_{ij}\) pointwise are \(h\Ad_{q}h^{-1}\) and its inverse. Each restricts on
\(
W_{E^{(120)}_{ij}}^{\perp},
W_{E^{(120)}_{ij}}:=\operatorname{span}_{\mathbb R}
\left\{V^{(120)}_i,V^{(120)}_j\right\},
\)
to a rotation by \(\pm 2\pi/3\). The number of elements of order \(3\) in \(G_{120}\) that fix some edge pointwise is \(400\).
\end{proposition}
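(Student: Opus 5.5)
The plan follows the template of Propositions~\ref{prop:13.1} and~\ref{prop:14.1}, using Lemma~\ref{lem:120} for the counting.

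\emph{Step 1: the edge fixed by $\Ad_q$.} Since $\Ad_q(x_1,x_2,x_3,x_4)=(x_1,x_4,x_2,x_3)$ preserves the vertex set, $\Ad_q\in G_{120}$. Its fixed plane is
\[
F=\operatorname{span}_{\mathbb R}\{1,i+j+k\}=\{x_2=x_3=x_4\}.
\]
I will check directly which vertices lie in $F$. Family~2 gives $(\pm\sqrt5,\pm(1,1,1))$, Family~3 gives $(\pm\phi^{-2},\pm\phi(1,1,1))$, and Family~4 gives $(\pm\phi^2,\pm\phi^{-1}(1,1,1))$, so $F$ contains twelve vertices. In this plane I will pick $V_1^{(120)}$ and $V_2^{(120)}$ at squared distance $(3-\sqrt5)^2$. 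For example, $(\sqrt5,1,1,1)$ and $(\phi^2,\phi^{-1},\phi^{-1},\phi^{-1})$ have squared distance $(\sqrt5-\phi^2)^2+3(1-\phi^{-1})^2$, which I expect to equal $14-6\sqrt5$. This gives $E_0^{(120)}\subset F$. As in the earlier cases, linear independence gives $F=W_{E_0^{(120)}}$, and quaternion conjugation shows that $\Ad_q$ rotates $F^\perp$ by $2\pi/3$.

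\emph{Step 2: transport and uniqueness.} Regularity gives edge-transitivity of $G_{120}$, so conjugating by $h$ carries the conclusion to every edge $E^{(120)}_{ij}$. The rotation angle is $\pm2\pi/3$ on $W^\perp_{E^{(120)}_{ij}}=h(F^\perp)$. For uniqueness, the edge figure $\{3\}$ (Table~\ref{tab:regular4polytopes-figures}) means the orientation-preserving pointwise stabilizer acts faithfully on a triangle. An orientation-preserving map fixing $W_E$ pointwise is a rotation of $W_E^\perp$, so the stabilizer is cyclic of order $3$. Its nontrivial elements are therefore exactly $h\Ad_qh^{-1}$ and its inverse.

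\emph{Step 3: counting.} An order-$3$ element $g$ fixing some edge pointwise has fixed set exactly the plane $W_E$, since it rotates $W_E^\perp$ nontrivially. Consequently, $g$ fixes pointwise every edge lying in $W_E$. Conversely, if two edges determine the same plane, their stabilizers coincide. By Lemma~\ref{lem:120}, each such plane contains exactly $6$ edges, so the $1200$ edges fall into $1200/6=200$ planes, with no overlaps because distinct planes are distinct $W_E$'s. Each plane contributes exactly the two elements from Step~2, for a total of $400$.

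The main obstacle is the explicit verification in Step~1: confirming an honest edge inside $\{x_2=x_3=x_4\}$ among the listed families, and reconciling it with the labeling $E_0^{(120)}$. If the pair above fails, I will try the other pairs inside $F$, for instance Family~3 against Family~4 vertices. Lemma~\ref{lem:120} then guarantees that six such edges exist.
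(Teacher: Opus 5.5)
Your proposal is correct and follows essentially the same route as the paper: the same reference edge $[(\sqrt5,1,1,1),(\phi^2,\phi^{-1},\phi^{-1},\phi^{-1})]\subset F$, transport to every edge by conjugating with an edge-transitive $h$, and the count of $1200/6=200$ planes times two rotations each, using Lemma~\ref{lem:120}. The pair you chose does work, so no fallback is needed: $(\sqrt5-\phi^2)^2$ and $(1-\phi^{-1})^2$ both equal $\tfrac{7-3\sqrt5}{2}$, giving a total of $4\cdot\tfrac{7-3\sqrt5}{2}=14-6\sqrt5=(3-\sqrt5)^2$. Your remark that an order-$3$ element fixing an edge has fixed set exactly $W_E$ spells out the converse step that the paper states only briefly.
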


\begin{figure}[t]
  \centering
  \includegraphics[width=0.95\textwidth]{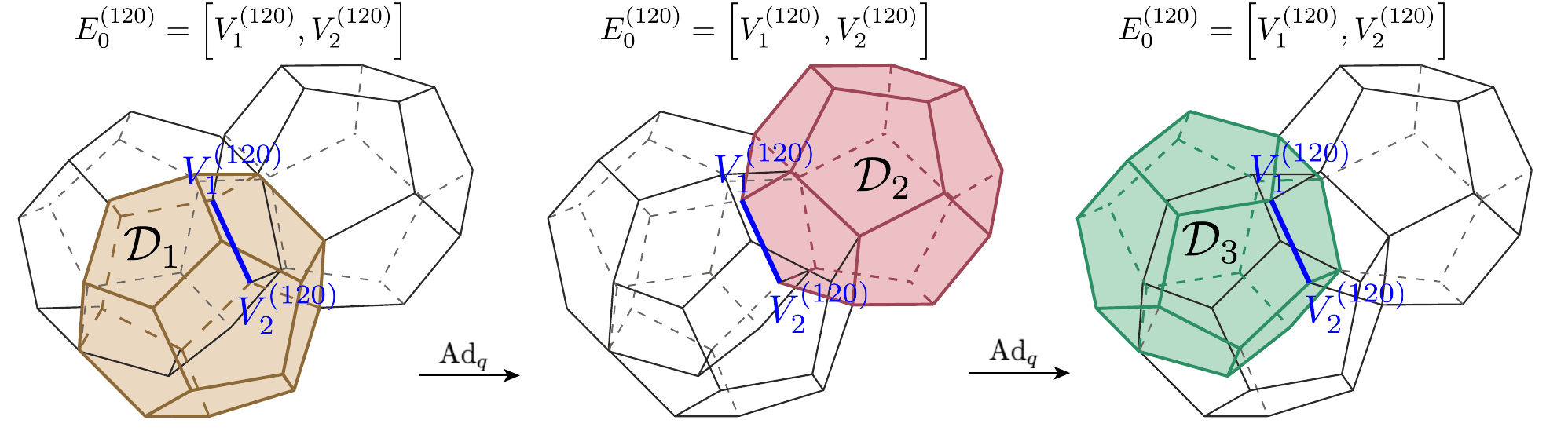}
  \caption{The three dodecahedral facets \(\mathcal{D}_1,\mathcal{D}_2,\mathcal{D}_3\) of the 120-cell
\(P_{120}\) meeting along the reference edge
\(E^{(120)}_0=[V^{(120)}_1,V^{(120)}_2]\), together with the induced facet dynamics under \(\operatorname{Ad}_q\).}
  \label{fig:120cell}
\end{figure}

\begin{proof}
The map \(\Ad_q\in G_{120}\) has order \(3\), fixes the \(2\)--plane
\(
F=\mathrm{span}_{\mathbb R}\{1,i+j+k\}
\)
pointwise, and rotates \(F^\perp\) by angle \(2\pi/3\).
Set
\[
V^{(120)}_1:=(\sqrt5,1,1,1),\qquad
V^{(120)}_2:=(\phi^2,\phi^{-1},\phi^{-1},\phi^{-1}),
\]
and let
\(
E^{(120)}_0:=[V^{(120)}_1,V^{(120)}_2].
\)
Both \(V^{(120)}_1\) and \(V^{(120)}_2\) belong to the vertex set of the standard model, and both are fixed by \(\Ad_q\), since their last three coordinates are equal. Moreover,
\(
\|V^{(120)}_1-V^{(120)}_2\|^2
=(3-\sqrt5)^2.
\)
Hence \(E^{(120)}_0\) is an edge of \(P_{120}\). Since
\(
V^{(120)}_1,V^{(120)}_2\in F
\)
are linearly independent, we have
\(
W_{E^{(120)}_0}:=\mathrm{span}_{\mathbb R}\{V^{(120)}_1,V^{(120)}_2\}=F.
\)
Therefore \(\Ad_q\) fixes \(E^{(120)}_0\) pointwise and restricts on
\(
W_{E^{(120)}_0}^{\perp}
\)
to a rotation by angle \(2\pi/3\), as illustrated in Fig.~\ref{fig:120cell}.

Since the \(120\)--cell is regular, \(G_{120}\) acts transitively on its edges. Hence for any edge
\[
E^{(120)}_{ij}:=[V^{(120)}_i,V^{(120)}_j]
\]
of \(P_{120}\), there exists \(h\in G_{120}\) with
\(
h(E^{(120)}_0)=E^{(120)}_{ij}.
\)
Then \(h\Ad_q h^{-1}\) fixes \(E^{(120)}_{ij}\) pointwise and restricts on
\(
W_{E^{(120)}_{ij}}^{\perp},
\) with
\(W_{E^{(120)}_{ij}}:=\mathrm{span}_{\mathbb R}\{V^{(120)}_i,V^{(120)}_j\},
\)
to a rotation by \(2\pi/3\), while its inverse restricts to a rotation by \(-2\pi/3\).

The \(120\)-cell has \(1200\) edges \cite{coxeter1973regular}. By Lemma~\ref{lem:120}, each edge lies in a unique plane through the origin, and each such plane contains exactly \(6\) edges. Therefore the number of such planes is \(200\). There are exactly two nontrivial rotations of order \(3\) fixing the plane \(P_e\) pointwise and rotating \(P_e^\perp\) by \(\pm 2\pi/3\).
Conversely, if an element of order \(3\) preserves an edge \(e=[u,v]\), then it fixes the plane \(\operatorname{span}\{u,v\}\) pointwise, and therefore is one of the two order-\(3\) rotations associated with that plane. Thus the number of order-\(3\) elements fixing some edge pointwise is \(400\).
\end{proof}

\subsection{The lift group \(\widetilde G_{120}\) and the associated $\mathbb{Z}/2$--extension.}\label{subsec:120cell-lifts}

By Proposition~\ref{prop:even}, each vertex of \(\Gamma_{120}\) has valency $4$, so the assignment
\(
\mu_{120}([m_{e}])=-1
\)
on edge meridians is well-defined. Let
\(
p_{120}:\widetilde M_{120}\to M_{120}
\)
be the associated connected double cover corresponding to \(H_{120}:=\ker(\mu_{120})\), let \(\tau_{120}\) denote its nontrivial deck involution, and let \(\mathcal{I}_{120}\to M_{120}\) be the associated real line bundle. Define
\[
\widetilde G_{120}:=\left\{\widetilde g:\widetilde M_{120}\to \widetilde M_{120}\ \middle|\ \exists g\in G_{120}\text{ such that }p_{120}\circ \widetilde g=g\circ p_{120}\right\}.
\]

\begin{proposition}[Splitting for the $120$--cell group]\label{prop:15.2}
The \(G_{120}\)-action on \(M_{120}\) lifts to a coherent choice of fiberwise linear bundle automorphisms
\(
\widehat g : \mathcal{I}_{120}\to \mathcal{I}_{120}
\)
covering \(g\in G_{120}\), such that
\(
\widehat{gh}=\widehat g\circ \widehat h
\text{ for all } g,h\in G_{120}.
\)
\end{proposition}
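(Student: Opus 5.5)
The plan follows the template of Propositions~\ref{prop:13.2} and~\ref{prop:14.2}. First, Proposition~\ref{prop:even} shows that the assignment $\mu_{120}([m_e])=-1$ is well defined. Every $g\in G_{120}$ preserves $\Gamma_{120}$ and permutes its edges, so Lemma~\ref{lem:12.3} gives $\mu_{120}\circ g_*=\mu_{120}$. Lemma~\ref{lem:12.2} then says each $g$ lifts to $\widetilde M_{120}$, uniquely up to $\tau_{120}$. Lemma~\ref{lem:12.5} yields the central extension $1\to\langle\tau_{120}\rangle\to\widetilde G_{120}\to G_{120}\to1$, and it remains to produce a splitting $\sigma$. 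Once $\sigma$ exists, Lemma~\ref{lem:12.4} converts the lifts $\sigma(g)$ into fiberwise linear automorphisms $\widehat g$, and $\widehat{gh}=\widehat g\circ\widehat h$ follows from the fact that $\sigma$ is a homomorphism.

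To build $\sigma$, use the presentation \eqref{eq:120}:
\[
G_{120}=\langle\sigma_1,\sigma_2,\sigma_3\mid\sigma_1^5=\sigma_2^3=\sigma_3^3=(\sigma_1\sigma_2)^2=(\sigma_2\sigma_3)^2=(\sigma_1\sigma_2\sigma_3)^2=1\rangle .
\]
Take $\sigma_3=\Ad_q$, and choose lifts of the generators with the same orders, as follows.

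\emph{Lifts of $\sigma_2,\sigma_3$.} Start from arbitrary lifts. Their cubes lie in $\{\id,\tau_{120}\}$; wherever a cube equals $\tau_{120}$, replace that lift by $\tau_{120}\widetilde\sigma_i$. Since $\tau_{120}$ is central, $(\tau_{120}\widetilde\sigma_i)^3=\tau_{120}^{\,3}\widetilde\sigma_i^{\,3}=\tau_{120}\cdot\tau_{120}=\id$.

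\emph{Lift of $\sigma_1$.} The order is $5$, which is odd, so the same trick works: if $\widetilde\sigma_1^{\,5}=\tau_{120}$, replace $\widetilde\sigma_1$ by $\tau_{120}\widetilde\sigma_1$, and then $(\tau_{120}\widetilde\sigma_1)^5=\tau_{120}^{\,6}=\id$. This is easier than the $8$- and $24$-cell cases, which needed a fixed point for the order-$4$ generator.

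\emph{The three involutive relations.} Let $g\in\{\sigma_1\sigma_2,\sigma_2\sigma_3,\sigma_1\sigma_2\sigma_3\}$ and let $\widetilde g$ be the corresponding product of the chosen lifts. Then $\widetilde g^{\,2}$ is a deck transformation, and we must show it is trivial. We may not adjust $\widetilde g$ by $\tau_{120}$ independently, so we use the fixed-point argument of Proposition~\ref{prop:14.2}.

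First we need $g\neq\pm\id$. We argue as in Proposition~\ref{prop:14.2}: $g=\pm\id$ would force relations incompatible with the orders $5$ and $3$. For instance, $\sigma_1\sigma_2=\pm\id$ would give $\sigma_1=\pm\sigma_2^{-1}$, whose order divides $6$ and so is not $5$; the other two cases are analogous. With $g\neq\pm\id$, the eigenvalue argument from Proposition~\ref{prop:5cell-splitting} shows that $\Fix(g|_{S^3})$ is a great circle $C_g$.

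Next, $C_g\not\subset\Gamma_{120}$. Otherwise $g$ would fix the radial image of an edge pointwise. But the edge figure of $\{5,3,3\}$ is a triangle, so the orientation-preserving pointwise edge stabilizer is cyclic of order $3$ (Proposition~\ref{prop:15.1}), and it contains no involution.

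So pick $x\in C_g\cap M_{120}$ and $\widetilde x$ above it. Then $\widetilde g(\widetilde x)\in\{\widetilde x,\tau_{120}\widetilde x\}$, hence $\widetilde g^{\,2}(\widetilde x)=\widetilde x$. Because $\tau_{120}$ acts freely, $\widetilde g^{\,2}=\id$.

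All defining relations therefore hold upstairs, and $\sigma_i\mapsto\widetilde\sigma_i$ extends to a splitting $\sigma$ with $\pi\circ\sigma=\id_{G_{120}}$.

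The main obstacle is the step $g\neq\pm\id$ and the nondegeneracy of the fixed circles. Concretely, $-\id$ does belong to $G_{120}$, so each of the three involutions must be shown not to be $-\id$ using the presentation.

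A cleaner alternative avoids this: if $g=-\id$, the relation $\widetilde g^{\,2}=\id$ could still be enforced by a parity adjustment of $\widetilde\sigma_1$ by $\tau_{120}$, provided the others hold. I would first try the direct argument. If the presentation check gets murky, I would instead use explicit matrices for $\sigma_1,\sigma_2$, taken from \cite{johnson1999quadratic} as in the $24$-cell case, and compute each fixed plane directly.
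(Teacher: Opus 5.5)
Your proposal is correct and follows the paper's proof essentially step for step: odd-order $\tau_{120}$-adjustments of $\widetilde\sigma_1,\widetilde\sigma_2,\widetilde\sigma_3$, then the fixed-circle argument for $\sigma_1\sigma_2$, $\sigma_2\sigma_3$, $\sigma_1\sigma_2\sigma_3$ after excluding $\pm\mathrm{id}$ via the presentation, although for $\sigma_2\sigma_3=+\mathrm{id}$ the order count is not literally ``analogous'' (the paper uses $\sigma_1\sigma_2\sigma_3=\pm\sigma_1$ there), a case that is harmless anyway because its fixed set is all of $S^3$. Your fallback ``parity adjustment'' would not work, since $(\tau_{120}\widetilde g)^2=\widetilde g^{\,2}$ for any lift and replacing $\widetilde\sigma_1$ by $\tau_{120}\widetilde\sigma_1$ breaks $\widetilde\sigma_1^{\,5}=\mathrm{id}$; it is unnecessary, because the order argument does exclude $-\mathrm{id}$ in all three cases (it forces $\sigma_1$ of order $6$, $\sigma_2$ of order $6$, and $\sigma_1$ of order at most $2$, respectively).
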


\begin{proof}
Since every \(g\in G_{120}\) preserves \(\Gamma_{120}\) and permutes its edges, Lemma~\ref{lem:12.3} gives
\(
\mu_{120}\circ g_*=\mu_{120}.
\)
Hence, by Lemma~\ref{lem:12.2}, every \(g\in G_{120}\) admits a lift to \(\widetilde M_{120}\), unique up to composition with \(\tau_{120}\). By Lemma~\ref{lem:12.5}, one therefore has a short exact sequence
\[
1 \longrightarrow \langle \tau_{120}\rangle
\longrightarrow \widetilde G_{120}
\xrightarrow{\ \pi\ }
G_{120}
\longrightarrow 1.
\]
It suffices to construct a splitting
\(
s:G_{120}\to \widetilde G_{120}.
\)

Choose generators \(\sigma_1,\sigma_2,\sigma_3\in G_{120}\) as in Equation~(\ref{eq:120}) with presentation
\[
G_{120}\cong [5,3,3]^+
=
\left\langle \sigma_1,\sigma_2,\sigma_3 \,\middle|\,
\sigma_1^{5}=\sigma_2^{3}=\sigma_3^{3}
=(\sigma_1\sigma_2)^{2}
=(\sigma_2\sigma_3)^{2}
=(\sigma_1\sigma_2\sigma_3)^{2}
=1
\right\rangle.
\]
Set
\(
g_{12}:=\sigma_1\sigma_2,
g_{23}:=\sigma_2\sigma_3,\) and \(
g_{123}:=\sigma_1\sigma_2\sigma_3.
\)
We show below that each of \(g_{12},g_{23},g_{123}\) has fixed-point set meeting
\(M_{120}\), which is enough to verify the involution relations upstairs.

\noindent \it Step 1: choose lifts of \(\sigma_1,\sigma_2,\sigma_3\) with the same odd orders upstairs. \normalfont
By Lemma~\ref{lem:12.2}, each \(\sigma_i\) admits a lift to \(\widetilde M_{120}\). Choose arbitrary lifts
\(
\widetilde\sigma_1,\widetilde\sigma_2,\widetilde\sigma_3.
\)
Since
\(
\sigma_1^{5}=\sigma_2^{3}=\sigma_3^{3}=1,
\)
the elements
\(
\widetilde\sigma_1^{5},
\widetilde\sigma_2^{3},
\widetilde\sigma_3^{3}
\in\{\id,\tau_{120}\}\). If
\(\widetilde\sigma_1^{5}=\tau_{120}\), replace \(\widetilde\sigma_1\) by
\(\tau_{120}\widetilde\sigma_1\). Since every lift commutes with \(\tau_{120}\),
\(
(\tau_{120}\widetilde\sigma_1)^5
=
\tau_{120}^{5}\widetilde\sigma_1^{5}
=
\tau_{120}\cdot \tau_{120}
=
\id.
\)
The same odd-order adjustment applies to \(\widetilde\sigma_2\) and
\(\widetilde\sigma_3\). Thus we may assume
\(
\widetilde\sigma_1^{5}
=
\widetilde\sigma_2^{3}
=
\widetilde\sigma_3^{3}
=
\id.
\)

\smallskip

\noindent\emph{Step 2: verify the involution relations upstairs.}
Set
\[
\widetilde g_{12}:=\widetilde\sigma_1\widetilde\sigma_2,\qquad
\widetilde g_{23}:=\widetilde\sigma_2\widetilde\sigma_3,\qquad
\widetilde g_{123}:=\widetilde\sigma_1\widetilde\sigma_2\widetilde\sigma_3.
\]
These are lifts of
\[
g_{12}:=\sigma_1\sigma_2,\qquad
g_{23}:=\sigma_2\sigma_3,\qquad
g_{123}:=\sigma_1\sigma_2\sigma_3.
\]
Since \(g_{12}^2=g_{23}^2=g_{123}^2=1\) in \(G_{120}\), each of \(\widetilde g_{12}^{\,2},\widetilde g_{23}^{\,2},\widetilde g_{123}^{\,2}\) is a deck transformation.

We first claim that each of \(g_{12},g_{23},g_{123}\) is a nontrivial involution in \(SO(4)\). Indeed, \(g_{12}\neq \pm \mathrm{id}\), for otherwise \(\sigma_1=\pm \sigma_2^{-1}\), impossible since \(\sigma_1\) has order \(5\) while \(\pm \sigma_2^{-1}\) has order \(3\) or \(6\). Likewise, \(g_{23}\neq \pm \mathrm{id}\), for otherwise \(g_{123}=\pm \sigma_1\), contradicting \(g_{123}^2=1\) and \(|\sigma_1|=5\). Finally, \(g_{123}\neq \pm \mathrm{id}\), for otherwise \(g_{12}=\pm \sigma_3^{-1}\), contradicting \(g_{12}^2=1\) and \(|\sigma_3|=3\).

Hence, for each
\(
g\in\{g_{12},g_{23},g_{123}\},
\)
the fixed-point set \(\Fix(g|_{S^3})\) is a great circle. We claim that
\(
\Fix(g)\cap M_{120}\neq\varnothing.
\)
If not, then \(\Fix(g)\subset \Gamma_{120}\). Since \(\Fix(g)\) is a great circle contained in the radial graph, it contains the radial image of some edge of \(P_{120}\), so \(g\) fixes that edge pointwise. But the 120-cell has triangular edge figure, so the orientation-preserving
pointwise stabilizer of an edge is cyclic of order \(3\), as in the proof of
Proposition~11.11. In particular, no involution can fix an edge pointwise.

Therefore, we may choose
\(
x_{12}\in \Fix(g_{12})\cap M_{120},
x_{23}\in \Fix(g_{23})\cap M_{120},
x_{123}\in \Fix(g_{123})\cap M_{120},
\)
and lifts
\(
\widetilde x_{12}\in p_{120}^{-1}(x_{12}),
\widetilde x_{23}\in p_{120}^{-1}(x_{23}),
\widetilde x_{123}\in p_{120}^{-1}(x_{123}).
\)
Because each \(g\) fixes the corresponding point, the lift \(\widetilde g\) sends the chosen lift into the same two-point fiber, so \(\widetilde g^{\,2}\) fixes that chosen lift. Since \(\widetilde g^{\,2}\) is a deck transformation and the nontrivial deck involution acts freely, it follows that
\(
(\widetilde\sigma_1\widetilde\sigma_2)^2=
(\widetilde\sigma_2\widetilde\sigma_3)^2=
(\widetilde\sigma_1\widetilde\sigma_2\widetilde\sigma_3)^2=\mathrm{id}.
\)

Thus, the chosen lifts satisfy all defining relations of the presentation of \(G_{120}\).
Therefore, the assignment \(
\sigma_1\mapsto \widetilde\sigma_1,
\sigma_2\mapsto \widetilde\sigma_2,
\sigma_3\mapsto \widetilde\sigma_3
\)
extends to a homomorphism
\(
s:G_{120}\longrightarrow \widetilde G_{120}
\)
with \(\pi\circ s=\mathrm{id}_{G_{120}}\). Hence the extension splits.
For each \(g\in G_{120}\), let \(\widehat g:\mathcal I_{120}\to \mathcal I_{120}\) be the fiberwise linear bundle automorphism induced by the lift \(s(g)\) via Lemma~\ref{lem:12.4}.
Since \(s\) is a homomorphism, these satisfy \(\widehat{gh}=\widehat g\circ \widehat h\) for all \(g,h\in G_{120}\). This proves the proposition.

\end{proof}

\section{Order--$5$ Isometries on Regular $600$--cell}\label{sec:600cell}

The goal of this section is to verify the \(600\)--cell case of Propositions~\ref{prop:lift} and~\ref{prop:odd-stab}. Using the model in \S\ref{subsec:600cell-model}, Proposition~\ref{prop:16.1} supplies the odd--order edge stabilizers required for Proposition~\ref{prop:odd-stab}. Proposition~\ref{prop:16.2} then proves that the associated \(\mathbb Z/2\)--extension splits.

\subsection{The $600$--cell model.}\label{subsec:600cell-model}

A \(600\)--cell is the regular convex \(4\)--polytope with Schl\"afli symbol
\(
\{3,3,5\}.
\)
Its facets are regular tetrahedra, its \(2\)--faces are equilateral triangles, its vertex figures are regular icosahedra, and its edge figure is a pentagon.
\(P_{600}\) has \(120\) vertices, \(720\) edges, and \(600\) tetrahedral \(3\)-cells, denoted $\mathcal{F}_i$ as depicted in Fig.~\ref{fig:600cell}. Its edge length is \(\phi^{-1}\), and since \(V_{600}\subset S^3\), every vertex has distance \(1\) from the origin. Explicitly, the vertices are the union of the following three families:
\begin{align*}
&A_1 = \{(\pm1,0,0,0)\ \text{and all permutations}\},\\
&A_2 = \left\{\frac12(\pm1,\pm1,\pm1,\pm1)\right\},\\
&A_3 = \left\{\frac12(0,\pm1,\pm\phi,\pm\phi^{-1})\ \text{and all even permutations}\right\},
\end{align*}
where \(\phi := {1+\sqrt5\over 2}\). These three families contribute \(8\), \(16\), and \(96\) vertices, respectively.
Let
\(
\mathcal V_{600}:=\{V_1^{(600)},\dots,V_{120}^{(600)}\}\subset S^3
\)
denote this set, and define
\(
P_{600}:=\operatorname{Hull}(\mathcal V_{600})\subset \mathbb R^4.
\)
Let \(\Gamma_{600}\subset S^3\) be the radial projection of the \(1\)--skeleton of \(P_{600}\), set
\(
M_{600}:=S^3\setminus \Gamma_{600},
\)
and write
\(
G_{600}:=\Sym^{+}(P_{600})\subset SO(4).
\)

The set \(\mathcal V_{600}\) is the binary icosahedral group \(2I\subset S^3\), so it is closed under quaternion multiplication and inversion. Set
\[
p:=\frac12(\phi+\phi^{-1}i+k).
\]
Then \(p\in A_3 \subset\mathcal{V}_{600}\), because its coordinate vector
\(
(\phi,\phi^{-1},0,1)
\)
is the even permutation of \((0,1,\phi,\phi^{-1})\). Since \(p\in S^3\), its inverse is its quaternionic conjugate:
\[
p^{-1}=\bar p=\frac12(\phi-\phi^{-1}i-k).
\]
For \(x\in\mathbb H\), define
\(
\Ad_p(x):=pxp^{-1}.
\)
Direct computation gives
\[
\Ad_p(1)=1,
\ \ 
\Ad_p(i)=\frac12\bigl(i+\phi j+\phi^{-1}k\bigr),
\ \ 
\Ad_p(j)=\frac12\bigl(-\phi i+\phi^{-1}j+k\bigr),
\ \ 
\Ad_p(k)=\frac12\bigl(\phi^{-1}i-j+\phi k\bigr).
\]
By \(\mathbb R\)-linearity,
\(\Ad_p(\pm1)=\pm1\in A_1, \Ad_p(\pm i),\Ad_p(\pm j),\Ad_p(\pm k)\in A_3\)
because the coordinate vectors
\[
(0,1,\phi,\phi^{-1}),\qquad (0,-\phi,\phi^{-1},1),\qquad (0,\phi^{-1},-1,\phi)
\]
are all even permutations of \((0,\pm1,\pm\phi,\pm\phi^{-1})\).

Let \(x\in\mathcal{V}_{600}\). Since \(\mathcal{V}_{600}=2I\) is a group, and both \(p\) and \(p^{-1}\) lie in \(\mathcal{V}_{600}\), the product
\(
pxp^{-1}
\)
also lies in \(\mathcal{V}_{600}\). To show injectivity, suppose \(x,y\in\mathcal{V}_{600}\) and
\(
\Ad_p(x)=\Ad_p(y).
\)
Then
\(
pxp^{-1}=pyp^{-1}.
\)
Multiplying on the left by \(p^{-1}\) and on the right by \(p\), we get
\(
x=y.
\)
Hence \(\Ad_p\) is injective on \(\mathcal{V}_{600}\). Since \(\Ad_p(\mathcal{V}_{600})\subseteq\mathcal{V}_{600}\) and \(\mathcal{V}_{600}\) is finite, every injective self-map of \(\mathcal{V}_{600}\) is bijective (also \(\Ad_p\) has explicit inverse \(\Ad_{p^{-1}}\)).

\begin{figure}[t]
  \centering
  \includegraphics[width=\textwidth]{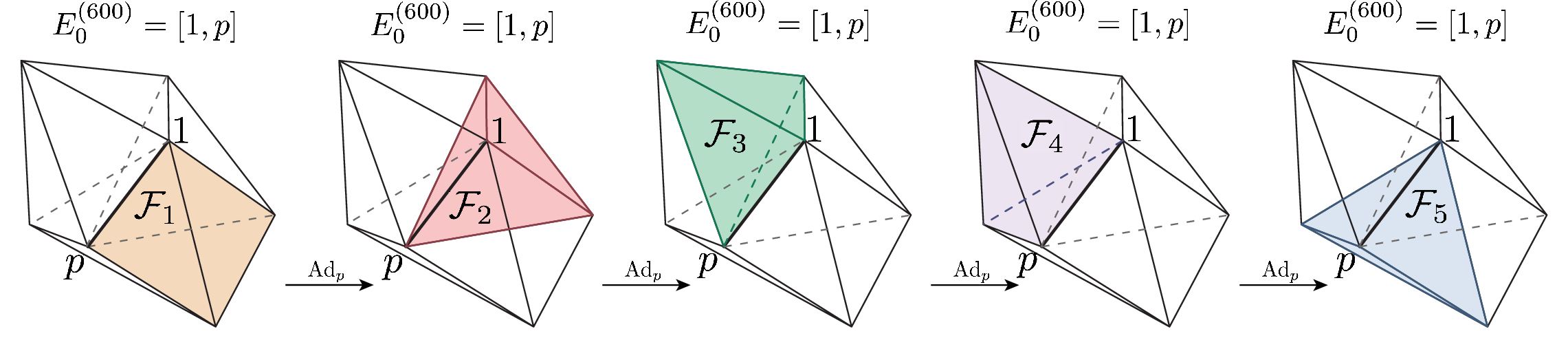}
  \caption{The five tetrahedral facets \(\mathcal{F}_1,\dots,\mathcal{F}_5\) of the 600-cell
\(P_{600}\) meeting along the reference edge \(E^{(600)}_0=[1,p]\),
together with the induced 5-cycle of facets under \(\operatorname{Ad}_p\).}
  \label{fig:600cell}
\end{figure}

\begin{lemma}\label{lem:600cell-decagon}
Let
\(
\phi:=\frac{1+\sqrt5}{2},
p:=\frac12(\phi+\phi^{-1}i+k)\in \mathcal V_{600}\subset S^3,
\)
and 
\(
F:=\operatorname{span}_{\mathbb R}\{1,\Im(p)\}\subset \mathbb H\simeq \mathbb R^4.
\)
Then
\(
F\cap \mathcal V_{600}
=
\{1,p,p^2,p^3,p^4,-1,-p,-p^2,-p^3,-p^4\}.
\)
These ten vertices lie on the unit circle \(F\cap S^3\) as a regular decagon. Moreover, the edges of \(P_{600}\) contained in \(F\) are exactly the ten sides of this decagon.
\end{lemma}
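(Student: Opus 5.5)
The plan is to exploit the group structure of \(\mathcal V_{600}=2I\) together with the observation that \(F=\operatorname{span}_{\mathbb R}\{1,\Im(p)\}\) is a commutative subalgebra of \(\mathbb H\) isomorphic to \(\mathbb C\). Write \(u:=\Im(p)/|\Im(p)|\), so that \(u^2=-1\). Then \(F=\{a+bu\}\cong\mathbb C\), and \(F\cap S^3\) is the unit circle of this copy of \(\mathbb C\). Since \(\Re(p)=\phi/2=\cos(\pi/5)\), we have \(p=\cos(\pi/5)+u\sin(\pi/5)=e^{u\pi/5}\), which has order \(10\) in \(S^3\). Hence \(\pm p^k\), \(k=0,\dots,4\), are ten distinct points \(e^{u\pi m/5}\) (\(m=0,\dots,9\)). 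They lie in \(\mathcal V_{600}\) because \(2I\) is a group, and they form a regular decagon on \(F\cap S^3\).

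For the reverse inclusion, I would note that \(F\cap\mathcal V_{600}\) is a finite subgroup of the circle group \(F\cap S^3\cong U(1)\), since both \(F\cap S^3\) and \(2I\) are groups. It is therefore cyclic, generated by some \(e^{u\pi\cdot 2/N}\), and it contains the order-\(10\) element \(p\). The key input is that elements of \(2I\) have orders in \(\{1,2,3,4,6,10\}\). One sees this from the real parts of the vertices: by the three families these lie in \(\{0,\pm\tfrac12,\pm1,\pm\tfrac{\phi}{2},\pm\tfrac{1}{2\phi}\}\). A unit quaternion \(e^{u\theta}\) has real part \(\cos\theta\), so \(\theta\in\pi\mathbb Z/5\cup\pi\mathbb Z/3\cup\pi\mathbb Z/2\). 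A cyclic group containing an element of order \(10\) with all orders dividing \(\{1,2,3,4,6,10\}\) must be exactly of order \(10\): its order is a multiple of \(10\), and a generator of order \(N\ge 20\) or \(N=30\) would have real part \(\cos(2\pi/N)\), which is not in the list. This gives equality of the two ten-element sets. Alternatively, one could cross-check directly by noting that a vertex \(x\) lies in \(F\) if and only if \(\Im(x)\parallel(\phi^{-1},0,1)\) in the \((i,j,k)\) coordinates, and scanning the three families. I would mention this as a sanity check but rely on the group argument.

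For the edges, I would use the fact that two vertices of \(P_{600}\) (all on \(S^3\), with edge length \(\phi^{-1}\)) are adjacent if and only if \(|x-y|^2=\phi^{-2}\), that is, \(\Re(x\bar y)=1-\phi^{-2}/2=\phi/2\). This criterion is the analogue of the one used for the 24-cell in Proposition~\ref{prop:14.1}. For \(x=e^{u\pi m/5}\) and \(y=e^{u\pi m'/5}\) in \(F\), \(\Re(x\bar y)=\cos(\pi(m-m')/5)\), and this equals \(\phi/2\) exactly when \(m-m'\equiv\pm1\pmod{10}\). So the adjacent pairs among the ten vertices are precisely the consecutive decagon vertices, giving ten sides. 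Finally, any edge of \(P_{600}\) contained in \(F\) has both endpoints in \(F\cap\mathcal V_{600}\), so it is one of these sides.

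The main obstacle is the justification of the order list for \(2I\), equivalently that no element of \(F\cap\mathcal V_{600}\) lies strictly between consecutive powers of \(p\). The real-part enumeration from the explicit coordinates of \(A_1,A_2,A_3\) settles it cleanly, and I would write that computation out carefully rather than cite the structure of \(2I\).
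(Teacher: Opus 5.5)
Your proof is correct. For the set equality $F\cap\mathcal V_{600}=\{\pm p^k\}$ it takes a different route from the paper; the edge part is essentially the same.

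\textbf{The paper's route.} The paper writes $F=\{a+bi+\phi b\,k\}$ and scans the three vertex families directly: $A_1$ contributes $\pm1$, $A_2$ contributes nothing, and among the even permutations in $A_3$ exactly two patterns, with their four sign choices each, satisfy $c=0$, $d=\phi b$. This gives exactly ten vertices, which are then matched with the ten distinct powers of $p$.

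\textbf{Your route.} You observe that $F\cap\mathcal V_{600}=(F\cap S^3)\cap 2I$ is a finite, hence cyclic, subgroup of a circle group, and that it contains the order-$10$ element $p$. You then cap its order: a generator of order $N\ge 20$ would have real part $\cos(2\pi/N)\in(\phi/2,1)$. That value is not among the real parts $\{0,\pm\tfrac12,\pm1,\pm\tfrac{\phi}{2},\pm\tfrac{1}{2\phi}\}$ that occur in the three families.

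\textbf{Comparison.} Your argument uses the group structure of $2I$ essentially, but needs only the coarse list of real parts. It therefore avoids the bookkeeping over even permutations and sign patterns, and it applies verbatim to any plane $\operatorname{span}_{\mathbb R}\{1,\Im(x)\}$ through a vertex $x$. The paper's scan is a direct, hands-on verification from the coordinates.

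\textbf{The edge part.} Your criterion $\Re(x\bar y)=\phi/2$ is just the paper's computation $\|p^{m+r}-p^m\|=2|\sin(r\pi/10)|$ rewritten. Both arguments rely on the same fact: edges are exactly the vertex pairs at distance $\phi^{-1}$.

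\textbf{One small slip.} The set of element orders in $2I$ is $\{1,2,3,4,5,6,10\}$, not $\{1,2,3,4,6,10\}$; for instance $p^2$, with real part $\tfrac{1}{2\phi}$, has order $5$. This does not affect your proof, since what you actually use is that $\cos(2\pi/N)$ is not an admissible real part for $N\ge 20$. Also, the phrase ``$N\ge 20$ or $N=30$'' is redundant.
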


\begin{proof}
Let \(u=\operatorname{Im}(p)/|\operatorname{Im}(p)|\). Since
\(p \in \operatorname{span}_{\mathbb R}\{1,u\}\), \((a+bu)(c+du)=(ac-bd)+(ad+bc)u.\) So \(\operatorname{span}_{\mathbb R}\{1,u\}\) is closed under multiplication. Hence every power \(p^n\in \operatorname{span}_{\mathbb R}\{1,u\}.\) Because \(p^5=-1\), these are ten distinct points. We now check \(F\cap \mathcal V_{600}\) consists of exactly these ten vertices. A quaternion
\(
x=a+bi+cj+dk
\)
lies in \(F\) if and only if
\(
c=0, d=\phi b.
\)
Equivalently,
\(
F=\{\,a+bi+\phi b\,k : a,b\in\mathbb R\,\}.
\)
\begin{itemize}
    \item \(A_1:\) The condition \(c=0\) and \(d=\phi b\) restricts to
\(
A_1\cap F=\{\pm1\}.
\)
\item \(A_2\): Every such point has \(j\)-coordinate \(\pm\frac12\neq0\), so none lies in \(F\). Hence
\(
A_2\cap F=\emptyset.
\)
\item \(A_3\): 
Among even permutations of
\((0,1,\phi,\phi^{-1})\), the ones with \(0\) in the third slot are:
\[
(1,\phi,0,\phi^{-1}),\qquad
(\phi,\phi^{-1},0,1),\qquad
(\phi^{-1},1,0,\phi).
\]
The latter two satisfy \(d=\phi b\). Therefore the only \(A_3\) vertices in \(F\) are the 8 points:
\[
\frac12(\varepsilon\phi+\delta\phi^{-1}i+\delta k),
\qquad
\frac12(\varepsilon\phi^{-1}+\delta i+\delta\phi k),
\qquad
\varepsilon,\delta\in\{\pm1\}.
\]
\end{itemize}
Since \( p=\cos{\pi \over 5}+u\sin {\pi \over 5}\), the ten points
\(
\{1, p, p^2, p^3, p^4,-1,- p,- p^2,- p^3,- p^4\}
\)
are equally spaced on the circle \(F\cap S^3\), hence form a regular decagon.

Finally, the edge length of \(P_{600}\) is \(\phi^{-1}\), and
\(
\| p^{m+1}- p^m\|=\|1- p\|=\phi^{-1}.
\)
So consecutive vertices in the decagon are joined by edges of \(P_{600}\). 
For any integers \(m,r\),
\[
\|p^{m+r}-p^m\|
=\|p^m(p^r-1)\|
=\|p^m\|\,\|p^r-1\|
=\|p^r-1\|
=2\left|\sin\frac{r\pi}{10}\right|.
\]
If \(r\) is chosen modulo \(10\) with \(0\le |r|\le 5\), this simplifies to
\(
\|p^{m+r}-p^m\|=2\sin\!\left(\frac{|r|\pi}{10}\right)>\phi^{-1}
\)
if \(|r|\ge 2\).
So no non-consecutive pair is joined by an edge. Therefore the edges of \(P_{600}\) contained
in \(F\) are exactly the ten sides of the regular decagon.
\end{proof}

\begin{proposition}[Order--$5$ symmetries of the $600$--cell]
\label{prop:16.1}
Let \(P_{600}\subset \mathbb R^4\) be the regular \(600\)--cell, let
\(\Gamma_{600}\subset S^3\) be the radial projection of its \(1\)--skeleton, and set
\(
G_{600}:=\Sym^{+}(P_{600})\subset SO(4).
\)
Let
\(
E^{(600)}_0:=[1,p].
\)
For any edge
\(
E^{(600)}=\left[V_i^{(600)},V_j^{(600)}\right]
\)
of \(P_{600}\), there exists \(h\in G_{600}\) with
\(
h\!\left(E^{(600)}_0\right)=E^{(600)}.
\)
Moreover, the four nontrivial order--\(5\) elements of \(G_{600}\) fixing \(E^{(600)}\) pointwise are
\(
\left(h\,\Ad_p\,h^{-1}\right)^k, k=1,2,3,4.
\)
Each restricts to
\(
W_{E^{(600)}}^\perp,\) where \(W_{E^{(600)}}:=\operatorname{span}_{\mathbb R}\{V_i^{(600)},V_j^{(600)}\},\) to a rotation by angle \(2\pi k/5\). In particular, the orientation--preserving pointwise stabilizer of \(E^{(600)}\) is cyclic of order \(5\).
\end{proposition}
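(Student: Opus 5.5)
The proof follows the template of Propositions~\ref{prop:14.1} and~\ref{prop:15.1}, with $\Ad_p$ in place of $\Ad_q$.

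\emph{Step 1: $\Ad_p$ is an order-$5$ symmetry fixing $E^{(600)}_0$.} The discussion preceding Lemma~\ref{lem:600cell-decagon} shows that $\Ad_p$ permutes $\mathcal V_{600}$. Since $\Ad_p\in\SO(4)$, it preserves $P_{600}=\operatorname{Hull}(\mathcal V_{600})$, so $\Ad_p\in G_{600}$. Write $p=\cos\frac{\pi}{5}+u\sin\frac{\pi}{5}$ with $u=\Im(p)/|\Im(p)|$. By the formula $\Ad_a(x)=\cos(2\theta)x+\sin(2\theta)ux$ of \S\ref{subsec:quat}, $\Ad_p$ fixes $F=\operatorname{span}_{\R}\{1,u\}$ pointwise and rotates $F^\perp$ by $2\pi/5$. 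Hence $\Ad_p$ has order $5$. Both $1$ and $p$ lie in $F$ and are linearly independent, so $W_{E^{(600)}_0}=F$. Therefore $\Ad_p$ fixes $E^{(600)}_0$ pointwise, and $\Ad_p^k$ acts on $W^\perp$ by rotation through $2\pi k/5$.

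\emph{Step 2: transport to an arbitrary edge.} Regularity (Definition~\ref{def:regular}) makes $G_{600}$ transitive on chambers. Since orientation-reversing elements can be composed with a reflection fixing the chamber's edge, $G_{600}$ is also transitive on edges. So there is $h\in G_{600}$ with $h(E^{(600)}_0)=E^{(600)}$. Then $h$ maps $W_{E_0}$ onto $W_E$ and $W_{E_0}^\perp$ onto $W_E^\perp$ isometrically. It follows that $(h\Ad_p h^{-1})^k$ fixes $E^{(600)}$ pointwise and acts on $W_E^\perp$ by a rotation through $\pm 2\pi k/5$; the sign depends on the orientation of $W_E^\perp$, which may be chosen so that it is $+$.

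\emph{Step 3: the pointwise stabilizer is exactly $\langle h\Ad_p h^{-1}\rangle$.} This is the main point requiring care. Suppose $g\in G_{600}$ fixes $E$ pointwise. Then $g$ is the identity on $W_E$ and acts on $W_E^\perp$ by an element of $\SO(2)$, since $\det g=1$, so $g$ is a rotation of $W_E^\perp$. The pentagonal edge figure from Table~\ref{tab:regular4polytopes-figures} makes this explicit. The five tetrahedral facets around $E$ (Fig.~\ref{fig:600cell}) give five vertices not on $E$, one for each triangle through $E$ not lying in the cycle's other facets. These five vertices project orthogonally to $W_E^\perp$ as the vertices of a regular pentagon centered at $0$, and $g$ must permute them. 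A rotation of the plane preserving a regular pentagon centered at the origin is a rotation by a multiple of $2\pi/5$. Hence $g$ coincides with some power $(h\Ad_p h^{-1})^k$ on $W_E\oplus W_E^\perp=\R^4$. The stabilizer is therefore cyclic of order $5$, and its four nontrivial elements are the stated powers. To avoid leaning on the figure, the pentagon can be checked directly for $E_0=[1,p]$. The vertices $x\in\mathcal V_{600}$ with $\Re(x)=\Re(x\bar p)=\phi/2$ are the common neighbours of $1$ and $p$. Because $\Ad_p$ fixes $1$ and $p$, it permutes these common neighbours, they form a single $\Ad_p$-orbit of size $5$, and their $W^\perp$-components are nonzero. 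The general edge then follows by conjugating with $h$.
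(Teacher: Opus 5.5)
Your proof is correct. Steps~1--2 follow the paper's argument: $\Ad_p$ fixes $F=\operatorname{span}_{\R}\{1,\Im(p)\}=W_{E_0^{(600)}}$ and rotates $F^\perp$ by $2\pi/5$, and edge-transitivity carries this to every edge by conjugation. The paper gets that $[1,p]\subset F$ is an edge from Lemma~\ref{lem:600cell-decagon}; you should cite it, or note $|1-p|^2=2-\phi=\phi^{-2}$.

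Your Step~3 goes beyond the paper. The paper's proof only exhibits the powers $(h\Ad_ph^{-1})^k$ and then states that the pointwise stabilizer is cyclic of order $5$, implicitly relying on the pentagonal edge figure. It never rules out other elements. Your argument does: any $g$ fixing $E$ pointwise restricts to an element of $\SO(W_E^\perp)$ and permutes the common neighbours of the endpoints, whose $W_E^\perp$-projections form a single free $\langle h\Ad_ph^{-1}\rangle$-orbit. This genuinely proves the ``exactly four'' and ``cyclic of order $5$'' claims.

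Because that argument needs the common-neighbour set to be exactly one orbit, write the count down. For $E_0$ the common neighbours of $1$ and $p$ are
\[
\tfrac12(\phi,0,\pm1,\phi^{-1}),\qquad \tfrac12(\phi,1,\pm\phi^{-1},0),\qquad \tfrac12(\phi,-\phi^{-1},0,1).
\]
None of these lies in $F$, so they form one orbit of size $5$.

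One correction in Step~2: regularity makes $\Isom(P_{600})$, not $G_{600}$, transitive on chambers. The index-two rotation subgroup has two chamber orbits. It is your next step, composing with a reflection that fixes the edge, that gives edge-transitivity of $G_{600}$.
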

\begin{proof}
By the quaternionic description from \S\ref{subsec:quat}, \(\Ad_p\) fixes pointwise the \(2\)--plane
\[
F:=\operatorname{span}_{\mathbb R}\{1,u\}
=
\operatorname{span}_{\mathbb R}\{1,\Im(p)\}.
\]
and rotates \(F^\perp\) by angle
\(\frac{2\pi}{5}.\)
By Lemma~\ref{lem:600cell-decagon}, the segment
\(
E^{(600)}_0=[1,p]
\)
is an edge of \(P_{600}\), and it is contained in \(F\). Therefore, \(\Ad_p\) fixes \(E^{(600)}_0\) pointwise and restricts on \(W^\perp_{E^{(600)}_0}\) to a rotation by angle \(2\pi/5\), as illustrated in Fig.~\ref{fig:600cell}. Since \(P_{600}\) is regular, \(G_{600}\) acts transitively on its edges. Hence for any edge \(E^{(600)}\) there exists \(h\in G_{600}\) such that
\(
h\!\left(E^{(600)}_0\right)=E^{(600)}.
\)
Then
\(
h\Ad_p h^{-1}
\)
fixes \(E^{(600)}\) pointwise and acts on \(W_{E^{(600)}}^\perp\) by rotation through
\(2\pi/5\). Its powers
\(
(h\Ad_p h^{-1})^k, k=1,2,3,4,
\)
act by rotations through angles \(2\pi k/5\).
\end{proof}

\begin{corollary}\label{cor:600cell-order5-edgecount}
The number of order \(5\) elements in \(G_{600}\) preserving an edge of \(P_{600}\) is
\(
288.
\)
\end{corollary}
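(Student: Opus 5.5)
The count will follow the pattern of Proposition~\ref{prop:15.1}: partition the $720$ edges of $P_{600}$ according to the $2$-plane through the origin that contains them, and then count the order-$5$ rotations attached to each plane.

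\emph{Step 1: the edges fall into planes of ten.} By Lemma~\ref{lem:600cell-decagon}, the plane $F=\operatorname{span}_{\mathbb R}\{1,\Im(p)\}$ meets $\mathcal V_{600}$ in a regular decagon, and the edges of $P_{600}$ lying in $F$ are exactly its ten sides. Since $G_{600}$ acts transitively on edges, and $h(W_{E})=W_{h(E)}$ for $h\in G_{600}$, every plane $W_E$ spanned by an edge also contains exactly ten edges. Moreover, two such planes that share an edge $E$ both contain $W_E$, so by dimension they coincide. Hence the $720$ edges are partitioned into $720/10=72$ edge planes.

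\emph{Step 2: four order-$5$ elements per plane.} By Proposition~\ref{prop:16.1}, the orientation-preserving pointwise stabilizer of an edge $E$ is cyclic of order $5$, generated by $h\Ad_p h^{-1}$. Its four nontrivial elements fix $W_E$ pointwise, and they depend only on the plane $W_E$: any element of $SO(4)$ fixing a plane pointwise is a rotation of the orthogonal complement. For the converse, take an order-$5$ element $g\in G_{600}$ that fixes an edge pointwise. It fixes $W_E$ pointwise, so it lies in the pointwise stabilizer of $E$, which is exactly the cyclic group above. Distinct planes give distinct elements, because a nontrivial rotation of $W^\perp$ has fixed-point set exactly $W$. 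The count of order-$5$ elements fixing some edge pointwise is therefore $72\times 4=288$.

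\emph{Step 3: "preserving" versus "fixing pointwise".} This is the delicate point. An order-$5$ element that preserves an edge setwise either fixes both endpoints or swaps them. Swapping them would give an element of even order on the pair of endpoints, which is impossible for an element of order $5$. So "preserving an edge" coincides with "fixing it pointwise", and the answer is $288$.

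The main obstacle is Step 1: we need that no edge plane carries fewer or more edges than the reference plane, and that planes cannot overlap. Both follow from edge-transitivity together with the two-dimensionality argument above.
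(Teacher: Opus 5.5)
Your proposal is correct and follows essentially the same route as the paper. Like the paper, you use Lemma~\ref{lem:600cell-decagon} and edge-transitivity to partition the $720$ edges into $72$ decagon planes of ten edges each, and then attach to each plane the four nontrivial elements of the cyclic order-$5$ pointwise stabilizer from Proposition~\ref{prop:16.1}, giving $72\cdot 4=288$. Your Step~3, that an element of odd order preserving an edge setwise must fix both endpoints, and your check that distinct planes give distinct elements make explicit two points the paper's proof passes over.
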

\begin{proof}
The \(600\)--cell has \(120\) vertices, and by Proposition~\ref{prop:even} each vertex has valency \(12\). Hence the number of edges is
\(
720.
\)
By Proposition~\ref{prop:16.1}, the orientation--preserving pointwise stabilizer of any edge is cyclic of order \(5\). Therefore each edge is preserved by exactly four nontrivial elements of order \(5\). By Proposition~\ref{prop:16.1}, every fixed plane of such an element is a conjugate of \(F\). By Lemma~\ref{lem:600cell-decagon}, each such plane contains exactly \(10\) edges of \(P_{600}\). Each edge spans a \(2\)--plane through the origin, so each edge lies in exactly one such fixed plane. Thus \(720\) edges determine \(72\) such planes, and each such plane determines four nontrivial order \(5\) rotations. Therefore there are \(72\cdot 4 = 288\) such elements.
\end{proof}

\subsection{The lift group \(\widetilde G_{600}\) and the associated $\mathbb{Z}/2$--extension.}
\label{subsec:600cell-lifts}

By Proposition~\ref{prop:even}, each vertex of \(\Gamma_{600}\) has valency \(12\), so the assignment
\(
\mu_{600}([m_e])=-1
\)
on edge meridians is well-defined. Let
\(
p_{600}:\widetilde M_{600}\to M_{600}
\)
be the associated connected double cover corresponding to
\(
H_{600}:=\ker(\mu_{600}),
\)
let \(\tau_{600}\) denote its nontrivial deck involution, and let
\(
\mathcal{I}_{600}\to M_{600}
\)
be the associated real line bundle. Define
\[
\widetilde G_{600}:=
\left\{
\widetilde g:\widetilde M_{600}\to \widetilde M_{600}
\ \middle|\
\exists\, g\in G_{600}\text{ such that }
p_{600}\circ \widetilde g=g\circ p_{600}
\right\}.
\]

\begin{proposition}[Splitting for the $600$--cell group]
\label{prop:16.2}
The \(G_{600}\)--action on \(M_{600}\) lifts to a coherent choice of
fiberwise linear bundle automorphisms
\(
\widehat g:\mathcal{I}_{600}\to \mathcal{I}_{600}
\)
covering \(g\in G_{600}\), such that
\(
\widehat{gh}=\widehat g\circ \widehat h
\qquad\text{for all }g,h\in G_{600}.
\)
\end{proposition}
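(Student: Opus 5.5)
The plan follows the template used for Propositions~\ref{prop:13.2}, \ref{prop:14.2} and \ref{prop:15.2}. Every $g\in G_{600}$ preserves $\Gamma_{600}$ and permutes its edges, so Lemma~\ref{lem:12.3} gives $\mu_{600}\circ g_*=\mu_{600}$. Lemma~\ref{lem:12.2} then shows that each $g$ lifts to $\widetilde M_{600}$, uniquely up to $\tau_{600}$. Lemma~\ref{lem:12.5} produces the central extension
\[
1\to\langle\tau_{600}\rangle\to\widetilde G_{600}\xrightarrow{\ \pi\ }G_{600}\to 1,
\]
and it suffices to build a homomorphic section $s:G_{600}\to\widetilde G_{600}$. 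Once $s$ is in hand, Lemma~\ref{lem:12.4} turns the lifts $s(g)$ into fiberwise linear automorphisms $\widehat g$ of $\mathcal I_{600}$, and $\widehat{gh}=\widehat g\circ\widehat h$ follows from $s(gh)=s(g)s(h)$.

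To construct $s$, I will use the presentation \eqref{eq:600}:
\[
[3,3,5]^+=\bigl\langle\sigma_1,\sigma_2,\sigma_3 \bigm| \sigma_1^3=\sigma_2^3=\sigma_3^5=(\sigma_1\sigma_2)^2=(\sigma_2\sigma_3)^2=(\sigma_1\sigma_2\sigma_3)^2=1\bigr\rangle .
\]
\emph{Step 1.} All three generators have odd order. Choose arbitrary lifts $\widetilde\sigma_i$. Then $\widetilde\sigma_1^3,\widetilde\sigma_2^3,\widetilde\sigma_3^5\in\{\id,\tau_{600}\}$. If one of these equals $\tau_{600}$, replace that lift by $\tau_{600}\widetilde\sigma_i$. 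Since $\tau_{600}$ is central (Lemma~\ref{lem:12.4}(1)) and the exponent is odd, this makes the corresponding power equal to $\id$, exactly as in Step~1 of Proposition~\ref{prop:15.2}.

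\emph{Step 2.} The three involution relations hold automatically upstairs. Let $g\in\{\sigma_1\sigma_2,\sigma_2\sigma_3,\sigma_1\sigma_2\sigma_3\}$. First check $g\neq\pm\id$. If $\sigma_1\sigma_2=\pm\id$, then $\sigma_1=\pm\sigma_2^{-1}$, and $\sigma_1$ would be obtained from an order-$3$ element multiplied by $\pm 1$, so it could not satisfy the remaining relations; one checks this is impossible using element orders in $SO(4)$. The same style of argument handles the other two products, for instance $\sigma_1\sigma_2\sigma_3=\pm\id$ would force $\sigma_1\sigma_2=\pm\sigma_3^{-1}$ to have order $5$ or $10$. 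Since $g^2=\id$ and $g\neq\pm\id$, the eigenvalue argument of Proposition~\ref{prop:5cell-splitting} shows $\Fix(g|_{S^3})$ is a great circle.

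If that circle lay in $\Gamma_{600}$, then $g$ would fix some edge pointwise. By Proposition~\ref{prop:16.1}, the orientation-preserving pointwise edge stabilizer is cyclic of order $5$, so it contains no involution. Hence the circle meets $M_{600}$. Pick $x$ in that intersection and a lift $\widetilde x$. The lift $\widetilde g$ maps $\widetilde x$ into the fiber $\{\widetilde x,\tau_{600}\widetilde x\}$, so $\widetilde g^{\,2}$ is a deck transformation fixing $\widetilde x$. Because $\tau_{600}$ acts freely, $\widetilde g^{\,2}=\id$. All relations are therefore satisfied, and $\sigma_i\mapsto\widetilde\sigma_i$ defines $s$.

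I expect the main obstacle to be this step: checking that each product $g$ is a genuine involution different from $\pm\id$. Note that $-\id\in G_{600}$, since $-1\in 2I$. If some product in the chosen generating set were the central $-\id$, it would have no fixed points on $S^3$ and the fixed-point argument would fail. The first thing I would try is writing the generators explicitly as pairs $(q_L,q_R)\in 2I\times 2I$ in the quaternionic model, and checking that the three products are not $(\pm1,\mp1)$-type elements; this is a finite computation. If any product did turn out to be $-\id$, I would instead check directly that the lift of $-\id$ satisfies $\widetilde{(-\id)}^{\,2}=\id$, by finding a loop along which the antipodal map preserves the sheets. A cleaner alternative uses group cohomology: a perfect group with $H^2(G;\mathbb Z/2)$ vanishing splits every central $\mathbb Z/2$ extension. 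However, $[3,3,5]^+\cong(2I\times 2I)/\pm$ is not perfect in the needed sense, so the explicit approach is preferable.
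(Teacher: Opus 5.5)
Your proposal is correct and follows the paper's proof essentially step for step. You use the same presentation of $[3,3,5]^+$ and the same odd-order sign adjustment of the generator lifts. You also use the same order arguments to rule out $\sigma_1\sigma_2,\sigma_2\sigma_3,\sigma_1\sigma_2\sigma_3=-\id$, followed by the fixed-great-circle and free-deck-involution argument based on the order-$5$ edge stabilizer of Proposition~\ref{prop:16.1}; the case $g=+\id$ needs no exclusion, and the paper simply treats it as trivial.

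One correction to your closing aside: $G_{600}$ is perfect. The cohomological shortcut fails instead because $H^2(G_{600};\mathbb Z/2)\cong\mathbb Z/2$, with the nonsplit class realized by $2I\times 2I\to G_{600}$, so the explicit presentation argument is indeed the right route.
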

\begin{proof}
Since every \(g\in G_{600}\) preserves \(\Gamma_{600}\) and permutes its edges, Lemma~\ref{lem:12.3} gives
\(
\mu_{600}\circ g_*=\mu_{600}.
\)
Hence, by Lemma~\ref{lem:12.2}, every \(g\in G_{600}\) admits a lift to \(\widetilde M_{600}\), unique up to composition with \(\tau_{600}\). By Lemma~\ref{lem:12.5}, one therefore has a short exact sequence
\[
1\longrightarrow \langle \tau_{600}\rangle
\longrightarrow \widetilde G_{600}
\stackrel{\pi}{\longrightarrow}
G_{600}
\longrightarrow 1.
\]
It suffices to construct a splitting \(s:G_{600}\to \widetilde G_{600}\).
Choose generators \(\sigma_1,\sigma_2,\sigma_3\in G_{600}\) as in Equation~(\ref{eq:600}), with presentation
\[
G_{600}\cong [3,3,5]^+
=
\left\langle \sigma_1,\sigma_2,\sigma_3 \,\middle|\,
\sigma_1^{3}=\sigma_2^{3}=\sigma_3^{5}
=(\sigma_1\sigma_2)^{2}
=(\sigma_2\sigma_3)^{2}
=(\sigma_1\sigma_2\sigma_3)^{2}
=1
\right\rangle.
\]

\noindent \it Step 1: choose lifts of \(\sigma_1,\sigma_2,\sigma_3\) with the same odd orders upstairs. \normalfont
By Lemma~\ref{lem:12.2}, each \(\sigma_i\) admits a lift to \(\widetilde M_{600}\). Choose arbitrary lifts
\(
\widetilde\sigma_1,\widetilde\sigma_2,\widetilde\sigma_3.
\)
Since
\(
\sigma_1^{3}=\sigma_2^{3}=\sigma_3^{5}=1,
\)
the elements
\(
\widetilde\sigma_1^{\,3},
\widetilde\sigma_2^{\,3},
\widetilde\sigma_3^{\,5}
\)
are deck transformations, hence lie in \(\{\id,\tau_{600}\}\). If
\(\widetilde\sigma_1^{\,3}=\tau_{600}\), replace \(\widetilde\sigma_1\) by
\(\tau_{600}\widetilde\sigma_1\). Since every lift commutes with \(\tau_{600}\),
\(
(\tau_{600}\widetilde\sigma_1)^3
=\id.
\)
The same odd-order adjustment applies to \(\widetilde\sigma_2\) and \(\widetilde\sigma_3\). Thus we may assume
\(
\widetilde\sigma_1^{\,3}
=
\widetilde\sigma_2^{\,3}
=
\widetilde\sigma_3^{\,5}
=
\id.
\)

\smallskip

\noindent \it Step 2: verify the involution relations upstairs. \normalfont Set
\(
\widetilde\sigma_1\widetilde\sigma_2,
\widetilde\sigma_2\widetilde\sigma_3,\) and \(
\widetilde\sigma_1\widetilde\sigma_2\widetilde\sigma_3
\) as lifts of \(\sigma_1\sigma_2\), \(\sigma_2\sigma_3\), and
\(\sigma_1\sigma_2\sigma_3\). Since
\(
(\sigma_1\sigma_2)^2=(\sigma_2\sigma_3)^2=(\sigma_1\sigma_2\sigma_3)^2=1
\)
in \(G_{600}\), each of
\(
(\widetilde\sigma_1\widetilde\sigma_2)^{\,2},
(\widetilde\sigma_2\widetilde\sigma_3)^{\,2},\) and \(
(\widetilde\sigma_1\widetilde\sigma_2\widetilde\sigma_3)^{\,2}
\)
is a deck transformation.
We claim that each of \(\sigma_1 \sigma_2,\sigma_2 \sigma_3\), and \(\sigma_1 \sigma_2 \sigma_3\) has a fixed point in \(M_{600}\).
First, none of them is equal to \(-\id\). Indeed, if \(\sigma_1 \sigma_2=-\id\), then
\(
\sigma_1=-\sigma_2^{-1},
\)
so
\(
\sigma_1^3=(-\sigma_2^{-1})^3=-\id,
\)
contrary to \(\sigma_1^3=\id\). If \(\sigma_2 \sigma_3=-\id\), then
\(
\sigma_2=-\sigma_3^{-1},
\)
so
\(
\sigma_2^{15}=(-\sigma_3^{-1})^{15}=-\id,
\)
contrary to \(\sigma_2^{15}=\id\). If \(\sigma_1 \sigma_2 \sigma_3=-\id\), then
\(
\sigma_1\sigma_2=-\sigma_3^{-1},
\)
but \(\sigma_1\sigma_2\) has order dividing \(2\), whereas \(-\sigma_3^{-1}\) has order \(10\),
a contradiction.

Now let \(g\in\{\sigma_1 \sigma_2,\sigma_2 \sigma_3,\sigma_1 \sigma_2 \sigma_3\}\). If \(g=\id\), then trivially
\(
\Fix(g)\cap M_{600}=M_{600}\neq\varnothing.
\)
Assume \(g\neq \id\). Then \(g\) is an involution in \(SO(4)\) distinct from \(\pm\id\), following from the previous paragraph.
Hence its \(+1\)- and \(-1\)-eigenspaces are both \(2\)-dimensional, and therefore
\(
\Fix(g|_{S^3})
\)
is a great circle.
We claim that \(\Fix(g|_{S^3})\not\subset \Gamma_{600}\). If \(\Fix(g|_{S^3})\subset \Gamma_{600}\), then since \(\Gamma_{600}\) is a finite union of geodesic edges, the circle \(\Fix(g|_{S^3})\) contains a non-vertex point of \(\Gamma_{600}\), hence the radial projection of some edge \(E\) of \(P_{600}\). Because every point of \(\Fix(g|_{S^3})\) is fixed by \(g\), the edge \(E\) is fixed pointwise by \(g\). But this is impossible by Proposition~\ref{prop:16.1}, since the orientation-preserving pointwise stabilizer of any edge is cyclic of order \(5\). Therefore,
\(
\Fix(g|_{S^3})\cap M_{600}\neq\emptyset.
\)

Choose
\(
x_{12}\in \Fix(\sigma_1 \sigma_2)\cap M_{600},
x_{23}\in \Fix(\sigma_2 \sigma_3)\cap M_{600},\) and \(
x_{123}\in \Fix(\sigma_1 \sigma_2 \sigma_3)\cap M_{600},
\)
and lifts
\(
\widetilde x_{12}\in p_{600}^{-1}(x_{12}),
\widetilde x_{23}\in p_{600}^{-1}(x_{23}),\) and \(
\widetilde x_{123}\in p_{600}^{-1}(x_{123}).
\)
Since \(\widetilde\sigma_1\widetilde\sigma_2\) covers \(\sigma_1 \sigma_2\) and \(\sigma_1 \sigma_2(x_{12})=x_{12}\), the point
\(\widetilde\sigma_1\widetilde\sigma_2(\widetilde x_{12})\) lies in the two-point fiber
\(
p_{600}^{-1}(x_{12})=\{\widetilde x_{12},\tau_{600}(\widetilde x_{12})\}.
\)

Hence
\(
(\widetilde\sigma_1\widetilde\sigma_2)^{\,2}(\widetilde x_{12})=\widetilde x_{12}.
\)
Since \((\widetilde\sigma_1\widetilde\sigma_2)^{\,2}\) is a deck transformation and the nontrivial deck involution \(\tau_{600}\) acts freely, it follows that
\(
(\widetilde\sigma_1\widetilde\sigma_2)^2=\id.
\)
Similarly,
\(
(\widetilde\sigma_2\widetilde\sigma_3)^{\,2}(\widetilde x_{23})=\widetilde x_{23},
(\widetilde\sigma_1\widetilde\sigma_2\widetilde\sigma_3)^{\,2}(\widetilde x_{123})=\widetilde x_{123}.
\)
Since both squares are deck transformations, the same argument gives
\(
(\widetilde\sigma_2\widetilde\sigma_3)^2=
(\widetilde\sigma_1\widetilde\sigma_2\widetilde\sigma_3)^2=
\id.
\)

Thus, the chosen lifts satisfy all defining relations of Equation~\eqref{eq:600}. Therefore the map
\[
\sigma_1\mapsto \widetilde\sigma_1,\qquad
\sigma_2\mapsto \widetilde\sigma_2,\qquad
\sigma_3\mapsto \widetilde\sigma_3
\]
extends to a homomorphism
\(
s:G_{600}\to \widetilde G_{600}
\)
with
\(
\pi\circ s=\id_{G_{600}}.
\)

Hence, by Lemma~\ref{lem:12.5}, the extension splits. For each \(g\in G_{600}\), let
\(\widehat g:\mathcal{I}_{600}\to \mathcal{I}_{600}\) be the fiberwise linear bundle automorphism
induced by the lift \(s(g)\) via Lemma~\ref{lem:12.4}. Since \(s\) is a homomorphism,
these bundle maps satisfy
\(\widehat{gh}=\widehat g\circ \widehat h\) for all \(g,h\in G_{600}\).
This proves the proposition.
\end{proof}

\bibliographystyle{plain}
\bibliography{bib}
\end{document}